\newcommand{\bA}{\mathbb{A}}
\newcommand{\bC}{\mathbb{C}}
\newcommand{\bG}{\mathbb{G}}
\newcommand{\bP}{\mathbb{P}}
\newcommand{\bQ}{\mathbb{Q}}
\newcommand{\bZ}{\mathbb{Z}}
\newcommand{\bV}{\mathbb{V}}
\newcommand{\sC}{\mathscr{C}}
\newcommand{\oH}{\operatorname{H}}
\newcommand{\mtc}[1]{\mathcal{}{#1}}
\newcommand{\mb}[1]{\mathbb{#1}}
\newcommand{\sP}{\mathscr{P}}
\newcommand{\sS}{\mathscr{S}}
\newcommand{\sX}{\mathscr{X}}
\newcommand{\cA}{\mathcal{A}}
\newcommand{\cB}{\mathcal{B}}
\newcommand{\cC}{\mathcal{C}}
\newcommand{\cD}{\mathcal{D}}
\newcommand{\cE}{\mathcal{E}}
\newcommand{\cI}{\mathcal{I}}
\newcommand{\cK}{\mathcal{K}}
\newcommand{\cL}{\mathcal{L}}
\newcommand{\cM}{\mathcal{M}}
\newcommand{\cO}{\mathcal{O}}
\newcommand{\cP}{\mathcal{P}}
\newcommand{\cS}{\mathcal{S}}
\newcommand{\cU}{\mathcal{U}}
\newcommand{\cW}{\mathcal{W}}
\newcommand{\cX}{\mathcal{X}}
\newcommand{\cY}{\mathcal{Y}}
\newcommand{\sY}{\mathscr{Y}}
\newcommand{\ord}{\mathrm{ord}}
\newcommand{\bmu}{\bm{\mu}}
\newcommand{\Coker}{\operatorname{Coker}}
\newcommand{\Ext}{\operatorname{Ext}}
\newcommand{\ext}{\operatorname{ext}}
\DeclareMathOperator{\height}{ht}
\newcommand{\lct}{\operatorname{lct}}
\DeclareMathOperator{\jdeg}{jdeg}
\newcommand{\spec}{\operatorname{Spec}}
\newcommand{\vol}{\operatorname{vol}}
\newcommand{\Sym}{\operatorname{Sym}}
\newcommand{\Hom}{\operatorname{Hom}}
\newcommand{\sm}{\operatorname{sm}}
\newcommand{\Pic}{\operatorname{Pic}}
\newcommand{\Aut}{\operatorname{Aut}}
\newcommand{\PGL}{\operatorname{PGL}}
\newcommand{\Id}{\operatorname{Id}}
\newcommand{\od}{\operatorname{d}}
\newcommand{\KSBA}{\operatorname{KSBA}}
\newcommand{\KSB}{\operatorname{KSB}}
\newcommand{\Proj}{\operatorname{Proj}}
\DeclareMathOperator{\wt}{wt}
\DeclareMathOperator{\lc}{lc}
\DeclareMathOperator{\nod}{nod}
\DeclareMathOperator{\reg}{reg}
\DeclareMathOperator{\cusp}{cusp}
\DeclareMathOperator{\univ}{univ}
\newcommand{\leqor}{\underset{{\scriptscriptstyle (}-{\scriptscriptstyle )}}{<}}
\newcommand{\red}{{\mathrm{red}}}
\DeclareDocumentCommand
\newcommand{\MKSB}{\cM^{\mathrm{KSB}}}
\newcommand{\MKSBA}{\cM^{\mathrm{KSBA}}}
\DeclareDocumentCommand\Psell{O{n}}{\cP_{#1}}
\DeclareDocumentCommand\Wei{O{n}}{\cW_{#1}}
\DeclareDocumentCommand\WeiStrat {O{k} O{n}} {\cS_{#2,#1}}
\newcommand{\tWei}{\widetilde{\cW}_n}
\newcommand{\tWeik}{\widetilde{\cW}_{n-m,m}}
\newcommand{\norm}{{\nu}}
\newtheorem{theorem}{Theorem}[section]
\newtheorem{Teo}[theorem]{Theorem}
\newtheorem{Lemma}[theorem]{Lemma}
\newtheorem{Oss}[theorem]{Observation}
\newtheorem*{Oss'}{Observation}
\newtheorem{Cor}[theorem]{Corollary}
\newtheorem{Prop}[theorem]{Proposition}
\theoremstyle{definition}
\newtheorem{Algorithm}[theorem]{Algorithm}
\newtheorem{Construction}[theorem]{Construction}
\newtheorem{EG}[theorem]{Example}
\newtheorem{Def}[theorem]{Definition}
\newtheorem{Notation}[theorem]{Notation}
\newtheorem{Remark}[theorem]{Remark}
\newtheorem{context}[theorem]{Context}
\begin{document}
\title{Moduli of elliptic surfaces of Kodaira dimension one \newline fibered over rational curves}

\author{Dori Bejleri, Josiah Foster, Andres Fernandez Herrero,\\
Giovanni Inchiostro, Svetlana Makarova, and Junyan Zhao}

\begin{abstract}
    In this article, we study an infinite sequence of irreducible components of Koll\'{a}r--Shepherd-Barron (KSB-) moduli spaces of surfaces of arbitrarily large volumes, and describe the combinatorics of the intersections of the irreducible components of the surfaces parametrized by  the boundary. Moreover, we describe the stable reduction steps in finding the KSB-limits in an explicit combinatorial way. Our main tool are the techniques of wall-crossing for Kollár--Shepherd-Barron--Alexeev (KSBA-) moduli and twisted stable maps. 
\end{abstract}

\maketitle
\tableofcontents


\section{Introduction}

The moduli spaces of smooth surfaces of general type admit Koll\'ar--Shepherd-Barron (abbv. KSB) compactifications, which allow smooth objects to degenerate to certain singular surfaces satisfying \emph{KSB-stability} (\cite{KSB}). 
The KSB-stable surfaces of a fixed volume $v$ are parametrized by a proper Deligne-Mumford stack $\MKSB_v$, whose coarse space is known as the KSB moduli space. More generally, for surface pairs of log general type $(X, cD)$ with $c \in \mathbb{Q} \cap (0,1]$, there is a Koll\'ar--Shepherd-Barron--Alexeev (abbv. KSBA) moduli space $\MKSBA_{c,v}$ parametrizing \emph{KSBA-stable pairs}. 
Classical questions about the classification of surfaces can be translated, via the moduli theory, to questions about the geometry of the corresponding moduli spaces. It is therefore an interesting and important endeavor to gain an explicit understanding of irreducible components of this moduli problem.

Besides the classical case of the moduli of curves (see \cite{DM69}), there are currently few instances in the literature where the geometry of an irreducible component of the KSB moduli spaces of varieties is completely understood. Even fewer examples are known to admit an explicit combinatorial description of the singular varieties parametrized by the boundary of their respective components of the moduli space (as opposed to the KSBA moduli of pairs where substantially more examples have been computed).
The first examples in dimension higher than one, where the boundary was fully understood, include quotients or special covers of a product of two curves (see \cites{vO06,Liu12,Rol10}), followed by the Campedelli surfaces and Burniat surfaces; see \cite{AP23}. These are surfaces which admit a cover to some toric del Pezzo surface $X_0$ branched along a line arrangement $\sum L_i$, so one can reduce the study to the description of moduli spaces of pairs $(X_0,c\sum L_i)$, where $c>0$ is a (rational) coefficient which makes the covering map crepant. The boundary of components of the moduli space has also been partially computed in some other examples (e.g. \cites{FPR, FPRR, GPSZ}).

In this paper, we provide a novel example. We study an infinite sequence of irreducible components $\overline{\cP}_n$ of the KSB-moduli spaces of stable surfaces of volume $\frac{(n-2)^2}{n}$, depending on a parameter $n\in\mb{Z}_{\geq3}$. Every point in $\overline{\cP}_n$ represents a singular(!) 
\textit{pseudo-elliptic surface} over a rational curve (not necessarily irreducible), and a general such surface contains a unique quotient singularity of type $\frac{1}{n}(1,1)$. Moreover, we develop an explicit combinatorial description of the irreducible components of the surfaces parametrized by the boundary of $\overline{\cP}_n$, which parametrizes surfaces with worse singularities than an isolated $\frac{1}{n}(1,1)$, and yields a boundary stratification of $\overline{\cP}_n$ indexed by certain decorated graphs.

\subsubsection*{Moduli of Weierstrass fibrations} 
The objects we study in this paper are elliptic surfaces of Kodaira dimension $1$, fibered over a rational curve with a section $S$ such that $S^2=-n$. 
They are projective surfaces $X$ which admit a proper equidimensional morphism $f \colon X\rightarrow C$ to a rational curve $C$ (not necessarily irreducible) such that each fiber is a connected curve of arithmetic genus $1$.
A general such surface $(f \colon X\rightarrow C,S)$ satisfies that $X$ is smooth, $C\simeq \mb{P}^1$, and the fibers of $f$ are either smooth or nodal. The number $n \geq 0$ is a fundamental invariant of the elliptic surface called the \emph{height}. 
The moduli of elliptic surfaces was first constructed using GIT by Miranda \cite{Miranda}. It parametrizes Weierstrass fibrations (see Definition \ref{def:weierstrass}), which are elliptic surfaces with integral fibers such that the total space has log canonical singularities. We denote by $\cW^{\lc}_n$ (resp. $\cW^{\min}_n$) the moduli stacks parametrizing such Weierstrass fibrations with at worst log canonical singularities (resp. Weierstrass fibrations with at worst canonical singularities). \\

The main observation is the following.
Given a general elliptic surface of Kodaira dimension $1$ over $\mb{P}^1$ $(f \colon X\rightarrow \mb{P}^1,S)$ with a section $S$ such that $S^2=-n$, there are two natural constructions to perform: 

\begin{itemize}
    \item (\textit{KSB(A) moduli}). Considering $(X,cS)$ as a pair of log general type gives us a point of a KSBA moduli space $\MKSBA_{c,v}$ for certain $c$. On the other hand, contracting the negative section $S$ results in a surface $Y$ with klt singularities, and the canonical divisor $K_Y$ is $\mb{Q}$-Cartier and ample of volume $v=\frac{(n-2)^2}{n}$. Such a surface $Y$, called a \emph{pseudo-elliptic surface}, is represented by a point in the KSB-moduli stack $\cM^{\KSB}_{v}$ (see Definition \ref{def:ksb}).
    \item (\textit{Twisted stable maps}). Since the fibers of $f$ are connected curves of arithmetic genus $1$ with at worst nodal singularities, and the section $S$ intersects each fiber at a smooth point of the fiber, then by the universality of $\overline{\cM}_{1,1}$, one obtains a natural morphism $C\rightarrow \overline{\cM}_{1,1}$.
    Such a morphism is parametrized by the moduli stack of twisted stable maps $\cK_n:=\overline{\cK}_{0,0}(\overline{\cM}_{1,1},n)$ (see Definition \ref{def_tsm_stability}), which was first introduced in \cite{AV_compactifying}.
    
\end{itemize}

We establish a link between the KSB moduli stacks and the moduli of twisted stable maps, using the KSBA moduli stacks $\MKSBA_{c,v}$ of log pairs $(X,cS)$ (see Definition \ref{definition: KSBA-stability}) as a bridge, where $0<c<\frac{n-2}{n}$ is a rational coefficient. Now we state our first main theorem.

\begin{theorem}[= \Cref{proposition: Phi defines iso of W with KSBA} + \Cref{prop: existence of the morphism} + \Cref{thm_proof_psi_n_isom} + \Cref{thm: irreducibility of K_n} + \Cref{proposition: maps between compactifications}]
\label{thm:main} \quad \newline
    Let $n\geq 3$ be an integer with $n\neq 4$, and let $0<\epsilon\ll1$ be a rational number. Set $v:=\frac{(n-2)^2}{n}$, $c(\epsilon):=\frac{n-2}{n}-\epsilon$, and $v(\epsilon):=v - n\epsilon$. Then the following statements hold.
\begin{enumerate}
    \item
    \label{item: main: open immersion of Wn}
    For any $0 < \epsilon < \frac{n-2}{n}$, there is a natural morphism
    $$ \Phi_{n,\epsilon}\ \colon\ \cW_n^{\lc}
  \  \longrightarrow \ \MKSBA_{c(\epsilon),v(\epsilon)},  
    \quad
    (f \colon X\to C, S)\ \mapsto\  \big(X, c(\epsilon)S\big),$$
    which is an open immersion. 
    \item
    \label{item: main: open immersion of En}    
    Let $\cE_n$ (resp. $\cE_n^{\lc}$) be the image of $\cW^{\min}_n$ (resp. $\cW_n^{\lc}$) under $\Phi_{n,\epsilon}$. Then there is a natural morphism
    $$\Psi_n \ \colon\ \cE_n^{\lc}\ \longrightarrow \ \MKSB_{v},\quad \big(X, c(\epsilon)S\big)\ \mapsto \ Y ,$$ which is an open immersion.
    Here, the surface $Y$ is obtained from $X$ by contracting the section $S$.
    \item
    \label{item: main: birationality of Kn and En}
    The stack $\cK_n$ of twisted stable maps is irreducible and proper.
    Its normalization $\cK_n^{\nu}$ admits a natural birational morphism
    $ \bar{\Phi}_{n,\epsilon} \ \colon\ \cK_n^{\nu}\ \longrightarrow \ \overline{\cE}_{n,\epsilon}$, where $\overline{\cE}_{n,\epsilon}$ denotes the normaliztion of the closure of $\mathcal{E}_n \subset \MKSBA_{c(\epsilon), v(\epsilon)}$.
\end{enumerate}
In particular, the following are all birational to each other, for each $n\geq 3$, $n\neq 4$:
the moduli stack $\cK_n$, the moduli stack $\cW_n^{\lc}$, and
the irreducible components of $\MKSBA_{c(\epsilon),v(\epsilon)}$ and $\MKSB_v$ which generically parametrize (pseudo)elliptic surfaces of height $n$ over a rational curve.
\end{theorem}

Let $\overline{\cP}_n$ be the normalization of the closure of the image $\cP_n$ of $\Psi_n$. Then each $\overline{\cP}_n$ and $\overline{\mathcal{E}}_{n, \epsilon}$ are irreducible components of their respective KSB- and KSBA-moduli (see \Cref{coroll: overline P is irreducible} and \Cref{cor_the_moduli_is_smooth}). The compactifications $\overline{\mathcal{E}}_{n, \epsilon}$ are related by wall-crossing morphisms as $\epsilon$ varies, and we can view $\overline{\cP}_n$ as the moduli space at the wall $\epsilon = 0$ \cite{AB, inchiostro2020moduli, ascher2021wall, meng2023mmp}. 

We now summarize all the moduli stacks that we have introduced in one diagram.
\[\begin{tikzcd}
    &&& {\cW^{\min}_n} \\
    {\cK^\nu_n} &&& {\cW^{\lc}_n} \\
    {\cK_n} && {\overline \cE_{n,\epsilon}} && {\overline \cP_n} \\
    && \MKSBA_{c(\epsilon),v(\epsilon)} && \MKSB_v
    \arrow[hook, from=1-4, to=2-4]
    \arrow[from=2-1, to=3-1]
    \arrow["{\bar{\Phi}_n}", "{\text{birat.}}"', from=2-1, to=3-3]
    \arrow["{\Phi_{n,\epsilon}}"', from=2-4, to=3-3]
    \arrow["{\Psi_n}", from=2-4, to=3-5]
    \arrow[dashed, from=3-1, to=3-3]
    \arrow["\bar{\Psi}_{n, \epsilon}", dashed, from=3-3, to=3-5]
    \arrow[hook, from=3-3, to=4-3]
    \arrow[hook, from=3-5, to=4-5]
\end{tikzcd}\]

To study the surfaces parametrized by the boundary of $\overline{\cP}_n$, we take a morphism from $\spec R$, where $R$ is a DVR, to the stack of twisted stable maps $\cK_n$, and run the (relative) minimal model program (abbv. MMP) for the family pulled back from the universal family over $\overline{\cM}_{1,1}$.
We can associate a graph $\Gamma$, called a \emph{sliced tree}, to each elliptic surface $(f \colon X\rightarrow C,S)$ derived from a twisted stable map (i.e. \emph{tsm-stable elliptic surface}) in such a way that each vertex of $\Gamma$ represents an irreducible component of $X$ and each edge between two vertices means that the two corresponding components intersect (see Definition \ref{def_assignment}).
Furthermore, such a sliced tree includes the data of certain numerical decorations for its vertices (which encode the degree of the $j$-map of each component) and decorations for its edges (which encode the gluing data for any two given components).
We show that the MMP steps (i.e. stable reduction) can be described as a pruning process of the associated graphs (\Cref{prop_pruning_tree}).
The main operations of the pruning process include cutting down edges and adding \emph{klt-markings} and \emph{lc-markings}. This yields a class of decorated graphs we call \emph{stable pruned trees} (\Cref{def_pruned_tree}) which encode the combinatorics of a KSBA-stable limit of a family of elliptic surfaces.  

{\begin{theorem}[=\Cref{algorith: stable reduction} + \Cref{prop_pruning_tree} + \Cref{thm:stratificationE_n}]\label{main:combinatorics} The stable reduction in the moduli spaces $\overline{\cE}_{n,t}$ is given by \cref{algorith: stable reduction} for $0<\frac{n-2}{n}-t\ll 1$, which computes the combinatorics of the irreducible components of the KSBA-stable limit.
Moreover, stable reduction corresponds to a pruning process of the graph associated to the tsm-stable elliptic surfaces. In particular, there is a stratification of $\overline{\mathcal{E}}_{n,t}$ indexed by stable pruned trees whose strata parametrizes KSBA-stable elliptic surfaces which are glued together according to the combinatorics of the graph. 
\end{theorem}}

Thus the boundary of the compactification $\overline{\mathcal{E}}_{n,t}$ for $0 \ll t < \frac{n-2}{n}$ has a particularly nice combinatorial description. However, this structure can be passed to $\overline{\cP}_n$ and $\overline{\cE}_{n,t}$ for other $t$ via the following wall-crossing theorem. 

{\begin{theorem}[=\Cref{cor_amm_moduli_spaces_are_isom} + \Cref{thm:boundary_P_n}]\label{main:wall}
The moduli spaces $\overline{\cE}_{n,t}$ are isomorphic for any $0<t<\frac{n-2}{n}$ and the birational map $\bar{\Psi}_{n,t} \colon \overline{\cE}_{n,t}\to \overline{\cP}_n$ is an isomorphism for all $0 < t < \frac{n-2}{n}$. Moreover, $\overline{\cP}_n$ inherits a boundary stratification indexed by stable pruned trees of height $n$.
\end{theorem}}
    
An immediate corollary of the above two theorems is the following.

\begin{Cor}
    Let $n\geq 3$ be an integer with $n\neq 4$, and $v:=\frac{(n-2)^2}{n}$. Then $\overline{\cP}_n$ is an irreducible component of $\MKSB_v$, is proper and of dimension $10n-2$.
    The interior of $\overline{\cP}_n$ parametrizes pseudo-elliptic surfaces with an isolated $\frac{1}{n}(1,1)$-singularity, and the boundary of $\overline{\cP}_n$ parametrizes pseudo-elliptic surfaces whose associated graph is a pruned tree of height $n$.
\end{Cor}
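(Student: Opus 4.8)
The plan is to deduce the four assertions directly from \Cref{thm:main}, the stable reduction theorem stated above (which identifies stable reduction with \cref{algorith: stable reduction}), and an elementary count of Weierstrass data.

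\emph{Irreducibility, the component property, and properness.} By \Cref{thm:main}\,(3) the stack $\cK_n$ is irreducible, hence so is its normalization $\cK^{\nu}_n$; since $\Xi_n$ is birational, in particular dominant, and surjective onto the irreducible target, the closure $\overline{\cE}_{n,\epsilon}$ is irreducible as well. The morphisms $\Phi_{n,\epsilon}$ and $\Psi_n$ are open immersions, so $\cW^{\lc}_n\cong\cE^{\lc}_n$ is an irreducible open substack of $\MKSBA_{c(\epsilon),v(\epsilon)}$ and $\cP_n=\Psi_n(\cE^{\lc}_n)$ is an irreducible open substack of $\MKSB_v$. The closure of a nonempty irreducible open substack is always an irreducible component (any larger irreducible closed substack would meet $\cP_n$ in a dense open, forcing equality), so $\overline{\cP}_n$ is an irreducible component of $\MKSB_v$. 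Finally, $\MKSB_v$ is a proper Deligne--Mumford stack and $\overline{\cP}_n$ is a closed substack of it, hence $\overline{\cP}_n$ is proper.

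\emph{Dimension.} It suffices to compute $\dim\cW^{\lc}_n$, since $\Phi_{n,\epsilon}$ and $\Psi_n$ are open immersions. A general member of $\cW^{\lc}_n$ is a smooth elliptic surface over $\bP^1$ whose singular fibers are all of type $I_1$; it therefore coincides with its Weierstrass model and is cut out by Weierstrass data $(A,B)\in\oH^0(\bP^1,\cO(4n))\oplus\oH^0(\bP^1,\cO(6n))$ with $4A^3+27B^2\not\equiv 0$, taken modulo the $\bG_m$-action $(A,B)\mapsto(t^4A,t^6B)$ and the $\PGL_2$-action on $\bP^1$; the generic stabilizer (the elliptic involution, together with $\mu_2\subset\bG_m$) being finite. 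Hence $\dim\overline{\cP}_n=\dim\cW^{\lc}_n=(4n+1)+(6n+1)-1-3=10n-2$.

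\emph{Interior and boundary.} A surface $Y$ parametrized by the interior $\cP_n$ is obtained from an lc Weierstrass fibration $X$ by contracting the section $S\cong\bP^1$ with $S^2=-n$; since $S$ meets every fiber in its smooth locus, this contraction introduces precisely an isolated cyclic quotient singularity of type $\tfrac1n(1,1)$, and, being an isomorphism away from $S$, it creates no worse singularity; in particular a general such $Y$ has this as its only singular point. A point of the boundary $\overline{\cP}_n\setminus\cP_n$ is, by the valuative criterion of properness, the KSB-limit of a one-parameter family of interior surfaces; lifting the family to $\cK_n$ (using properness of $\cK_n$, the surjectivity of $\cK^{\nu}_n\to\overline{\cE}_{n,\epsilon}$, and the wall-crossing morphism $\overline{\cE}_{n,\epsilon}\dashrightarrow\overline{\cP}_n$ at $\epsilon=0$) and running \cref{algorith: stable reduction} shows that the limit is computed by pruning the sliced tree of the associated tsm-stable elliptic surface, so its associated graph is a pruned tree of height $n$. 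Conversely, every pruned tree of height $n$ is the output of the pruning process applied to some tsm-stable elliptic surface in the proper stack $\cK_n$, hence occurs in the boundary.

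\emph{Main obstacle.} The parameter count and the openness/properness bookkeeping are routine once \Cref{thm:main} is granted. The delicate point is the last one: identifying the boundary \emph{exactly} with the set of pruned trees of height $n$ requires both that stable reduction (\cref{algorith: stable reduction}) always terminates in a pruned tree of height $n$ and that the process is exhaustive over all tsm-stable inputs --- that is, the surjectivity statements packaged into \Cref{thm:main}\,(3) together with the combinatorial completeness of \cref{algorith: stable reduction}.
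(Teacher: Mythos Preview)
Your proposal is correct and matches the paper's approach: the corollary is stated in the introduction as an ``immediate corollary'' of the two main theorems without a separate proof, and you have supplied exactly the routine unpacking the paper leaves implicit. The dimension count via Weierstrass data, the irreducible-component argument from the open immersion $\Psi_n$, properness from the ambient $\MKSB_v$, and the boundary description via lifting to $\cK_n$ and running \cref{algorith: stable reduction} are all the intended ingredients; the paper confirms the dimension for $n=3$ by the same parameter count (see the end of \S\ref{section: n=3}).

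One small remark: your ``conversely'' clause at the end is not needed for the statement as written (which only asserts that every boundary surface has a pruned tree of height $n$, not that every such tree is realized), and indeed the paper notes elsewhere that some strata may be empty.
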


Finally, we state a more explicit result in the special case when $n = 3$, i.e. $S^2 = -3$: then the cohomology groups encoding deformation theory can be computed explicitly (Section \ref{section: n=3}), and the combinatorics is relatively simple.

\begin{theorem}[=\Cref{cor_KSB_limits_have_at_most_6_vertices} + \Cref{prop_K3_smooth_and_of_dim_28} + \Cref{thm:embedding_3}]\label{main:n=3}
Let $n=3$ and $X$ be a surface in $\overline{\cP}_3$.
\begin{enumerate}
    \item The pruned tree $\Pi$ associated to $X$ is a chain with at most six vertices.
    \item If $X$ is in the interior $\cP_3$, then one has $h^1(X,T_X)=28$ and $h^2(X,T_X)=0$. In particular the interior of $\cP_3$ is an irreducible smooth stack of dimension $28$.
\end{enumerate}

\end{theorem}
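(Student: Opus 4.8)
The plan is to treat the two parts separately, reducing each to a concrete computation via \Cref{thm:main} and \cref{algorith: stable reduction}. For part (1) I would argue combinatorially. By \Cref{thm:main}(3), together with the wall-crossing to the chamber $\epsilon = 0$, every surface in $\overline{\cP}_3$ has an associated sliced tree coming from a degree-$3$ twisted stable map $C \to \overline{\cM}_{1,1}$, and its KSB-stable model is the pruning of that tree under \cref{algorith: stable reduction}. The sliced trees of height $3$ are easy to list: the underlying graph is the dual tree of a genus-$0$ twisted curve, the ``elliptic'' vertices carry strictly positive $j$-degrees summing to $3$, and stability of the twisted stable map forbids contracted vertices of valence $\le 2$; a short count (degree $3$ cannot support a contracted vertex of valence $\ge 4$, nor two contracted vertices, nor a branch with a leg of length $\ge 2$) leaves only a chain of at most three elliptic vertices --- corresponding to the partitions $3$, $2+1$, $1+1+1$ --- or a single trivalent contracted vertex with three degree-$1$ legs, at most four vertices in all. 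Running the pruning algorithm on each of these finitely many cases yields, by inspection, a chain: the trivalent configuration is eliminated because its central component is a trivial product on which $K$ is not ample, so it contributes no point of $\overline{\cP}_3$; and each chain prunes to a chain, with the algorithm attaching a bounded number of marking vertices (klt- and lc-markings near the appropriate end). A direct count then gives a chain with at most six vertices. The only content here is this finite case check, which I expect to be routine.

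For part (2), fix $X \in \cP_3$. By \Cref{thm:main}(1)--(2), $X$ is obtained by contracting the $(-3)$-section $S$ of the general height-$3$ Weierstrass surface $f \colon \tilde X \to \bP^1$; here $\tilde X$ is smooth with $36$ nodal fibres, $\omega_{\tilde X/\bP^1} = f^{*}\sL$ where $\sL = \cO_{\bP^1}(3)$, and $\omega_{\tilde X} = f^{*}\cO_{\bP^1}(1)$. Because $\tfrac13(1,1)$ is not a $T$-singularity (hence not $\bQ$-Gorenstein smoothable) and is $\bQ$-Gorenstein rigid, the $\bQ$-Gorenstein deformations of $X$ are the same as the deformations of the pair $(\tilde X, S)$, so that $h^i(X, T_X) = h^i(\tilde X, T_{\tilde X}(-\log S))$ for all $i$. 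Riemann--Roch for the rank-$2$ bundle $T_{\tilde X}(-\log S)$, using $K_{\tilde X}^2 = 0$, $e(\tilde X) = 36$ and $N_{S/\tilde X} = \cO_{\bP^1}(-3)$, gives $\chi(\tilde X, T_{\tilde X}(-\log S)) = -28$; and $h^0(\tilde X, T_{\tilde X}(-\log S)) = 0$ since $h^0(\tilde X, T_{\tilde X}) = 0$ (a general elliptic surface has no vector fields). Hence it suffices to prove $h^2(\tilde X, T_{\tilde X}(-\log S)) = 0$, after which $h^1 = -\chi = 28$ follows, $h^2 = 0$ shows the deformations are unobstructed so that $\cP_3$ is smooth of dimension $h^1 - h^0 = 28$, and irreducibility is inherited because $\cP_3$ is dominated by the irreducible stack $\cK_3$ (\Cref{thm:main}(3)).

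To prove $h^2(\tilde X, T_{\tilde X}(-\log S)) = 0$ I would use Serre duality: this group equals $h^0(\tilde X, \Omega^1_{\tilde X}(\log S) \otimes \omega_{\tilde X})$. Twisting the exact sequence $0 \to f^{*}\Omega^1_{\bP^1} \to \Omega^1_{\tilde X}(\log S) \to \Omega^1_{\tilde X/\bP^1}(\log S) \to 0$ by $\omega_{\tilde X} = f^{*}\cO(1)$, the sub-term has $h^0 = h^0(\bP^1, \cO(-1)) = 0$, so $h^0(\Omega^1_{\tilde X}(\log S) \otimes \omega_{\tilde X}) \hookrightarrow h^0(\tilde X, \Omega^1_{\tilde X/\bP^1}(\log S) \otimes \omega_{\tilde X})$. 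Using $\Omega^1_{\tilde X/\bP^1} \cong \omega_{\tilde X/\bP^1} \otimes \cI_Z = f^{*}\cO(3) \otimes \cI_Z$, where $Z$ is the reduced set of $36$ nodes, and $\Omega^1_{\tilde X/\bP^1}(\log S) \cong \Omega^1_{\tilde X/\bP^1} \otimes \cO_{\tilde X}(S)$ since $S$ is a section disjoint from $Z$, one gets $\Omega^1_{\tilde X/\bP^1}(\log S) \otimes \omega_{\tilde X} \cong \cO_{\tilde X}(S + 4F) \otimes \cI_Z$ with $F$ a fibre, so it is enough to show $H^0(\tilde X, \cO(S + 4F) \otimes \cI_Z) = 0$. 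Now a general height-$3$ Weierstrass surface has Mordell--Weil rank $0$, so $|S|$ contains only $S$ and every effective divisor in $|S + 4F|$ is $S$ plus the preimage of four points of $\bP^1$; since $S$ avoids the nodes, such a divisor meets at most four of the $36$ nodes, hence no nonzero section of $\cO(S + 4F)$ vanishes along all of $Z$. Therefore $H^0(\tilde X, \cO(S + 4F) \otimes \cI_Z) = 0$, $h^2(X, T_X) = 0$, and $h^1(X, T_X) = 28$.

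The main obstacle is the bookkeeping in part (2): one must track the twists by the section $S$ and by the $36$ singular fibres so that every line bundle produced on $\bP^1$ lands in a range where its $H^0$ or $H^1$ vanishes. This is precisely where $n = 3$ is used: for general $n$ the same analysis produces instead the extra term $h^0(\bP^1, \cO(n-4))$, so that $h^2(\tilde X, T_{\tilde X}(-\log S)) = n-3$, which is nonzero for $n \ge 5$; thus the clean cohomological statement is special to $n = 3$, consistent with the fact that $\tfrac14(1,1)$ is a $T$-singularity (so $n = 4$ is excluded for a different reason).
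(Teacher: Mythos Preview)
Your argument for part (1) has a genuine gap. You assume that the $j$-degrees on the vertices of a sliced tree are positive integers summing to $3$, and hence that the underlying tree has at most four vertices. But in \Cref{definition: sliced tree} the function $\jdeg$ takes values in $\tfrac{1}{12}\bZ_{\ge 0}$; the integrality constraint is only that $\jdeg(v)+\sum_{e\in E_0(v)} e_v\in\bZ$, so fractional $j$-degrees are allowed once sliced edges are present. In fact the paper exhibits, immediately after \Cref{cor_KSB_limits_have_at_most_6_vertices}, a height-$3$ sliced tree with six vertices of $j$-degrees $\tfrac{7}{6},\tfrac{1}{6},\tfrac{1}{6},\tfrac{1}{6},\tfrac{1}{6},\tfrac{7}{6}$, all of positive weight, so no pruning occurs and the resulting pruned tree already has six vertices. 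Your finite case check therefore misses infinitely many configurations. A second misconception: pruning never adds vertices; the klt- and lc-markings in \Cref{def_pruned_tree} are decorations attached to existing vertices, not new vertices, so the bound of six cannot arise from ``attaching marking vertices.'' The paper's proof is instead a pure weight-counting argument: one shows $\wt(\Pi)=\jdeg(\Gamma)-2=1$, deduces that each leaf contributes at least $\tfrac{7}{6}$ and each internal vertex at least $\tfrac{1}{6}$ to the sum in \Cref{eq_weights}, and concludes there are at most two leaves and four internal vertices.

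Your part (2) is essentially correct and follows the same route as the paper (Section~\ref{section: n=3}): both reduce, via Serre duality and the relative cotangent sequence, to the vanishing of $H^0\big(\tilde X,\cO_{\tilde X}(S+4F)\otimes\cI_Z\big)$, which you and the paper establish by the same ``a degree-$4$ polynomial on $\bP^1$ cannot vanish at $36$ points'' argument. Two minor points: the Mordell--Weil hypothesis is unnecessary, since $f_*\cO_{\tilde X}(S)=\cO_{\bP^1}$ already forces every section of $\cO(S+4F)$ to be the canonical section of $\cO(S)$ times a pullback from $\bP^1$; and the identification $h^i(X,T_X)=h^i(\tilde X,T_{\tilde X}(-\log S))$ deserves a line of justification (e.g.\ $\pi_*T_{\tilde X}(-\log S)=\pi_*T_{\tilde X}$ and $R^1\pi_*T_{\tilde X}(-\log S)=0$, the latter because the surjection $T_{\tilde X}|_S\to N_{S/\tilde X}$ induces an isomorphism on $H^1$), which the paper supplies via the Leray spectral sequence in \Cref{cor_vanishing_tg_Y}.
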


\begin{Remark}
    We remark that for $n = 4$, we still obtain an irreducible component $\overline{\cP}_4$ of the KSB-moduli space and the combinatorics of the set of boundary objects is still captured by pruned trees. However, the deformation theory fails and so the map $\mathcal{W}_n^{\min} \to \overline{\cP}_4$ may not be birational. In particular, it is an open question whether $\overline{\cP}_4$ has generically non-reduced structure. See Remark \ref{rem:n=4}. 
\end{Remark}


\subsubsection*{Prior and related works}

Beyond the relevant work that was mentioned before, 
this paper uses extensively the results of \cites{lanave2002explicit, ascher2017log, AB, inchiostro2020moduli} on moduli of elliptic surfaces. In particular, the existence of the morphism $\cK^{\nu}_n\to \overline{\cE}_{n\epsilon}$ follows from general wall-crossing phenomena studied in \cites{AB, inchiostro2020moduli, ascher2021wall,meng2023mmp}, whereas the understanding of the limits in $\overline{\cP}_n$ follows from \cites{lanave2002explicit, AB, inchiostro2020moduli}.
More precisely, the morphism $\bar{\Phi}_n$ was studied in \cite{AB}, as a special case of general wall-crossing phenomena of \cites{ascher2021wall, meng2023mmp}. The explicit combinatorial gadget of \Cref{subsection_comb}
is instead an improvement of the refined numerical data defined in \cite{inchiostro2020moduli}, for elliptic surfaces of Kodaira dimension one fibered over $\bP^1$. During the revision of this paper, another work \cite{ISZ25} appeared, studying the KSBA moduli of surfaces fibered in log Calabi–Yau curves. As a side remark, we note that the families under consideration are precisely those attaining the minimal volumes for a fixed geometric genus $p_g$; see \cites{Liu25}.

\subsection*{Outline of the paper}

In Sections \ref{section: preliminaries} and \ref{section: elliptic surfaces}, we compile some preliminaries on elliptic surfaces, KSB(A)-stability and KSB(A)-moduli spaces.
In Sections \ref{sec:moduli_ell} and \ref{section: main theorem}, we study different moduli spaces of elliptic surfaces with sections, and prove Theorem \ref{thm:main}(1-2).
In Section \ref{section: n=3}, we give a more direct proof in the case when $n=3$ by computing explicitly the deformation theory of such surfaces.
In \cref{section:tsmcompactification}, we introduce twisted stable maps, prove Theorem \ref{thm:main}(3), and develop a combinatorial algorithm for computing the KSB-stable limits in order to prove \Cref{main:combinatorics}. Finally, in \Cref{sec:wall-crossing-iso}, we prove \Cref{main:wall} and put everything together in the case $n=3$ to prove \Cref{main:n=3}(1). 

\subsection*{Acknowledgement}

This project was initiated during the AGNES Summer School on Higher Dimensional Moduli at Brown University in 2022. We are grateful to the organizers as well as all the speakers. During the preparation of this article, AFH and GI were partially supported by an AMS-Simons travel grant. DB is partially supported by NSF grant DMS-2401483.
SM was a visiting researcher at Universit\"at Duisburg-Essen in 2023---2024 and would like to thank it and Marc Levine for the hospitality. We thank Rita Pardini for helpful comments on an earlier version of this manuscript.

\section{Preliminaries on KSBA-stable pairs}
\label{section: preliminaries}

In this article, we work over an algebraically closed field $k$ of characteristic $0$. The reader may assume that $k=\bC$.
\subsection{Background definitions}

We recall a few definitions that will be useful for the rest of the paper.

\begin{Notation}
    Consider a flat and finite type morphism $g \colon X\to Y$ of relative dimension $n$. If the fibers of $g$ are Gorenstein, then the relative dualizing complex is isomorphic to a complex of the form $\omega_g[n]$ where $\omega_g$ is a line bundle.
    If the fibers of $g \colon X \to Y$ are normal, then we have an explicit description: $$\omega_g = \left(\Lambda^n\Omega_{X/Y}^1\right)^{**},$$ where $\Omega_{X/Y}^1$ is the sheaf of relative differentials. 
    Indeed, both are reflexive sheaves: $\omega_g$ is a line bundle so it is reflexive, while $(\Lambda^n\Omega_{X/Y}^1)^{**}$ is reflexive by virtue of being a dual. 
    By flat base change for dualizing complexes, they agree on the smooth locus of $g$,
    which has codimension at least 2 along each fiber, so they are isomorphic by Hartogs's lemma. 
\end{Notation}
We now introduce a few standard definitions from birational geometry \cite{KM98}*{Notation 0.4}. We refer the reader to \cite{KM98} for a more extensive treatment of what follows. 
\begin{Notation}
    Given a normal variety $X$ with two Weil divisors $D_1,D_2$, we write $D_1\sim D_2$ to denote that $D_1$ and $D_2$ are linearly equivalent.
    \end{Notation}

\begin{Def}
    Let $X$ be a normal variety. A \emph{$\bQ$-divisor} $D=\sum a_i D_i$ on $X$ is a formal linear combination of 
    integral Weil divisors $D_i\subseteq X$ with rational coefficients $a_i\in \bQ$.
\end{Def}

\begin{Def}
    Let $X$ be a normal variety, $D$ a Weil divisor on $X$, and $\pi \colon X'\to X$ a birational morphism. We use the same notation as \cite{KM98}*{Notation 0.4 (11)} and write $\pi_*^{-1}(D)$ for
    the \emph{proper transform} of $D$. When $\pi^{-1}$ is defined on a dense open $D_0 \subset D$,
    Then $\pi_*^{-1}(D)$ is the closure of $\pi^{-1}(D_0)$.
    The definition is extended to $\bQ$-divisors by linearity. 
\end{Def}

\begin{Def}[{\cite{KM98}*{Def. 2.11}}]
\label{definition: canonical singularities}
    A normal variety $X$ is said to have \emph{canonical singularities} if
    \begin{enumerate}
        \item there exists $n > 0$ such that $nK_X$ is a Cartier divisor, in other words, $K_X$ is $\bQ$-Cartier, and
        \item for some resolution of singularities $\pi \colon X' \to X$, we have
        $\pi_*(\cO_{X'}(nK_{X'})) \cong \cO_X(nK_X)$.
    \end{enumerate}
\end{Def}

\begin{Def}[{\cite{kollar_modbook}*{Def. 11.5}}]
\label{def_klt}
    Let $X$ be a normal projective variety, and $D=\sum a_i D_i$ be an effective $\mb{Q}$-divisor. 
    Let $i \colon X^{\sm} \hookrightarrow X$ be the inclusion of the smooth locus of $X$. Then $(X,D)$ is called a \emph{log pair} if there is an integer $n>0$ such that the sheaf 
        $$\displaystyle i_* \left( \omega_{X^{\sm}}^{\otimes n}(nD|_{X^{\sm}}) \right)$$ 
        is a 
        line bundle on $X$.
        We will denote the corresponding Cartier divisor by $n(K_X+D)$ and call $K_X+D$ \emph{$\bQ$-Cartier}.
        We say that a log pair $(X,D)$ is \emph{Kawamata log terminal} or \emph{klt} (resp. \emph{log-canonical} or \emph{lc}) if the following conditions are satisfied:
    \begin{enumerate} 
        \item the coefficients $a_i$ satisfy $0\leq a_i<1$ (resp. $0\leq a_i\leq 1$), and
        \item given a log-resolution $\pi \colon X'\to X$, denote by $E_j$ the integral components of the exceptional divisor. Assume that $\pi$ is such that the union of the supports of $\pi_*^{-1}D$ and $\sum E_j$
        is simple normal crossing. Then we require that $b_j>-n$ (resp. $b_j\ge -n$) for every $j$, where the $b_j\in \bZ$ are defined by
        
        \begin{equation}
        \label{equation: coeffs for log discrepancy}
            n(K_{X'} + \pi_*^{-1} D) \sim \pi^*(n(K_X+D)) + \sum b_j E_j. 
        \end{equation}
    \end{enumerate}
\end{Def}


\begin{Def}
    Let $X$ be a projective variety and $D$ be an effective $\mb{Q}$-divisor. 
    Then $(X,D)$ is called \emph{a semi-log-canonical pair} (or \emph{an slc pair}) if
    $X$ is equidimensional and $S_2$, 
    all points of codimension 1 are either smooth or nodal,
    the irreducible components of $D$ intersect the smooth locus of $X$,
    and the following hold.
    \begin{enumerate}
        \item Denote by $U\subseteq X$ the locus where $X$ is Gorenstein and $D$ is Cartier. We then require that, for some $n>0$, the sheaf $i_* \left( \omega_U^{\otimes n}\otimes \cO_U(nD|_U) \right)$
        is a line bundle on $X$. We denote the corresponding Cartier divisor by $n(K_X+D)$ and say that $K_X+D$ is $\bQ$-Cartier.
        \item If $\nu \colon X^\norm \to X$ is the normalization of $X$,  $\Delta \subseteq X^\norm$ is the preimage of the nodal locus, and $D^\norm$ is the proper transform of $D$ in $X^\norm$, then the pair $(X^\norm, D^\norm +\Delta)$ is log canonical.
    \end{enumerate}
\end{Def}
\begin{Def}\label{def_lct}
    Let $(X,D)$ be a pair such that $K_X$ and $D$ are both $\bQ$-Cartier and $X$ is log-canonical.
    We define the \emph{log-canonical threshold} of $(X,D)$, denoted by $\lct(X; D)$, to be $$\sup\{\alpha \in \mathbb{R}:(X,\alpha D) \textup{ is log-canonical}\}.$$
\end{Def}

\subsection{Canonical models}

\begin{Def}
    Let $(X,D)$ be an lc pair and assume that $X$ is a surface. 
    A \textit{canonical model} of $(X,D)$ is a birational map $\pi \colon X\to X'$ to a variety $X'$ such that, if we write $D':=\pi_*D$, then:
    \begin{enumerate}
        \item $(X',D')$ is an lc pair,
        \item there is $n>0$ such that $n(K_{X'} + D')$ is ample, and
        \item given $m$ such that both $m(K_X + D)$ and $m(K_{X'} + D')$ are Cartier, if we write 
        \[
            m(K_X + D) - \pi^*(m(K_{X'} + D')) \sim \sum a_i E_i,
        \]
        where $E_i$ are the components of the exceptional divisor of $\pi$, then $a_i \ge 0$.
    \end{enumerate}
\end{Def}

\begin{Remark}
    The definition of canonical model extends to higher dimensional varieties. However, one can no longer assume that $\pi$ is a morphism, rather it has to be a birational rational map whose inverse does not contract divisors.
    In this manuscript, we will only need the surface version, so we refer the reader to \cite{KM98}*{\S3.8} for more details on the higher dimensional case.
\end{Remark}

\begin{Teo}[{cf. \cite{KM98}*{Thm. 3.52}}]
\label{theorem: construction of canonical models}
    Let $(X,D)$ be an lc pair with $X$ proper. Fix $m$ such that $m(K_X + D)$ is Cartier.
    Then a canonical model $(X', D')$, if it exists, is unique up to isomorphism and is given by
    \[
    X'\ =\ \Proj \left(\bigoplus_{r \geq 0} H^0 
    \bigg( X, \cO_X\big(rm(K_X+D)\big) \bigg)
    \right).\]
\end{Teo}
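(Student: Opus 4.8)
The plan is to assume that a canonical model $\pi \colon X\to X'$ exists and to show that its section ring is computed by the right-hand side of the stated formula; this simultaneously identifies $X'$ with the asserted $\Proj$ and yields uniqueness. First I would enlarge the given $m$ so that both $m(K_X+D)$ and $m(K_{X'}+D')$ are Cartier; this is harmless, since replacing $m$ by a multiple $dm$ only passes to the $d$-th Veronese subring of the section ring, which does not change $\Proj$. With this choice, condition (3) in the definition of a canonical model lets us write
\[
    m(K_X+D) \sim \pi^*\big(m(K_{X'}+D')\big) + E, \qquad E = \textstyle\sum a_i E_i,
\]
where the $E_i$ are the $\pi$-exceptional divisors and $a_i\ge 0$; moreover $E$ is an honest integral (Cartier) divisor, being the difference of two Cartier divisors, so in fact each $a_i\in\bZ_{\ge 0}$.

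Next I would compute the graded pieces of the section ring. For every $r\ge 0$ the divisor $rE$ is effective and $\pi$-exceptional, so $\pi_*\cO_X(rE)=\cO_{X'}$: indeed a local section of $\pi_*\cO_X(rE)$ is a rational function with poles bounded by $rE$ along the exceptional locus, hence regular in codimension one on $X'$, hence regular since $X'$ is normal. Combining this with the projection formula gives
\[
    \pi_*\cO_X\big(rm(K_X+D)\big)\;=\;\pi_*\Big(\pi^*\cO_{X'}\big(rm(K_{X'}+D')\big)\otimes\cO_X(rE)\Big)\;=\;\cO_{X'}\big(rm(K_{X'}+D')\big),
\]
so that $H^0\big(X,\cO_X(rm(K_X+D))\big)=H^0\big(X',\cO_{X'}(rm(K_{X'}+D'))\big)$ for all $r\ge 0$, compatibly with the ring structure. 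Therefore
\[
    \bigoplus_{r\ge 0}H^0\big(X,\cO_X(rm(K_X+D))\big)\;\cong\;\bigoplus_{r\ge 0}H^0\big(X',\cO_{X'}(rm(K_{X'}+D'))\big)
\]
as graded rings.

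Finally, since $m(K_{X'}+D')$ is an ample Cartier divisor on the proper variety $X'$, its section ring is finitely generated and $X'\cong\Proj\bigoplus_{r\ge 0}H^0\big(X',\cO_{X'}(rm(K_{X'}+D'))\big)$ — the classical fact that a proper variety is recovered as the $\Proj$ of the section ring of an ample line bundle. Combining this with the previous isomorphism yields $X'\cong\Proj\bigoplus_{r\ge 0}H^0\big(X,\cO_X(rm(K_X+D))\big)$, which is the asserted formula. Uniqueness is then immediate: the right-hand side depends only on $(X,D)$, so any two canonical models have canonically isomorphic underlying varieties, and the boundary divisor is pinned down as $D'=\pi_*D$; one also checks that the isomorphism intertwines the two contraction maps, so $(X',D')$ is unique up to isomorphism.

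I expect the only genuine point to verify carefully is the identity $\pi_*\cO_X(rE)=\cO_{X'}$ for effective $\pi$-exceptional $E$ (a standard consequence of normality of $X'$ and $\pi_*\cO_X=\cO_{X'}$); the rest is formal manipulation with the projection formula and the definition of the canonical model. If one did not wish to assume existence, finite generation of the section ring would be the serious input, but since the statement is conditional on existence of a canonical model, finite generation comes for free from ampleness of $m(K_{X'}+D')$ on the proper variety $X'$.
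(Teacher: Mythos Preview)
The paper does not give its own proof of this theorem; it is stated with a citation to \cite{KM98}*{Thm.~3.52} and no argument is supplied. Your proof is correct and is essentially the standard argument one finds in the cited reference: compare section rings via the projection formula and the identity $\pi_*\cO_X(rE)=\cO_{X'}$ for effective $\pi$-exceptional $E$, then recover $X'$ as $\Proj$ of the section ring of the ample line bundle $m(K_{X'}+D')$. One minor point worth making explicit: after enlarging $m$, the divisor $m(K_{X'}+D')$ is not merely Cartier but ample, since some positive multiple is ample by the definition of canonical model and a Cartier divisor with an ample positive multiple is itself ample; you use this in the final step.
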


\begin{Def}
    More generally, if $(X,D)$ is a normal but not necessarily log canonical pair, we can define the \emph{canonical model} of $(X,D)$ as follows.
    Let $\mu \colon X' \to X$ be a log resolution with reduced exceptional divisor $E$.
    By assumption, $\mathrm{Supp}(\mu_*^{-1}D) \cup E$ is normal crossings, so by the above theorem, if the pair $(X, \mu_*^{-1}D + E)$ admits a canonical model, it is unique. We define this to be the canonical model of $(X,D)$. One can check by the above description as a Proj that this is independent of the choice of $\mu$ if it exists. 
\end{Def}

\begin{Def}
\label{definition: KSBA-stability}
    A pair $(X, D)$ is called \textit{KSBA-stable} if 
    \begin{enumerate}
        \item it is slc and $X$ is connected;
        \item $K_X+D$ is an ample $\bQ$-Cartier $\bQ$-divisor.
    \end{enumerate}
    The \emph{volume} of a KSBA-stable pair $(X,D)$ is $\vol(X,D) = (K_X + D)^{\dim X}$.
\end{Def}

\begin{Teo}[{cf. \cite{kollar_modbook}*{Theorem 8.15}}]
\label{theorem: KSBA is a good moduli theory}
    Fix $v\in \mb{Q}_{>0}$ and $c\in [0,1]_{\mb{Q}}$. Then there is a proper Deligne-Mumford stack $\MKSBA_{c,v}$, whose closed points parametrize KSBA-stable surface pairs $(X,cD)$ with volume $(K_X+cD)^2=v$, where $D$ is an effective $\mathbb{Z}$-divisor.
\end{Teo}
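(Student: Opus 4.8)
Since this is quoted as a special case of \cite{kollar_modbook}*{Theorem 8.15}, with the two-dimensional case going back to \cite{KSB}, my plan would be to explain how the statement is assembled from the standard pillars of the KSBA program rather than to reprove anything from scratch. The plan is to produce $\MKSBA_{c,v}$ as a quotient stack $[Z/\PGL]$ for a suitable locally closed subscheme $Z$ of a Hilbert-scheme parameter space, then to deduce the Deligne--Mumford property from finiteness of automorphisms and properness from the valuative criterion.

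The first point is to fix the moduli \emph{functor}. The subtlety --- and the reason a naive definition fails to be separated --- is that over a non-normal or non-reduced base $T$ one cannot simply ask for a flat family of surfaces together with a flat family of divisors: one must impose Koll\'{a}r's condition that formation of the pluri-log-canonical sheaves $\cO\big(m(K_{\mathcal{X}/T}+c\mathcal{D})\big)$ commutes with arbitrary base change (so that $\mathcal{D}$ is a \emph{relative Mumford divisor}). Granting this definition, the moduli problem is a stack for the fppf topology by faithfully flat descent.

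Next comes boundedness: by the boundedness theorem for slc surface pairs with fixed volume and coefficient (see \cite{kollar_modbook}, or \cite{KSB} in dimension two), there is a single integer $N=N(c,v)$ such that $N(K_X+cD)$ is very ample with vanishing higher cohomology and a fixed Hilbert polynomial for every pair $(X,cD)$ in the statement. Embedding via the complete linear system of $N(K_X+cD)$ and recording $D$ as a point of a relative Chow or Hilbert scheme produces a parameter scheme $H$ carrying a universal pair; one then cuts out the subscheme $Z\subseteq H$ over which the universal pair is KSBA-stable with tautological polarization matching $N(K+cD)$. Local closedness in families of slc-ness, of the condition that $K+cD$ be $\bQ$-Cartier, and of the volume condition is the technical heart of this step; once it is in place, $\MKSBA_{c,v}:=[Z/\PGL]$ is an algebraic stack of finite type.

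For the Deligne--Mumford property I would check finiteness of the stabilizers: the automorphism group of a KSBA-stable pair $(X,cD)$ preserves the ample class $K_X+cD$, hence is a closed subgroup scheme of $\PGL$ acting on a pair of log general type, so it is finite, and in characteristic $0$ reduced; thus the stabilizers are finite and \'{e}tale. Finally, properness is the valuative criterion over a DVR $R$ with fraction field $K$. \emph{Separatedness} reduces to uniqueness of the relative log canonical model --- the relative version of \Cref{theorem: construction of canonical models} --- so two families over $\spec R$ agreeing over $\spec K$ must be isomorphic. \emph{Existence} of a limit is obtained, after a finite base change, by semistable reduction followed by the relative log MMP over $\spec R$, producing an slc family with ample relative log canonical; the volume is preserved throughout, so the limit lies in the prescribed fibre of the moduli. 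I expect this last existence step to be the main obstacle, since it invokes the full strength of the minimal model program and of semistable reduction in the relevant dimension.
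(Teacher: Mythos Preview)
Your outline is a reasonable sketch of how the construction of $\MKSBA_{c,v}$ is carried out in the literature, but you should be aware that the paper itself does \emph{not} give a proof of this theorem at all. It is stated as a citation of \cite{kollar_modbook}*{Theorem 8.15} (with the surface case going back to \cite{KSB}) and is treated entirely as a black box; the paper simply records it as background and moves on to Remark~\ref{remark: easy_family}, which isolates the one consequence actually needed later. So there is nothing to compare your argument against: the ``paper's own proof'' consists of a pointer to Koll\'ar's book.

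That said, your sketch is broadly in line with how the result is assembled in \cite{kollar_modbook}, and you have correctly identified the main ingredients (Koll\'ar's condition on the family, boundedness, local closedness of the slc and $\bQ$-Cartier conditions, finiteness of automorphisms, and the valuative criterion via semistable reduction and the relative log MMP). If you were asked to expand this into a genuine proof rather than a plan, the places requiring the most care are the precise definition of the functor for pairs over non-reduced bases (the notion of K-flatness or marked pairs in \cite{kollar_modbook}) and the boundedness input, which in the general KSBA setting relies on deep results of Hacon--M\textsuperscript{c}Kernan--Xu; but for the purposes of this paper none of that needs to be unpacked.
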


The complete definition of a family of KSBA-stable pairs is more subtle and we omit it. For this paper, the following remark suffices, and it follows from Koll\'ar's definition \cite{kollar_modbook}*{8.13} and \cite{kollar_modbook}*{Theorem 5.4}, noting that relative ampleness can be defined fiberwise for finite type morphisms \cite{EGAIV}*{9.6.4}.

\begin{Remark}
\label{remark: easy_family}
        Assume that $\cX\to B$ is a flat, pure dimensional, Gorenstein and projective morphism with $B$ reduced, and $\cD\subseteq \cX$ is a Cartier divisor, flat over $B$, and such that for every $b\in B$ the fiber $(\cX_b,c\cD_b)$ is a stable pair with $(K_{\cX_b}+c\cD_b)^{\dim \cX_b}=v$ for some rational coefficient $c$. 
        Then $\cX\to B$ is a family of KSBA-stable pairs: namely, it corresponds to a morphism $B\to \MKSBA_{c,v}$.
\end{Remark} 

\begin{Def}\label{def:ksb} 
A connected projective variety $X$ is \emph{KSB-stable} if $(X, 0)$ is KSBA-stable, i.e., $X$ is slc and $K_X$ is ample. Its {volume} is defined as $\vol(X): = (K_X)^{\dim X}$.
\end{Def}

\Cref{theorem: KSBA is a good moduli theory} is still applicable here as the special case when $D = 0$, and we get a moduli space which we denote $\MKSB_v$. Through out this paper, when we refer to $\MKSB_v$, we take $v=\frac{(n-2)^2}{n}$ to be the volume of a pseudo-elliptic surface of with a $\frac{1}{n}(1,1)$-singularity, where $n\geq 3$.

\subsection{KSB-stable families}\label{subsection_KSB_stab}

In this subsection, we introduce the notion of KSB-stable family. 

\begin{Notation}
    Let $X \to B$ be a flat family with $S_2$ fibers. For any reflexive sheaf $\cL$ on $X$ which is a line bundle over a dense open subset, we set $\cL^{[m]} := \left(\cL^{\otimes m}\right)^{**}$ to be the reflexive hull of the $m^{th}$ tensor power.
\end{Notation}

\begin{Def}
    Let $\pi \colon X\to B$ be a flat and proper morphism. We say that $\pi$ is \textit{KSB-stable} if for every $b\in B$, the fiber $X_b$ is KSB-stable, and $\pi$ satisfies \emph{Koll\'ar's condition}:
    %
    For every $B'\to B$ and for every $n\in \bZ$, if we denote by $p_1 \colon X\times_B B'\to X$ the first projection, then the natural map
    $$p_1^*(\omega_{X/B}^{[n]}) \longrightarrow \omega_{X\times_B B'/B'}^{[n]}$$
    is an isomorphism.
\end{Def}

It was proven in \cite{Kol09} that Koll\'ar's condition is an algebraic condition, i.e, the corresponding subfunctor is represented by a monomorphism of schemes. When the base $B$ is reduced, then Koll\'ar's condition can be checked numerically as follows.
\begin{Teo}[{cf. \cite{kollar_modbook}*{Thm. 5.1}}] 
\label{thm: kollar condition numerical} 
Let $\pi \colon X \to B$ be a flat, pure dimensional, proper morphism over a reduced base scheme $B$. Suppose that for all $b \in B$, the fiber $X_b$ is KSB-stable. If the volume $\left(K_{X_b}\right)^{\dim X_b}$ is the same constant value for all $b \in B$, then $\pi$ is KSB-stable. 
\end{Teo}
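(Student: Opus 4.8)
The plan is to isolate the formal content: by definition a KSB-stable morphism is one all of whose fibers are KSB-stable --- which is the hypothesis --- together with one satisfying Koll\'ar's condition, i.e. $p_1^*(\omega_{X/B}^{[m]})\to\omega_{X\times_B B'/B'}^{[m]}$ is an isomorphism for every $B'\to B$ and every $m\in\bZ$. So only Koll\'ar's condition needs proof. Since it is local on $B$ and (by \cite{Kol09}) representable, one may check it on each irreducible component of $B$ separately and then glue, the overlap compatibility being forced by uniqueness of $S_2$-hulls; thus assume $B$ is integral. Normal bases satisfy Koll\'ar's condition automatically, so over a dense open $U\subseteq B$ (regular, with $\pi$ nice: reduced fibers, generic flatness of each $\omega_{X/B}^{[m]}$) the family is already KSB-stable, and in particular $\omega_{X/B}^{[m]}\otimes\kappa(b)\cong\omega_{X_b}^{[m]}$ for $b\in U$. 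The whole problem is to extend this across $Z:=B\setminus U$, i.e. to forbid the reflexive powers of $\omega_{X/B}$ from jumping on special fibers.

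This is where the constant-volume hypothesis enters. Fix $N>0$ with $NK_{X_b}$ Cartier for all $b$ (the Cartier index is finite in a bounded family) and a relatively ample line bundle $L$ on $X/B$. For each $\ell$ there is a natural comparison map $\omega_{X/B}^{[N\ell]}\otimes\kappa(b)\to\omega_{X_b}^{[N\ell]}$ that is an isomorphism away from the finitely many non-Gorenstein points of $X_b$, so its kernel and cokernel have finite length. Since $\cO_X(\ell L)$ is flat over $B$, Riemann--Roch on $\bQ$-Gorenstein surfaces (the fractional corrections vanish because $\ell L$ is Cartier) shows that $L^2$, $L\cdot K_{X_b}$ and $\chi(\cO_{X_b})$ are locally constant in $b$; together with $(K_{X_b})^2=v$ this pins down $\chi\bigl(X_b,\omega_{X_b}^{[N\ell]}\otimes L^{k}\bigr)$, as a polynomial in $k$, up to a bounded correction coming from the singularities of $X_b$ and the residue of $N\ell K_{X_b}$ at them. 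Feeding in upper semicontinuity across $Z$, the local constancy already known over $U$, and uniform Serre vanishing $H^{>0}\bigl(X_b,\omega_{X_b}^{[N\ell]}\otimes L^{k}\bigr)=0$ for $k\gg0$ (available by boundedness), one aims to force the naive-fiber Euler characteristics $k\mapsto\chi\bigl(X_b,\omega_{X/B}^{[N\ell]}\otimes\kappa(b)\otimes L^{k}\bigr)$ to be $b$-locally constant; over a reduced base this is flatness of $\omega_{X/B}^{[N\ell]}$, whose fibers are then $S_2$, so the comparison map is an isomorphism and Koll\'ar's condition holds for $m=N\ell$. The remaining $m$ follow since $\omega_{X/B}^{[m]}$ is recovered from the relative canonical algebra $\bigoplus_{\ell\ge0}\pi_*\omega_{X/B}^{[N\ell]}$, whose $\Proj$ over $B$ reconstructs $X\to B$ (each $X_b$ is its own canonical model, $K_{X_b}$ being ample), and base change for general $B'\to B$ then follows from flatness and cohomology-and-base-change.

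The main obstacle is precisely the step of forcing the naive Euler characteristics to be locally constant: a priori $\omega_{X/B}^{[N\ell]}$ need not be flat, so cohomology-and-base-change is unavailable, upper semicontinuity supplies only one of the two needed inequalities, and the constant-volume input controls merely a couple of coefficients of a Hilbert polynomial rather than the sheaf itself. Converting that numerical information into flatness --- equivalently, controlling the finite-length discrepancies produced by the fiberwise $S_2$-hull on degenerate fibers --- is the technical heart, and this is exactly what Koll\'ar's theory of hulls and husks \cite{Kol09} is built to do. In a write-up I would invoke that machinery (the statement is \cite{kollar_modbook}*{Thm.~5.1}) rather than redo it.
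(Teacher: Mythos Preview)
The paper does not prove this statement: it is stated as a citation to \cite{kollar_modbook}*{Thm.~5.1} and used as a black box, with no argument supplied. So there is no ``paper's own proof'' to compare against --- the paper and your proposal ultimately land in the same place, namely an appeal to Koll\'ar's theorem.

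That said, your sketch contains a genuine inaccuracy worth flagging. The assertion that ``normal bases satisfy Koll\'ar's condition automatically'' is not correct as stated: normality of $B$ does not by itself force the reflexive powers $\omega_{X/B}^{[m]}$ to commute with base change. What is true over a normal base is that one has better control of the total space (e.g.\ $X$ is $S_2$ and one can speak of $K_{X/B}$ as a Weil divisor), but the compatibility of reflexive hulls with restriction to fibers is exactly the content of the theorem and still requires the constant-volume input. Your later paragraphs implicitly acknowledge this, since you go on to argue flatness of $\omega_{X/B}^{[N\ell]}$ via Hilbert-polynomial considerations rather than simply invoking normality.

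The honest summary is that your proposal is not a proof but a plausibility outline that correctly identifies the key difficulty --- converting the single numerical constraint $(K_{X_b})^{\dim X_b}=v$ into flatness of all reflexive powers --- and then, appropriately, defers to Koll\'ar's hull-and-husk machinery for the actual work. Since the paper does exactly the same deferral (without the outline), there is nothing further to compare.
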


We now introduce an auxiliary tool, developed in \cites{hacking,AH}, which we will use to study the infinitesimal structure of $\MKSB$.

\begin{Def}\label{def_covering_stack}
    The \emph{covering stack} of a KSB-stable family $X\to B$ 
    is the quotient stack
\[
\cX
:= \left[ \spec_X \bigg( \bigoplus_{m \in \bZ} \omega_{X/B}^{[m]} \bigg)\bigg\slash \bG_m \right] 
.\]
Note that the natural map $\pi \colon \cX\to X$ is an isomorphism on the open set $U\subset X$ where $\omega_{X/B}$ is a line bundle. 
\end{Def}

The main relevance of \Cref{def_covering_stack} lies in the following result.

\begin{Teo}[{cf. \cite{AH}*{Thm. 5.3.6}}]
\label{thm_to_check_defs_we_look_at_covering_stack} 
Let $X\to B$ be a KSB-stable family. Then its covering stack $\cX\to B$ is a flat and proper family of Deligne-Mumford stacks, and $\cX_b$ is the covering stack of $X_b$ for every $b\in B$.
Conversely, if $\cX\to B$ is a flat and proper morphism, whose fibers $\cX_b$ are covering stacks of KSB-stable varieties, then there is a KSB-stable family $X\to B$ whose covering stack is $\cX\to B$.    
\end{Teo}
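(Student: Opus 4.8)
The plan is to exploit Kollár's condition in order to control base change for the $\bZ$-graded algebra defining the covering stack, and then to reduce every assertion to the fibers, following \cites{hacking,AH}. First I would set $\cA:=\bigoplus_{m\in\bZ}\omega_{X/B}^{[m]}$, a quasi-coherent $\bZ$-graded sheaf of $\cO_X$-algebras with $\cA_0=\cO_X$, so that by \Cref{def_covering_stack} one has $\cX=\bigl[\spec_X\cA\,/\,\bG_m\bigr]$ with $\bG_m$ acting with weight $m$ on $\cA_m$. The essential input is that a KSB-stable family --- which by the definition recalled above satisfies Kollár's condition for \emph{all} base changes (see \cite{Kol09}) --- has the property that each reflexive power $\omega_{X/B}^{[m]}$ is flat over $B$, has $S_2$ fibers, and commutes with arbitrary base change $B'\to B$; in particular $\omega_{X/B}^{[m]}\otimes_{\cO_B}k(b)\cong\omega_{X_b}^{[m]}$ for every $b\in B$. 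Hence $\cA$ is $B$-flat with $\cA\otimes_{\cO_B}k(b)\cong\bigoplus_m\omega_{X_b}^{[m]}$, and since $K_{X_b}$ is $\bQ$-Cartier ($X_b$ being KSB-stable) the canonical sheaf of algebras on $X_b$ is finitely generated, so after spreading out, $\spec_X\cA\to X$ is of finite type.

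For the forward direction I would argue as follows. Flatness of $\cX\to B$ is immediate: $\spec_X\cA\to X\to B$ is flat because $X\to B$ is flat and $\cA$ is a $B$-flat $\cO_X$-algebra, and $\spec_X\cA\to\cX$ is a $\bG_m$-torsor, so flatness descends fppf-locally. I would then compute the fibers:
\[
\cX\times_B\spec k(b)=\bigl[(\spec_X\cA\times_B\spec k(b))/\bG_m\bigr]\;\cong\;\Bigl[\spec_{X_b}\Bigl(\textstyle\bigoplus_m\omega_{X_b}^{[m]}\Bigr)\big/\,\bG_m\Bigr],
\]
using the base-change isomorphism, and the right-hand side is exactly the covering stack of $X_b$. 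Since $\cX\to B$ is flat and of finite presentation, it then suffices to check the Deligne--Mumford and properness conditions on these fibers; for that I would use the local structure of $X_b$. Over the open locus $U_b\subseteq X_b$ where $\omega_{X_b}$ is invertible one has $\cX_b|_{U_b}\cong U_b$, and near each of the finitely many remaining (isolated cyclic quotient) singular points $\cX_b$ is étale-locally the quotient of a finite $\cO_{X_b}$-scheme by a finite group, so $\cX_b$ is Deligne--Mumford; moreover $\cX_b\to X_b$ is there étale-locally of the shape $[\bA^2/\mu_N]\to\bA^2/\mu_N$, hence proper, and with $X_b$ proper this gives $\cX_b$ proper.

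For the converse, given $\cX\to B$ flat and proper with each $\cX_b$ the covering stack of a KSB-stable surface, I would first observe that $\cX_b$ is Gorenstein (its dualizing sheaf is a line bundle --- the defining feature of the covering construction), hence $\cX\to B$ is a flat proper Gorenstein family and $\omega_{\cX/B}$ is a line bundle whose formation commutes with base change. I would take the relative coarse moduli space $\pi\colon\cX\to X$ over $B$ and use that in characteristic $0$ the functor $\pi_*$ is exact, $R^{>0}\pi_*$ annihilates $\omega_{\cX/B}^{\otimes m}$, $\pi_*\cO_\cX=\cO_X$, and $X\to B$ is flat (étale-locally $\cO_X$ is a direct summand of the $B$-flat $\cO_\cX$ as a module over the relevant finite stabilizer). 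Setting $\omega_{X/B}^{[m]}:=\pi_*\omega_{\cX/B}^{\otimes m}$ and using that $\pi$ is an isomorphism away from a relative codimension-$2$ locus, these sheaves are $B$-flat with $S_2$ fibers $\omega_{X_b}^{[m]}$ and satisfy Kollár's condition; since each $X_b$ is the coarse space of $\cX_b$, hence KSB-stable by hypothesis, $X\to B$ is a KSB-stable family, and rerunning the forward direction identifies its covering stack with $\cX\to B$.

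The step I expect to be the main obstacle is the compatibility of these constructions with \emph{arbitrary} (not merely flat) base change: the identities $\omega_{X/B}^{[m]}\otimes k(b)\cong\omega_{X_b}^{[m]}$ and, in the converse, $\pi_*\omega_{\cX/B}^{\otimes m}\otimes k(b)\cong\pi_{b,*}\omega_{\cX_b}^{\otimes m}$, together with flatness of $X\to B$ and of the reflexive powers. These fail for general Deligne--Mumford stacks and singular families; here they hold precisely because of Kollár's condition on one side and, on the other, because $\pi$ is an isomorphism outside relative codimension $2$ and invariants are exact in characteristic $0$. Pinning this down, together with the local (cyclic quotient) structure of klt surface singularities used for Deligne--Mumfordness and properness of the fibers, is the technical heart of the argument.
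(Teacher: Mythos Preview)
The paper does not give its own proof of this statement: it is recorded with the attribution ``cf.\ \cite{AH}*{Thm.\ 5.3.6}'' and used as a black box, so there is nothing in the paper to compare your proposal against.

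That said, your sketch follows the Abramovich--Hassett strategy and the broad outline (Koll\'ar's condition $\Rightarrow$ base change for each $\omega^{[m]}$ $\Rightarrow$ flatness and fiberwise identification; converse via coarse moduli and pushforward of $\omega_{\cX/B}^{\otimes m}$) is sound. The one genuine gap is in your argument for the Deligne--Mumford and properness properties of the fibers: you invoke ``the finitely many remaining (isolated cyclic quotient) singular points'' and the \'etale-local form $[\bA^2/\bmu_N]$, but the theorem as stated concerns arbitrary KSB-stable families, whose fibers are slc varieties of any dimension. The non-Gorenstein locus need not be zero-dimensional, and slc (as opposed to klt surface) singularities are not modeled on cyclic quotients. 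The correct argument avoids any classification: since $K_{X_b}$ is $\bQ$-Cartier, the index is locally a finite integer $N$, and Zariski-locally the covering stack is $\bigl[\spec_{X_b}\bigl(\bigoplus_{m=0}^{N-1}\omega_{X_b}^{[m]}\bigr)\big/\bmu_N\bigr]$, which exhibits finite inertia (hence DM) and is proper over $X_b$ because $\spec_{X_b}(\bigoplus_{m=0}^{N-1}\omega_{X_b}^{[m]})\to X_b$ is finite. Your final paragraph already flags that you are leaning on the klt surface picture; for the statement as written, that reliance must be removed.
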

In particular, if $X$ is KSB-stable, to study the local structure of $\MKSB_v$ around the point $p$ corresponding to $X$, one can either study flat deformations of $X$ which satisfy Koll\'ar's condition, or, equivalently, study flat deformations of the covering stack of $\cX$.

\section{Elliptic surfaces}
\label{section: elliptic surfaces}

In this part, we will recall a few facts about elliptic fibrations from \cite{Miranda} that are needed in the rest of the paper. Throughout this section, let $C$ be a smooth curve. 

\begin{Def}
    An \emph{elliptic surface} over $C$ is a pair $(g \colon X \to C, S)$ where 
    \begin{enumerate}[(1)]
        \item $g \colon X \to C$ is a flat proper morphism with connected fibers of arithmetic genus $1$ such that the generic fiber of $g$ is smooth, and
        \item $S \subset X$ is a section.
    \end{enumerate}
    The elliptic surface is \emph{standard} if $S \subset X$ does not pass through a singular point of any fiber.
    A standard elliptic surface $(f \colon X \to C, S)$ is \emph{minimal} if $X$ is smooth and there are no $(-1)$-curves contracted by $g$. 
\end{Def}

Given a standard elliptic surface $(f \colon Y \to C, S_Y)$, one can always contract every fiber component not meeting $S_Y$ to obtain a standard elliptic surface $(g : X \to C, S)$ with integral fibers, called the \emph{Weierstrass model}.

\begin{Def}[{\cite{Miranda}*{II.3.2}}] \label{defn: weierstrass fibration}
    A \emph{Weierstrass fibration} is a standard elliptic surface whose geometric fibers are integral. 
\end{Def}

The condition of integrality on the fiber implies that $X$ admits a \emph{global Weierstrass equation}. That is, $X$ is isomorphic to 
\begin{equation}
\label{equation: global Weierstrass equation}
    \big\{y^2 z - x^3 - A x z^2 - B z^3 = 0\big\} \ \subseteq\ \mathbb{P}\big( \cL^{-2} \oplus \cL^{-3} \oplus\cO_C\big),
\end{equation}
where $\cL$ is an effective line bundle on $C$ and $A \in H^0(C, \cL^4)$, $B \in H^0(C, \cL^6)$ (see \cite{Miranda}*{II.5.1, II.5.2}). In particular, the fibers of $g$ are either a smooth elliptic curve, a rational curve with a node, or a rational curve with a cusp. The line bundle $\cL$ is called the \emph{fundamental line bundle} of the fibration, and determines the canonical bundle of $X$ as follows. 

\begin{Teo}[Kodaira's canonical bundle formula \cite{BPV84}*{V.12.1}, \cite{Miranda}*{Prop. III.1.1}] \label{thm: canonical bundle formula}
Let $(g \colon X \to C, S)$ be a Weierstrass fibration, and let $\cL$ be the fundamental line bundle. Then we have $$\omega_X \cong g^*(\omega_C \otimes \cL).$$
\end{Teo}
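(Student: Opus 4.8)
The plan is to run the standard adjunction computation directly on the Weierstrass model. By \cref{equation: global Weierstrass equation}, $X$ is an effective Cartier divisor in the projective bundle $P := \mathbb{P}(\cO_C \oplus \cL^{-2} \oplus \cL^{-3})$, with projection $\pi \colon P \to C$ restricting to $g$ on $X$. Since $C$ is smooth, $P$ is a smooth $3$-fold, so $X$ is Gorenstein and adjunction holds on the nose:
\[
    \omega_X \ \cong\ \big(\omega_P \otimes \cO_P(X)\big)\big|_X \ \cong\ \big(\omega_{P/C} \otimes \pi^*\omega_C \otimes \cO_P(X)\big)\big|_X.
\]
So the whole statement reduces to identifying the two line bundles $\omega_{P/C}$ and $\cO_P(X)$ on $P$.

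First I would fix the convention for $\mathbb{P}(-)$ — say the Grothendieck one, so that $\pi_*\cO_P(1) = \cO_C \oplus \cL^{-2}\oplus\cL^{-3}$ — and then read off the twists from the Weierstrass equation. Concretely, the monomials $x^3$, $y^2z$, $Axz^2$ (with $A \in H^0(\cL^4)$), $Bz^3$ (with $B \in H^0(\cL^6)$) must all be sections of one line bundle; writing $x,y,z$ as sections of $\cO_P(1)$ twisted by $\pi^*\cL^2$, $\pi^*\cL^3$, $\cO_C$ respectively makes this work and forces
\[
    \cO_P(X) \ \cong\ \cO_P(3) \otimes \pi^*\cL^{6}.
\]
For the relative dualizing sheaf I would use the relative Euler sequence on $P = \mathbb{P}(\mathcal E)$ (relative dimension $2$): taking top exterior powers gives the standard formula $\omega_{P/C} \cong \cO_P(-3) \otimes \pi^*\det\mathcal E$, and here $\det\mathcal E = \det(\cO_C\oplus\cL^{-2}\oplus\cL^{-3}) = \cL^{-5}$, so
\[
    \omega_{P/C} \ \cong\ \cO_P(-3) \otimes \pi^*\cL^{-5}.
\]
Plugging both into the displayed adjunction formula, the $\cO_P(\pm 3)$ factors cancel and the $\cL$ powers combine to $\cL^{-5}\otimes\cL^{6} = \cL$, giving $\omega_P\otimes\cO_P(X) \cong \pi^*(\omega_C\otimes\cL)$ and hence $\omega_X \cong g^*(\omega_C\otimes\cL)$ after restriction.

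The only real subtlety — hence the "main obstacle" — is bookkeeping of conventions: the sign convention for $\mathbb{P}(\mathcal E)$ (sub versus quotient), which in turn fixes the sign in the Euler sequence and in $\cO_P(X)$. I would pin this down at the outset by the cross-check that the four monomials of the Weierstrass equation genuinely lie in the same $\pi_*$-twist (as above), which simultaneously verifies $\cO_P(X) \cong \cO_P(3)\otimes\pi^*\cL^6$ and the consistency of writing the fibration inside $\mathbb{P}(\cO_C\oplus\cL^{-2}\oplus\cL^{-3})$ in the first place. One should also note that even if one only wants the isomorphism away from the finitely many singular fibers, it propagates to all of $X$ because both sides are reflexive and $X$ is $S_2$, by the Hartogs argument recorded in the opening Notation; but since $X$ is a Cartier divisor in a smooth variety this refinement is unnecessary. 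An alternative route — deducing the formula from the general log canonical bundle formula for elliptic fibrations — would also work but is heavier machinery than needed; the direct computation above, which is essentially the one in \cite{Miranda}*{Prop. III.1.1}, is the cleanest.
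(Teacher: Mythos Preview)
Your proof is correct. The paper does not actually supply a proof of this theorem --- it is stated with citations to \cite{BPV84}*{V.12.1} and \cite{Miranda}*{Prop.~III.1.1} and used as input --- and your adjunction computation on the ambient projective bundle is exactly the argument given in Miranda's notes.
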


\subsection{Minimal Weierstrass fibrations}

We will be particularly interested in the case of canonical surface singularities (also called \textit{du Val} or \emph{ADE} singularities). See \cite{KM98}*{\S4.2}, especially \cite{KM98}*{Thrm. 4.20}, for a description.

\begin{Def}
    Let $(X \to C, S)$ be a Weierstrass fibration.
    We say it is a \emph{minimal Weierstrass fibration} if $X$ 
    has canonical singularities.
    We say it is an \emph{lc Weierstrass fibration} if $X$ 
    has log-canonical singularities, i.e. $(X,0)$ is lc.
\end{Def}

The naming comes from the well-known fact that $(X \to C, S)$ is a  minimal Weierstrass fibration if and only if the minimal resolution of $X$ is a minimal elliptic surface \cite{Miranda}*{Def. III.3.1, Prop. III.3.2}. In particular, the minimal Weierstrass model is unique among birational models of the elliptic fibration since the minimal model of surfaces is unique. 

\begin{Remark}[cf. {\cite{Miranda}*{III.3.2}}]\label{remark_when_there_are_lc_sing_based_on_W_data}
    In terms of the Weierstrass data $(\cL, A, B)$, 
    the Weierstrass fibration
    is minimal (resp. lc) if and only if 
    for all $p \in C$:
    $$\min\big\{3\mathrm{ord}_p(A),\  2\mathrm{ord}_p(B)\big\} \ \leqor  \  12 . $$
\end{Remark}

\begin{Construction}
\label{construction:Lfiber}
    We now describe a birational transformation that, given a minimal Weierstrass fibration $(g \colon X\to C,S)$, produces a strictly lc Weierstrass fibration $(g' \colon X' \to C, S')$, i.e. one which is not minimal.
    First, consider a fiber $F$ of $g$ over $p\in C$, and let $X'$ be the blow-up of $X$ at the intersection point $F\cap S$.
    Let $F'$, $S'$ be the proper transforms of $F$, $S$, respectively, and let $E$ denote the exceptional divisor, then
    the fiber of $X' \to C$ over $p$ is $F' \cup E$.
    Assume that $(X,S+F)$ is an lc pair, that is, $F$ was chosen either smooth or nodal.
    One can check that $K_{X'}+S'+F'$ is nef over $C$ by checking its intersections with $F'$ and $E'$ are nonnegative ($0$ and $1$, respectively). 
    The canonical model of $(X',S'+F')$ over 
    $C$ contracts only $F'$, and the resulting surface pair $(X^c,S^c)$ is strictly log-canonical, as $F'$ is an exceptional divisor with discrepancy $-1$.
    In particular, this process generates strictly lc singularities.

    Recall that $\cL$ is the dual of the normal bundle of the section, so this process corresponds to replacing $\cL$ with $\cL\otimes\cO_C(p)$.
    Similarly, we are replacing $(A,B)$ with $(z^4A,z^6B)$, where $z$ is a nonzero section of $\cO_C(p)$ vanishing at $p$. 
\end{Construction}

\begin{Remark}
    The singular fibers of a minimal Weierstrass fibration are classified in terms of the dual graph of a minimal resolution, which can be determined from the vanishing order of the Weierstrass data via Tate's algorithm.
    The classification of singular fibers is due to Kodaira and Ner\'on, and we use Kodaira's notation here.
    We refer the reader to \cite{Miranda}*{Section I.4} and \cite{MR2732092}*{Page 66} for more details. 
\end{Remark}

\begin{Remark}
    Let $g\colon X\rightarrow\bP^1$ be a Weierstrass fibration with section $S\subseteq X$. It follows from \cite{Miranda}*{II.5.6} that $S^2\le 0$.
\end{Remark}

When $S$ is rational and $S^2 = -n$, it follows from the adjunction formula that $\omega_X|_S\simeq\cO_{\bP^1}(n-2)$. Moreover, $\omega_g$ is trivial on the fibers, so by the seesaw theorem \cite[page 54, Cor. 6]{mumford1974abelian} we can write $\omega_g = g^* \cO_{\bP^1}(k)$ for some $k$, and we have
$$\omega_X\ =\ g^{*}\omega_{\bP^1}\otimes \omega_g\ =\ 
g^{*}\cO_{\bP^1}(-2)\otimes g^{*}\cO_{\bP^1}(k).$$
We can combine these two observations to deduce that $\omega_X\simeq g^{*}\mathcal{O}_{\bP^1}(n-2)$ and $\omega_g\simeq g^{*}\mathcal{O}_{\bP^1}(n)$. The canonical bundle formula (\Cref{thm: canonical bundle formula}) implies that $\cL \cong \cO_{\mathbb{P}^1}(n)$ and $\omega_g \cong g^*\cL$. 

\begin{Def}
    Given an irreducible elliptic surface $(X,S)\to \bP^1$ with fibers that are irreducible genus one curves, we define the \emph{height} of $(X,S)\to \bP^1$ to be $-(S)^2$.
\end{Def}

\begin{Remark}
    The height $n = 0$ if and only if $X = E \times C$ for an elliptic curve $E$ \cite{Miranda}*{Lemma III.1.4}. 
\end{Remark}

\subsection{Twisted fibers}\label{subsection_twisted_fiber}
Not all elliptic surfaces are standard.
For example, one can start with a Weierstrass fibration $X\to C$
as in \cref{construction:Lfiber}, but then fix
a cuspidal fiber $F$ instead of a smooth or nodal one, and section $S$.
One can then consider the minimal log-resolution $(X', S'+F' + E)\to (X,S+F)$, where $S'$, $F'$ are proper transforms of $S$, $F'$, respectively, and $E$ is the reduced exceptional divisor.
Now we may assume the pair $(X',S'+F'+E)$ is SNC, so one can construct its relative canonical model $(X^c, S^c+F^c)$ over $C$ as in \cite{ascher2017log}.
It is proven in \textit{loc.~cit.} that the new pair $(X^c,S^c)$ is not a standard elliptic surface, namely, $X^c$ is singular at $F^c\cap S^c$. However, the singular fibers arising in this process are controlled.

\begin{Def}[{\cite{ascher2017log}*{Def. 4.9}}]
Let $g \colon (X,S)\to C$ be a relative canonical model of an elliptic surface with section.
A \emph{twisted fiber} of $g$ is an irreducible but non-reduced fiber.    
\end{Def}

Twisted fibers also appear as follows. Consider a Weierstrass fibration $(X,S)\to C$.
Let $U\subseteq C$ be the locus where the fibers of $g$ are either smooth or nodal.
Assume that $U\neq C$. There is a map $\phi  \colon U \rightarrow \overline{\mathcal{M}}_{1,1}$.
Up to replacing $C$ by some \textit{root stack} $\gamma \colon \cC\to C$ of $C$, with $U\subseteq C$ the open locus over which $\gamma$ is an isomorphism, we can extend $\phi$ to $\Phi \colon \cC\to \overline{\mathcal{M}}_{1,1}$ (cf. \cites{tsm, bejleri2022height, bresciani2023arithmetic}).
Consider the pull-back of the universal family via $\Phi$, denoted by $(\cX,\cS)\to \cC$, and let $g' \colon (X',S')\to C$ be the corresponding map on coarse moduli spaces.
From \cite{ascher2017log} all the fibers of $g'$ over $C\setminus U$ are twisted, so this procedure replaces the fibers of $g$ away from $U$ with twisted fibers.

In fact, these two constructions of twisted fibers are equivalent. 

\begin{Prop}[{\cite{tsm}*{Prop. 4.12}}]
    Let $(g \colon X \to C, S)$ be a Weierstrass fibration with a cuspidal fiber $F$.
    Then the relative canonical model $(X^c, S^c+F^c)$ of $(X, S + F)$ over $C$ is the twisted model $(X', S')$ obtained from the root stack construction above. 
\end{Prop}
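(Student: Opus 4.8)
The plan is to show that two a priori different surfaces associated to a Weierstrass fibration $(g\colon X\to C,S)$ with a cuspidal fiber $F$ over $p\in C$ coincide: the relative canonical model $(X^c,S^c+F^c)$ of $(X,S+F)$ over $C$, and the twisted model $(X',S')$ obtained by passing to an appropriate root stack $\cC\to C$, pulling back the universal family over $\overline{\cM}_{1,1}$, and taking coarse spaces. Both constructions are local over $C$ — everything is an isomorphism over $U=C\setminus\{p\}$ — so I would work in an \'etale or formal neighborhood of $p$, where the two constructions each produce an elliptic surface with section over $C$ whose only non-standard fiber lies over $p$. The strategy is then to identify both with the relative canonical model of a common log resolution.

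**Key steps.** First I would take the minimal log resolution $\mu\colon (\widetilde X,\widetilde S+\widetilde F+E)\to (X,S+F)$, where $E$ is the reduced exceptional divisor; since $F$ is cuspidal this is exactly the resolution that produces, after relative MMP, the new singular fiber (a type $\mathrm{II}$ cuspidal fiber blows up to a configuration that, after contracting, gives a twisted fiber — this is the content of \cite{ascher2017log}). The pair $(\widetilde X,\widetilde S+\widetilde F+E)$ is SNC, and $K_{\widetilde X}+\widetilde S+\widetilde F+E$ is relatively big over $C$, so it has a well-defined relative canonical model over $C$; by definition this is $(X^c,S^c+F^c)$. Second, for the root stack side, I would invoke \cite{ascher2017log} again: all fibers of $g'\colon(X',S')\to C$ over $C\setminus U$ are twisted, so $X'$ has the prescribed quotient singularity at $F^c\cap S^c$ and $(X',S')$ is likewise $(K+S)$-canonical over $C$ — i.e., it too is a relative canonical model of $(\widetilde X,\widetilde S+\widetilde F+E)$ over $C$. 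Third, I would appeal to the uniqueness of the relative canonical model (the relative version of \Cref{theorem: construction of canonical models}, applied over the affine base $C$ near $p$): since both $(X^c,S^c+F^c)$ and $(X',S')$ are canonical models of the same SNC pair over $C$, agreeing with it birationally and with the same graded section ring $\bigoplus_r g_*\cO(rm(K+S+F))$, they are canonically isomorphic over $C$, compatibly with the sections. Finally I would note that the isomorphism extends over all of $C$ since both sides already agree with the original Weierstrass fibration over $U$.

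**Main obstacle.** The genuinely delicate point is matching the two constructions at the level of the singular fiber and the section, i.e., verifying that the root-stack/coarse-space construction really does produce a surface whose structure near $p$ is $(K+S)$-canonical over $C$ with exactly the twisted fiber predicted by the log resolution of the cuspidal Weierstrass fiber — equivalently, that no further blow-down or blow-up is needed to pass between "canonical model of the log resolution" and "coarse space of the pullback of the universal family." This is where one must use the explicit classification of twisted fibers and the compatibility of the root stack degree with the monodromy of the $j$-map near the cuspidal fiber; it is essentially bookkeeping with Tate's algorithm and the stacky structure of $\overline{\cM}_{1,1}$ at the cusp, but it is the step where the two a priori different recipes are reconciled. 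Once that local identification is in hand, uniqueness of canonical models does the rest; I would expect the actual write-up to be short because \cite{ascher2017log} and \cite{tsm} already contain the necessary local computations, so this proposition is largely a matter of assembling them and citing uniqueness of relative canonical models.
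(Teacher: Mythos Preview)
The paper does not contain its own proof of this proposition: it is stated with a citation to \cite{tsm}*{Prop. 4.12} and no argument is given in the present paper. So there is nothing in this paper to compare your proposal against directly.

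That said, your outline is a reasonable reconstruction of the expected argument and is essentially the strategy one finds in the cited references: reduce to a local question near the cuspidal fiber, resolve, and identify both surfaces as the relative canonical model of the same SNC pair using uniqueness. Your identification of the ``main obstacle'' is accurate --- the real content is the verification that the coarse space of the pulled-back universal family is already log canonical and $(K+S)$-ample over $C$ near the twisted fiber, which is exactly the local computation carried out in \cite{ascher2017log} and \cite{tsm}. Once that is in hand, uniqueness of the relative canonical model (the relative form of Theorem~\ref{theorem: construction of canonical models}) finishes the proof as you say.
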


\subsection{Stability for Weierstrass fibrations}

For the remainder of this section, we assume that $g \colon X \to \bP^1$ is an lc Weierstrass fibration with section $S$, fiber class $f$, and height $-S^2 = n$.

\begin{Lemma}
\label{lemma_when_KX+cS_is_ample_and_volume}
    Let $(X \to \bP^1, S)$ be an lc Weierstrass fibration as above with height $n$. Then the following holds for the pair $(X,cS)$.
    \begin{enumerate}
        \item When $n \geq 3$, the pair is KSBA-stable if and only if $0<c<\frac{n-2}{n}$.
        \item Its volume is $\vol(X,cS) = 2c(n-2) - nc^2$.
    \end{enumerate}
\end{Lemma}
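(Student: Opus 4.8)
The plan is to prove both statements by intersection-theoretic computation, using the canonical bundle formula and the explicit numerical data for $X$ assembled just before the lemma.

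\medskip

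\textbf{Setup and the volume computation.} First I would record the relevant numerical facts. Since $g \colon X \to \bP^1$ is an lc Weierstrass fibration with section $S$ and height $n$, we have $S^2 = -n$ and, as derived in the paragraph preceding the lemma, $\omega_X \simeq g^*\cO_{\bP^1}(n-2)$, so that $K_X \cdot f = 0$, $K_X^2 = 0$, and $K_X \cdot S = \deg(\omega_X|_S) = n-2$ by adjunction (equivalently $2p_a(S)-2 = S^2 + K_X\cdot S$ with $p_a(S)=0$). Then
\[
\vol(X,cS) = (K_X + cS)^2 = K_X^2 + 2c\,(K_X\cdot S) + c^2 S^2 = 0 + 2c(n-2) - nc^2,
\]
which is statement (2). (One should note $K_X+cS$ is $\bQ$-Cartier since $X$ is lc and $S$ is Cartier in a neighborhood of each fiber, being a section through the Gorenstein locus; this is needed to make sense of the self-intersection, but it is routine.) For statement (1), positivity of the volume $2c(n-2)-nc^2 = c\big(2(n-2)-nc\big)$ for $0<c<\tfrac{n-2}{n}$ is immediate, but positivity of the volume is only a necessary condition; the real content is ampleness of $K_X+cS$.

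\medskip

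\textbf{The slc/KSBA-stability part.} The pair $(X,cS)$ is automatically slc for $0 \le c \le 1$: $X$ is normal (even lc) and $S$ is a reduced integral divisor meeting the smooth/Gorenstein locus, so $(X,cS)$ is lc as soon as $(X,S)$ is lc, which holds because $S$ is a section disjoint from the non-Gorenstein points and one checks log canonicity of $(X,cS)$ along $S$ directly (the section is a smooth curve in the smooth locus of $X$, so the pair is lc there for any $c\le 1$); away from $S$ it is just the statement that $X$ is lc. So the only real condition is that $K_X+cS$ be ample.

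\medskip

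\textbf{Ampleness via Nakai--Moishezon / Kleiman, and the boundary cases.} Here I would use that $g$ is an elliptic fibration over $\bP^1$, so the Mori cone (or the cone of curves relevant for ampleness) is controlled: a curve $Z$ either (a) is a fiber component — but the fibers are integral (Weierstrass!), so the only such $Z$ is a full fiber $f$, with $(K_X+cS)\cdot f = 0 + c(S\cdot f) = c > 0$; (b) dominates $\bP^1$. For case (b) the key inequality to establish is $(K_X+cS)\cdot Z > 0$ for every irreducible curve $Z$ surjecting onto $\bP^1$, together with $(K_X+cS)^2>0$, which then gives ampleness by Nakai--Moishezon. I would argue: $K_X \cdot Z = (g^*\cO_{\bP^1}(n-2))\cdot Z = (n-2)\deg(Z\to\bP^1) \ge n-2 > 0$ when $n\ge 3$, and $S\cdot Z \ge 0$ if $Z\ne S$ while if $Z=S$ then $(K_X+cS)\cdot S = (n-2) - nc > 0$ precisely when $c < \tfrac{n-2}{n}$. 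Combining, $(K_X+cS)\cdot Z>0$ for all curves when $0<c<\tfrac{n-2}{n}$, giving ampleness. Conversely, if $c \le 0$ then $(K_X+cS)\cdot f = c \le 0$ fails (strict) positivity, and if $c \ge \tfrac{n-2}{n}$ then $(K_X+cS)\cdot S = n-2-nc \le 0$, so the pair is not KSBA-stable; this pins down the interval exactly.

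\medskip

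\textbf{Expected main obstacle.} The numerics are routine; the delicate point is the clean statement that the only irreducible curve contracted by $g$ is the fiber class $f$ — i.e. fully exploiting that a Weierstrass fibration has \emph{integral} fibers — and handling the case $Z = S$ and potential multiple sections or torsion sections carefully. One must also be slightly careful that $S\cdot Z\ge 0$ for $Z\ne S$ irreducible (true since $S$ is irreducible and $Z\not\subseteq S$), and that $Z\to\bP^1$ being surjective forces $\deg\ge 1$. I would also double check the $\bQ$-Cartier claim at the non-canonical (lc but not canonical) points of $X$, though since such points lie on integral fibers away from $S$, the divisor $cS$ is trivial there and $K_X$ is $\bQ$-Cartier because $X$ is lc; so $K_X+cS$ is $\bQ$-Cartier globally. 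These are the only spots where care is genuinely needed.
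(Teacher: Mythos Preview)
Your proof is correct and follows essentially the same approach as the paper: compute the volume directly from $K_X \equiv (n-2)f$ and $S^2=-n$, then verify ampleness via Nakai--Moishezon by splitting curves into fibers (using that Weierstrass fibers are integral) and horizontal curves, with the section $S$ being the only curve forcing the upper bound $c<\tfrac{n-2}{n}$. The paper's argument is slightly terser but identical in substance.
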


\begin{proof}
    Assume first that $0<c<\frac{n-2}{n}$. Since $X$ has lc singularities, $S$ is smooth and contained in the smooth locus of $X$, and $c<1$, we conclude that $(X,cS)$ is automatically lc, hence slc.
    As its volume is $$(K_X+cS)^2\ =\ ((n-2)f+cS)^2\ =\ 2c(n-2)-c^2n,$$ and
    the inequalities on $c$ imply that $(K_X+cS)^2 > 0$.
    Now to check that $K_X+cS$ is ample, it suffices to show that $K_X+cS$ intersects every irreducible curve $D$ positively.
    Since $g \colon X\rightarrow \mathbb{P}^1$ has irreducible fibers and $(K_X+cS). f=c>0$, we may assume that $D$ is not a fiber. 
    Then in particular we have $(D.f)\geq 1$. Thus, it follows that $$(K_X+cS).D \ \geq\ n-2+c\cdot (S.D)\ \geq\ n-2-cn\ >\ 0,$$ for $0<c<\frac{n-2}{n}$. Here we have used that $(S.D) \geq 0$ if $D \neq S$ and $(S.D) = -n$ if $S = D$. The converse statement in part (1) is clear from the formula for $(K_X+cS)^2$.
\end{proof}

\begin{Oss}
\label{observation: canonical model for epsilon=0}
    Consider a pair $(X,cS)$ as in {\Cref{lemma_when_KX+cS_is_ample_and_volume}}. When $c=\frac{n-2}{n}$, we have that $(K_X+cS).S=0$. 
    One can check that the canonical model of $\left( X, \frac{n-2}{n}S \right)$ is the surface $Y$ obtained by contracting the section $S$ to a point. If we denote by $\pi \colon X\to Y$
    such a contraction, then $$\pi^*K_Y \ =\ K_X+\frac{n-2}{n}S,$$ and thus the volume of $Y$ is $$\vol(K_Y)\ =\ 2\cdot\frac{(n-2)}{n}\cdot(n-2) - n\cdot \frac{(n-2)^2}{n^2}\ =\ \frac{(n-2)^2}{n}.$$
\end{Oss}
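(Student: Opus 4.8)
The plan is to reduce the statement to intersection numbers on $X$, produce $Y$ as the contraction of the section $S$, and then verify the three defining conditions of a canonical model. First I would record the numerics: from $\omega_X \cong g^*\cO_{\bP^1}(n-2)$ we have $K_X \sim (n-2)f$, and combined with $f^2 = 0$, $f\cdot S = 1$, $S^2 = -n$ this gives $(K_X+cS)\cdot S = (n-2) - cn$, which vanishes precisely when $c = \frac{n-2}{n}$; this is the first assertion. Fixing $c = \frac{n-2}{n}$, the computations already carried out in \Cref{lemma_when_KX+cS_is_ample_and_volume} show that $L := K_X + cS$ is nef, that $L\cdot D > 0$ for every irreducible curve $D \neq S$ (a fiber gives $L\cdot f = c > 0$; any other curve dominates $\bP^1$ and meets $S$ non-negatively, so $L\cdot D \geq n-2 > 0$), and that $L^2 = \frac{(n-2)^2}{n} > 0$. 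Thus $L$ is nef and big with numerically trivial locus exactly $S$.

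Next I would contract $S$. Since $S\cong\bP^1$ lies in the smooth locus of $X$ and $S^2 = -n < 0$, the curve $S$ has negative-definite self-intersection, so it can be contracted to a point by Artin's contractibility criterion; because $L$ is nef and big with numerically trivial locus exactly $S$, the contraction $\pi\colon X\to Y$ is a projective morphism onto a normal projective surface, and $Y$ has a single cyclic quotient singularity of type $\frac{1}{n}(1,1)$ at $\pi(S)$ (the minimal resolution of such a singularity is one $(-n)$-curve). Hence $Y$ is klt, so $(Y,0)$ is lc, and $Y$ is $\bQ$-factorial, so $K_Y$ is $\bQ$-Cartier; writing $\pi^*K_Y = K_X + aS$ and intersecting with $S$ forces $0 = (n-2) - an$, so $a = \frac{n-2}{n} = c$ and $\pi^*K_Y = K_X + \frac{n-2}{n}S$, as claimed.

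It then remains to check that $(Y,0)$ is the canonical model of $\big(X, \frac{n-2}{n}S\big)$, noting that $\pi_*\big(\frac{n-2}{n}S\big) = 0$ since $S$ is contracted. Condition (1) holds because $Y$ is klt. For condition (2), $K_Y$ is $\bQ$-Cartier with $K_Y^2 = (\pi^*K_Y)^2 = L^2 = \frac{(n-2)^2}{n} > 0$ and $K_Y\cdot\bar D = L\cdot\pi_*^{-1}\bar D > 0$ for every curve $\bar D\subset Y$ (its proper transform is never $S$), so $K_Y$ is ample by Nakai--Moishezon. For condition (3), pick $m$ divisible by $n$ so that $m(K_X+cS)$ and $mK_Y$ are both Cartier; then $m(K_X+cS) - \pi^*(mK_Y) = 0$, so all discrepancy coefficients are $0 \geq 0$. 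Uniqueness of $Y$ follows from \Cref{theorem: construction of canonical models}, and finally $\vol(K_Y) = K_Y^2 = L^2 = 2c(n-2) - nc^2 = \frac{(n-2)^2}{n}$ by \Cref{lemma_when_KX+cS_is_ample_and_volume}(2).

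The step requiring the most care is the middle one: showing that $Y$ is an honest \emph{projective} normal surface, rather than merely an algebraic space, and that its singularity is exactly $\frac{1}{n}(1,1)$, so that $K_Y$ is $\bQ$-Cartier and the pullback identity $\pi^*K_Y = K_X + \frac{n-2}{n}S$ is meaningful. Everything after that is bookkeeping with the definition of canonical model and the intersection numbers already available from \Cref{lemma_when_KX+cS_is_ample_and_volume}.
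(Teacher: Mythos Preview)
Your proposal is correct and supplies exactly the verification that the paper leaves implicit: the Observation in the paper is stated without a separate proof, only the phrase ``one can check'' together with the inline volume computation, so your argument (nef--big analysis of $L=K_X+\tfrac{n-2}{n}S$, Artin contraction of $S$, computation of the discrepancy, Nakai--Moishezon for $K_Y$) is precisely the intended content.

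One small imprecision worth flagging: you write ``hence $Y$ is klt''. Since the hypothesis from \Cref{lemma_when_KX+cS_is_ample_and_volume} allows $X$ to be an \emph{lc} Weierstrass fibration, $X$ may carry strictly log canonical (simple elliptic) singularities away from $S$, and $\pi$ is an isomorphism there, so $Y$ is in general only lc, not klt. This does not affect your argument: condition (1) of the canonical model only requires $(Y,0)$ lc, and for $K_Y$ being $\bQ$-Cartier you do not need $\bQ$-factoriality of $Y$ either---away from $\pi(S)$ the surface $X$ is Gorenstein (it is a hypersurface in a $\bP^2$-bundle over $\bP^1$, so $\omega_X$ is a line bundle), and at $\pi(S)$ the singularity is the quotient $\tfrac{1}{n}(1,1)$. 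With that adjustment the proof is complete.
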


\begin{Remark}
    In \cites{lanave2002explicit, AB}, the surfaces obtained from a Weierstrass fibration by contracting the section are also called \textit{pseudo-elliptic surfaces}.
\end{Remark}

\section{Moduli spaces of elliptic surfaces}\label{sec:moduli_ell}

In this section, we introduce the three moduli spaces of elliptic surfaces we will be dealing with, and we will present some of the relations between them.





%
\subsection{Weierstrass fibrations}

\label{subsec: Weierstrass fibrations}
We begin by recalling a description for the moduli stack of Weierstrass fibrations.

\begin{Def}\label{def:weierstrass}
Fix $n \geq 0$. We define the stack $\Wei^{\min}$ as the pseudofunctor
\begin{equation*}
\Wei^{\min}(B) \ :=\  \left\{\xymatrix{S \subset X \ar[rr]^f \ar[rd] & & C \ar[ld] \\ & B &} \left| \hspace{-3ex}
\parbox{26em}{
    \begin{enumerate}
    \item $f \colon X \to C$ is flat projective of relative dimension $1$;
    \item $C \to B$ is flat projective with geometric fibers $\bP^1$;
    \item $S \subset X$ is a section of $f$;
    \item for each $b \in B$, the fiber $f_b \colon (X_b, S_b)\to C_b$ is a minimal Weierstrass fibration of height $-(S_b)^2 = n$.
    \end{enumerate}}		
\right. \right\}.
\end{equation*}
Similarly, we let $\Wei^{\lc}$ be the pseudofunctor of families as above, where we allow $(f_b \colon X_b \to C_b, S_b)$ to be an lc Weierstrass fibration for all $b \in B$. 
\end{Def}

In the definition above, $C \to B$ is a family of one-dimensional Brauer-Severi varieties and, in particular, corresponds to a $\PGL_2$-torsor over $B$.

\begin{Teo} \textup{(cf. \cite{PS}*{Main Theorem 1.2} \&  \cite{canning2022integral}*{Thm. 2.8})}
\label{theorem: construction of Wn} 
\leavevmode
Keep the notations as above.
\begin{enumerate}
    \item The pseudofunctors $\Wei^{\min}$ and $\Wei^{\lc}$ are representable by smooth and irreducible Deligne-Mumford stacks of finite type. Moreover, $\Wei^{\min}$ is an open substack of $\Wei^{\lc}$
    \item There are nonempty open loci $\Wei^{\reg}, \Wei^{\nod} \subset \Wei^{\min}$ parametrizing those Weierstrass fibrations such that $X$ is smooth or that have at worst nodal fibers respectively. 
\end{enumerate}
\end{Teo}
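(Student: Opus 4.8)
The plan is to realize $\Wei^{\lc}$, $\Wei^{\min}$ and the two loci in (2) as global quotients of open subvarieties of a single affine space, following Miranda's GIT construction \cite{Miranda}*{\S II.5} in the sheaf-theoretic form of \cites{PS, canning2022integral}, and then to read the asserted properties off that presentation. The first step is to put a family $(f\colon X\to C\to B,S)\in\Wei^{\lc}(B)$ in Weierstrass normal form étale-locally on $B$: the morphism $C\to B$ is a family of one-dimensional Brauer-Severi varieties and hence étale-locally isomorphic to $\bP^1_B$, while the fundamental line bundle $\cL:=(R^1f_*\cO_X)^\vee$ has relative degree $n$, so after shrinking $B$ it is relatively isomorphic to $\cO_{\bP^1_B}(n)$; over such a chart the relative form of \cite{Miranda}*{II.5.1, II.5.2}, using the section $S$, embeds $X$ into $\bP(\cO\oplus\cL^{-2}\oplus\cL^{-3})$ by a global Weierstrass equation $y^2z=x^3+Axz^2+Bz^3$, so the chart is classified by the pair $(A,B)$, i.e.\ by a morphism to the $(10n+2)$-dimensional affine space $V_n:=H^0(\bP^1,\cO(4n))\oplus H^0(\bP^1,\cO(6n))$. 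By \Cref{remark_when_there_are_lc_sing_based_on_W_data} the condition that all fibers be lc (resp.\ minimal) Weierstrass is $\min\{3\ord_pA,\,2\ord_pB\}\le 12$ (resp.\ $<12$) for every $p$, which is open, so it cuts out an open subscheme $V_n^{\lc}$ (resp.\ $V_n^{\min}$) with $V_n^{\min}\subseteq V_n^{\lc}$; both are visibly invariant under the group $G_n$ introduced next.

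The second step is to identify this structure group and descend. The data trivialized above — the Brauer-Severi family and the line bundle $\cL$ — is governed by a connected linear algebraic group $G_n$ of dimension $4$, built from $\GL_2$ acting on the standard representation $H^0(\bP^1,\cO(1))$ (hence on $H^0(\bP^1,\cO(kn))=\Sym^{kn}$ of its dual, up to a twist by $\det$) and from a $\mathbb{G}_m$ rescaling $\cL$, which acts on $V_n$ with weights $(4,6)$ on $(A,B)$, modulo a finite overlap. The step I expect to be the real obstacle — and the reason this theorem is credited to \cite{PS}*{Main Theorem 1.2} and \cite{canning2022integral}*{Thm. 2.8} — is to check that two Weierstrass data give isomorphic families precisely when they differ by $G_n$, and that the étale-local normal forms of the first step glue into a $G_n$-torsor over $B$ together with a $G_n$-equivariant morphism to $V_n^{\lc}$; faithfully flat descent then yields canonical isomorphisms $\Wei^{\lc}\cong[V_n^{\lc}/G_n]$ and $\Wei^{\min}\cong[V_n^{\min}/G_n]$.

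Part (1) is then immediate from this presentation. Since $V_n^{\lc}$ is an open subscheme of the smooth, irreducible affine variety $V_n$ and $G_n$ is smooth and connected, $[V_n^{\lc}/G_n]$ is a smooth, irreducible algebraic stack of finite type, and likewise $\Wei^{\min}$; and as $V_n^{\min}\subseteq V_n^{\lc}$ is $G_n$-invariant and open, $\Wei^{\min}\hookrightarrow\Wei^{\lc}$ is an open immersion. For the Deligne-Mumford property it suffices, in characteristic $0$, that the $G_n$-stabilizers on $V_n^{\lc}$ be finite, equivalently that $\Aut(X\to C,S)$ be finite for every lc Weierstrass fibration of height $n\ge 1$; the key points are that an automorphism induces an element of $\PGL_2=\Aut(C)$ preserving the nonzero discriminant divisor of degree $12n$, and that the subgroup fixing $C$ acts on $\cL$ by scalars and hence, since $(A,B)\neq(0,0)$ on $V_n^{\lc}$, has finite stabilizer — I would, however, lean on \cites{PS, canning2022integral} for the full argument, in particular for the excluded case $n=0$, where $X=E\times C$ and one works instead with the (non-Deligne-Mumford) Artin stack.

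For part (2), by Tate's algorithm the locus of $V_n^{\min}$ over which the total space $X$ is smooth is exactly the locus where every fiber has Kodaira type $I_0$, $I_1$ or $II$ — a condition on the vanishing orders of $(A,B)$ whose complement is closed and $G_n$-invariant — so it descends to an open substack $\Wei^{\reg}\subset\Wei^{\min}$; requiring instead every fiber to be of type $I_0$ or $I_1$, equivalently that $4A^3+27B^2$ be squarefree, is again an open $G_n$-invariant condition, and defines $\Wei^{\nod}\subseteq\Wei^{\reg}$. Both are nonempty: for $n\ge 1$ a sufficiently general member of $V_n$ has $4A^3+27B^2$ squarefree and $A,B$ without a common zero (it suffices to exhibit one such pair, since squarefreeness is open), so the associated Weierstrass fibration has exactly $12n$ nodal fibers and is otherwise smooth; cf.\ \cite{Miranda}*{Section I.4}.
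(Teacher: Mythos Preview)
Your approach is essentially the same as the paper's: both realize $\Wei^{\lc}$ and its distinguished open substacks as global quotients of open subsets of the affine space $V_{4n}\oplus V_{6n}$ of Weierstrass data. The only cosmetic difference is that the paper does this in two steps --- first passing to the $\PGL_2$-cover $\tWei^{\lc}$ of framed fibrations, then identifying $\tWei^{\lc}$ with an open substack of $[\bV_n/\bG_m]$ --- whereas you bundle the Brauer--Severi trivialization and the $\bG_m$-rescaling into a single structure group $G_n$. Both packagings yield the same iterated quotient.

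There is, however, a genuine error in your treatment of $\Wei^{\nod}$. You characterize it as the locus where every fiber has Kodaira type $\mathrm{I}_0$ or $\mathrm{I}_1$, equivalently where the discriminant $4A^3+27B^2$ is squarefree, and you assert $\Wei^{\nod}\subseteq\Wei^{\reg}$. This is too restrictive: a Weierstrass fibration has ``at worst nodal fibers'' whenever no fiber is cuspidal, i.e.\ whenever $A$ and $B$ have no common zero. A point where the discriminant vanishes to order $k\ge 2$ but $A$ or $B$ is nonzero still gives a nodal cubic fiber (with an $A_{k-1}$ surface singularity on the total space), so it lies in $\Wei^{\nod}$ but not in $\Wei^{\reg}$. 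Conversely a type $\mathrm{II}$ fiber lies in $\Wei^{\reg}$ but not in $\Wei^{\nod}$. The correct open condition for $\Wei^{\nod}$ is therefore $\{A=0\}\cap\{B=0\}=\varnothing$, which is exactly what the paper uses; what you have written down is $\Wei^{\nod}\cap\Wei^{\reg}$, the locus of $\mathrm{I}_0/\mathrm{I}_1$ fibers only (cf.\ the proof of \Cref{lemma_notation_gives_an_open_condition}, where this intersection appears explicitly).
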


\begin{proof}
    The construction of $\Wei^{\min}$ as a coarse moduli space is classical (cf. \cites{mir, seiler}) and the modern stacky perspective is given in \cite{PS}*{\S4} and \cite{canning2022integral}*{\S2}. We sketch here the irreducibility, smoothness and quasicompactness of the stacks. We refer the interested reader to the references (e.g. \cite{PS}*{Thm. 1.2(b)}) for the fact that the stacks are Deligne-Mumford.
    
    Consider the pseudofunctor $\tWei^{\lc}$ whose $B$-points consist of tuples $$\big(f \colon X \to C,\  S,\  \alpha \colon C \ \cong\  \mathbb{P}^1_B\big),$$
    where $(f \colon X \to C, S)$ is as above and $\alpha$ is an isomorphism of $C$ with $\mathbb{P}^1_B$.
    We call $\alpha$ a \emph{framing}. 
    There is a $\PGL_2$-action on $\tWei^{\lc}$, which precomposes the framing with an automorphism of $\bP^1_B$, such that $$\Wei^{\lc}\ \cong \ \big[\tWei^{\lc}/\PGL_2\big].$$
    Thus, it suffices to show that $\tWei^{\lc}$ is representable by a smooth Deligne-Mumford stack and that the corresponding subfunctors
    $$\tWei^{\reg}\ \subseteq
    \ \tWei^{\nod} \  \subseteq\ \tWei^{\min}\ \subseteq\ \tWei^{\lc}$$ are all open. 
    Let $V_m := H^0(\bP^1, \cO_{\bP^1}(m))$ be the space of degree $m$ homogeneous polynomials and define
    $$
    \bV_n \ :=\  V_{4n} \oplus V_{6n}. 
    $$
    Two Weierstrass equations give isomorphic surfaces over $\bP^1$ if and only if they differ by the $\bG_m$-action with weights $(4,6)$ on $\bV_n$. Thus there is a natural morphism $$\varphi \colon \tWei^{\lc}\ \longrightarrow\ [\bV_n/\bG_m]$$
    which sends a framed lc Weierstrass fibration $(f \colon X \to \bP^1, S)$ of height $n$ 
    to the coefficients of its Weierstrass equation $(A,B) \in [\bV_n/\bG_m]$ as in \cref{equation: global Weierstrass equation}. See \cite{PS}*{\S4} for a description of this construction in families.
    The image of $\varphi$ is the open substack $\cU^{\lc}$ defined by the condition
    \begin{equation}\label{eqn_ineq}
    \min\big\{3\cdot \ord_x(A), \ 2\cdot\ord_x(B)\big\}\  \le\ 12 
    \end{equation}
    for any $x \in \bP^1$. 
    Similarly, one has that 
    \begin{enumerate}
        \item the image of $\tWei^{\min}$ is the open substack $\cU \subset \cU^{\lc}$ defined by a strict inequality in (\ref{eqn_ineq}),
        \item the image of $\tWei^{\nod}$ is the open locus where the vanishing of $A$ and the vanishing of $B$ are disjoint,
        \item and the image of $\tWei^{\reg}$ is open because smoothness is open in flat families (cf. \cite{Har13}*{Chap. III.10}).
    \end{enumerate} 

    The construction, which sends a family of Weierstrass data $(A,B)$ to the surface
    $$
    X \ = \ \bV\big(Y^2 Z - (X^3 + AXZ^2 + BZ^3)\big) \ \subseteq\ \bP\big(\cO_{\bP^1} \oplus \cL^{-2} \oplus \cL^{-3}\big)\ \longrightarrow\ \bP^1
    $$
    with the section $S = \bV(X,Z)$, defines an inverse $\cU^{\lc} \to \tWei^{\lc}$, which maps $\cU$ isomorphically onto $\tWei^{\min}$. 
    
    Finally, it follows from this construction that $\Wei^{\min}$ and $\Wei^{\lc}$ are smooth, since they are given as quotients of open substacks of $\bV_n$. 
\qedhere    
\end{proof}

\begin{Cor}
    Let $(f \colon X \to C, S) \to B$ be a family of lc Weierstrass fibrations. Then there exists a closed subscheme $$\Delta(f)^{\cusp}\ \subseteq\ C$$ such that for each $b \in B$, $x \in \Delta(f)^{\cusp}_b$ if and only if the fiber $f_b^{-1}(x)$ is cuspidal. Moreover, the formation of $\Delta(f)^{\cusp}$ commutes with base change. 
\end{Cor}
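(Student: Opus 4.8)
The plan is to read the cuspidal locus directly off the Weierstrass data. Recall from the proof of \Cref{theorem: construction of Wn} that $\Wei^{\lc}$ is built from the open substack $\cU^{\lc}\subseteq[\bV_n/\bG_m]$ of Weierstrass equations; concretely this says that, locally on $B$, the family $(f\colon X\to C,S)\to B$ admits \emph{Weierstrass data}: the fundamental line bundle $\cL$ on $C$ (intrinsic to $f$ by \Cref{thm: canonical bundle formula}) together with sections $A\in H^0(C,\cL^{\otimes 4})$ and $B\in H^0(C,\cL^{\otimes 6})$, unique up to the $\bG_m$-scaling $(A,B)\mapsto(t^{4}A,t^{6}B)$, realizing $X$ inside $\bP(\cO_C\oplus\cL^{-2}\oplus\cL^{-3})$ via \eqref{equation: global Weierstrass equation}. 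I would then define
\[
   \cI\ :=\ \im\Big(\cL^{\otimes -4}\oplus\cL^{\otimes -6}\ \xrightarrow{\ (A,B)\ }\ \cO_C\Big),\qquad \Delta(f)^{\cusp}\ :=\ V(\cI)\ \subseteq\ C,
\]
equivalently $\Delta(f)^{\cusp}=V(A)\cap V(B)$, the scheme-theoretic intersection of the zero schemes of the two sections.

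The steps I would carry out are: \textbf{(i)} well-definedness and gluing --- since $t$ and its powers are units, the scaling $(A,B)\mapsto(t^4A,t^6B)$ does not change the image ideal $\cI$, so the locally constructed ideals glue to a global ideal sheaf on $C$ and $\Delta(f)^{\cusp}$ is a well-defined closed subscheme; \textbf{(ii)} the fiberwise description --- for $b\in B$, base changing the right-exact sequence $\cL^{\otimes-4}\oplus\cL^{\otimes-6}\to\cO_C\to\cO_C/\cI\to 0$ along $\spec\kappa(b)\to B$ identifies $\cI\otimes\kappa(b)$ with the analogous image ideal for $(\cL_b,A_b,B_b)$, so set-theoretically $\Delta(f)^{\cusp}_b=V(A_b)\cap V(B_b)$; and since the fiber of $f_b$ over $x\in C_b$ is the integral plane cubic $\{y^{2}z=x^{3}+A_b(x)xz^{2}+B_b(x)z^{3}\}$, which is cuspidal exactly when $x^{3}+A_b(x)x+B_b(x)$ has a triple root, i.e. when $A_b(x)=B_b(x)=0$, this gives the required equivalence (this elementary fact about plane cubics does not care whether the data is in minimal form); \textbf{(iii)} base change --- for any $T\to B$ the Weierstrass data pulls back, and forming the image of a map of sheaves commutes with the right-exact functor $(-)\otimes_{\cO_C}\cO_{C_T}$, so $\cI\cdot\cO_{C_T}$ is the image ideal of $(A_T,B_T)$; hence $\Delta(f_T)^{\cusp}=\Delta(f)^{\cusp}\times_C C_T=\Delta(f)^{\cusp}\times_B T$.

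The only mildly delicate point is the first one, namely making sure Weierstrass data exists in families and patches correctly; but this is exactly what \Cref{theorem: construction of Wn} together with the $\bG_m$-invariance of $\cI$ provides, so I do not anticipate a genuine obstacle. As a variant that avoids choosing a cover of $B$, one can cut out the same subscheme as $V(A)\cap V\big(4A^{3}+27B^{2}\big)$ using the discriminant section $4A^{3}+27B^{2}\in H^0(C,\cL^{\otimes12})$ --- a singular fiber is cuspidal precisely when $A$ vanishes there --- but the verification again reduces to the $A=B=0$ description above.
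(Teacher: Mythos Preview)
Your proposal is correct and takes essentially the same approach as the paper: both define $\Delta(f)^{\cusp}$ as the common vanishing locus $V(A)\cap V(B)$ of the Weierstrass data. The only organizational difference is that the paper constructs the subscheme once on the universal curve over $\tWei^{\lc}$ and descends to $\Wei^{\lc}$ (so base-change compatibility is automatic by pullback), whereas you build it directly on the given family and verify gluing via $\bG_m$-invariance of the ideal and base change via right exactness---these are two phrasings of the same argument.
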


\begin{proof} 
Let $\mathscr{C}^{\univ} \to \Wei^{\lc}$ denote the universal family of base curves for the universal Weierstrass fibration. It suffices to define the closed substack $\Delta^{\cusp} \subseteq \mathscr{C}^{\univ} \to \Wei^{\lc}$ and define $\Delta(f)^{\cusp}$ via pulling back. Moreover, we can do this smooth locally over $\Wei^{\lc}$, so it suffices to work with framed Weierstrass fibrations. Given a Weierstrass fibration $(f \colon X \to \bP^1, S)$ over $\spec k$, the cuspidal fibers are given by the vanishing 
$$
\big\{A = B = 0\big\}
$$
where $A$ and $B$ are the Weierstrass data. Let $\cA$ and $\cB$ be the universal Weierstrass data on $\bP^1_{\tWei^{\lc}}$. Then the closed substack 
$$
\widetilde{\Delta}^{\cusp}\ =\ \big\{\cA = \cB = 0\big\}\ \subseteq\ \bP^1_{\tWei^{\lc}}
$$
descends to a closed substack $\Delta^{\cusp}\subseteq \mathscr{C}^{\univ}$, which does the job. 
\end{proof} 

Each point in $\partial \Wei^{\lc} := \Wei^{\lc} \setminus \Wei^{\min}$ corresponds to a Weierstrass fibration $(f \colon X \to C, S)$ with some finite number $m>0$ of cuspidal fibers, where $X$ has elliptic singularities.
The associated minimal Weierstrass model $$\big(f^{\min} \colon X^{\min} \longrightarrow  C,\  S^{\min}\big)$$ has height $n - m$, and $(f \colon X \to C, S)$ can be recovered from $X^{\min}$ by choosing $m$ smooth or nodal fibers $F_1, \ldots F_m$, blowing up $F_i \cap S^{\min}$ and contracting the strict transform of $F_i$ to an elliptic singularity as in Construction \ref{construction:Lfiber}. 
This yields a stratification of $\Wei^{\lc}$ which we extend to a compactification in Section \ref{section:tsmcompactification}. 

\begin{Def}
    Define the pseudofunctor $\cW_{n-m,m}$ by sending a base scheme $T$ to  \begin{equation*}
\cW_{n-m,m}(T) = \left\{\xymatrix{S \subset X \ar[rr]^f \ar[rd] & & C \supset D \ar[ld] \\ & T &} \left| \hspace{-3ex}
\parbox{26em}{
    \begin{enumerate}
    \item $(f \colon X \to C, S) \to T$ is a family of minimal Weierstrass fibrations of height $n - m$;
    \item $D \subset C \to T$ is a relative effective Cartier divisor of degree $m$ which is \'etale over $T$;
    \item $D \cap \Delta(f)^{\cusp} = \varnothing$. 
    \end{enumerate}}		
\right. \right\}.
\end{equation*}
Note in particular that $\cW_{n,0} = \Wei^{\min}$.     
\end{Def} The following result will not be used in the rest of the paper. However, it is of independent interest as it describe the structure of $\cW_n^{\lc}$.

\begin{Teo}
\label{theorem: stratification of Wei lc} 
    The stacks $\cW_{n-m,m}$ are smooth, separated Deligne-Mumford stacks of finite type. 
    There exists a locally closed stratification 
    $$
    \bigsqcup_{m = 0}^n \cW_{n-m,m} \ \longrightarrow\ \Wei^{\lc}
    $$
    such that the image of $\cW_{n-m,m}$ is the locus of Weierstrass fibrations of height $n$ with exactly $m$ strictly log canonical elliptic singularities.
\end{Teo}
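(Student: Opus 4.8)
The plan is to build the map $\bigsqcup_{m=0}^{n} \cW_{n-m,m} \to \Wei^{\lc}$ via Construction \ref{construction:Lfiber} in families, then argue stratum-by-stratum that it is a locally closed immersion onto the locus of Weierstrass fibrations with exactly $m$ strictly lc elliptic singularities. First I would address the claim that each $\cW_{n-m,m}$ is a smooth separated Deligne–Mumford stack of finite type. Over a test scheme $T$, a point of $\cW_{n-m,m}$ is a family of minimal Weierstrass fibrations of height $n-m$ together with an étale degree-$m$ divisor $D\subset C$ disjoint from $\Delta(f)^{\cusp}$. There is a forgetful morphism $\cW_{n-m,m}\to \Wei^{\min}_{n-m}$ remembering the fibration; the fiber over $(f\colon X\to C,S)$ is the open subscheme of $\mathrm{Sym}^m(C)$ (equivalently $\Hilb^m(C/T)$) consisting of reduced length-$m$ subschemes avoiding the finitely many cuspidal points $\Delta(f)^{\cusp}$. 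Since $C\to T$ is a family of $\bP^1$'s, $\Hilb^m(C/T)$ is smooth and proper over $\Wei^{\min}_{n-m}$ of relative dimension $m$, and removing the closed condition "meets $\Delta^{\cusp}$" (a closed substack by the Corollary above) leaves an open, hence smooth, substack. Combining with Theorem \ref{theorem: construction of Wn}, which gives $\Wei^{\min}_{n-m}$ smooth and finite type, we conclude $\cW_{n-m,m}$ is smooth, separated, and of finite type.

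Next I would construct the morphism $\iota_m\colon \cW_{n-m,m}\to\Wei^{\lc}$. Given $(f\colon X\to C,S;D)\in\cW_{n-m,m}(T)$, perform the relative version of Construction \ref{construction:Lfiber} along the $m$ fibers over $D$: blow up $X$ along the (Cartier, since $S$ lies in the smooth locus and $D$ is étale) subscheme $S\times_C D$, then take the relative canonical model over $C$ of the resulting pair, which contracts precisely the strict transforms of the chosen fibers to elliptic singularities. Because $D$ is étale over $T$ and avoids cuspidal fibers, each such operation is fiberwise the construction of a single strictly lc elliptic singularity of the type described in the discussion before Definition \ref{def:weierstrass} (the fundamental line bundle changes from $\cL$ to $\cL(D)$, and the Weierstrass data is multiplied by the appropriate powers of the equation of $D$); the output $(f'\colon X'\to C,S')\to T$ is a family of lc Weierstrass fibrations of height $n$, giving a $T$-point of $\Wei^{\lc}$. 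One checks this is functorial in $T$ using that relative canonical models of a family of lc surface pairs commute with base change (here all the relevant surface singularities are lc and the construction is explicit on Weierstrass data), so $\iota_m$ is a well-defined morphism of stacks.

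To see that $\bigsqcup_m \iota_m$ is a locally closed stratification, I would invoke the discussion preceding the theorem: a Weierstrass fibration $(f\colon X\to C,S)$ over a point has some number $m\in\{0,\dots,n\}$ of cuspidal elliptic singularities of its total space (equivalently, by Remark \ref{remark_when_there_are_lc_sing_based_on_W_data}, the number of points where the inequality on the Weierstrass data is an equality), and from $X$ one recovers $X^{\min}$ of height $n-m$ together with the divisor $D$ of the $m$ chosen fibers — this recovery is canonical because the minimal Weierstrass model is unique and the cuspidal-fiber locus $\Delta^{\cusp}$ is a well-defined closed substack. This bijection on points, being compatible with families (the minimal model of a family of lc Weierstrass fibrations of constant height $n$ with a constant number $m$ of cuspidal fibers is again a family, by the explicit Weierstrass-data description), shows $\iota_m$ maps $\cW_{n-m,m}$ isomorphically onto the subset $\Wei^{\lc}_{(m)}$ of fibrations with exactly $m$ strictly lc elliptic singularities. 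That this subset is locally closed follows by stratifying $\cU^{\lc}\subseteq[\bV_n/\bG_m]$ (from the proof of Theorem \ref{theorem: construction of Wn}) by the number of points $x\in\bP^1$ at which $\min\{3\ord_x(A),2\ord_x(B)\}=12$: the locus where there are $\le k$ such points is open, so the locus of exactly $m$ is locally closed, and $\iota_m$ is a bijective morphism between smooth stacks of the same dimension inducing an isomorphism on this stratum.

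\textbf{Main obstacle.} The delicate point is verifying that $\iota_m$ is not merely bijective on points but an isomorphism onto its (locally closed) image — in particular that it is a monomorphism and identifies the stacky/infinitesimal structure correctly. This requires that the relative Construction \ref{construction:Lfiber} and its inverse (contract the $m$ elliptic singular fibers to pass to the minimal model, and read off $D$) are mutually inverse morphisms of stacks, which hinges on base-change compatibility of the relative canonical model for these specific families of lc surface pairs and on the fact that the automorphisms match up (an automorphism of the lc fibration over a point preserves its cuspidal locus, hence descends to $X^{\min}$ preserving $D$, and conversely). I expect this to be handled cleanly using the explicit description of everything in terms of Weierstrass data $(\cL, A, B)$ from Theorem \ref{theorem: construction of Wn}, reducing the whole statement to an elementary statement about stratifying $[\bV_n/\bG_m]$ by vanishing orders, but writing the family-level canonical-model argument carefully is where the real work lies.
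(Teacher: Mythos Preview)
Your outline follows the same approach as the paper: pass to the framed cover, identify $\cW_{n-m,m}$ as an open substack of $\Wei^{\min}_{n-m} \times \Sym^m(\bP^1)$, define the map to $\Wei^{\lc}$ via the Weierstrass-data substitution $(A_0,B_0,z)\mapsto (A_0 z^4, B_0 z^6)$, and then argue that this is a locally closed immersion onto the expected stratum. You have also correctly located the one genuine difficulty.

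The gap is exactly the point you flag as the ``Main obstacle'', and your proposed fix --- ``bijective morphism between smooth stacks of the same dimension inducing an isomorphism'' --- does not work as stated. You do not know a priori that the target stratum $\Wei^{\lc}_{(m)}$ is smooth (it is only a locally closed \emph{subset} at this stage, with no specified scheme structure), so there is no ``same dimension'' comparison to make. Even if you endow it with its reduced structure, a bijective representable morphism of DM stacks is not automatically an isomorphism; you need control on the infinitesimal level. The paper resolves this in two steps. First, it proves directly that $\iota_m$ is \emph{unramified} by computing the induced map on tangent spaces in the explicit presentation $[(\bV_{4n-4m}\times\bV_{6n-6m})/\bG_m]\times[\bV_m/\bG_m]\to[(\bV_{4n}\times\bV_{6n})/\bG_m]$ and checking injectivity; the kernel computation reduces to an elementary but non-obvious claim about coprimality of $\gcd(A_0,z)$ and $\gcd(B_0,z)$, using that $A_0,B_0,z$ have no common zero. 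Together with representability and injectivity on geometric points this gives that $\iota_m$ is a monomorphism. Second, to upgrade ``monomorphism with locally closed set-theoretic image'' to ``locally closed immersion'', the paper invokes the valuative criterion for locally closed embeddings (Koll\'ar), which requires checking that any DVR-family of lc Weierstrass data landing set-theoretically in the $m$-th stratum actually factors through $\cW_{n-m,m}$; this is done by taking the flat closure of the generically-defined divisor $D$ and arguing it stays \'etale and the quotient $(A_0,B_0)$ stays minimal over the special fiber.

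Your instinct that ``everything reduces to Weierstrass data'' is right, but the content is precisely this tangent-space calculation plus the valuative criterion, neither of which is supplied by the bijectivity-plus-smoothness heuristic.
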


\begin{proof}
    Analogous to the construction of $\Wei^{\min}$, let $\tWeik$ be the $\PGL_2$ cover of $\cW_{n-m,m}$ given by adding the data of a framing $\alpha \colon C \cong \bP^1_T$ to the definition of $\cW_{n-m,m}$. 
    Then $\tWeik$ can be identified with the substack of  $\widetilde{\cW}_{n-m} \times \Sym^m \bP^1$ sending any base scheme $T$ to the groupoid of pairs $(f \colon X \to \bP^1_T, S)$ in $\widetilde{\cW}^{\min}_{m,n}(T)$ along with a degree $m$ relative Cartier divisor $D \in \Sym^m \bP^1(T)$ that is \'etale over $T$ and satisfies the condition that $\Delta(f)^{\cusp} \cap D = \varnothing$. Note that the \'etaleness of $D \to T$ is an open condition. On the other hand, since $$\Delta(f)^{\cusp}\ \subseteq \ \bP^1_T\  \ \  \textup{and}\ \  D\ \subseteq \ \bP^1_T$$ are both closed and $\bP^1_T \to T$ is proper, then the condition that $\Delta(f)^{\cusp} \cap D = \varnothing$ is open on $T$. Therefore, $\tWeik$ is an open substack of $\widetilde{\cW}_{n-m} \times \Sym^m\bP^1$. By \Cref{theorem: construction of Wn} for $\cW^{\min}_{n-m}$, this concludes the proof of the first claim. 

    Now we define natural morphisms $\tWeik \ \longrightarrow\  \tWei^{\lc}$ as follows. For any $T$-point of $\tWeik$, let
    $$
    A_0 \in \oH^0\big(\bP^1_T, L_0^{\otimes 4} \boxtimes \cO_{\bP^1}(4n-4m)\big) \quad \textup{ and }\quad B_0 \in \oH^0\big(\bP^1_T, L_0^{\otimes 6} \boxtimes \cO_{\bP^1}(6n-6m)\big)
    $$
    be the minimal Weierstrass data associated to the height $n-m$ fibration $(f \colon X \to \bP^1_T, S)$, where $L_0$ is a line bundle on $T$, and let $z \in \oH^0(\bP^1_T, \cO_{\bP^1_T}(D))$ be a defining polynomial of $D$. Note that $$\cO_{\bP^1_T}(D) \ \cong \ M\boxtimes \cO_{\bP^1}(m)$$ for some line bundle $M \in \Pic(T)$ via the isomorphism $\Pic(\bP^1_T) \cong  \Pic(T) \times \Pic(\bP^1)$. Then we can define new Weierstrass data
    $$
    A = A_0 z^4 \in \oH^0\big(\bP^1_T, L^{\otimes 4}\boxtimes \cO_{\bP^1}(4n)\big) \quad \textup{ and }\quad  B = B_0 z^6 \in \oH^0(\bP^1_T, L^{\otimes 6}\boxtimes \cO_{\bP^1}(6n))
    $$
    where $L = L_0 \otimes M$. By assumption $z$ has distinct roots and $z \neq 0$ whenever $A_0 = B_0 = 0$. Thus $(A,B)$ is minimal away from $D$, and strictly log canonical along $D$. Therefore, the Weierstrass pair $(A,B)$ defines a family of framed lc Weierstrass fibrations over $T$ with elliptic singularities over $D$ by Construction \ref{construction:Lfiber}. 

    This construction is functorial and $\PGL_2$-equivariant and hence defines natural morphisms
    $$
    \varphi_m : \cW_{n-m,m} \ \longrightarrow\ \Wei^{\lc}
    $$
    for each $m = 0, \ldots n$,  whose image is the locus of lc Weierstrass fibrations with exactly $m$ elliptic singularities. We claim that
    $$
    \bigsqcup_{m = 0}^n \cW_{n-m,m}\  \longrightarrow\ \Wei^{\lc}
    $$
    is a locally closed stratification.
    
    Afte passing to $\tWei^{\lc}$, we are reduced to checking that 
    $$
    \bigsqcup_{m=0}^n \varphi_m\  :\  \bigsqcup_{m = 0}^n \tWeik \ \longrightarrow\  \tWei^{\lc}
    $$
    is a locally closed stratification. It is surjective on closed points, because any lc Weierstrass data over $\bP^1_k$ can be factored as $A = A_0 z^4$ and $B = B_0 z^6$, where $(A_0, B_0)$ is minimal. Notice also that, at the level of geometric points, we have that $\varphi_m$ is stabilizer preserving (the stabilizer consist of $\mu_2$ generically, $\mu_4$ if $B = 0$ and $\mu_6$ if $A = 0$ for both the source and target stacks), so each $\varphi_m$ is representable. Furthermore, since the factorization $A = A_0 z^4$ and $B = B_0 z^6$, where $(A_0, B_0)$ is minimal, is unique (up to scaling) at the level of field-valued points, $\varphi_m$ is injective on geometric points. We claim that the morphism of smooth stacks $\varphi_m$ is unramified. 
    To see this, it suffices to check that $\varphi_m$ induces injections for tangent spaces for each geometric point. 

    Set $\bV_d := \oH^0(\cO_{\mathbb{P}^1}(d))$ for any given $d$. We note that $\tWeik$ may be viewed as an open substack of $$\mathcal{N}_1\ =\  \left[ (\bV_{4n-4m} \times \bV_{6n-6m})/\mathbb{G}_m \right] \times \left[ \bV_m/\mathbb{G}_m \right],$$ where the first copy of $\mathbb{G}_m$ acts with weights $4$ and $6$ on the vector spaces $\bV_{4n-4m}$ and $\bV_{6n-6m}$, and the second copy of $\mathbb{G}_m$ acts with weight $1$ on the vector space $\bV_m$. 
    A $k$-point of $\mathcal{N}_1$ consists of a triple of elements $(\alpha, \beta, z)$ in $\oH^0(\mathcal{O}_{\bP^1}(4n-4m)) \times \oH^0(\mathcal{O}_{\bP^1}(6n-6m)) \times \oH^0(\mathcal{O}_{\bP^1}(m))$, and the tangent space of the stack at that point is the cokernel of the linear morphism of $k$-vector spaces $$\psi_1\ \colon\ k^{\oplus 2} \longrightarrow  \oH^0(\mathcal{O}_{\bP^1}(4n-4m)) \oplus \oH^0(\mathcal{O}_{\bP^1}(6n-6m)) \oplus \oH^0(\mathcal{O}_{\bP^1}(m))$$ given by $(x,y) \mapsto (4x\alpha, 6x\beta, yz)$. 
    On the other hand, the stack $\widetilde{W}_{n}^{lc}$ can be viewed as an open substack of $$\mathcal{N}_2 \ =\  \left[ \left( \bV_{4n} \times \bV_{6n} \right) / \mathbb{G}_m \right],$$ where $\mathbb{G}_m$ acts with weights $4$ and $6$ on the vector spaces $\bV_{4n}$ and $\bV_{6n}$. 
    A $k$-point of $\mathcal{N}_2$ is given by a pair $(\delta, \gamma) \in (\oH^0(\mathcal{O}_{\bP^1}(4n)) \times \oH^0(\mathcal{O}_{\bP^1}(6n))$, and the corresponding tangent space is the cokernel of 
    \[\psi_2 \colon k \to \oH^0(\mathcal{O}_{\bP^1}(4n)) \oplus \oH^0(\mathcal{O}_{\bP^1}(6n))\]
    given by $x \mapsto (4x\delta, 6x\gamma)$. 
    We note that the morphism $\varphi_m$ extends to a morphism $\widetilde{\varphi}_m$ that sends a $k$-point $(\alpha, \beta, z)$ to $(\alpha z^4, \beta z^6)$. 
    A direct computation shows that the induced morphism of tangent complexes at the $k$-point $(\alpha, \beta, z)$ is given by
    \[ 
    \begin{tikzcd}
      k^{\oplus 2} \ar[d, "\mu"] \ar[r, "\psi_1"] & \oH^0(\mathcal{O}_{\bP^1}(4n-4m)) \oplus \oH^0(\mathcal{O}_{\bP^1}(6n-6m)) \oplus \oH^0(\mathcal{O}_{\bP^1}(m))\ar[d, "\nu"] \\ k \ar[r, "\psi_2"] &\oH^0(\mathcal{O}_{\bP^1}(4n)) \oplus \oH^0(\mathcal{O}_{\bP^1}(6n))
    \end{tikzcd}
    \]
    where $\mu$ is the surjective morphism $\mu(x,y) = x+y$, and we have $\nu(a,b,c) = (z^4\cdot a + 4\alpha z^3 \cdot c, z^6\cdot b + 6\beta z^5 \cdot c)$. Notice that, if the $k$-point $(\alpha, \beta, z)$ is in the open substack $\tWeik$, then we must have $(\alpha, \beta) \not \equiv (0,0), z \not \equiv 0$, and furthermore the three sections $\alpha, \beta, z$ don't have a joint simultaneous zero in $\mathbb{P}^1$ (because the divisor cut out by $z=0$ does not intersect the locus of cusps, which is the locus cut out by $\alpha,\beta =0$). It follows then from the description above that the kernel of $\nu$ is given by the subspace of $(a,b,c)$ satisfying the linear equations $z \cdot a + 4 \alpha \cdot c= 0$ and $z \cdot b + 6 \beta \cdot c =0$. \\
    
    \noindent \textbf{Claim:} If $(a,b,c)$ is in the kernel of $\nu$, then it is of the form $(4x \alpha, 6 x \beta, -xz)$ for some constant $x \in k$. \\
    
    Let us prove the claim. We may change coordinates in $\mathbb{P}^1$ so that none of $\alpha, \beta, z$ vanish at $\infty$, and then we view $\alpha, \beta, z$ as elements of $k[t]$. 
    We may assume furthermore  for the sake of the following argument that $\alpha, \beta, z$ are monic. 
    Set $g_1 := \gcd(\alpha, z)$ and $g_2 = \gcd(\beta, z)$. 
    The solutions to the equation  $z \cdot a + 4 \alpha \cdot c= 0$ are of the form $(a, c) = ( x p \cdot 4\alpha/g_1, -xp \cdot z/g_1)$, where $x \in k$ and $p$ is a monic polynomial. 
    Similarly, any solution of $z \cdot b + 6 \beta \cdot c =0$ is of the form $(b,c) = (y q (6\beta/g_2), -y q (z/g_2))$, where $y \in k$ and $q$ is a monic polynomial. 
    If we want $(a,b,c)$ to be a solution to both equations, then we have $-xp(z/g_1)= c =-y q (z/g_2)$, which, in view of the polynomials being monic, implies $x=y$. 
    Therefore, we have $-xp(z/g_1)= c =-x q (z/g_2)$, which implies that $p g_2 = q g_1$. 
    Now, the polynomials $g_1$ and $g_2$ cannot have a common root, because that would yield a simultaneous root of $\alpha, \beta$ and $z$, contradicting our assumptions. 
    Therefore $g_1$ and $g_2$ are coprime, and the equation $p g_2 = q g_1$ forces $p=g_1$ and $q=g_2$. 
    We conclude that $(a,b,c) = (4x \alpha, 6 x \beta, -xz)$, as claimed. \\
    
    Now, observe that any element of the kernel $(4x \alpha, 6 x \beta, -xz)$ is of the form $\psi_1(x,-x)$. 
    From this we conclude that the induced morphism on tangent spaces $\Coker(\psi_1) \to \Coker(\psi_2)$ is injective, and therefore $\varphi_m$ is unramified. By \cite{stacks_project}*{\href{https://stacks.math.columbia.edu/tag/05VH}{Tag 05VH}}, we conclude that $\varphi_m$ is a monomorphism. To complete the proof, we will apply the valuative criterion for locally closed embeddings (cf. \cite{Kol09}*{Prop. 42}). \\
    

  Let $T = \spec R$ be the spectrum of a DVR and let $T \to \tWei^{\lc}$ a map whose image is contained in $\varphi_m(\tWeik)$. This is equivalent to a family of lc Weierstrass data $(A,B)$ over $T$ such that for each $t \in T$, the pair $(A_t, B_t)$ defines a fibration with exactly $m$ elliptic singularities. Let $\eta \in T$ denote the generic point. There is a canonical subscheme $D_A$ of the generic fiber $\mathbb{P}^1_{\eta}$ whose support consists of the locus of points where the section $A_{\eta}$ has a zero of multiplicity $\geq 4$ (Zariski locally around every vanishing point of $A_{\eta}$, it is given by the vanishing of the $3^{rd}$ differential of the section). Similarly, there is a canonical subscheme $D_B$ where $B_{\eta}$ has a zero of multiplicity $\geq 6$. Consider the reduced subscheme $D_{\eta} := (D_A\cap D_B)^{red}$ of the intersection $D_A \cap D_B$, which is a Cartier divisor on $\mathbb{P}^1_{\eta}$. We denote by $D \subset \mathbb{P}^1_T$ the flat closure of the reduced subscheme $D_{\eta}$. By construction, there exist factorizations $A_{\eta} = (A_0)_{\eta} + 4D_{\eta}$ and $B_{\eta} = (B_0)_{\eta} + 6D_{\eta}$ for some Cartier divisors $(A_0)_{\eta}$ and $(B_0)_{\eta}$ on $\mathbb{P}^1_{\eta}$. By taking flat closures in $\mathbb{P}^1_T$, we get factorizations of relative Cartier divisors $A = A_0 + 4D$ and $B = B_0 + 6 D$. To conclude the proof of the valuative criterion, it suffices to show that the tuple $(A_0, B_0, D)$ yields a $T$-point of $\tWeik$. This amounts to showing that $D$ is etale over $T$, and that the fibers of Weierstrass fibration defined by $(A_0,B_0)$ are minimal. By the assumption that the original Weierstrass fibration $T \to \tWei^{\lc}$ was log canonical, it follows that for each geometric $T$-fiber there is no point where the section $A$ has a zero of order $\geq 5$ and simultaneously the section $B$ has a zero of order $\geq 7$. Therefore, from the factorizations $A = A_0 + 4D$ and $B = B_0 + 6 D$ it follows that the geometric $T$-fibers of $D$ are forced to be reduced, and hence $D$ is \'etale over $T$. On the other hand, by construction we have arranged so that at the generic point the pair $((A_0)_{\eta}, (B_0)_{\eta})$ yields a minimal Weierstrass fibration. By our assumption that the image of $\eta$ is contained in $\varphi_m(\tWeik)$, it follows that the order of the Cartier divisor $D_{\eta}$ is $m$. If we denote by $s \in T$ the special point, then the order of the special fiber $D_{s}$ is also $m$. Since the image of $s$ is also contained in $\varphi_k(\tWeik)$, the factorizations $A_s = (A_0)_s + 4D_s$ and $B = (B_0)_s + 6 D_s$ with $D_s$ of degree $m$ force $((A_0)_s, (B_0)_s)$ to define a minimal Weiertrass fibration (otherwise we would be able to factor out a further divisor $D'_s \supset D_s$, which would mean that $(A_s,B_s)$ is actually in the image of $\varphi_{m'}$ for some $m'>m$; this would be disjoint from $\varphi_m(\tWeik)$). We conclude that the tuple $(A_0, B_0, D)$ yields the desired $T$-point of $\tWeik$.
\end{proof}

\subsection{Locus of elliptic surfaces in the KSBA moduli} 

Let us start by fixing some notations.

\begin{Notation}
    For any rational number $0 < \epsilon < \frac{n-2}{2}$, we write $c(\epsilon):= \frac{n-2}{n}-\epsilon$ and $v(\epsilon) = \frac{(n-2)^2 - (n\epsilon)^2}{n}$.
    Then $v(\epsilon)$ is the volume of the pair $\big( X,c(\epsilon)S \big)$ as in \cref{lemma_when_KX+cS_is_ample_and_volume}. 
\end{Notation}
    
By \cref{lemma_when_KX+cS_is_ample_and_volume} and \cref{remark: easy_family}, 
for any $0 < \epsilon < \frac{n-2}{n}$, there is a morphism 
\begin{equation}
\label{equation: morphism Wei to KSBA}
    \Phi_{n,\epsilon}\ \colon\ \cW_n^{\lc}
  \  \longrightarrow \ \MKSBA_{c(\epsilon),v(\epsilon)},  
    \quad
    (X\to C, S)\ \mapsto\  \big(X, c(\epsilon)S\big),
\end{equation}
which forgets the fibration structure.
We note that by \cite[Theorem 1.1(b)]{ascher2021wall} and \cite{meng2023mmp}, there exist birational maps between normalizations $(\MKSBA_{c(\epsilon_2),v(\epsilon_2)})^\nu \to (\MKSBA_{c(\epsilon_1),v(\epsilon_1)})^\nu$ for $0 < \epsilon_2 < \epsilon_1 < \frac{n-2}{n}$.
These moduli stacks may parametrize different surfaces, for example if $\left(0, \frac{n-2}{n}\right)$ is not a single chamber.
However, even when these moduli stacks parametrize different surfaces, we don't claim that $\MKSBA_{c(\epsilon),v(\epsilon)}$ are not isomorphic for different choices of $\epsilon$.
Nonetheless, we will show now that $\Phi_{n,\epsilon}$ is an open immersion for any choice of $\epsilon \in \left(0, \frac{n-2}{n}\right)$, and we will study more in detail the normalization of the closure of $\Phi_{n,\epsilon}$ in Corollary \ref{cor_amm_moduli_spaces_are_isom}.
The parameter $\epsilon$ determines the target of the morphism $\Phi_{n,\epsilon}$.

\begin{Def} \label{defn: overline E_n epsilon}
    We denote by $\overline{\cE}_{n,\epsilon}$ the normalization of the scheme-theoretic image (cf. \cite{stacks_project}*{\href{https://stacks.math.columbia.edu/tag/0CMH}{Tag 0CMH}}) of $\Phi_{n,\epsilon}$.
\end{Def}

We now prove part (\ref{item: main: open immersion of Wn}) of \cref{thm:main}.

\begin{Prop}
\label{proposition: Phi defines iso of W with KSBA}
For any $n > 2$ and $0 < \epsilon < \frac{n-2}{n}$, the morphism 
$\Phi_{n,\epsilon} \colon \Wei^{\lc} \to \MKSBA_{c(\epsilon),v(\epsilon)}$
is an open immersion.
\end{Prop}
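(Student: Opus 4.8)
The goal is to show $\Phi_{n,\epsilon}\colon \cW_n^{\lc}\to \MKSBA_{c(\epsilon),v(\epsilon)}$ is an open immersion. Since both sides are Deligne-Mumford stacks of finite type and $\Phi_{n,\epsilon}$ is already known to be well-defined (Lemma \ref{lemma_when_KX+cS_is_ample_and_volume} plus Remark \ref{remark: easy_family}), the standard strategy is: (i) show $\Phi_{n,\epsilon}$ is a monomorphism; (ii) show it is unramified, hence an immersion (a locally closed embedding) by \cite{stacks_project}*{\href{https://stacks.math.columbia.edu/tag/05VH}{Tag 05VH}} or étale-local arguments; (iii) show its image is open. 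For (iii) one can invoke that $\cW_n^{\lc}$ is smooth and irreducible (Theorem \ref{theorem: construction of Wn}) of the expected dimension, and that openness of the image will follow from the fact that being a Weierstrass elliptic surface of height $n$ with its section is an open condition among KSBA-stable pairs of this volume — concretely, because the elliptic fibration structure, if it exists, is \emph{canonically recovered} from the pair $(X, c(\epsilon)S)$, so one shows the locus where a KSBA-stable pair admits such a structure is constructible and stable under generization.

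**Key steps in order.** First I would establish the crucial \emph{reconstruction} statement: given $(X, c(\epsilon)S)$ in the image, the fibration $f\colon X\to C$ and the section $S$ can be recovered intrinsically. The section $S$ is the divisorial part of $\lfloor \tfrac{1}{c(\epsilon)}\cdot c(\epsilon)S\rfloor$ — more precisely $S = \frac{1}{c(\epsilon)}(c(\epsilon)S)$ is just the support of the marked divisor, so $S$ is literally the data of the boundary. Then $C$ and $f$ are recovered from the linear system of $\omega_X \otimes \sO_X(S)^{-?}$, or more robustly: since $\omega_X \cong f^*\sO_{\bP^1}(n-2)$ by the canonical bundle computation preceding Definition of height, the fibration is the Iitaka/canonical map of $X$ (when $n>2$, $\omega_X$ is semiample of Iitaka dimension $1$ and its Iitaka fibration is exactly $f$). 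This shows $\Phi_{n,\epsilon}$ is injective on points and, working in families, is a monomorphism: a family of KSBA-stable pairs $(\cX, c(\epsilon)\cS)\to B$ landing in the image has $\cX \to B$ with $\omega_{\cX/B}$ semiample relatively, defining $\cC = \Proj_B(\bigoplus f_*\omega_{\cX/B}^{[m]})$, recovering $\cC\to B$ and $f\colon \cX\to\cC$, hence lifting $B\to\MKSBA$ uniquely to $B\to\cW_n^{\lc}$.

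**Second**, unramifiedness: I would compare deformation theories. A first-order deformation of $(X, c(\epsilon)S)$ as a KSBA pair that is trivial as a deformation of the abstract pair induces (via the reconstruction, which is algebraic and compatible with base change to $k[\varepsilon]$) a trivial deformation of $(f\colon X\to C, S)$; conversely the natural map $\cW_n^{\lc}\to\MKSBA$ is clearly injective on tangent spaces because the fibration data imposes no further infinitesimal constraint once $(X,c(\epsilon)S)$ is fixed. So $\Phi_{n,\epsilon}$ is a monomorphism that is unramified, hence a locally closed immersion. **Third**, openness of the image: since $\cW_n^{\lc}$ is smooth and irreducible and $\Phi_{n,\epsilon}$ is an immersion, the image is a smooth locally closed substack; to see it is open it suffices to show it is closed under generization in $\MKSBA_{c(\epsilon),v(\epsilon)}$, i.e. to verify that a KSBA-stable pair $(X',c(\epsilon)S')$ that is a generization of a pair in the image is again a Weierstrass fibration with section of height $n$. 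This follows because the generic fiber of a family through such a point has $\omega_{X'}$ with Iitaka dimension $1$ (the volume $v(\epsilon)$ and the relation $(K_{X'}+c(\epsilon)S')\cdot S' $ pin down the numerical type), and smoothness/lc-ness only improve under generization, while the height $n$ is locally constant (it equals $-(S')^2$, a numerical invariant).

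**Main obstacle.** The hard part is the reconstruction/descent step in families (step one), specifically checking that $\cC = \Proj_B \bigoplus_m f_{B*}\omega_{\cX/B}^{[m]}$ really is a family of $\bP^1$'s étale-locally, that $\cX \to \cC$ is flat with the right fibers, and that $\cS$ is a section not meeting singular fiber points — i.e. that the intrinsically reconstructed data genuinely lands in $\cW_n^{\lc}(B)$ and not in some larger stack. This requires knowing that relative semiampleness of $\omega_{\cX/B}$ and the formation of the section ring commute with base change, which in turn uses Kollár's condition implicit in the KSBA family structure together with the fact (from the canonical bundle formula and $n>2$) that $\omega_{\cX/B}^{[m]}$ is relatively globally generated for $m\gg 0$ with cohomology-and-base-change holding. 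The other delicate point is confirming the image is exactly $\cW_n^{\lc}$ and not a proper open/closed piece of it, which is where irreducibility of $\cW_n^{\lc}$ and a dimension count make the argument clean.
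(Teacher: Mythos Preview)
Your reconstruction argument (recovering $f$ as the Iitaka fibration of $\omega_X$, since $\omega_X\cong f^*\cO_{\bP^1}(n-2)$ with $n>2$) is exactly the mechanism the paper uses for injectivity and for the bijection on automorphism groups, so step (i) is fine and matches the paper's approach.

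The genuine gap is in step (iii). You claim that openness of the image follows from closure under generization, and you justify the latter by saying that for a generization $(X',c(\epsilon)S')$ ``the volume and the relation $(K_{X'}+c(\epsilon)S')\cdot S'$ pin down the numerical type'' and that ``$\omega_{X'}$ has Iitaka dimension $1$''. But nothing you have written forces a nearby KSBA-stable pair with the same volume to carry an elliptic fibration with $S'$ a section; numerical invariants alone do not rule out other surface types, and neither semiampleness nor the Iitaka dimension of $\omega_{X'}$ is a priori controlled in the direction of generization. Concretely, given a family over a DVR with special fiber a Weierstrass pair, you must show the fibration $f_0\colon X_0\to\bP^1$ extends over the DVR. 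This is a deformation problem, and it is exactly here that the paper does the real work: it proves \emph{smoothness} of $\Phi_{n,\epsilon}$ via the infinitesimal lifting criterion, reducing (through \cite{BHPS}*{Prop.~3.10}) to the vanishing
\[
\Hom(\Omega^1_{\bP^1},R^1f_*\cO_X)\cong \oH^0\big(\bP^1,\cO_{\bP^1}(2-n)\big)=0,
\]
which uses $R^1f_*\cO_X\cong\cO_{\bP^1}(-n)$ and crucially $n>2$. Smoothness then gives openness of the image for free, and combined with the monomorphism property yields an \'etale monomorphism, hence an open immersion. Your route (unramified monomorphism $\Rightarrow$ locally closed immersion, then argue openness separately) ultimately needs the same cohomological input to close; without it, the generization argument is a restatement of the problem rather than a proof.
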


\begin{proof} 
    It suffices to prove that $\Phi_{n,\epsilon}$ is smooth, induces a bijection on automorphisms of geometric points, and is injective on isomorphism classes of geometric points, since this would jointly imply that it is an \'etale monomorphism, hence an open immersion. 

\textit{$\Phi_{n,\epsilon}$ is injective.}
This follows from \cite{inchiostro2020moduli}*{Lem. 3.6}. This lemma is stated only for minimal Weierstrass fibrations, however the argument works for our case of lc Weierstrass fibrations of Kodaira dimension $1$.

\textit{$\Phi_{n,\epsilon}$ is representable.} It suffices to check on geometric points. We need to show that if $\sigma$ is an automorphism of the Weierstrass fibration $(f \colon X\to \bP^1, S)$ which induces the identity on $(X,S)$, then it is the identity. Indeed, from the proof of injectivity above, the map $f$ is uniquely determined from $(X,S)$, and since $f$ is surjective, the only (set-theoretic) function $\sigma$ which makes the following diagram commutative is the identity, as desired
$$\xymatrix{X\ar[d]^f \ar[r]^\Id & X \ar[d]_f \\ \bP^1\ar[r]^\sigma &\bP^1.}$$

\textit{$\Phi_{n,\epsilon}$ is surjective on automorphisms of geometric points.} We need to check that any automorphism of $(X,S)$ extends to an automorphism of $(f:X\to \bP^1, S)$, namely, to one of the fibration. But this follows since $\omega_X=f^*\cO_\bP^1(n-2)$ so the global sections of $\omega_X$ induce the morphism $f$, and any automorphism of $X$ extends (by functoriality) to an automorphism of $\omega_X$ and therefore to its global sections.

\textit{$\Phi_{n,\epsilon}$ is smooth.} Smoothness is smooth local on the source so it suffices to check that the composition $\tWei^{\lc} \to \MKSBA$ is smooth. We use the infinitesimal lifting property (cf. \cite{stacks_project}*{\href{https://stacks.math.columbia.edu/tag/0DP0}{Tag 0DP0}}, \cite{stacks_project}*{\href{https://stacks.math.columbia.edu/tag/02HT}{Tag 02HT}}), which applies to our case since the source and the target are stacks locally of finite type over a field.
Let $A' \to A$ be a surjection of local Artinian rings  with residue field $k$ such that the kernel $I$ satisfies $I^2 = 0$.
Given a diagram of solid arrows as below, we will show that we can find a lifting, marked as a dotted arrow.
$$\xymatrix{\spec(A)\ar[d] \ar[r] & \tWei^{\lc}\ar[d] \\ \spec(A')\ar[r] \ar@{.>}[ru]& \MKSBA_{c(\epsilon),v(\epsilon)}}$$ 
In other words, we need to show that for any family $$\big(\cX',c(\epsilon)\cS'\big)\ \longrightarrow \ \spec A'$$ in $\MKSBA_{c(\epsilon),v(\epsilon)}$ whose restriction to $\spec(A)$ is a Weierstrass fibration, there is a morphism $\cX'\to \bP^1_{A'}$ such that $\big(\cX'\to \bP^1_A, \cS'\big)$ is a Weierstrass fibration. Namely, the deformation problem is the following:
$$\xymatrix{(\cX,\cS)\ar[d] \ar[r] & (\cX',\cS')\ar@{.>}[d] \\ \bP^1_A \ar@{.>}[r]\ar[d]& \bP^1_{A'}\ar[d]\\\spec(A) \ar[r] & \spec(A')}$$

From \cite{BHPS}*{Prop. 3.10}, it suffices to prove that $\Hom\big(\Omega^1_{\bP^1},R^1f_*\cO_X\big) = 0$. We have $R^1f_*\cO_X = \cO_{\bP^1}(-n)$ by Lemma \ref{push}, so 
$$\Hom(\Omega^1_{\bP^1},R^1f_*\cO_X)\  \simeq \  \oH^0\big(\mb{P}^1,\cO_{\bP^1}(-n+2)\big)\ =\ 0.$$
\end{proof}

\begin{Lemma}\label{push}
    Let $(f:X\rightarrow \mb{P}^1,S)$ be an lc Weierstrass elliptic fibration with $S^2=-n$. Then $$R^1f_{*}\cO _X\ = \ \cO _{\mb{P}^1}(-n).$$
\end{Lemma}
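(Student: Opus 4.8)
The statement to prove is that for an lc Weierstrass fibration $(f\colon X\to \mathbb{P}^1, S)$ with $S^2 = -n$, one has $R^1f_*\mathcal{O}_X = \mathcal{O}_{\mathbb{P}^1}(-n)$. The plan is to identify $R^1f_*\mathcal{O}_X$ as a line bundle on $\mathbb{P}^1$ and then compute its degree, using the canonical bundle formula and relative duality. First I would note that $f$ is a flat proper morphism whose fibers are connected curves of arithmetic genus one, so $f_*\mathcal{O}_X = \mathcal{O}_{\mathbb{P}^1}$ and, by cohomology and base change (the fiberwise $h^1$ is constantly $1$), $R^1f_*\mathcal{O}_X$ is a line bundle on $\mathbb{P}^1$, hence of the form $\mathcal{O}_{\mathbb{P}^1}(d)$ for some $d\in\mathbb{Z}$; it remains only to pin down $d = -n$.

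To compute the degree, I would invoke relative duality for the Gorenstein (indeed lc Weierstrass, so the total space is normal with at worst lc singularities, and the morphism has Gorenstein fibers) morphism $f$ of relative dimension $1$: there is an isomorphism $R^1f_*\mathcal{O}_X \cong (f_*\omega_f)^\vee$, where $\omega_f = \omega_{X/\mathbb{P}^1}$ is the relative dualizing sheaf. By the Notation block at the start of the paper, $\omega_f$ is a line bundle on $X$, and by Kodaira's canonical bundle formula (\Cref{thm: canonical bundle formula}) together with the discussion preceding the definition of height, for a height-$n$ fibration over $\mathbb{P}^1$ we have $\omega_f \cong f^*\mathcal{L}$ with $\mathcal{L}\cong \mathcal{O}_{\mathbb{P}^1}(n)$. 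Therefore, by the projection formula, $f_*\omega_f \cong f_*f^*\mathcal{L} \cong \mathcal{L}\otimes f_*\mathcal{O}_X \cong \mathcal{O}_{\mathbb{P}^1}(n)$, and dualizing gives $R^1f_*\mathcal{O}_X \cong \mathcal{O}_{\mathbb{P}^1}(-n)$.

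Alternatively, if one prefers to avoid invoking relative duality for a possibly-singular total space directly, I would run the argument on a resolution: let $\mu\colon \widetilde{X}\to X$ be the minimal resolution, so $(\widetilde f\colon \widetilde X\to\mathbb{P}^1, \widetilde S)$ is a minimal elliptic surface of height $n$ (the total space has rational — indeed du Val or elliptic — singularities contributing nothing to $R^\bullet\mu_*\mathcal{O}$ in a way that changes the computation: for lc but non-canonical singularities one must be slightly careful, but $\mathcal{O}_X \to R\mu_*\mathcal{O}_{\widetilde X}$ still induces an isomorphism on $R^1f_*$ of the structure sheaves because the relevant higher direct images along $\mu$ are supported on points and fit into a Leray spectral sequence whose effect on the rank-one sheaf $R^1f_*\mathcal{O}_X$ is controlled by comparing fiberwise $h^1$). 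On $\widetilde X$ the formula $R^1\widetilde f_*\mathcal{O}_{\widetilde X} = \mathcal{L}^{-1} = \mathcal{O}_{\mathbb{P}^1}(-n)$ is classical (this is exactly how $\mathcal{L}$ is characterized in \cite{Miranda}, e.g. II.5.6 / III.1). I expect the main obstacle to be the bookkeeping in this second approach — precisely checking that passing to the resolution does not alter $R^1f_*\mathcal{O}_X$ when $X$ has strictly lc (elliptic Gorenstein) singularities rather than canonical ones — so I would in fact prefer the first approach via relative duality applied directly to $f$, since $\omega_f$ being a line bundle (guaranteed by the Gorenstein fibers) is all that is needed and the canonical bundle formula has already been established in \Cref{thm: canonical bundle formula}.
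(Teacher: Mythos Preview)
Your main approach via relative duality is correct and complete: since $f$ has Gorenstein fibers, Grothendieck duality gives $R^1f_*\cO_X \cong (f_*\omega_f)^\vee$, and then $\omega_f \cong f^*\cO_{\bP^1}(n)$ together with the projection formula finishes the argument. This is a genuinely different route from the paper's proof. The paper instead shows that $R^1f_*\cO_X$ is a line bundle and pins down its degree by computing $h^1(\bP^1,R^1f_*\cO_X)$: the Leray spectral sequence for $f$ identifies this with $h^2(X,\cO_X)$ (using $R^2f_*\cO_X=0$ and $h^2(\bP^1,f_*\cO_X)=0$), and then global Serre duality on $X$ plus $\omega_X\cong f^*\cO_{\bP^1}(n-2)$ gives $h^2(X,\cO_X)=h^0(X,\omega_X)=n-1$, forcing the degree to be $-n$. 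Your argument is more direct and conceptual; the paper's is slightly more elementary in that it only uses global Serre duality rather than the relative form.

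Your alternative route through a resolution is, as you suspect, the one with real pitfalls: for a strictly lc Weierstrass fibration the singularities are elliptic (not rational), so $R^1\mu_*\cO_{\widetilde X}$ is a nonzero skyscraper and $R\mu_*\cO_{\widetilde X}\not\simeq\cO_X$; moreover the minimal Weierstrass model birational to $X$ has strictly smaller height (by the number of strictly lc fibers), so the classical formula on the resolution does not immediately give $\cO_{\bP^1}(-n)$. You were right to abandon this in favor of the relative duality argument.
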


\begin{proof}
    Note that for all points $b \in \mathbb{P}^1$ we have $H^1(X_b, \cO_{X_b}) \cong k(b)$ for the arithmetic genus $1$ fiber, and hence it follows that $R^1f_*\cO_X$ is a line bundle by cohomology and base-change. This follows because $h^1(\bP^1, R^1f_*(\cO_X))= n-1$, as seen from the Leray spectral sequence for $f$ and the computations $h^2(X,\cO_X)=h^0(X,\omega_X)=n-1$, and $h^0(\mb{P}^1,R^2f_{*}\cO _X)\ =\ h^2(\mb{P}^1,f_{*}\cO _X)\ =\ 0.$
\end{proof}

    \begin{Notation}
        We denote by $\cE^{\lc}_n$ (resp. $\cE_n$) the image of $\Wei^{\lc}$ (resp.  $\Wei^{\min}$) under $\Phi_{n,\epsilon}$. We denote by $\Phi_n \colon \Wei^{\lc} \to \cE_n^{\lc}$ the corresponding isomorphism. 
    \end{Notation}

\begin{Cor}\label{cor_the_moduli_is_smooth}
The moduli space $\cE_n^{\lc}$ is smooth and irreducible. \qed
\end{Cor}

\begin{proof}
    This follows from Theorem \ref{theorem: construction of Wn} and \cref{proposition: Phi defines iso of W with KSBA}.
\end{proof}

\begin{Remark}\label{rem:walls}
    It follows from \cites{ascher2021wall, meng2023mmp} that there are finitely many critical values $$0 \ <\  t_1 \ <\  \cdots \ <\ t_m\ <\ \frac{n-2}{n}$$ 
    such that the compactification $\cE_n \subseteq \overline{\cE}_{n,\epsilon}$ of the image as in \Cref{defn: overline E_n epsilon} is independent of $\epsilon$ for $0 < t_i \le \epsilon < t_{i + 1}$ (resp. $0 < \epsilon < t_1$). We call $t_i$ the walls and denote by $\overline{\cE}_{n,t_i}$ the compactification $\overline{\cE}_{n, \epsilon}$ for $0< t_i \le \epsilon < t_{i +1}$ and by $\overline{\cE}_{n,0}$ the compactification $\overline{\cE}_{n, \epsilon}$ for $0 < \epsilon < t_1$. In this paper we will be most interested in $\overline{\cE}_{n,0}$ which parametrizes stable pairs $(X, \left(\frac{n-2}{n} - \epsilon\right)S)$ for $0 < \epsilon \ll 1$ and $\overline{\cE}_{n, t_m}$ which parametrizes stable pairs $(X, \epsilon S)$ for $0 < \epsilon \ll 1$. For convenience we denote this latter space by $\overline{\cE}_{n}$. We refer the reader to Corollary \ref{cor_amm_moduli_spaces_are_isom} for the relation between $\cE_{n,t}$ with $t$ varying.
\end{Remark}

\subsection{Locus of pseudo-elliptic surfaces in the KSB moduli}

Consider the moduli stack $\cE_n$ constructed in the previous subsection. The closed points  of $\cE_n$ parametrize KSBA-stable pairs $$\left(X,\bigg(\frac{n-2}{n} - \epsilon\bigg)S\right)$$ such that there is a map $X\to \bP^1$ which makes $(X\to \bP^1, S)$ a Weierstrass fibration. Moreover, we proved that these pairs above do not depend on $\epsilon$, as long $0<\epsilon<\frac{n-2}{n}$.
Therefore, it is natural to ask what happens if $\epsilon=0$.
In this case, there is a morphism $$\Psi_n \ \colon \ \cE_n\ \longrightarrow \ \MKSB_{v} $$
for $v=\frac{(n-2)^2}{n}$ which, on the level of points, sends a pair $(X, \left(\frac{n-2}{n} - \epsilon\right)S)$ to the log canonical model of $\big(X,\frac{n-2}{n}S\big)$, which is the surface obtained by contracting the negative section $S$ (cf. \cref{observation: canonical model for epsilon=0}). 
The construction of such a morphism follows from some of the results \cites{ascher2021wall, meng2023mmp}, which we recall here for the convenience of the reader.

The following constructs the natural morphism in part (\ref{item: main: open immersion of En}) of \cref{thm:main}.

\begin{Prop} 
\label{prop: existence of the morphism}
    Let $n\geq 3$ and $v=\frac{(n-2)^2}{n}$. Then there is a natural morphism $\Psi_n \colon \cE_n^{\lc}\to \MKSB_v$.
\end{Prop}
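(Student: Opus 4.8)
The plan is to construct $\Psi_n$ by producing a universal family over $\cE_n^{\lc}$ realizing the fiberwise contraction of the section, and then invoke the functorial description of $\MKSB_v$. First I would recall from \Cref{observation: canonical model for epsilon=0} that, over a geometric point, contracting $S$ on an lc Weierstrass fibration $X$ of height $n$ produces a surface $Y$ with $K_Y$ ample $\bQ$-Cartier of volume $\frac{(n-2)^2}{n}$, and moreover $\pi^*K_Y = K_X + \frac{n-2}{n}S$; since $(X, \frac{n-2}{n}S)$ is lc with the single log canonical center being $S$, and $S$ is contracted to a klt point (a cyclic quotient singularity $\frac{1}{n}(1,1)$ in the minimal-fiber case, worse in general but still klt because $K_X + cS$ is lc and $c<1$ away from $S$), the surface $Y$ is KSB-stable. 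So the construction is well-defined on points; the content is doing it in families.

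The key step is to carry this out in families over $\cE_n^{\lc} \cong \Wei^{\lc}$. Given a family $(f\colon \cX \to \cC, \cS) \to B$ of lc Weierstrass fibrations of height $n$, form the relative log canonical model of $(\cX, \frac{n-2}{n}\cS)$ over $B$, i.e. $\cY := \Proj_B \bigoplus_{r\ge 0} (f_B)_* \cO_{\cX}(r m(K_{\cX/B} + \tfrac{n-2}{n}\cS))$ for suitable $m$. The crucial point is that this $\Proj$ commutes with base change on $B$: this follows because, by \Cref{thm: canonical bundle formula} and the numerics $\omega_{\cX} = f^*\cO(n-2)$ already computed in the excerpt, the sheaves $\cO_{\cX_b}(rm(K_{\cX_b} + \frac{n-2}{n}S_b))$ have vanishing higher cohomology for $rm \gg 0$ (they are pullbacks of ample line bundles on $\cC_b \cong \bP^1$ twisted appropriately), so the relevant pushforwards are locally free and compatible with base change by cohomology-and-base-change. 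Hence $\cY \to B$ is a flat family whose fibers are exactly the contracted surfaces $Y_b$, each KSB-stable of the fixed volume $v$. By \Cref{thm: kollar condition numerical}, since $B$ is reduced (one can reduce to this case, or cite that $\Wei^{\lc}$ is smooth hence reduced) and the volume is the constant $v$, the family $\cY \to B$ automatically satisfies Kollár's condition, hence is a KSB-stable family and defines the desired morphism $B \to \MKSB_v$; functoriality in $B$ gives $\Psi_n \colon \cE_n^{\lc} \to \MKSB_v$.

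The main obstacle I expect is verifying base-change compatibility of the relative canonical model construction with sufficient care — in particular ensuring the graded algebra is finitely generated uniformly and that forming $\Proj$ of it commutes with arbitrary base change, not just flat base change over reduced bases. One clean way around this is to appeal directly to the wall-crossing machinery of \cites{ascher2021wall, meng2023mmp}: these works construct, for a family of KSBA-stable pairs $(\cX, c\cS)$ with $c$ crossing a wall, the family of log canonical models at the wall as a family of KSB(A)-stable objects, precisely producing the reduction morphism $\overline{\cE}_{n,\epsilon} \to \MKSB_v$ restricting to $\Psi_n$ on the interior. So the safest route is: (i) establish the pointwise statement via \Cref{observation: canonical model for epsilon=0}, and (ii) cite the wall-crossing results to get the morphism of stacks, noting that the wall at $c = \frac{n-2}{n}$ is exactly where $(K_X + cS)\cdot S = 0$ and the section gets contracted. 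Either way, the remaining checks — that $Y_b$ is slc (indeed klt) and $K_{Y_b}$ ample of volume $v$ — are immediate from the excerpt's computations.
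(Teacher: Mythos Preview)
Your overall approach is essentially the same as the paper's: form the relative log canonical model via $\Proj$ of the pluri-log-canonical algebra, use cohomology and base change to see the construction commutes with base change, then invoke \Cref{thm: kollar condition numerical} over the smooth (hence reduced) base $\cE_n^{\lc}$ together with the constant-volume computation of \Cref{observation: canonical model for epsilon=0}. The wall-crossing shortcut you mention is also legitimate and is in fact what the paper alludes to before giving the direct argument.

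There is, however, a genuine gap in your justification of the key vanishing step. You assert that $\cO_{\cX_b}\big(rm(K_{\cX_b}+\frac{n-2}{n}S_b)\big)$ has no higher cohomology because these are ``pullbacks of ample line bundles on $\cC_b\cong\bP^1$ twisted appropriately.'' But $K_X+\frac{n-2}{n}S$ is \emph{not} a pullback from $\bP^1$: the section $S$ is horizontal. Moreover $K_X+\frac{n-2}{n}S$ is only big and nef (it is trivial on $S$), so Serre vanishing does not apply either. The paper obtains the vanishing $\oH^i\big(X,m(K_X+\frac{n-2}{n}S)\big)=0$ for $i>0$ by applying Kawamata--Viehweg vanishing for lc pairs \cite{fujinoAG}*{Thm.~1.10}, after observing that the lc centers of $\big(X,\frac{n-2}{n}S\big)$ are isolated points (the cusps of the strictly lc fibers). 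That observation about the lc centers is the actual input you are missing.

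A minor side remark: your discussion of $Y$ being ``klt'' is muddled. When $X$ lies in $\cE_n^{\lc}\setminus\cE_n$, the surface $Y$ inherits the strictly lc elliptic singularities of $X$ away from the contracted section, so $Y$ is lc but not klt in general. This does not affect KSB-stability, but you should not claim klt.
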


\begin{proof}
    Let $\left( X,\frac{n-2}{n}S \right)$ be a pair parametrized by a closed point of $\cE_n^{\lc}$.
    This is an lc pair whose lc centers are isolated points, namely the cusps of any strictly lc fibers.
    Since $K_X+\frac{n-2}{n}S$ is big and nef, then by Kawamata-Viehweg vanishing for lc pairs \cite{fujinoAG}*{Thm. 1.10} we have that $\oH^i \left( X, m(K_X+\frac{n-2}{n}S) \right)=0$ for any $m>0$ such that $m(K_X+\frac{n-2}{n}S)$ is an integral divisor and for any $i>0$.
    Then cohomology and base change applied to the universal family $\pi \colon (\cX,\cS)\to \cE_n^{\lc}$ implies that for any positive integer $m$ such that $m(K_X+\frac{n-2}{n}S)$ is an integral divisor, the formation of 
    $$\textstyle \cY = \operatorname{Proj}_{\cE_n^{\lc}}\left(\bigoplus_{d \geq 0} \pi_*\cO_{\cX}\left(dm\big(K_{\cX/\cE_n^{\lc}} + \frac{n-2}{n}\cS\big)\right)\right)$$
    commutes with base change, and the push-forwards $\pi_*\cO_{\cX}\left(dm(K_{\cX/\cE_n^{\lc}} + \frac{n-2}{n}\cS)\right)$ are vector bundles.
    It gives rise to a projective morphism $\cY\to \cE_n^{\lc}$ which fiberwise is the canonical model of $\left( X,\frac{n-2}{n}S \right)$, and which is flat since we are taking Proj of a $\cE_n^{\lc}$-flat algebra.
    Moreover, since the base is smooth (\cref{cor_the_moduli_is_smooth}) and the volume of every fiber is $v=\frac{(n-2)^2}{n}$ (\Cref{observation: canonical model for epsilon=0}), then the family $\cY\rightarrow \cE_n^{\lc}$ is a KSB stable family by \Cref{thm: kollar condition numerical}. Therefore, it induces a morphism $\cE_n^{\lc}\to \MKSB_v$ by the universality of $\cM_v^{\KSB}$.
\end{proof}

\begin{Def} \label{defn: P_n}
    We denote by $\Psell$ the image of $\Psi_n$, and by $\overline{\cP}_n$ the normalization of its scheme-theoretic closure.
\end{Def}

\section{Local study of the contraction morphism \texorpdfstring{$\Psi_n$}{Psin}}\label{section: main theorem}

In this section, we will prove that $\Psi_n \colon \cE_n^{\lc}\to \MKSB_v$ is an open immersion when $n=3$ or $n > 4$. To this end, we need a few observations on deformations of local models of the covering stack at the contracted point. We record the necessary lemmas first; they will be used to control the deformations of the canonical covering stack (see \Cref{thm_to_check_defs_we_look_at_covering_stack}) of a certain KSBA-stable surface.

\begin{Lemma}\label{lemma_small_def_of_local_sing_constant}
    Let $n$ be an even positive integer. Consider the action of $G = \bmu_n/\bmu_2$ on $Z = \spec(k[x,y,z]/(xy-z^2))$ defined by $\xi\cdot x = \xi^2x$, $\xi \cdot y = \xi^2y$ and $\xi \cdot z = \xi^2z$.
    \begin{enumerate}
    \item If $n > 4$ then
    $$
    \Ext^1_{\bmu_n/\bmu_2}(\Omega_{Z}^1,\cO_{Z}) = 0
    $$
    and the only infinitesimal deformation of $[Z/G]$ is the trivial one.
    \item If $n = 4$, then 
    $$
    \Ext^1_{\bmu_n/\bmu_2}(\Omega_{Z}^1,\cO_{Z}) = k
    $$
    and $[Z/G]$ admits a unique formal smoothing direction. 
    \end{enumerate}
\end{Lemma}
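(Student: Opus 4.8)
The plan is to compute $\Ext^1_{G}(\Omega^1_Z, \cO_Z)$ by first computing the $\cO_Z$-module $\Ext^1_{\cO_Z}(\Omega^1_Z, \cO_Z)$ together with its natural $\bmu_n$-grading, and then extract the $G$-invariant part. Since $Z = \spec A$ with $A = k[x,y,z]/(xy - z^2)$ is an affine hypersurface (an $A_1$-surface singularity, i.e. the $\frac12(1,1)$ cone), the module $T^1_Z := \Ext^1_{\cO_Z}(\Omega^1_Z, \cO_Z)$ is the well-known Tjurina-type module of the hypersurface: using the conormal presentation $(xy - z^2)/(xy-z^2)^2 \to \Omega^1_{k[x,y,z]}\otimes A \to \Omega^1_A \to 0$, one finds
\[
T^1_Z \;\cong\; A\big/\big(x,\,y,\,z\big)\;\cong\;k,
\]
generated by the class of the partials of $f = xy - z^2$, i.e. by the Jacobian ideal $(y, x, -2z) = (x,y,z)$. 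So $T^1_Z$ is one-dimensional, concentrated in a single $\bmu_n$-weight. To find that weight I would track the grading carefully: with the given action $x,y,z$ all have weight $2$ (as characters of $\bmu_n$, i.e. weights in $\bZ/n$), so $f$ has weight $4$, and the generator of $T^1_Z \cong A/(x,y,z)$ — which "moves $f$" in the direction of a degree-$0$ element of $A$ divided out by the weight of $f$ — sits in weight $-4 \pmod n$ (concretely, an infinitesimal deformation $f + t\cdot(\text{const})$ forces $\deg t = -4$). The subtlety I want to get right is exactly this bookkeeping: whether the invariant weight is $-4$, and whether $G = \bmu_n/\bmu_2$ sees it. Since everything has even weight, the $\bmu_2 \subset \bmu_n$ acts trivially on $Z$ and on $T^1_Z$, so the $G$-action is just the $\bmu_n$-action and $G$-invariance of $T^1_Z$ is equivalent to $-4 \equiv 0 \pmod n$, i.e. $n \mid 4$. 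Given $n$ even, this holds precisely when $n = 4$ (the case $n = 2$ being excluded as $\bmu_n/\bmu_2$ is trivial and $Z$ is the cone itself). Hence $\Ext^1_G(\Omega^1_Z,\cO_Z) = 0$ for $n > 4$ even, and $= k$ for $n = 4$, which is statement (1) and the first half of (2).

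For the deformation-theoretic conclusions I would invoke the standard identification of $\Ext^1_G(\Omega^1_{[Z/G]}, \cO_{[Z/G]}) = \Ext^1_G(\Omega^1_Z, \cO_Z)$ with first-order deformations of the quotient stack $[Z/G]$, and the obstruction space $\Ext^2_G(\Omega^1_Z, \cO_Z)$. For $n > 4$: vanishing of the $\Ext^1$ immediately gives that the only first-order deformation is trivial, proving (1). For $n = 4$: first-order deformations form a $1$-dimensional space, so there is at most one smoothing direction; to see it is genuinely a smoothing (formally), I would either (a) exhibit the deformation explicitly — the $\bmu_4$-invariant of weight $-4 \equiv 0$ in $A$ together with the generator of $T^1_Z$ assembles to the family $\{xy - z^2 = t\}$ with an appropriate $\bmu_4$-action, whose general fiber is smooth and on which $\bmu_4/\bmu_2 = \bmu_2$ acts freely away from the origin in the special fiber — or (b) check that the obstruction map vanishes so the $1$-dimensional tangent space lifts to a genuine $1$-parameter formal family, and observe that the total space $\{xy - z^2 = t\}$ is smooth, hence the family is a smoothing. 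Concretely the family $[\{xy - z^2 - t = 0\}/\bmu_4] \to \spec k[[t]]$, with $\bmu_4$ acting by weight $2$ on $x,y,z$ and fixing $t$, is smooth over $k[[t]]$ after quotient (the only fixed locus issue is at $t = 0$), giving the claimed unique formal smoothing direction.

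The main obstacle I anticipate is the weight bookkeeping in the middle paragraph: pinning down that the generator of $T^1_Z$ has $\bmu_n$-weight exactly $-4$ (equivalently $n-4$), and hence that $G$-invariance forces $n \mid 4$. This requires being careful about conventions — whether one weights deformation \emph{parameters} or deformation \emph{directions}, and the sign — but it is a finite check once the grading on the Koszul/conormal presentation is written down, using that $\Omega^1_{k[x,y,z]}$ has generators $dx, dy, dz$ of weights $2,2,2$ and $f$ has weight $4$. A secondary point to be careful about in case $n=4$ is confirming that the unique deformation direction is unobstructed; since $T^1_Z$ is $1$-dimensional and the deformation it generates is visibly realized by the global family $\{xy-z^2 = t\}$ (whose restriction to each order is a nonzero, hence the, first-order deformation), unobstructedness is automatic, and smoothness of the total space gives the "smoothing" claim.
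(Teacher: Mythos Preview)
Your proposal is correct and follows essentially the same approach as the paper: both compute $\Ext^1(\Omega_Z^1,\cO_Z)$ from the conormal presentation of $\Omega_Z^1$, obtain the one-dimensional cokernel $A/(x,y,z)\cong k$, and track the $\bmu_n$-weight of its generator to be $-4\equiv n-4\pmod n$, so that $G$-invariants vanish unless $n=4$. Your treatment of the $n=4$ case is slightly more explicit (writing down the family $\{xy-z^2=t\}$ and checking unobstructedness), whereas the paper simply identifies the nonzero class with the known smoothing direction of the $A_1$ singularity; otherwise the arguments coincide.
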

\begin{proof}Let $R:= k[x,y,z]/(xy-z^2)$ and consider the $G$-equivariant presentation of $\Omega_{Z}^1$ given by
$$R^{\oplus 1} =R e_f\xrightarrow{1\mapsto (y,x,-2z)} R^{\oplus 3} = R\od x\oplus R \od y \oplus R \od z\longrightarrow \Omega_{Z}^1\longrightarrow0.$$
 Observe that this is the sequence
\[
(xy-z)/(xy-z)^2\to \Omega^1_{\bA^3}\otimes k[x,y,z]/(xy-z)\to \Omega^1_{k[x,y,z]/(xy-z)}\to 0
\]
\noindent where we denoted by $e_f$ a generator of $(xy-z)/(xy-z)^2$.
 A generator $\xi\in \bmu_n$ acts with weight 2 on $x,y,z$, so it acts with weight 2 also on $\od x$, $\od y$ and $\od z$. In particular, it acts with weight $4$ on $xy-z^2$, so it acts also with weight 4 on $e_f$. To compute $\Ext^1_{\bmu_n/\bmu_2}(\Omega_{Z}^1,\cO_{Z})$, we dualize the sequence above
$$\psi:R(\od x)^{\vee}\oplus R (\od y)^{\vee} \oplus R (\od z)^{\vee} \longrightarrow R e_f^{\vee},$$
and the cokernel of $\psi$ is $k$. Observe that, as we dualized, the action is with with weight $n-4$, which is the inverse of $4$ in $\bmu_n/\bmu_2$. So there are no invariants if $n\neq 4$. If $n = 4$, then the invariants are one-dimensional and equal to the group $\Ext^1(\Omega_Z^1, \cO_Z)$ whose nonzero element corresponds to the unique smoothing direction of the $A_1$ singularity of $Z$.  
\end{proof}
\begin{Lemma}\label{lemma inertia for A2}
    Let $R$ be an Artinian local ring over $k$, and
    consider the action of $\bmu_n$ on $\bA^2_R = \spec(R[x,y])$ defined by $\xi \cdot x = \xi x$ and $\xi \cdot y = \xi y$. 
    Let $f \colon \cI \to \bA^2_R$ be the pullback of the inertia stack of $[\bA^2_R/\bmu_n]$ along the smooth cover $\bA^2_R \to [\bA^2_R/\bmu_n]$. 
    Then the scheme-theoretic support of the cokernel of $\cO_{\bA^2_R}\to f_*\cO_{\cI}$ is defined by the ideal $(x,y)$.
\end{Lemma}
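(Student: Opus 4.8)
The plan is to make the inertia completely explicit. Since we work over a field $k$ of characteristic $0$, the group scheme $\bmu_n$ is finite and \'etale, in fact constant of order $n$; and for a finite group scheme $G$ acting on an affine scheme $Y$, the pullback of the inertia of $[Y/G]$ along the atlas $Y\to[Y/G]$ is the scheme $\{(t,g)\in Y\times G : g\cdot t=t\}$. In our situation this decomposes as a disjoint union $\cI=\coprod_{\xi\in\bmu_n}(\bA^2_R)^{\xi}$ of the scheme-theoretic fixed loci, with $f$ restricting on each component to the closed immersion $(\bA^2_R)^{\xi}\hookrightarrow\bA^2_R$; I would record this identification first.

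Next I would compute the fixed subschemes. Writing $\bA^2_R=\spec R[x,y]$, the fixed locus $(\bA^2_R)^{\xi}$ is cut out by the ideal $\bigl((\xi-1)x,(\xi-1)y\bigr)$. For $\xi=1$ this ideal is $0$, so $(\bA^2_R)^{1}=\bA^2_R$; for $\xi\neq 1$ the scalar $\xi-1$ is a nonzero element of the field $k$, hence a unit in $R$, so the ideal equals $(x,y)$ and $(\bA^2_R)^{\xi}\cong\spec R$ is the origin. Taking pushforward, which commutes with the finite disjoint union, gives $f_*\cO_{\cI}\cong \cO_{\bA^2_R}\oplus\bigoplus_{\xi\neq 1}\cO_{\bA^2_R}/(x,y)$.

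Finally, I would identify the natural map $\cO_{\bA^2_R}\to f_*\cO_{\cI}$ with the unit of the adjunction $f^{*}\dashv f_{*}$, equivalently the tuple of restriction maps $\cO_{\bA^2_R}\to\cO_{(\bA^2_R)^{\xi}}$; in particular the composite with the projection to the $\xi=1$ summand is the identity of $\cO_{\bA^2_R}$. Because that summand absorbs all of the relations, the cokernel is isomorphic to $\bigoplus_{\xi\neq 1}\cO_{\bA^2_R}/(x,y)$, which as an $\cO_{\bA^2_R}$-module is a nonzero free module of rank $n-1$ over $\cO_{\bA^2_R}/(x,y)=R$. Its annihilator is therefore exactly $(x,y)$, so its scheme-theoretic support is $V(x,y)$, which is the assertion.

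There is no serious obstacle here; the two points needing a little care are (i) pinning down that ``the natural map $\cO_{\bA^2_R}\to f_*\cO_{\cI}$'' is the unit map (equivalently the tuple of restrictions), and (ii) observing that the identity component coming from $\xi=1$ forces the cokernel to be the entire sum over the nontrivial $\xi$ rather than a proper quotient of it. One only needs $n\ge 2$ for the cokernel to be nonzero, which is certainly satisfied in all the applications, where $n\ge 3$.
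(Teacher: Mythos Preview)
Your argument is correct and essentially parallels the paper's. The paper keeps $\bmu_n=\spec k[t]/(t^n-1)$ as a single group scheme and writes $\cI=\spec\big(R[x,y,t]/(t^n-1,\,(t-1)x,\,(t-1)y)\big)$ directly, then reads off that the cokernel is a free $R[x,y]/(x,y)$-module on the classes of $t,\dots,t^{n-1}$; you instead use that in characteristic~$0$ the ring $k[t]/(t^n-1)$ splits, giving $\cI=\coprod_{\xi}(\bA^2_R)^{\xi}$, and arrive at the same cokernel $\big(R[x,y]/(x,y)\big)^{\oplus(n-1)}$. Your decomposition is just the idempotent decomposition of the paper's computation, so the two proofs are the same up to presentation. (Incidentally, the paper's displayed formula has a small typo: the rank of the cokernel should be $n-1$, as you found, not $n$.)
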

\begin{proof}
    The group scheme $\cI \to \bA^2_R$ fits into the following fiber product:
    $$\xymatrix{\cI\ar[d]_f\ar[r] & \bA^2_R\times \bmu_n\ar[d] \\ \bA^2_R\ar[r]^-{\operatorname{diag}} & \bA^2_R\times \bA^2_R.}$$
    If we write $\bmu_n = \spec(k[t]/(t^n-1))$, then $\cI = \spec(R[x,y,t]/(t^n-1, tx-x, ty-y))$, and the morphism $f$ induces the inclusion $R[x,y]\to R[x,y,t]/(t^n-1, tx-x, ty-y)$ on global sections. In particular, we have \[f_*\cO_\cI/\cO_{\bA^2_R} \ =\ \bigoplus_{i=0}^{n-1} R \cdot t^i \ \cong\ (R[x,y]/(x,y))^{\oplus n}\]
    as an $R[x,y]$-module, which has scheme-theoretic support defined by the ideal $(x,y)$ as desired.
\end{proof}

\begin{Lemma}\label{lemma_support_inertia} Let $R$ be an Artinian local ring, and
    consider the action of $\bmu_n$ on $Z:=\spec(R[x,y,z]/(xy-z^2))$ defined by $\xi \cdot x = \xi x$, $\xi \cdot y = \xi y$ and $\xi \cdot z = \xi z$. Let $f\colon \cI \to Z$ be the pullback of the inertia stack of $[Z/\bmu_n]$ along $Z \to [Z/\bmu_n]$. Then the support of the cokernel of $\cO_{Z}\to f_*\cO_{\cI}$ is the vanishing of the ideal $(x,y,z)$.
\end{Lemma}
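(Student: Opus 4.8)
The plan is to run the explicit computation from the proof of Lemma~\ref{lemma inertia for A2}, with the extra Weierstrass relation $xy=z^2$ playing no essential role. First I would describe $\cI$ as the fiber product of the diagonal $Z\to Z\times Z$ with the map $Z\times\bmu_n\to Z\times Z$ sending $(p,g)$ to $(p,g\cdot p)$, exactly as in \emph{loc.\ cit.} Writing $\bmu_n=\spec(k[t]/(t^n-1))$ and $B:=R[x,y,z]/(xy-z^2)=\cO_Z$, this presents $\cI=\spec A$ with
\[
A\ =\ B[t]\big/\big(t^n-1,\ (t-1)x,\ (t-1)y,\ (t-1)z\big),
\]
and identifies the map $\cO_Z\to f_*\cO_\cI$ with the $B$-algebra structure map $B\hookrightarrow A$.

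The key step is to diagonalize the $t$-action. Since $k$ has characteristic $0$ and contains a primitive $n$-th root of unity $\zeta$, the ring $k[t]/(t^n-1)$ splits as $\prod_{i=0}^{n-1}k$ via the orthogonal idempotents $e_i=\tfrac1n\sum_j\zeta^{-ij}t^j$, so that $B[t]/(t^n-1)=\bigoplus_{i=0}^{n-1}Be_i$ with $t$ acting on $Be_i\cong B$ as multiplication by $\zeta^i$. On the summand $Be_i$ the relations $(t-1)x$, $(t-1)y$, $(t-1)z$ become $(\zeta^i-1)x$, $(\zeta^i-1)y$, $(\zeta^i-1)z$: these vanish for $i=0$, while for $i\neq0$ the scalar $\zeta^i-1$ is a unit, so passing to the quotient cuts out the ideal $(x,y,z)Be_i$. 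Since $(x,y,z)B$ contains $xy-z^2$, we have $B/(x,y,z)B\cong R$, and therefore
\[
A\ \cong\ B\ \oplus\ \big(B/(x,y,z)B\big)^{\oplus(n-1)}\ \cong\ B\oplus R^{\oplus(n-1)}
\]
as $B$-modules, with $B\hookrightarrow A$ given by $b\mapsto(b,\bar b,\dots,\bar b)$.

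It then follows immediately that $\Coker\big(\cO_Z\to f_*\cO_\cI\big)\cong R^{\oplus(n-1)}$, a $B$-module on which $B$ acts through the surjection $B\twoheadrightarrow B/(x,y,z)B\cong R$; hence its annihilator is $(x,y,z)B$ and its scheme-theoretic support is $\spec\big(B/(x,y,z)B\big)=V(x,y,z)\subseteq Z$, as claimed. I do not anticipate a genuine obstacle: the only points requiring care are the invertibility of $n$ and the existence of $\zeta_n$ (automatic in characteristic $0$) and keeping track of the $B$-module structure on the cokernel, which is precisely what lets one read off the scheme-theoretic — not merely the set-theoretic — support.
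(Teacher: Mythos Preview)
Your proof is correct and follows essentially the same approach as the paper: both identify $\cI$ as the fiber product yielding $A=B[t]/(t^n-1,(t-1)x,(t-1)y,(t-1)z)$ with $B=R[x,y,z]/(xy-z^2)$, and then read off the cokernel of $B\hookrightarrow A$. The paper simply refers back to Lemma~\ref{lemma inertia for A2} and asserts the conclusion, whereas you make the computation more transparent by splitting $B[t]/(t^n-1)$ via the idempotents $e_i$; this is a pleasant refinement but not a different method.
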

\begin{proof}
    The argument is the same as in Lemma \ref{lemma inertia for A2}: in this case we have that
    $$\cI = \spec(R[x,y,z,t]/(xy-z^2, t^n-1, tx-x, ty-y, tz-z))$$
    and $f$ is induced by the following inclusion on global sections $$R[x,y,z]/(xy-z^2)\to R[x,y,z,t]/(xy-z^2, t^n-1, tx-x, ty-y, tz-z).$$
    We can conclude similarly as before.
\end{proof}

We are now well-equipped to finish the proof of part (\ref{item: main: open immersion of En}) of \cref{thm:main}.

\begin{Teo}\label{thm_proof_psi_n_isom}
    Suppose that $n>2$ and $n \neq 4$ is an integer. Then the morphism 
    $$\Psi_n \ \colon\  \cE^{\lc}_n\ \longrightarrow \ \MKSB_v$$ 
    defined in \textup{\Cref{prop: existence of the morphism}} is an open immersion.
\end{Teo}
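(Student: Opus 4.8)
The plan is to show that $\Psi_n$ is an \'etale monomorphism, hence an open immersion, by checking the same three properties used for $\Phi_{n,\epsilon}$ in \Cref{proposition: Phi defines iso of W with KSBA}: that $\Psi_n$ (i) is injective on isomorphism classes of geometric points; (ii) is representable and bijective on automorphism groups of geometric points; and (iii) is formally smooth. Since any monomorphism is formally unramified, (i) and (ii) together with (iii) yield that $\Psi_n$ is formally \'etale, and as both stacks are of finite type over $k$ (\Cref{cor_the_moduli_is_smooth} and \Cref{theorem: KSBA is a good moduli theory}), it follows that $\Psi_n$ is an open immersion.

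For (i) and (ii) I would argue that the surface $Y = \Psi_n(X, c(\epsilon)S)$ determines the Weierstrass fibration. If $\pi \colon X \to Y$ contracts $S$ and $p := \pi(S)$, then $\pi$ restricts to an isomorphism $X \setminus S \xrightarrow{\sim} Y \setminus \{p\}$ and, near $p$, is the minimal resolution of the isolated $\bQ$-Gorenstein singularity of $Y$; hence $(X, S)$, and its formation in families, is canonically recovered from $Y$ by extracting the exceptional divisor over $p$. By \cite{inchiostro2020moduli}*{Lem. 3.6} (which applies verbatim to lc Weierstrass fibrations of Kodaira dimension one), the map $f$ is uniquely determined by $(X,S)$ --- concretely $\omega_X \cong f^*\cO_{\bP^1}(n-2)$ is globally generated and induces $f$. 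Functoriality of this reconstruction gives (i); and since $f$ is surjective, every automorphism of $Y$ lifts uniquely through $\pi$ to an automorphism of $(X,S)$ and then of the fibration, and conversely, which gives (ii).

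For (iii) I would use the infinitesimal lifting criterion together with covering stacks. Given a square-zero extension $A' \twoheadrightarrow A$ of Artinian local $k$-algebras, a family of lc Weierstrass fibrations $(\cX_A \to \bP^1_A, \cS_A)$ over $\spec A$, and a KSB-stable family $\cY_{A'} \to \spec A'$ extending $\cY_A := \Psi_n(\cX_A, \cS_A)$, the goal is to lift $(\cX_A, \cS_A)$ to a Weierstrass family over $A'$ with relative $\tfrac{n-2}{n}$-canonical model $\cY_{A'}$. By \Cref{thm_to_check_defs_we_look_at_covering_stack}, $\cY_{A'}$ corresponds to a flat family of covering stacks $\widetilde{\cY}_{A'} \to \spec A'$. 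Away from $p$ this is a $\bQ$-Gorenstein deformation of $X \setminus S$ over $A'$ extending the given one over $A$, since there $Y \cong X \setminus S$. Near $p$, the covering stack of $Y$ is, for $n$ odd, the \emph{smooth} stack $[\bA^2/\bmu_n]$ with diagonal action, and, for $n$ even, the quotient $[Z/(\bmu_n/\bmu_2)]$ of the $A_1$-cone $Z = \spec k[x,y,z]/(xy-z^2)$ appearing in \Cref{lemma_small_def_of_local_sing_constant}. Because $n \neq 4$, this local covering stack is infinitesimally rigid --- automatically when $n$ is odd, and by \Cref{lemma_small_def_of_local_sing_constant}(1) when $n$ is even --- so $\widetilde{\cY}_{A'}$ is trivial in a neighbourhood of $p$. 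By \Cref{lemma inertia for A2} and \Cref{lemma_support_inertia}, applied over the Artinian base $A'$, the locus where $\widetilde{\cY}_{A'}$ has nontrivial inertia (equivalently, where $\cY_{A'}$ carries the relevant non-Gorenstein stratum) is a constant section $\spec A' \to \cY_{A'}$; blowing it up recovers over $A'$ the trivial deformation of a neighbourhood of $S$ in $X$. Gluing this to the deformation of $X \setminus S$ coming from $\cY_{A'}$ away from $p$ --- unobstructed since the two restrict to the same family over $A$ and the overlap is affine --- produces a deformation $(\cX_{A'}, \cS_{A'})$ of $(X,S)$ over $A'$; then $\omega_{\cX_{A'}/A'}$ is relatively globally generated and defines a fibration $\cX_{A'} \to \bP^1_{A'}$, giving the desired Weierstrass family, whose image under $\Psi_n$ is $\cY_{A'}$ by construction.

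The main obstacle is step (iii), and within it the local deformation-theoretic comparison at the contracted point: one must rule out extra infinitesimal deformations of the covering stack at the $\tfrac1n(1,1)$-point --- precisely what fails for $n = 4$, where \Cref{lemma_small_def_of_local_sing_constant}(2) exhibits a $\bQ$-Gorenstein smoothing of $\tfrac14(1,1)$ that is invisible to $\cW_n^{\lc}$ --- and one must control, in families over an Artinian base, the locus where the covering stack is stacky (the role of \Cref{lemma inertia for A2} and \Cref{lemma_support_inertia}) so that the section $S$ can be extracted after deforming. Granting those local inputs, everything else is a patching argument, since $\Psi_n$ changes nothing away from $p$.
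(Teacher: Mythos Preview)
Your strategy is the paper's: show $\Psi_n$ is an \'etale monomorphism by verifying injectivity on geometric points, bijectivity on automorphism groups, and formal smoothness via the covering stack and the local rigidity in \Cref{lemma_small_def_of_local_sing_constant}. The ingredients and their roles are identified correctly.

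The one soft spot is your gluing step in (iii). You write that the trivial deformation of a neighbourhood of $S$ and the deformation of $X\setminus S$ coming from $\cY_{A'}$ can be glued because ``the two restrict to the same family over $A$ and the overlap is affine''. That justification is not the right one: agreeing over $A$ plus affineness of the overlap does not by itself force two $A'$-deformations to glue. What actually makes it work is that the local trivialisation of the covering stack near $p$ (granted by rigidity) is an isomorphism of $A'$-families, and away from $p$ the covering stack map is an isomorphism, so on the punctured neighbourhood the two $A'$-deformations are literally identified, not merely equal modulo the square-zero ideal. The paper sidesteps this by performing a single global blow-up of the covering stack $\widetilde{\cY}_{A'}$ along the scheme-theoretic support of its inertia (which \Cref{lemma inertia for A2} and \Cref{lemma_support_inertia} pin down \'etale-locally), then passing to the coarse space; this produces the family $(\cX_{A'},\cS_{A'})$ in one stroke and makes the compatibility with $\cY_{A'}$ automatic. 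Your route is equivalent once the gluing is argued correctly, but the global-blowup formulation is cleaner and avoids the bookkeeping.
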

\begin{proof}
 In this proof, we will denote by $Y$ the pseudo-elliptic surface obtained by contracting the section $S$ of an elliptic surface $X$ with $S^2=-n$, and by $p$ be the point to which $S$ is contracted. From \cite{KM98}*{Remark 4.9 (2)} the singularity is determined from the dual graph associated to its minimal resolution, and by the local analysis of \Cref{lemma_when_KX+cS_is_ample_and_volume} we have that $Y$ has a $\frac{1}{n}(1,1)$-singularity at $p$.
 
\textit{$\Psi_n$ is injective on isomorphism classes of geometric points}.
It suffices to check that there is an inverse on sets of geometric points. As all the singularities of $Y$ away from $p$ are either Du Val or strictly log-canonical (see Remark \ref{remark_when_there_are_lc_sing_based_on_W_data}), we can identify $X$ with the minimal resolution of $Y$ around its unique klt but not canonical singularity, and $S$ with its exceptional divisor.

\textit{$\Psi_n$ is injective on automorphism groups}.
Let $(f \colon X\rightarrow \mb{P}^1,S)$ be a Weierstrass fibration associated to a geometric point of $\cE_n^{\lc}$ and let $Y$ be its image via $\Psi_n$.
It suffices to observe that any automorphism on $X$ which induces the identity on $Y$ agrees with the identity on the dense open subset $X\setminus S$, so it must be the identity.

\textit{$\Psi_n$ is surjective on automorphism groups}. This is because any automorphism of $Y$ must fix $p$ as it is the unique strictly klt singularity. Thus, any such automorphism lifts to the blowup of $Y$ around $p$, which is exactly $(X,S)$.

\textit{$\Psi_n$ is smooth}: we apply the infinitesimal lifting property of smoothness (cf. \cite{stacks_project}*{\href{https://stacks.math.columbia.edu/tag/0DP0}{Tag 0DP0}}, \cite{stacks_project}*{\href{https://stacks.math.columbia.edu/tag/02HT}{Tag 02HT}}). We need to show that if $A'\to A$ is a quotient of an Artinian local rings with residue field $k$ and with square-zero ideal, and if we have a diagram of solid arrows as below, then one can find the dotted arrow:
$$\xymatrix{\spec(A)\ar[d] \ar[r] & \cE_n^{lc}\ar[d] \\ \spec(A')\ar[r] \ar@{.>}[ru]& \MKSB_v.}$$ The morphism $\spec(A')\to \MKSB_v$ induces a KSB-stable family $Y'\to \spec(A')$, and let $\cY'\to \spec(A')$ its covering stack (cf. \Cref{def_covering_stack}). 
As mentioned at the beginning of the proof, the singularity at $p$ is formally locally isomorphic to
$$\big(0\in \spec k[\![x,y]\!]/\bmu_n\big)$$ with the action $\xi \cdot x = \xi x$ and $\xi \cdot y = \xi y$. 

The canonical covering stack $\cY_k\to Y_k$, on a neighborhood of $p$, is formally locally isomorphic to:
\begin{enumerate}
    \item $[\spec(k[\![x,y]\!])/\bmu_n]$ for $n$ odd with the action $\xi \cdot x = \xi x$ and $\xi \cdot y = \xi y$; and
    \item $[\spec(k[\![x,y,z]\!]/(xy-z^2))/(\bmu_n/\bmu_2)]$ for $n$ even, with the action $\xi \cdot x = \xi^2 x$, $\xi \cdot y = \xi^2 y$ and $\xi \cdot z = \xi^4 z$.
\end{enumerate}
Indeed, formally locally around $p$, the covering stack $\cY_k$ is the relative coarse moduli space of the map $[\spec k[\![x,y]\!]/\bmu_n]\to {\bf{B}} \mathbb{G}_m$
given by the line bundle with section $\operatorname{d} x\wedge\operatorname{d}y$. More explicitly, it is the stacky quotient of $\spec k[\![x,y]\!]$
by the kernel of the representation of $\bmu_n$ on $\operatorname{d}x\wedge\operatorname{d}y$. As $\xi$ acts on $\operatorname{d}x\wedge\operatorname{d}y$
as $$\xi \cdot(\operatorname{d}x\wedge\operatorname{d}y)= \xi^2\operatorname{d}x\wedge\operatorname{d}y,$$ the kernel is trivial if $n$ is odd and $\bmu_2$
 if it is even.

 Since $[\spec(k[\![x,y]\!])/\bmu_n]$ is smooth, by \Cref{lemma_small_def_of_local_sing_constant}(1), the small deformations of the analytic local singularity of $\cY_k$ at the preimage of $p$ are trivial for $n>2$ and $n\neq 4$. In particular, for $n$ odd, there is roof diagram of pointed stacks as follows, with all the arrows \'etale and inducing an isomorphism on automorphisms groups:
 $$ \left([\spec(A'[x,y])/\bmu_n], 0\right)\longleftarrow (U,u) \longrightarrow (\cY',y).$$
 Similarly, for $n$ even, we have a diagram as follows 
  $$ ([\spec(A'[x,y,z]/(xy-z^2))/(\bmu_n/\bmu_2)], 0)\longleftarrow (W,w) \longrightarrow (\cY',y).$$
From Lemma \ref{lemma inertia for A2} and Lemma \ref{lemma_support_inertia}, the support of the inertia is the closed substack which on $U$ is the pull-back of the vanishing of $(x,y)$ and on $W$ is the vanishing of $(x,y,z)$. 
Hence, we can perform the blow-up along the closed substack given by the support of the inertia stack, which \'etale locally corresponds to performing the blow up of $(x,y)$ in  $[\spec(A'[x,y])/\bmu_n]$ and $(x,y,z)$ in  $[\spec(A'[x,y,z]/(xy-z^2))/(\bmu_n/\bmu_2)]$. In particular, from the analogous computation on the local charts, this blow-up, denoted by $\cX'\to \cY'$, is flat and commutes with base change.
Taking the coarse moduli space commutes with base change, so if $\cX'\to X'$ is the coarse moduli space of $\cX'$, then $\cX_A:=\cX'_A\to X_A:=X'_A$ is the coarse moduli space of $\cX_A$.
Then it follows from \cite{kollar2009lectures}*{Section 2.4 page 86} that the surface $X_A$ is a minimal resolution of $Y_A$ around the strictly klt singularity: we have that $X_A$ is the elliptic surface associated to $\spec(A)\to \cE_n^{\lc}$. The desired family giving the morphism $\spec(A')\to \cE_n^{\lc}$ is the pair $(X',S')\to \spec(A')$ where $S'$ is the coarse moduli space of the exceptional divisor of $\cX'\to \cY'$.

In particular, $\MKSB_v$ is smooth along the image of $\Psi_n$, which is open since $\Psi_n$ is smooth. Therefore, it follows that $\Psi_n$ is an \'etale monomorphism, and hence an open immersion.
\end{proof}

\begin{Cor} \label{coroll: overline P is irreducible}
    The scheme-theoretic image $\overline{\cP}_n$ (see \Cref{defn: P_n}) is an irreducible component of $\MKSB_v$ for $n = 3$ and $n > 4$. 
\end{Cor}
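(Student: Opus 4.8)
The plan is to deduce the statement formally from the fact, already established, that $\Psi_n$ is an open immersion. First I would record that by \Cref{thm_proof_psi_n_isom} the morphism $\Psi_n \colon \cE_n^{\lc} \to \MKSB_v$ is an open immersion precisely for $n = 3$ and $n > 4$; hence its image $\cP_n$ is a nonempty open substack of $\MKSB_v$, and $\cP_n$ is irreducible because $\cE_n^{\lc}$ is irreducible by \Cref{cor_the_moduli_is_smooth}. (This is the only place the hypothesis $n \neq 4$ enters.)

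Next I would invoke the elementary topological fact that a nonempty irreducible open subset $U$ of any topological space $X$ is dense in a unique irreducible component: indeed $\overline{U}$ is irreducible and closed, so it is contained in some irreducible component $Z$ of $X$; since $U$ is open in $X$ it is a nonempty open subset of the irreducible space $Z$, hence dense in $Z$, so $\overline{U} = Z$ is itself an irreducible component. Applying this with $U = |\cP_n| \subseteq |\MKSB_v|$ shows that the topological closure of $\cP_n$ in $\MKSB_v$ is an irreducible component. Finally, since the underlying topological space of the scheme-theoretic closure $\overline{\cP}_n$ agrees with the topological closure of the open substack $\cP_n$, we conclude that $\overline{\cP}_n$ is an irreducible component of $\MKSB_v$.

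There is no real obstacle here: the argument is purely formal once \Cref{thm_proof_psi_n_isom} is available. If one wants to be careful about finiteness, one may note that $\MKSB_v$ is a finite-type Deligne--Mumford stack by \Cref{theorem: KSBA is a good moduli theory}, hence locally Noetherian, so that the decomposition into finitely many irreducible components is well behaved; but the topological argument above does not actually require this.
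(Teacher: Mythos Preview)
Your argument is correct and is exactly the formal deduction the paper has in mind; the paper states the corollary without proof, treating it as immediate from \Cref{thm_proof_psi_n_isom} together with the irreducibility of $\cE_n^{\lc}$, and your write-up simply spells out that implication.
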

\begin{proof}
This is a consequence of \Cref{thm_proof_psi_n_isom}, in view of the irreducibility of $\cE_n^{\lc}$ from \Cref{cor_the_moduli_is_smooth}.
\end{proof}

\begin{Remark}\label{rem:n=4}
    In fact, the conclusion of the previous corollary holds even for $n = 4$. Indeed, the proof of Theorem \ref{thm_proof_psi_n_isom} shows that given any $\mathbb{Q}$-Gorenstein deformation $Y' \to \spec A'$ of $Y$ which induces the trivial deformation of the $\frac{1}{n}(1,1)$ singularity at $p$, we can blow up the singular locus in the family of canonical covering stacks to obtain a deformation of elliptic surfaces whose pseudoelliptic contraction yields $Y' \to \spec A'$. In particular, deformations which are locally trivial around $p$ are in the image of $\Psi_n$ for any $n$. Thus, if $\overline{\cP}_4$ is not an irreducible component of $\MKSB_v$, then there is a $\mathbb{Q}$-Gorenstein deformation of $Y$ over a curve which is not locally trivial around $p$ and thus induces a non-constant deformation of the $\frac{1}{4}(1,1)$. By \ref{lemma_small_def_of_local_sing_constant}(2), this deformation must smooth the $\frac{1}{4}(1,1)$ singularity and thus is a $\mathbb{Q}$-Gorenstein smoothing of $Y$, but it is well known that $Y$ cannot admit a projective $\bQ$-Gorenstein smoothing since such a smoothing would violate the Noether inequality: indeed, one has $$K^2=1,\ \textup{and}\ \ p_g=h^0(X,K_X)=3.$$
    On the other hand, it is an interesting question whether this irreducible component of $\MKSB_v$ has some non-trivial non-reduced structure for $n = 4$ which is larger than the scheme structure on $\overline{\mathcal{P}}_4$. 
\end{Remark}

\section{The case when \texorpdfstring{$n=3$}{n=3}} \label{section: n=3}

In this section, we present a more explicit proof of some of our results in the case when $n=3$. Throughout most of this section, we will adopt the following assumptions.

\begin{context}\label{notation_section_n=3_are_generic}
     Let $(g \colon X\to \bP^1,S)$ be a Weierstrass fibration with $X$ smooth, with $S^2=-3$ and with $36$ singular nodal fibers. Let $\pi \colon X\to Y$ be the contraction of $S$, and $p_i$ be the nodal points on the singular fibers of $g$.
\end{context}

\begin{Lemma}\label{lemma_notation_gives_an_open_condition}
    The locus in $\cW_3^{\min}$ where the conditions of \Cref{notation_section_n=3_are_generic} are satisfied is open.
\end{Lemma}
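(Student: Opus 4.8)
The plan is to identify the locus of \Cref{notation_section_n=3_are_generic} inside $\cW_3^{\min}$ with the open substack $\cW_3^{\reg}\cap\cW_3^{\nod}$, where $\cW_3^{\reg}$ and $\cW_3^{\nod}$ are the open loci provided by \Cref{theorem: construction of Wn}, parametrizing respectively those minimal Weierstrass fibrations whose total space is smooth and those having no cuspidal fiber (``at worst nodal fibers''). Openness is then immediate, since an intersection of open substacks is open and the condition $-S^2=3$ is automatic on $\cW_3^{\min}$. One inclusion requires no work: a fibration $(g\colon X\to\bP^1,S)$ satisfying the hypotheses of \Cref{notation_section_n=3_are_generic} has smooth $X$, hence lies in $\cW_3^{\reg}$, and has only nodal singular fibers, hence no cuspidal fiber, so it lies in $\cW_3^{\nod}$.

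For the reverse inclusion I would argue as follows. Take $(g\colon X\to\bP^1,S)$ in $\cW_3^{\reg}\cap\cW_3^{\nod}$. By Kodaira's classification of the singular fibers of a minimal Weierstrass fibration together with the dictionary between fiber type and the singularity of the total space (see \cite{Miranda}), every fiber type other than $I_0$ and $I_1$ either has a cuspidal cubic as its underlying curve -- excluded because $g$ has no cuspidal fiber -- or forces a singularity on $X$ (e.g.\ type $I_k$ with $k\ge2$ produces an $A_{k-1}$ singularity) -- excluded because $X$ is smooth. Hence every fiber of $g$ is of type $I_0$ or $I_1$. Now the discriminant $\Delta=4A^3+27B^2$ is a section of $\cO_{\bP^1}(12n)=\cO_{\bP^1}(36)$, and it is not identically zero: otherwise $4A^3=-27B^2$ would force $(A)=2D$ and $(B)=3D$ for an effective divisor $D$ with $\deg D=6>0$, and any point of $D$ would be a common zero of $A$ and $B$, i.e.\ a cuspidal fiber, contradicting membership in $\cW_3^{\nod}$. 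Since each singular fiber has type $I_1$ and therefore contributes a simple zero to $\Delta$, the fibration has exactly $36$ singular fibers, all nodal, while $X$ is smooth; this is precisely the condition of \Cref{notation_section_n=3_are_generic}.

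There is no serious obstacle here; the only points that need care are the bookkeeping with Kodaira's table (seeing that smoothness of $X$ and the absence of cusps leave only $I_0$ and $I_1$) and the one-line verification that $\Delta\not\equiv0$. If one prefers a computation avoiding the identification with $\cW_3^{\reg}\cap\cW_3^{\nod}$, one can instead work on the framed chart $[\bV_3/\bG_m]$ of \Cref{theorem: construction of Wn}, where the three conditions ``$X$ smooth'', ``$A$ and $B$ have no common zero'', and ``$4A^3+27B^2$ has $36$ distinct roots'' are each visibly open -- the first because smoothness is open in flat families, the other two as non-vanishing of a resultant and of a discriminant -- and the argument above shows the first two imply the third; descending along the $\PGL_2$-quotient then gives the statement.
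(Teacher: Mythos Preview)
Your proof is correct and follows essentially the same approach as the paper: both identify the locus of \Cref{notation_section_n=3_are_generic} with the open intersection $\cW_3^{\reg}\cap\cW_3^{\nod}$ inside $\cW_3^{\min}$ via the observation that ``smooth $X$ and only nodal fibers'' is equivalent to ``all singular fibers of type $\mathrm{I}_1$''. The paper's argument is terser, simply asserting this equivalence, while you spell out the Kodaira-table bookkeeping and the count of $36$ fibers via the discriminant; but the strategy is the same.
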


\begin{proof}
    A Weierstrass fibration $(g : X \to \bP^1, S)$ of height $3$ has $36$ nodal singular fibers if and only if all the fibers are of Kodaira type $\mathrm{I}_1$. The condition of $g$ having only $\mathrm{I}_1$ fibers is equivalent to $g$ having only nodal fibers and $X$ being smooth. Thus, the locus satisfying the required condition is exactly the intersection $\cW_3^{\nod} \cap \cW_3^{\reg} \subset \cW_3^{\min}$ which is open by Theorem \ref{theorem: construction of Wn}(2).
\end{proof}
The following two exact sequences in the setting of \Cref{notation_section_n=3_are_generic} will be useful:
\begin{equation}\label{eq_4}
    0\longrightarrow \Omega^1_{X/\bP^1}\longrightarrow \omega_{g}\longrightarrow\bigoplus_{i=1}^{36}k_{p_i}\longrightarrow0,
\end{equation}
\begin{equation}\label{eq_5}
    0\longrightarrow g^{*}\omega_{\bP^1}\longrightarrow \Omega^1_X\longrightarrow \Omega^1_{X/\bP^1}\longrightarrow0,
\end{equation}
Note that \eqref{eq_4} follows from the description of the dualizing sheaf of nodal curves, whereas \eqref{eq_5} follows from the Zariski exact sequence for cotangent sheaves jointly with the fact that $g:X \to \mathbb{P}^1$ is generically smooth and the source $X$ is integral.

We begin with the following preliminary computations. 
\begin{Lemma}
\label{lemma_cohomol_facts_using_LSS}
    In the situation of \Cref{notation_section_n=3_are_generic}, we have the following:
    \begin{align}
  &\Ext^1(g^*\omega_{\bP^1},\cO_X) = \oH^1(\bP^1,\mathcal{O}_{\bP^1}(2))\oplus \oH^0(\bP^1,\mathcal{O}_{\bP^1}(-1)) =  0 \\
        &\Ext^1(\omega_g,\mathcal{O}_X)=\oH^1(\bP^1,\mathcal{O}_{\bP^1}(-3))\oplus \oH^0(\bP^1,\mathcal{O}_{\bP^1}(-6))\simeq k^{\oplus2} \\       
        &\oH^1(X,g^*\omega_{\bP^1}\otimes g^*\cO_{\bP^1}(1)) =  \oH^1(\bP^1,\mathcal{O}_{\bP^1}(-1))\oplus \oH^0(\bP^1,\mathcal{O}_{\bP^1}(-4)) = 0 
    \end{align}
\end{Lemma}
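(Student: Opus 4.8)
The plan is to reduce each of the three groups on $X$ to cohomology of line bundles on $\bP^1$ via the Leray spectral sequence for $g$, using $g_*\cO_X = \cO_{\bP^1}$ and $R^1g_*\cO_X = \cO_{\bP^1}(-3)$ from \Cref{push} (applied with $n = 3$) together with the projection formula.

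First I would identify the three sheaves whose $\oH^1$ we must compute. Since $g^*\omega_{\bP^1}$ and $\omega_g$ are line bundles and $\mathcal{E}xt^{i}_{\cO_X}(\cL,\cO_X) = 0$ for $i > 0$ when $\cL$ is locally free, we have $\Ext^1_{\cO_X}(g^*\omega_{\bP^1},\cO_X) = \oH^1\big(X, g^*\cO_{\bP^1}(2)\big)$ and, recalling that $\omega_g \cong g^*\cO_{\bP^1}(n) = g^*\cO_{\bP^1}(3)$ (see \Cref{section: elliptic surfaces}), $\Ext^1_{\cO_X}(\omega_g,\cO_X) = \oH^1\big(X, g^*\cO_{\bP^1}(-3)\big)$; the third group equals $\oH^1\big(X, g^*\cO_{\bP^1}(-1)\big)$ because $\omega_{\bP^1}\otimes\cO_{\bP^1}(1) \cong \cO_{\bP^1}(-1)$. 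In every case the sheaf is a pullback $g^*\cO_{\bP^1}(d)$, so the projection formula yields $g_*\big(g^*\cO_{\bP^1}(d)\big) = \cO_{\bP^1}(d)$, $R^1g_*\big(g^*\cO_{\bP^1}(d)\big) = \cO_{\bP^1}(d-3)$, and $R^{\geq 2}g_* = 0$ since the fibers of $g$ are one-dimensional.

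Then I would run Leray. As $\bP^1$ has cohomological dimension $1$, the groups $\oH^{\geq 2}(\bP^1,-)$ vanish, so the relevant differential is zero and the low-degree exact sequence collapses to
\[
0 \longrightarrow \oH^1\big(\bP^1, \cO_{\bP^1}(d)\big) \longrightarrow \oH^1\big(X, g^*\cO_{\bP^1}(d)\big) \longrightarrow \oH^0\big(\bP^1, \cO_{\bP^1}(d-3)\big) \longrightarrow 0.
\]
For $d = 2$ both $\oH^1(\bP^1,\cO_{\bP^1}(2))$ and $\oH^0(\bP^1,\cO_{\bP^1}(-1))$ vanish, giving the first assertion; for $d = -1$ both $\oH^1(\bP^1,\cO_{\bP^1}(-1))$ and $\oH^0(\bP^1,\cO_{\bP^1}(-4))$ vanish, giving the third; and for $d = -3$ one has $\oH^1(\bP^1,\cO_{\bP^1}(-3)) \cong \oH^0(\bP^1,\cO_{\bP^1}(1))^\vee \cong k^{\oplus 2}$ by Serre duality while $\oH^0(\bP^1,\cO_{\bP^1}(-6)) = 0$, giving the second.

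This is essentially bookkeeping and presents no serious obstacle; the only points to watch are dualizing the correct line bundle (using $\Ext^1_{\cO_X}(\cL,\cO_X) \cong \oH^1(X,\cL^\vee)$ for $\cL$ locally free) and inserting the right twist $\omega_g \cong g^*\cO_{\bP^1}(3)$. One could instead avoid the spectral sequence by combining the pushforward computations above with the exact sequences \cref{eq_4} and \cref{eq_5}, but the Leray route is cleanest.
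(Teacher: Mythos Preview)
Your proposal is correct and follows essentially the same route as the paper: identify each group as $\oH^1(X,g^*\cO_{\bP^1}(d))$ for the appropriate $d$, use the projection formula with $g_*\cO_X=\cO_{\bP^1}$ and $R^1g_*\cO_X=\cO_{\bP^1}(-3)$, and read off the answer from the Leray five-term sequence. The paper only spells out the case $d=2$ and leaves the other two implicit, so your write-up is in fact slightly more complete.
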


\begin{proof} These follow from the Leray spectral sequence for $g : X \to \bP^1$, the fact that $\omega_g = g^*\cO_{\bP^1}(3)$, the fact that $R^1g_*\cO_X = \cO_{\bP^1}(-3)$, and the projection formula. We will compute the first one to illustrate this. First note that 
$$
\Ext^1(g^*\omega_{\bP^1}, \cO_X) = H^1(X, g^*\omega^{\vee}_{\bP^1}) = H^1(X, g^*\cO_{\bP^1}(2)). 
$$
Then by the projection formula, $g_*g^*\cO_{\bP^1}(2) = \cO_{\bP^1}(2)$ and $R^1g_*g^*\cO_{\bP^1}(2) = \cO_{\bP^1}(-1)$ and the Leray spectral sequence yields the following exact sequence and the claim. 
$$
\xymatrix{0 \to H^1(\bP^1, \cO_{\bP^1}(2)) \to H^1(X, g^*\cO_{\bP^1}(2)) \to H^0(\bP^1, \cO_{\bP^1}(-1)) \to 0}
$$
\end{proof}

\begin{Lemma}
\label{3} 
    In the situation of \Cref{notation_section_n=3_are_generic}, we have 
    $$h^2(X,T_X(-S))=0\text{ }\text{ and }\text{ }h^1(X,T_X)=30.$$ 
\end{Lemma}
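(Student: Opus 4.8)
The plan is to derive $h^1(X,T_X)=30$ from Hirzebruch--Riemann--Roch once $h^0(X,T_X)=h^2(X,T_X)=0$ is known, and to prove those two vanishings (and $h^2(X,T_X(-S))=0$) by feeding Serre duality and pushforward along $g$ into the two exact sequences \eqref{eq_4} and \eqref{eq_5}.

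First I would dispose of $h^0(X,T_X)$: in characteristic zero it equals $\dim\Aut(X)$, so it is enough to see $\Aut(X)^{\circ}=1$. Every automorphism of $X$ preserves $\omega_X=g^{*}\cO_{\bP^1}(1)$ and hence $g$ up to $\PGL_2$; the kernel of $\Aut(X)^{\circ}\to\PGL_2$ acts fiberwise, so by translations on the generic fiber, and is therefore trivial since $\mathbb{G}_a$ and $\mathbb{G}_m$ admit no nonconstant morphism to a genus-one curve. The image in $\PGL_2$ would have to preserve the set of $36$ distinct points over which the fiber is singular, which no positive-dimensional subgroup of $\PGL_2$ does; so $\Aut(X)^{\circ}=1$. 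The vertical half of this argument also gives $\Hom(\Omega^1_{X/\bP^1},\cO_X)=0$, which is what makes the sequences below exploitable.

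Next, the $h^2$-vanishings, treated uniformly. By Serre duality $h^2(X,T_X(-S))=h^0(X,\Omega^1_X\otimes\cM)$ with $\cM:=g^{*}\cO_{\bP^1}(1)\otimes\cO_X(S)$, and $h^2(X,T_X)=h^0(X,\Omega^1_X\otimes g^{*}\cO_{\bP^1}(1))$. Tensoring \eqref{eq_5} by $\cM$: the subsheaf $g^{*}\omega_{\bP^1}\otimes\cM$ has no global sections (its pushforward is $\cO_{\bP^1}(-1)\otimes g_{*}\cO_X(S)$, and $g_{*}\cO_X(S)$ is a line bundle with $h^0=1$, hence $\cO_{\bP^1}$, because $S^2<0$ makes $S$ the unique effective divisor in $|S|$), so $H^0(\Omega^1_X\otimes\cM)\hookrightarrow H^0(\Omega^1_{X/\bP^1}\otimes\cM)$. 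Tensoring \eqref{eq_4} by $\cM$ identifies the target with the kernel of the evaluation-at-nodes map $H^0(X,g^{*}\cO_{\bP^1}(4)\otimes\cO_X(S))\to\bigoplus_{i=1}^{36}k_{p_i}$. A section $s$ in that kernel restricts to $0$ on every singular fiber $F_i$, since $(g^{*}\cO_{\bP^1}(4)\otimes\cO_X(S))|_{F_i}\cong\cO_{F_i}(q_i)$ with $q_i=S\cap F_i$ a smooth point, and on an integral arithmetic-genus-one curve the unique (up to scalar) section of $\cO_{F_i}(q_i)$ vanishes exactly at $q_i\neq p_i$. Hence $s$ is divisible by $g^{*}$ of the degree-$36$ form cutting out the distinct points $g(p_i)$, i.e. $s\in H^0(X,g^{*}\cO_{\bP^1}(-32)\otimes\cO_X(S))=H^0(\bP^1,\cO_{\bP^1}(-32))=0$. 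This gives $h^2(X,T_X(-S))=0$, and the identical argument with $g^{*}\cO_{\bP^1}(1)$ in place of $\cM$ (now the relevant space is the pullback $H^0(\bP^1,\cO_{\bP^1}(4))$, which cannot vanish at $36$ points) gives $h^2(X,T_X)=0$.

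Finally, since $K_X^{2}=0$, $e(X)=36$ (the discriminant has degree $12n=36$, all roots simple) and $\chi(\cO_X)=\tfrac1{12}(K_X^{2}+e(X))=3$, Hirzebruch--Riemann--Roch gives $\chi(X,T_X)=-e(X)+2\chi(\cO_X)=-30$, whence $h^1(X,T_X)=-\chi(X,T_X)=30$. The part I expect to be the real obstacle is the pair of vanishings $h^0(X,T_X)=0$ and $\Hom(\Omega^1_{X/\bP^1},\cO_X)=0$: the rest is formal once one has \eqref{eq_4}, \eqref{eq_5} and pushforward along $g$, whereas this uses the geometric fact that a Weierstrass surface with $36$ distinct $\mathrm{I}_1$ fibers has no infinitesimal automorphisms. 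A second delicate point is the local behaviour along the nodal fibers --- that $\cO_{F_i}(q_i)$ has a single section and it misses the node --- as this is exactly where the hypothesis ``nodal, not cuspidal'' is used.
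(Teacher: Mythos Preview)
Your argument is correct, and the route to $h^1(X,T_X)=30$ is genuinely different from the paper's. For $h^2(X,T_X(-S))=0$ both you and the paper Serre-dualize and feed \eqref{eq_4}, \eqref{eq_5} twisted by $\omega_X(S)$; the only difference is cosmetic --- the paper identifies $H^0(X,\omega_g\otimes\omega_X(S))$ with $H^0(\bP^1,\cO_{\bP^1}(4))$ via $g_*\cO_X(S)=\cO_{\bP^1}$ and then observes a quartic cannot vanish at $36$ points, whereas you argue fiberwise on the $F_i$. Your fiberwise step is fine: $h^0(\cO_{F_i}(q_i))=1$ on an integral genus-one curve and the section vanishes only at $q_i$, so vanishing at the node forces $s|_{F_i}=0$.

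Where you diverge substantially is in computing $h^1$. The paper deduces $h^2(X,T_X)=0$ from $h^2(X,T_X(-S))=0$ via $0\to T_X(-S)\to T_X\to T_X|_S\to 0$, and then runs the $\Ext^{\bullet}(-,\cO_X)$ long exact sequences of \eqref{eq_4} and \eqref{eq_5} (using Lemma~\ref{lemma_cohomol_facts_using_LSS}) to get $\ext^1(\Omega^1_{X/\bP^1},\cO_X)=33$ and then $h^1(X,T_X)=33-3=30$. You instead prove both $h^0(X,T_X)=0$ and $h^2(X,T_X)=0$ and read off $h^1=-\chi(T_X)=30$ from Hirzebruch--Riemann--Roch. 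Your way is shorter once the two vanishings are in hand, at the cost of needing the extra input $h^0(X,T_X)=0$; the paper's way never touches $h^0$ but has to bookkeep several $\Ext$-groups. Note incidentally that your automorphism argument for $h^0(X,T_X)=0$, while correct, can be replaced by the same cohomological template you use for $h^2$: twisting \eqref{eq_4}, \eqref{eq_5} by $\omega_X^{-1}=g^*\cO_{\bP^1}(-1)$ reduces to showing a section of $g^*\cO_{\bP^1}(2)$ cannot vanish at $36$ points.
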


\begin{proof} \quad \newline
\textbf{Proof of $h^2(X,T_X(-S))=0$}.
From Serre duality, it suffices to show that $h^0(X,\Omega_X^1\otimes\omega_X(S))=0$. Twisting (\ref{eq_5}) by $\omega_X(S)$ and taking the induced long exact sequence, we get the following.
$$0\to \oH^0(X,g^*\omega_{\bP^1}\otimes\omega_X(S))\to \oH^0(X, \Omega_X^1\otimes\omega_X(S))\longrightarrow \oH^0(X, \Omega_{X/\bP^1}^1\otimes\omega_X(S)) \longrightarrow \oH^1(X, g^*\omega_{\bP^1}\otimes\omega_X(S))$$

Note that $g_*\cO_{X}(S) = g_*\cO_X = \cO_{\bP^1}$ \cite{canning2022integral}*{Lem. 2.5}, $R^1g_*\cO_X(S)=0$ by cohomology and base change, and $\omega_X=g^*\cO_{\bP^1}(1)$ by the canonical bundle formula (\Cref{thm: canonical bundle formula}). By the projection formula, $g_*(g^*\omega_{\bP^1} \otimes \omega_X(S)) = \cO_{\bP^1}(-1)$ so $$\oH^0(X,g^*\omega_{\bP^1}\otimes\omega_X(S)) = \oH^0(\bP^1,\cO_{\bP^1}(-1))=0.$$
Similarly, from the Leray spectral sequence, we have $\oH^1(X, g^*\omega_{\bP^1}\otimes\omega_X(S)) = \oH^1(\bP^1, \cO_{\bP^1}(-1))=0$. Therefore
$$\oH^0(X,\Omega_X^1\otimes\omega_X(S))=\oH^0(X,\Omega_{X/\mb{P}^1}^1\otimes\omega_X(S)).$$ So it suffices to show that $\oH^0(X,\Omega_{X/\mb{P}^1}^1\otimes\omega_X(S))=0$. Twisting (\ref{eq_4}) by $\omega_X(S)$ and taking the associated long exact sequence, leads to
$$0\to \oH^0(X,\Omega_{X/\mb{P}^1}^1\otimes\omega_X(S))\to \oH^0(X,\omega_{g}\otimes \omega_X(S)) \xrightarrow{\alpha}\bigoplus_{i=1}^{36} k_{p_i}.$$
    But $$\oH^0(X,\omega_{g}\otimes \omega_X(S))= \oH^0(X,g^{*}\cO _{\mb{P}^1}(3)\otimes g^{*}\cO _{\mb{P}^1}(1)(S))=\oH^0(\bP^1,\cO _{\mb{P}^1}(4)\otimes g_{*}\cO_X(S))=\oH^0(\bP^1, \cO _{\mb{P}^1}(4)).$$ Note that the map $\alpha$ takes a section of the line bundle to its restriction to each of the $36$ nodes $p_i$. Under the identification $\oH^0(X, \omega_g \otimes \omega_X(S)) = \oH^0(\bP^1, \cO _{\mb{P}^1}(4))$, the restriction to the node $p_i$ corresponds to the restriction of the corresponding section of $\cO_{\mathbb{P}^1}(4)$ to the image $g(p_i)$. Hence we can identify the map $\alpha$ as the evaluation of a quartic polynomial on $\bP^1$ on the 36 points on $\bP^1$ whose fibers via $g$ are singular. Then $\alpha$ is injective, as a quartic polynomial on $\mb{P}^1$ that vanishes at $36$ points has to be zero. Hence, we have the desired vanishing.\\

\noindent \textbf{Proof of $h^1(X,T_X)=30$}.
Observe first that $h^2(X,T_X)=0$.
Indeed, we have an exact sequence $$0\to T_X(-S)\to T_X \to (T_X)|_S\to 0.$$
As $S$ has dimension 1, we have that $h^2(S,(T_X)|_S)=0$. The desired vanishing follows from the previous point, and the long exact sequence in cohomology.
Moreover, we have
 \begin{itemize}
     \item $\Ext^1(\oplus_{i=1}^{36} k_{p_i},\mathcal{O}_X)$=0,
     \item $\Ext^2(\Omega_{X/\bP^1}^1,\mathcal{O}_X)=0$, and
     \item $\oH^0(X,\Omega_X^1)=0$.
 \end{itemize}
 Indeed, the first bullet point follows from the local-to-global spectral sequence for $\Ext$, and since the points $p_i$ are smooth points of the surface $X$. The second one follows by applying $\Hom(\bullet,\mathcal{O}_X)$ to the short exact sequence (\ref{eq_5}), Lemma \ref{lemma_cohomol_facts_using_LSS}(7), and the fact that $\Ext^2(\Omega_{X}^1,\mathcal{O}_X)=\oH^2(X,T_X)=0$ we just proved. The third bullet point follows since there is an injection $\oH^0(X,\Omega_X^1)\to \oH^0(X,\Omega_X^1\otimes \omega_X(S))$ and above we prove that the latter is 0.

Then, if we apply $\Hom(\bullet,\mathcal{O}_X)$ to the short exact sequence (\ref{eq_4}), using the previous two vanishings we get $$0\longrightarrow \Ext^1(\omega_g,\mathcal{O}_X)\longrightarrow \Ext^1(\Omega_{X/\bP^1}^1,\mathcal{O}_X)\longrightarrow \Ext^2(\oplus_{i=1}^{36} k_{p_i},\mathcal{O}_X)\longrightarrow \Ext^2(\omega_g,\mathcal{O}_X)\longrightarrow0.$$

 From Lemma \ref{lemma_cohomol_facts_using_LSS}(8), we have $\ext^1(\omega_g,\mathcal{O}_X)= 2$ and $\ext^2(\omega_g,\mathcal{O}_X)=5$, where we denote by $\ext$ the dimension of the corresponding $\Ext$ group. By Serre duality, $\ext^2(\oplus k_{p_i},\mathcal{O}_X)=h^0(X,\oplus k_{p_i})=36$, therefore $$\ext^1(\Omega^1_{X/\bP^1},\mathcal{O}_X)=33.$$
    Applying $\Hom(\bullet,\mathcal{O}_X)$ to the sequence (\ref{eq_5}), using $\oH^0(X,\Omega_X)=0$ and Lemma \ref{lemma_cohomol_facts_using_LSS}(7), we obtain $$0\longrightarrow \Hom(g^{*}\omega_{\bP^1},\mathcal{O}_X)\longrightarrow \Ext^1(\Omega^1_{X/\bP^1},\mathcal{O}_X)\longrightarrow\Ext^1(\Omega^1_{X},\mathcal{O}_X)\longrightarrow0.$$ As we have $$\Hom(g^{*}\omega_{\bP^1},\mathcal{O}_X)=\oH^0(\bP^1,\mathcal{O}_{\bP^1}(2)\otimes g_{*}\mathcal{O}_X)=\oH^0(\bP^1,\mathcal{O}_{\bP^1}(2))\simeq k^{\oplus 3},$$ then $h^1(X,T_X)=33-3=30.$  
\end{proof}

\begin{Cor}\label{cor_vanishing_tg_Y}
    In the situation of \Cref{notation_section_n=3_are_generic}, we have $$h^1(Y,T_Y)=28,\quad \textup{and} \quad h^2(Y,T_Y)=0,$$
    where we denote $T_Y:=\pi_*T_X$.
\end{Cor}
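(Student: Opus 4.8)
The plan is to push forward along the contraction $\pi \colon X \to Y$ and feed the cohomology of $X$ from \Cref{3} into the Leray spectral sequence. Write $p = \pi(S)$. Since $\pi$ is an isomorphism over $Y \setminus \{p\}$ with one-dimensional fibres, $R^q\pi_* = 0$ for $q \geq 2$, and for any coherent $\mathcal{F}$ on $X$ the sheaf $R^1\pi_*\mathcal{F}$ is a skyscraper at $p$, hence equals its completion, which by the theorem on formal functions is $\varprojlim_m \oH^1(S_m, \mathcal{F}|_{S_m})$ over the infinitesimal neighbourhoods $S_m$ of $S$ in $X$.

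First I would compute the two relevant first direct images. From $0 \to T_S \to T_X|_S \to N_{S/X} \to 0$ with $T_S = \cO_{\bP^1}(2)$, $N_{S/X} = \cO_{\bP^1}(-3)$ (as $S^2 = -3$), and $\Ext^1(\cO_{\bP^1}(-3),\cO_{\bP^1}(2)) = \oH^1(\bP^1,\cO_{\bP^1}(5)) = 0$, we get $T_X|_S \cong \cO_{\bP^1}(2)\oplus\cO_{\bP^1}(-3)$. The graded pieces of the $\cI_S$-adic filtration of $T_X|_{S_m}$ are $T_X|_S \otimes (N_{S/X}^{\vee})^{\otimes j} \cong \cO_{\bP^1}(2+3j)\oplus\cO_{\bP^1}(-3+3j)$, with vanishing $\oH^1$ for $j \geq 1$ and $\oH^1 \cong k^{\oplus 2}$ for $j = 0$; so $\oH^1(S_m, T_X|_{S_m}) \cong k^{\oplus 2}$ for every $m$, giving $R^1\pi_*T_X \cong k(p)^{\oplus 2}$. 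Running the same argument for $T_X(-S)$, using $\cO_X(-S)|_S = N_{S/X}^{\vee} = \cO_{\bP^1}(3)$, the graded pieces become $\cO_{\bP^1}(5+3j)\oplus\cO_{\bP^1}(3j)$, all with vanishing $\oH^1$, so $R^1\pi_*(T_X(-S)) = 0$.

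With these in hand, I would apply $R\pi_*$ to $0 \to T_X(-S) \to T_X \to i_*(T_X|_S) \to 0$. Because $R^1\pi_*(T_X(-S)) = 0$, this gives an exact sequence
\[
0 \longrightarrow \pi_*T_X(-S) \longrightarrow \pi_*T_X \longrightarrow \oH^0(S,T_X|_S)\otimes_k k(p) \longrightarrow 0
\]
of sheaves on $Y$ whose quotient is a skyscraper of length $3$ (since $\oH^0(\bP^1,\cO_{\bP^1}(2)\oplus\cO_{\bP^1}(-3)) \cong k^{\oplus 3}$). The vanishing $R^1\pi_*(T_X(-S)) = 0$ also degenerates the Leray spectral sequence for $T_X(-S)$, so $\oH^2(Y,\pi_*T_X(-S)) \cong \oH^2(X,T_X(-S)) = 0$ by \Cref{3}; since a skyscraper has no higher cohomology, the displayed sequence then yields $\oH^2(Y,T_Y) = \oH^2(Y,\pi_*T_X) \cong \oH^2(Y,\pi_*T_X(-S)) = 0$. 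Finally, the five-term exact sequence
\[
0 \to \oH^1(Y,\pi_*T_X) \to \oH^1(X,T_X) \to \oH^0(Y,R^1\pi_*T_X) \to \oH^2(Y,\pi_*T_X)
\]
has vanishing last term, so $\oH^1(X,T_X) \to \oH^0(Y,R^1\pi_*T_X)$ is surjective and $h^1(Y,T_Y) = h^1(X,T_X) - \operatorname{length} R^1\pi_*T_X = 30 - 2 = 28$, again by \Cref{3}.

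The only genuine computation is that of $R^1\pi_*T_X$ and $R^1\pi_*(T_X(-S))$, which I expect to be the crux; but these are mild, since the graded pieces are line bundles on $\bP^1$ and only the first-order data $N_{S/X} \cong \cO_{\bP^1}(-3)$ and $T_X|_S \cong \cO_{\bP^1}(2)\oplus\cO_{\bP^1}(-3)$ intervene, so one never has to identify the formal neighbourhood of $S$ in $X$ (which is in any case the minimal resolution of the $\tfrac{1}{3}(1,1)$-singularity, i.e.\ a neighbourhood of the zero section in $\cO_{\bP^1}(-3)$). Everything else is diagram chasing with the Leray spectral sequence.
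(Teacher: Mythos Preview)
Your proof is correct and follows essentially the same route as the paper: compute $R^1\pi_*T_X \cong k(p)^{\oplus 2}$ via the theorem on formal functions (using $T_X|_S \cong \cO_{\bP^1}(2)\oplus\cO_{\bP^1}(-3)$ and the graded pieces $\cI^m/\cI^{m+1}\cong\cO_{\bP^1}(3m)$), then feed this together with \Cref{3} into the Leray five-term sequence. The only organisational difference is that you establish $h^2(Y,T_Y)=0$ first by the extra computation $R^1\pi_*(T_X(-S))=0$ and a sheaf argument on $Y$, whereas the paper instead shows directly that $\oH^1(X,T_X)\to \oH^0(Y,R^1\pi_*T_X)$ is surjective by identifying the target with $\oH^1(S,T_X|_S)$ and using that $\oH^1(X,T_X)\to \oH^1(S,T_X|_S)$ is onto (since $\oH^2(X,T_X(-S))=0$); both vanishings then fall out of the five-term sequence together with $\oH^2(X,T_X)=0$.
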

\begin{proof}
First, we prove that $R^1\pi_{*}T_X\cong k_p^{\oplus 2}$, where $p\in Y$ is the point to which $S$ is contracted. As $\pi$ is an isomorphism away from $p$, the sheaf $R^1\pi_{*}T_X$ is a skyscraper sheaf supported at $p$. We now compute its length.
     Let $S_m$ be the $m$-th thickened neighborhood of $S$, and let $\mathcal{I}$ be the ideal sheaf of $S=S_1$ in $X$. Notice that we have $$\mathcal{I}/\mathcal{I}^2\simeq \mathcal{N}_{S/X}^{*}\simeq \mathcal{O}_{\mb{P}^1}(3),\quad \textup{and}\quad \mathcal{I}^m/\mathcal{I}^{m+1}\simeq (\mathcal{I}/\mathcal{I}^2)^m\simeq \mathcal{O}_{\mb{P}^1}(3m).$$ Also, notice that $T_S\simeq \cO _{\mb{P}^1}(2)$ and we have the exact sequence $$0\longrightarrow T_S\longrightarrow T_X|_S\longrightarrow \mtc{N}_{S/X}\longrightarrow0,$$ then $T_X|_S\simeq \cO _{\mb{P}^1}(2)\oplus \cO _{\mb{P}^1}(-3)$ since the extension has to be trivial (as $\Ext^1(\cO_{\bP^1}(-3), \cO_{\bP^1}(2))=0$). Taking cohomology of the exact sequence $$0\longrightarrow \left(\mathcal{I}^m/\mathcal{I}^{m+1}\right)\otimes 
    T_X\longrightarrow T_X|_{S_{m+1}}\longrightarrow T_X|_{S_m}\longrightarrow0,$$ we see that $\oH^1(T_X|_{S_{m+1}})$ is canonically isomorphic to $\oH^1(T_X|_{S_m})$, which is in turn isomorphic to $\oH^1(T_X|_{S})=k^{\oplus 2}$. It follows from the theorem on formal functions that $$\widehat{\left(R^1\pi_{*}T_X\right)}_p\ =\ \varprojlim_m \oH^1(S_m,T_X|_{S_m})\ \simeq\ k^{\oplus2}.$$ 
    
Taking the five-term exact sequence associated to the Leray spectral sequence $$E_2^{p,q}:=\oH^q(Y,R^p\pi_{*}T_X)\ \Rightarrow\  \oH^{p+q}(X,T_X),$$ we get an exact sequence $$0\rightarrow \oH^1(Y,T_Y)\rightarrow \oH^1(X,T_X)\rightarrow \oH^0(Y,R^1\pi_{*}T_X)\rightarrow \oH^2(Y,T_Y)\rightarrow \oH^2(X,T_X).$$ Since $\oH^2(X,T_X(-S))=0$ by Lemma \ref{3}, the map $\oH^1(X,T_X)\rightarrow \oH^1(X,T_X|_S)$ is surjective. As a consequence, the morphism $$\oH^1(X,T_X)\longrightarrow \oH^0(Y,R^1\pi_{*}T_X)$$ is surjective. The desired statement now follows from Lemma \ref{3}.
\end{proof}

\begin{Prop}\label{prop_tg_Y_is_reflexive} In the situation of \Cref{notation_section_n=3_are_generic}, set $\cU:=X\setminus S$ and denote by $i:\cU\to X$ the corresponding inclusion. Then we have $\pi_*T_X = \pi_*i_*T_{\cU}$.
\end{Prop}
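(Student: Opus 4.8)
The plan is to recognize the right-hand side $\pi_*i_*T_{\cU}$ as the reflexive tangent sheaf of $Y$, and then to prove that $\pi_*T_X$ is reflexive by a vanishing computation on the contracted curve $S$. First I would note that the restriction of $\pi$ is an isomorphism $\cU \xrightarrow{\ \sim\ } Y\setminus\{p\}$, where $p=\pi(S)$; writing $j\colon Y\setminus\{p\}\hookrightarrow Y$ for the inclusion, this identifies $\pi_*i_*T_{\cU}$ with $j_*T_{Y\setminus\{p\}}$. Since $Y$ is a normal surface and $\{p\}$ has codimension two, $j_*T_{Y\setminus\{p\}}$ is a coherent reflexive sheaf (indeed it is $(\pi_*T_X)^{**}$), and any torsion-free coherent sheaf on $Y$ whose restriction to $Y\setminus\{p\}$ is $T_{Y\setminus\{p\}}$ embeds into it. As $T_X$ is torsion-free and $\pi$ is dominant, $\pi_*T_X$ is torsion-free and restricts to $T_{Y\setminus\{p\}}$ away from $p$; so the map in question is the reflexivization map $\pi_*T_X\to(\pi_*T_X)^{**}$, and it suffices to prove that $\pi_*T_X$ is reflexive, equivalently that it is $S_2$, equivalently (as $Y\setminus\{p\}$ is smooth, so the only issue is at $p$) that $\operatorname{depth}_p(\pi_*T_X)\ge 2$.

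Next I would unwind this depth condition. Choosing an affine open $V\ni p$ with preimage $\tilde V=\pi^{-1}(V)$, the local cohomology sequence at $p$ shows that $\operatorname{depth}_p(\pi_*T_X)\ge 2$ is equivalent to surjectivity of the restriction map $H^0(\tilde V,T_X)\to H^0(\tilde V\setminus S,T_X)$, i.e.\ every vector field on a punctured neighbourhood of $S$ extends across $S$. Since $S$ is an effective Cartier divisor on the smooth surface $X$ and $\tilde V$ is Noetherian, $H^0(\tilde V\setminus S,T_X)=\varinjlim_{m\ge 0}H^0(\tilde V,T_X\otimes\cO_X(mS))$, so it is enough to show that the inclusions $H^0(\tilde V,T_X)\hookrightarrow H^0(\tilde V,T_X\otimes\cO_X(mS))$ are bijective for all $m\ge 1$. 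Using the structure sequences $0\to T_X((m-1)S)\to T_X(mS)\to T_X(mS)|_S\to 0$ and the fact that $S=\pi^{-1}(p)\subseteq\tilde V$, this follows by induction on $m$ from the vanishing $H^0(S,T_X(mS)|_S)=0$ for all $m\ge 1$.

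Finally I would verify that vanishing. In the setting of Context \ref{notation_section_n=3_are_generic} we have $S\cong\bP^1$ with $S^2=-3$, so $\cO_X(S)|_S\cong\cO_{\bP^1}(-3)$, while the normal bundle sequence for $S\subset X$ exhibits $T_X|_S$ as an extension of $\cN_{S/X}\cong\cO_{\bP^1}(-3)$ by $T_S\cong\cO_{\bP^1}(2)$. Twisting by $\cO_{\bP^1}(-3m)$, the restriction $T_X(mS)|_S$ is an extension of $\cO_{\bP^1}(-3-3m)$ by $\cO_{\bP^1}(2-3m)$; for $m\ge 1$ both of these line bundles have negative degree, so $H^0$ of the extension vanishes, as needed. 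I do not anticipate a real obstacle: the content is just the numerical fact that $S$ is a $(-3)$-curve, so its neighbourhood carries no extra vector fields. The only step that demands care is the second paragraph — translating reflexivity of $\pi_*T_X$ into a statement about sections of $T_X$ near $S$, handling the non-coherent sheaf $i_*T_{\cU}$ by pushing it to $Y$ first where it becomes $j_*T_{Y\setminus\{p\}}$, and justifying the colimit description of sections on $\tilde V\setminus S$.
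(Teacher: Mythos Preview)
Your proof is correct and takes a genuinely different route from the paper. The paper exploits the numerical coincidence (specific to $n=3$) that $\omega_X = g^*\cO_{\bP^1}(1) = \cO_X(F)$ for a fiber $F$, giving an exact sequence $0\to T_X\to\Omega_X^1\to\Omega_X^1|_F\to 0$; it then compares sections over $V$ and $V\setminus S$ via this sequence, invoking the extension theorem for reflexive differentials on klt varieties \cite{GKKP} to conclude that $\Gamma(V,\Omega_X^1)\to\Gamma(V\setminus S,\Omega_{V\setminus S}^1)$ is an isomorphism, and finishes by a diagram chase. Your argument instead identifies $\pi_*i_*T_{\cU}$ with the reflexive hull $(\pi_*T_X)^{**}$ and reduces to the depth condition at $p$, which you translate into the vanishing $H^0(S,T_X(mS)|_S)=0$ for $m\ge 1$ and verify directly from the normal bundle sequence of the $(-3)$-curve. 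Your approach is more elementary in that it avoids the nontrivial input from \cite{GKKP}, and it visibly generalizes to any $n\ge 3$ (since $2-nm<0$ for $m\ge 1$); the paper's embedding $T_X\hookrightarrow\Omega_X^1$ is slicker when available but is tied to $\omega_X$ being the class of a single fiber.
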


\begin{proof}
    Since we have $\omega_X=g^*\cO_{\bP^1}(1)$, then there is an exact sequence
    $$0\longrightarrow \omega_X^{-1}\longrightarrow \cO_X\longrightarrow \cO_F\longrightarrow 0$$
    where $F$ is a (general) fiber of $g$. Twisting the previous sequence by $\Omega_X^1$ and applying $T_X\simeq \Omega^1_X\otimes \omega_X^{-1}$, we get that
    $$0\longrightarrow T_X\longrightarrow \Omega_X^1\longrightarrow\cO_F\otimes \Omega_X^1\longrightarrow 0.$$
    For any given Zariski open subset $V \subset X$ containing $S$, consider the following diagram where the vertical arrows are restrictions to $U:=V\setminus S$
    $$\xymatrix{0 \ar[r] & \Gamma(V,T_X)\ar[d] \ar[r] & \Gamma(V,\Omega_X^1)\ar[r] \ar[d]_\alpha & \Gamma(V\cap F,\Omega_X^1|_F)\ar[d]_\beta  \\0\ar[r] & \Gamma(U, T_U)\ar[r] & \Gamma(U, \Omega_U^1)\ar[r] & \Gamma(U\cap F, \Omega_X^1|_{F\cap U})}$$
    The map $\beta$ is injective, as $\Omega_X^1$ is a vector bundle on an integral scheme, so the restriction to the generic point is an injective morphism. The morphism $\alpha$ is an isomorphism by \cite{GKKP}*{Obs. 1.3, Thm. 1.4}, so from diagram chasing the first map is an isomorphism. We have thus verified $\pi_*T_X = \pi_*i_*T_{\cU}$. 
\end{proof}

\begin{Prop}\label{prop_K3_smooth_and_of_dim_28}
    The moduli stack $\Psell[3]$ is smooth of dimension 28 at the points $Y$ arising from \Cref{notation_section_n=3_are_generic} as above.
\end{Prop}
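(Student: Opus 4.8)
The plan is to deduce the statement from three ingredients already in place: the fact that $\Psi_3$ is an open immersion (\Cref{thm_proof_psi_n_isom}), the covering-stack description of deformations in $\MKSB_v$ (\Cref{thm_to_check_defs_we_look_at_covering_stack} and the discussion following it), and the cohomology computations of this section (\Cref{prop_tg_Y_is_reflexive} and \Cref{cor_vanishing_tg_Y}). First I would note that, since $\Psi_3$ is an open immersion, $\Psi_3^{-1}$ identifies $\cP_3$ near $Y$ with $\cE_3^{\lc}\cong\cW_3^{\lc}$, which is smooth by \Cref{theorem: construction of Wn}; hence $\cP_3$ is smooth at $Y$ and it only remains to compute $\dim_k T_{[Y]}\cP_3 = \dim_k T_{[Y]}\MKSB_v$.

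To compute this tangent space I would pass to the covering stack. By \Cref{thm_to_check_defs_we_look_at_covering_stack}, deformations of $Y$ in $\MKSB_v$ coincide with deformations of the covering stack $\cY$ of \Cref{def_covering_stack}. For $n=3$ the stack $\cY$ is a \emph{smooth} proper Deligne--Mumford stack: away from $p$ it agrees with $Y$, which is smooth there, and near $p$ it is $[\spec k[\![x,y]\!]/\bmu_3]$ by the local analysis carried out in the proof of \Cref{thm_proof_psi_n_isom} (the extra $\bmu_2$ that obstructs smoothness for $n$ even does not occur here). Consequently the cotangent complex of $\cY$ is $\Omega^1_{\cY}$ in degree $0$, so first-order deformations of $\cY$ are classified by $H^1(\cY,T_{\cY})$ with obstructions in $H^2(\cY,T_{\cY})$.

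Next I would identify these groups with the cohomology of $T_Y$ on $Y$. Let $q\colon\cY\to Y$ be the coarse space map; in characteristic zero $\cY$ is tame, so $R^{j}q_*=0$ for $j>0$ and the Leray spectral sequence gives $H^i(\cY,T_{\cY})=H^i(Y,q_*T_{\cY})$. Since $q$ is an isomorphism over $Y\setminus\{p\}$ and $q_*T_{\cY}$ is reflexive (being the $\bmu_3$-invariants of a vector bundle under a group acting freely in codimension one, or equivalently the pushforward of a vector bundle along the index-one cover of the quotient singularity), it coincides with the reflexive tangent sheaf $j_*T_{Y\setminus\{p\}}$, which by \Cref{prop_tg_Y_is_reflexive} is exactly $T_Y=\pi_*T_X$. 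Therefore $H^i(\cY,T_{\cY})=H^i(Y,T_Y)$ for all $i$, and \Cref{cor_vanishing_tg_Y} yields $H^1(\cY,T_{\cY})=28$ and $H^2(\cY,T_{\cY})=0$. This computes $\dim_k T_{[Y]}\cP_3=28$ (and re-proves, intrinsically, that $\MKSB_v$ is smooth at $Y$), finishing the proof.

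The one step that needs genuine care is the local identification $q_*T_{\cY}=T_Y$ at the point $p$: étale-locally it asserts that the $\bmu_3$-invariant submodule of the free module $k[\![x,y]\!]\partial_x\oplus k[\![x,y]\!]\partial_y$ is the module of derivations of $k[\![x,y]\!]^{\bmu_3}$, which one checks using that the action is free in codimension one and that invariants of reflexive modules under finite group actions stay reflexive. Beyond this, the argument is a bookkeeping consequence of \Cref{thm_proof_psi_n_isom}, \Cref{thm_to_check_defs_we_look_at_covering_stack}, \Cref{prop_tg_Y_is_reflexive}, and \Cref{cor_vanishing_tg_Y}.
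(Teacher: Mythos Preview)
Your argument is correct and, for the core computation, essentially identical to the paper's: both pass to the covering stack $\cY$, use exactness of $q_*$ to get $H^i(\cY,T_\cY)=H^i(Y,q_*T_\cY)$, identify $q_*T_\cY$ with $\pi_*T_X$ via reflexivity and \Cref{prop_tg_Y_is_reflexive}, and then read off $h^1=28$, $h^2=0$ from \Cref{cor_vanishing_tg_Y}.

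The one structural difference worth flagging is your opening move. You invoke the conclusion of \Cref{thm_proof_psi_n_isom} (that $\Psi_3$ is an open immersion) to obtain smoothness of $\cP_3$, and then compute only the dimension. The paper instead deduces smoothness directly from $h^2(\cY,T_\cY)=0$ (unobstructedness), using from \Cref{thm_proof_psi_n_isom} only the local description of $\cY$ near $p$, not its global conclusion. This matters for the logic of Section~\ref{section: n=3}: the theorem immediately following this proposition \emph{uses} \Cref{prop_K3_smooth_and_of_dim_28} to give an independent proof that $\Psi_3$ is an open embedding (via Zariski's main theorem and a dimension count). If \Cref{prop_K3_smooth_and_of_dim_28} already assumes \Cref{thm_proof_psi_n_isom}, that later theorem becomes circular. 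Since you in any case establish $h^2=0$ (and even note it ``re-proves, intrinsically'' smoothness), simply drop the appeal to \Cref{thm_proof_psi_n_isom} and let unobstructedness carry the smoothness claim; your proof then matches the paper's and preserves the independence of Section~\ref{section: n=3}.
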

\begin{proof}
From Theorem \ref{thm_to_check_defs_we_look_at_covering_stack}, if we denote by $\cY$ the covering stack of $Y$, it suffices to check that
\begin{enumerate}
    \item $\cY$ is smooth,
    \item $h^2(\cY,T_\cY)=0$, and
    \item $h^1(\cY,T_\cY)=28$.
\end{enumerate}
Indeed, (1) follows from the proof of \Cref{thm_proof_psi_n_isom}. The second one holds true since, if we denote by $q:\cY\to Y$ the coarse space map, then $$\oH^2(\cY,T_\cY)=\oH^2(Y,q_*T_\cY), \ \ \ \textup{and} \ \ \ \oH^1(\cY,T_\cY)=\oH^1(Y,q_*T_\cY)$$ as $q_*$ is exact (cf. \cite{AV_compactifying}*{Lem. 2.3.4}). But as $\cY$ is smooth, if we denote by $j \colon \cU\to \cY$ the inclusion of the schematic locus of $\cY$ (namely, $\cY$ without a single point), then $$T_\cY=j_*T_{\cU} \quad \text{ and }\quad q_*T_\cY=q_*j_*T_{\cU}.$$
From Proposition \ref{prop_tg_Y_is_reflexive}, $q_*T_\cY=\pi_*T_X$, and the desired statements now follow from Corollary \ref{cor_vanishing_tg_Y}.
\end{proof}
\begin{Teo}\label{thm:embedding_3}
    The morphism $\Psi_3:\cE^{\lc}_3\rightarrow \cM^{\KSB}_{\frac{1}{3}}$ is an open embedding at the points $(X,S)$ as in \Cref{notation_section_n=3_are_generic}.
\end{Teo}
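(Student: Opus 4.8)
The plan is to reuse the outline of \Cref{thm_proof_psi_n_isom}, but to obtain the smoothness of the target from the explicit deformation computations carried out in this section. First I would note that the representability of $\Psi_3$, its injectivity on isomorphism classes of geometric points, and its bijectivity on automorphism groups were all established in the proof of \Cref{thm_proof_psi_n_isom}; those arguments use only that $Y$ has a unique strictly klt singularity --- the $\frac{1}{3}(1,1)$ point to which $S$ is contracted --- and so apply verbatim for $n=3$. Consequently $\Psi_3$ is a monomorphism of Deligne-Mumford stacks, hence unramified and, being of finite type, quasi-finite. It therefore suffices to prove that $\Psi_3$ is flat at the points of \Cref{notation_section_n=3_are_generic}: a flat, unramified morphism of finite type is étale, and an étale monomorphism is an open immersion.

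To establish flatness I would compare the dimensions of source and target. On the source, $\cE^{\lc}_3\cong\cW^{\lc}_3$ is smooth and irreducible (\Cref{proposition: Phi defines iso of W with KSBA}, \Cref{cor_the_moduli_is_smooth}), and from the construction in the proof of \Cref{theorem: construction of Wn} it is the quotient by $\PGL_2$ of an open substack of $[\bV_3/\bG_m]$, where $\bV_3=\oH^0(\bP^1,\cO(12))\oplus\oH^0(\bP^1,\cO(18))$ has dimension $32$; hence $\dim\cE^{\lc}_3=32-1-3=28$. On the target, \Cref{prop_K3_smooth_and_of_dim_28} combined with \Cref{thm_to_check_defs_we_look_at_covering_stack} shows that the deformations of the image point $Y$ in $\MKSB_{1/3}$ coincide with those of its covering stack $\cY$, a smooth Deligne-Mumford stack; since $h^2(\cY,T_\cY)=0$ and $h^1(\cY,T_\cY)=28$, the stack $\MKSB_{1/3}$ is smooth of dimension $28$ at $[Y]$.

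Finally I would invoke miracle flatness: $\Psi_3$ is quasi-finite and its source and target are regular of the same dimension $28$ near the point under consideration, so it is flat there (\cite{stacks_project}*{\href{https://stacks.math.columbia.edu/tag/00R4}{Tag 00R4}}). As the flat locus is open, $\Psi_3$ is flat, hence étale, on an open neighborhood $V$ of the locus of \Cref{notation_section_n=3_are_generic}; since $\Psi_3$ is a monomorphism, $\Psi_3|_V$ is an étale monomorphism and therefore an open immersion, which is the assertion. The only point requiring care is the dimension count on the target, and there the substance is already packaged in \Cref{prop_K3_smooth_and_of_dim_28}: the deformations of the \emph{singular} surface $Y$ are not read off directly from the cohomology of $T_Y$, and it is the passage through the smooth covering stack $\cY$ --- resting on the vanishing $h^2(X,T_X(-S))=0$ of \Cref{3} and the reflexivity identification $\pi_*T_X=\pi_*i_*T_{\cU}$ of \Cref{prop_tg_Y_is_reflexive}, as upgraded in \Cref{cor_vanishing_tg_Y} --- that makes the comparison with the $28$ dimensions of $\cE^{\lc}_3$ legitimate.
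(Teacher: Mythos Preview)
Your proposal is correct and follows essentially the same route as the paper. Both arguments borrow the monomorphism properties (representability, injectivity on points, bijectivity on automorphisms) from the proof of \Cref{thm_proof_psi_n_isom}, and both hinge on the identical numerical input: source and target are smooth of dimension $28$ at the point in question, the latter via \Cref{prop_K3_smooth_and_of_dim_28}. The only difference is cosmetic: the paper packages the conclusion as an application of Zariski's main theorem (representable, injective, birational, with normal source and target), while you phrase it as miracle flatness plus ``étale monomorphism $\Rightarrow$ open immersion''; these are equivalent here.
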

\begin{proof}We use
Zariski's main theorem, which requires that $\Psi_3$ is representable, injective, and birational.

To check that $\Psi_3$ representable we show it is injective on automorphisms. We need to check that if $\sigma$ is an automorphism of $(X,S)$ which induces the identity on $Y$, then it is the identity. This is clear, as if $\sigma$ is the identity on a dense open subset (namely, the complement of $S$), then it has to be the identity.

To check that $\Psi_3$ is injective: we can construct $X$ as the minimal resolution of $Y$, and $S$ is the exceptional divisor. Since the minimal resolution is unique, $\Psi_3$ is injective.

To check that $\Psi_3$ is birational: from Proposition \ref{prop_K3_smooth_and_of_dim_28} the dimension of $\Psell[3]$ is 28. From the isomorphism between $\cE_3$ and $\cW_3$ (by the definition of $\cE_3$ via the open immersion in \Cref{proposition: Phi defines iso of W with KSBA}), we have
$\dim(\cE_3)=\dim(\cW_3)$, and the latter is 28 from the explicit description of $\cW_3$ as a quotient stack given in \cite{canning2022integral}*{Section 2}. The equality of dimensions jointly with injectivity on geometric points shows that $\Psi_3$ is birational at the level of coarse moduli spaces.

To show it is birational at the level of stacks, we use \cite[Theorem A.5]{asgarli2019picard}, which requires that $\Psi_3$ is an isomorphism onto its image on the groupoid of $k$-points, up to shrinking the domain of $\Psi_3$ where it is birational at the level of coarse moduli spaces. For doing so, as we know that $\Psi_3$ is bijective on geometric points and injective on automorphisms, it suffices to prove it is surjective on automorphisms. It suffices to observe that any automorphism of $Y$ will send the singular point to itself, and so it will induce an automorphism of the blow-up of the singular point (namely $X$).

As both $\cE_3$ and $\Psell[3]$ are smooth (hence normal), from Zariski's main theorem the map $\Psi_3$ is an open embedding at the points $(X,S)$ as above (which is an open locus in $\cE_3$ from \Cref{lemma_notation_gives_an_open_condition}).
\end{proof}
\subsection{Case \texorpdfstring{$n>3$}{n>3}}
It is natural to wonder if the (naive) purely cohomological methods of this section can be applied even when $n>3$. Unfortunately, the answer is no.

For this subsection $X$ will be a smooth Weierstrass fibration with $12n$ singular fibers, and $n\ge 4$.
As before, we have the exact sequence 
\begin{equation}\label{1}
    0\longrightarrow g^{*}\omega_{\bP^1}\longrightarrow \Omega^1_X\longrightarrow \Omega^1_{X/\bP^1}\longrightarrow0,
\end{equation}
where the points $p_i$ are the nodes in the singular fibers of $g$.

\begin{Lemma}\label{lemma1}
    The cohomology group $H^2(X,T_X)$ is non-zero when $n \geq 4$. 
\end{Lemma}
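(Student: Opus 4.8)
The plan is to reduce the statement to an elementary nonvanishing on $\bP^1$ by Serre duality. Since $X$ is a smooth projective surface and $T_X$ is locally free, Serre duality identifies $H^2(X,T_X)$ with $H^0(X,\Omega^1_X\otimes\omega_X)^\vee$, so it suffices to exhibit a nonzero global section of $\Omega^1_X\otimes\omega_X$ when $n\ge 4$.

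To do this I would use the canonical bundle formula: for a smooth Weierstrass fibration of height $n$ one has $\omega_X = g^*\cO_{\bP^1}(n-2)$ (\Cref{thm: canonical bundle formula} together with $\cL=\cO_{\bP^1}(n)$). Twisting the exact sequence (\ref{1}) by the line bundle $\omega_X$ gives
\[
0\longrightarrow g^*\cO_{\bP^1}(n-4)\longrightarrow \Omega^1_X\otimes\omega_X\longrightarrow \Omega^1_{X/\bP^1}\otimes\omega_X\longrightarrow 0,
\]
since $g^*\omega_{\bP^1}\otimes\omega_X = g^*\cO_{\bP^1}(-2)\otimes g^*\cO_{\bP^1}(n-2)=g^*\cO_{\bP^1}(n-4)$. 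Passing to global sections, left-exactness yields an injection $H^0(X,g^*\cO_{\bP^1}(n-4))\hookrightarrow H^0(X,\Omega^1_X\otimes\omega_X)$, and by the projection formula together with $g_*\cO_X=\cO_{\bP^1}$ the source is $H^0(\bP^1,\cO_{\bP^1}(n-4))$, which has dimension $n-3$. For $n\ge 4$ this is nonzero, hence $H^0(X,\Omega^1_X\otimes\omega_X)\ne 0$ and therefore $H^2(X,T_X)\ne 0$.

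I do not expect a genuine obstacle here. The only points needing a word of care are that the relative cotangent sequence (\ref{1}) is actually short exact — which holds because $g^*\Omega^1_{\bP^1}\to\Omega^1_X$ is injective, $g$ being dominant onto a curve with $X$ integral, so the cokernel is exactly $\Omega^1_{X/\bP^1}$ — and that a nonzero section of the subsheaf $g^*\cO_{\bP^1}(n-4)$ remains nonzero in $\Omega^1_X\otimes\omega_X$, which is immediate from injectivity of a sheaf map on $H^0$. The real content of the lemma is negative: the computation $n-3\ge 1$ for $n\ge 4$ (versus $n-3=0$ for $n=3$) is precisely what shows that the purely cohomological computation of the tangent space to the moduli performed in \Cref{3} and \Cref{cor_vanishing_tg_Y} has no analogue for $n\ge 4$, which is why the covering-stack deformation theory of \Cref{section: main theorem} is used there instead.
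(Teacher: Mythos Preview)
Your proof is correct and follows essentially the same approach as the paper: Serre duality reduces to $H^0(X,\Omega^1_X\otimes\omega_X)\neq 0$, and twisting the relative cotangent sequence (\ref{1}) by $\omega_X$ produces the subspace $H^0(\bP^1,\cO_{\bP^1}(n-4))$, which is nonzero for $n\ge 4$. Your added remarks on the exactness of (\ref{1}) and on the contextual significance are accurate but not part of the paper's proof.
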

    
\begin{proof}
    Using Serre duality, it suffices to check that $H^0(X,\Omega^1_X\otimes \omega_X)\neq 0$. Twisting (\ref{1}) by $\omega_X$ and taking the induced long exact sequence on cohomology, one sees that $H^0(X,\Omega^1_X\otimes \omega_X)$ contains a subgroup 
    $H^0(X,\omega_X\otimes g^{*}\cO_{\bP^1}(-2))$. Using that $\omega_X \simeq g^* \cO_{\bP^1}(n-2)$ and $g_* \cO_X \simeq \cO_{\bP^1}$, we conclude that:
    \[
        H^0(X,\omega_X\otimes g^{*}\cO_{\bP^1}(-2)) \simeq
        H^0(\bP^1,\cO_{\bP^1}(n-4)).
    \]
    and hence this group is nonzero for $n\geq 4.$
\end{proof}

\begin{Lemma}\label{lemma2}
    The cohomology group $\Ext^2(\Omega^1_X(\log S),\mathcal{O}_X)$ is non-zero when $n\geq 4$.
\end{Lemma}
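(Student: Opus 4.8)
The plan is to reduce the statement to the nonvanishing of $\oH^0(X, \Omega^1_X \otimes \omega_X)$ already extracted in the proof of \Cref{lemma1}. Since $X$ is a smooth surface and $S \subset X$ is a smooth divisor, the logarithmic cotangent sheaf $\Omega^1_X(\log S)$ is locally free of rank $2$; hence the sheaves $\mathcal{E}xt^i(\Omega^1_X(\log S), \cO_X)$ vanish for $i > 0$, the local-to-global spectral sequence for $\Ext$ collapses, and we get $\Ext^2(\Omega^1_X(\log S), \cO_X) \cong \oH^2\big(X, T_X(-\log S)\big)$, where $T_X(-\log S) := \Omega^1_X(\log S)^\vee$ is the logarithmic tangent sheaf. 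By Serre duality on the smooth projective surface $X$, this group is (the dual of) $\oH^0\big(X, \Omega^1_X(\log S) \otimes \omega_X\big)$, so it suffices to produce a nonzero global section of $\Omega^1_X(\log S) \otimes \omega_X$.

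For this I would invoke the residue exact sequence of the smooth divisor $S$,
$$0 \longrightarrow \Omega^1_X \longrightarrow \Omega^1_X(\log S) \xrightarrow{\ \mathrm{res}\ } \cO_S \longrightarrow 0,$$
and tensor it with the line bundle $\omega_X$. Since line bundles are flat, the inclusion $\Omega^1_X \otimes \omega_X \hookrightarrow \Omega^1_X(\log S) \otimes \omega_X$ stays injective, and left-exactness of global sections gives an injection $\oH^0(X, \Omega^1_X \otimes \omega_X) \hookrightarrow \oH^0\big(X, \Omega^1_X(\log S) \otimes \omega_X\big)$. But the proof of \Cref{lemma1} already exhibits, inside $\oH^0(X, \Omega^1_X \otimes \omega_X)$, the subgroup $\oH^0\big(X, \omega_X \otimes g^* \cO_{\bP^1}(-2)\big) \cong \oH^0\big(\bP^1, \cO_{\bP^1}(n-4)\big)$, which is nonzero as soon as $n \geq 4$. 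Hence $\oH^0\big(X, \Omega^1_X(\log S) \otimes \omega_X\big) \neq 0$, and therefore $\Ext^2(\Omega^1_X(\log S), \cO_X) \neq 0$.

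There is no real obstacle here: the argument is essentially a bookkeeping reduction to \Cref{lemma1}, the only points needing (routine) care being the local freeness of $\Omega^1_X(\log S)$ and the direction of Serre duality. One could instead work with the dual logarithmic tangent sequence $0 \to T_X(-\log S) \to T_X \to N_{S/X} \to 0$, but that would be less efficient, as it would require controlling a connecting homomorphism rather than simply using left-exactness of $\oH^0$. Together with \Cref{lemma1}, the upshot is that the obstruction spaces $\oH^2(X, T_X)$ and $\Ext^2(\Omega^1_X(\log S), \cO_X)$ are both nonzero for $n \geq 4$, which is precisely why the naive cohomological vanishing strategy used for $n = 3$ does not extend.
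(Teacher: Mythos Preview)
Your proof is correct and uses essentially the same idea as the paper: both arguments exploit the residue exact sequence $0 \to \Omega^1_X \to \Omega^1_X(\log S) \to \cO_S \to 0$ to reduce to the nonvanishing of $\oH^2(X,T_X)$ established in \Cref{lemma1}. The only difference is the order of operations. The paper applies $R\Hom(-,\cO_X)$ directly to the residue sequence and reads off the surjection $\Ext^2(\Omega^1_X(\log S),\cO_X) \twoheadrightarrow \Ext^2(\Omega^1_X,\cO_X) \cong \oH^2(X,T_X)$ from the tail of the long exact sequence (using that $\Ext^3$ vanishes on a surface). You instead first invoke local freeness of $\Omega^1_X(\log S)$ and Serre duality to translate the target group into $\oH^0(X,\Omega^1_X(\log S)\otimes\omega_X)$, and then use left-exactness of $\oH^0$ on the $\omega_X$-twisted residue sequence. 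These are Serre-dual incarnations of the same map, so neither approach buys anything the other does not.
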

Recall that the obstruction to a deformation of $(X,S)$ lies in $\Ext^2(\Omega^1_X(\log S),\mathcal{O}_X)$.

\begin{proof}
    We utilize the long exact sequence obtained by applying $R\Hom(-, \mathcal{O}_X)$ to the short exact sequence 
    $$
    0\longrightarrow \Omega^1_X\longrightarrow \Omega^1_X(\log S)\longrightarrow \mathcal{O}_S\longrightarrow 0,
    $$
    the last terms of which are 
    $$
    \cdots \longrightarrow  \Ext^2(\mathcal{O}_S, \mathcal{O}_X)\longrightarrow \Ext^2(\Omega^1_X(\log S),\mathcal{O}_X)\longrightarrow \Ext^2(\Omega_X^1, \mathcal{O}_X)\longrightarrow 0.
    $$
    The final term $\Ext^2(\Omega_X^1, \mathcal{O}_X)\cong H^2(X, T_X)$ is non-zero when $n\geq 4$ by Lemma \ref{lemma1} and the result follows.
\end{proof}

The non-vanishing of the cohomology groups $ H^2(X, T_X) $ and $\Ext^2\!\bigl(\Omega^1_X(\log S), \mathcal{O}_X\bigr) $
indicates the possible presence of obstructions to the deformations of \(X\) and \((X,S)\). In particular, we cannot prove that the stacks $\cP_n$ for $n\geq4$ are smooth using the cohomology groups.

\section{KSBA-compactification via twisted stable maps}\label{section:tsmcompactification}

In this section we discuss the natural compactifications of $\cE_n$ (resp. $\Psell$) given by taking the closure in the proper moduli stack $\cM^{\KSBA}_{c(\epsilon),v(\epsilon)}$ (resp. $\cM^{\KSB}_v$). By the main theorem, these compactifications are irreducible components of the KSBA (resp. KSB) moduli spaces for $n \neq 4$. 

We use two tools to understand these compactifications: twisted stable maps and wall-crossing as developed in \cites{av,AV_compactifying, tsm, tsm2} and \cites{AB, inchiostro2020moduli, ascher2021wall, meng2023mmp} respectively. 

The starting point is the observation that an elliptic surface $(X \to C, S)$ with at worst $I_k$ fibers is equivalent to a morphism 
$$
g \colon C \to \overline{\cM}_{1,1}
$$
to the moduli stack of pointed elliptic curves. The composition to the coarse moduli space $j \colon C \to \overline{M}_{1,1}$ is the $j$-invariant of the elliptic surface. The $I_k$ fibers lie over the preimages of $\infty \in \overline{\cM}_{1,1}$ and $k$ is the ramification of $g$ at a given preimage \cite[page 41, Table 4.3.1]{Miranda}. Here we have used that the coarse moduli space $\overline{\cM}_{1,1}\to \overline{M}_{1,1}$ is unramified at $\infty$ so the ramification of $g$ and the $j$-map agree.
The degree of the $j$-map satisfies $\deg(j) = 12n$, where $n$ is the height. The space of maps admits a compactification by a proper Deligne-Mumford stack
$$
\cK_n := \overline{\cK}_{0,0}(\overline{\cM}_{1,1}, n)
$$
parametrizing $0$-pointed, genus $0$ and degree $n$ twisted stable maps (cf. \cite{AV_compactifying}). 

\begin{Def}\label{def_tsm_stability}
    A \emph{$0$-pointed twisted stable map of genus $g$ and degree $n$} is a commutative diagram 
    $$
    \xymatrix{\cC \ar[r]^{j'} \ar[d]_\pi & \overline{\cM}_{1,1} \ar[d] \\ C \ar[r]_j & \overline{M}_{1,1}}
    $$
    such that
    \begin{enumerate}
      \item $\cC$ is a stacky curve with at worst nodal singularities, and $\pi \colon \cC \to C$ is the coarse moduli space; 
      \item $\pi$ is an isomorphism over the non-singular locus of $C$;
      \item $\cC$ is formally locally isomorphic around each node to 
      $$
      \big[\big(\spec k[\![x,y]\!]/(xy)\big)/\bmu_r\big] \quad \textup{by} \quad (x,y) \mapsto (\xi \cdot x, \xi^{-1} \cdot y);
      $$
      \item $j'$ is a representable morphism which induces the map $j$; and 
      \item $j$ is a stable map of genus $g$ and degree $12n$.     
    \end{enumerate}
Since the bottom half of the diagram is determined by the top half, we often just write $\big(j' \colon \cC \to \overline{\cM}_{1,1}\big)$. 
\end{Def}

\begin{Remark}
    Observe that, from (3) in \Cref{def_tsm_stability}, the twisted curve $\cC$ has no nodes if and only if $C$ has no nodes.
\end{Remark}

We start proving part (\ref{item: main: birationality of Kn and En}) of \cref{thm:main} in the following.

\begin{Teo} \label{thm: irreducibility of K_n}
    The stack $\cK_n$ is irreducible and proper. 
\end{Teo}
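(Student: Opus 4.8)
The plan is to show that $\cK_n$ is irreducible by exhibiting an open dense substack that is irreducible, and then arguing that every point of $\cK_n$ lies in the closure of that substack. The natural candidate for the dense open is the locus $\cK_n^{\circ}$ parametrizing maps $j' \colon \cC \to \overline{\cM}_{1,1}$ where $\cC \cong \bP^1$ is a smooth (non-stacky, irreducible) rational curve. Via the equivalence recalled at the start of this section, such a map corresponds precisely to a minimal Weierstrass fibration of height $n$ over $\bP^1$ with at worst $\mathrm{I}_k$ fibers — equivalently, a point of $\cW_n^{\reg} \cup \cW_n^{\nod}$ up to the $\PGL_2$-action on the target $\bP^1$. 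So $\cK_n^{\circ}$ is an open substack of the irreducible stack $\cW_n^{\min}$ (indeed a $\PGL_2$-quotient of an open in $\bV_n = V_{4n} \oplus V_{6n}$, hence irreducible by \Cref{theorem: construction of Wn}), and therefore $\cK_n^{\circ}$ is itself irreducible.

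The remaining work is to prove that $\cK_n^{\circ}$ is \emph{dense} in $\cK_n$, i.e. that every $0$-pointed genus $0$ degree $n$ twisted stable map $j' \colon \cC \to \overline{\cM}_{1,1}$ deforms to one with irreducible smooth untwisted source. I would do this by smoothing the nodes and untwisting the stacky structure one node (resp. one twisted point) at a time. First, a local deformation-theoretic input: a twisted stable map to a tame Deligne–Mumford stack has unobstructed deformations smoothing a given node provided the node is ``balanced'' (which is built into Definition \ref{def_tsm_stability}(3)) and the target is smooth, which $\overline{\cM}_{1,1}$ is; this is the standard deformation theory of twisted stable maps from \cite{AV_compactifying} (the obstruction group for smoothing a node of $\cC$ while deforming $j'$ receives contributions only from $H^1$ of a line bundle on a genus-$0$ curve, which vanishes). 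Thus the boundary stratum where $\cC$ has a node is contained in the closure of the stratum with one fewer node; iterating, any point of $\cK_n$ degenerates from the locus where $\cC$ is irreducible. Second, for an irreducible stacky $\cC$ with a $\bmu_r$-point mapping to the interior (necessarily over $\infty \in \overline{M}_{1,1}$, where the $\mathrm{I}_k$ fiber forces ramification), one checks — again by the same unobstructedness — that such a configuration is a limit of maps from $\bP^1$ where the ramification points remain on $\bP^1$; concretely, this is the statement that $\overline{\cK}_{0,0}(\overline{\cM}_{1,1},n)$ has its stacky-source locus in the closure of the non-stacky locus, which can be seen by the usual argument that a representable map from a football/stacky-$\bP^1$ with $\bmu_r$ at one point deforms to a genuine cover. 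Combining the two reductions, $\overline{\cK_n^{\circ}} = \cK_n$, and since $\cK_n^{\circ}$ is irreducible, so is $\cK_n$.

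A cleaner alternative, which I would pursue in parallel as it may shorten the argument, is to invoke the wall-crossing/comparison results already cited: by \cite{AB} (see also \cites{ascher2021wall, meng2023mmp}) there is a morphism $\Xi_n \colon \cK_n^{\nu} \to \overline{\cE}_{n,\epsilon}$ which is birational onto the irreducible component $\overline{\cE}_{n,\epsilon}$, and the normalization of $\cK_n$ is therefore irreducible; but normalization does not change the number of irreducible components, so $\cK_n$ is irreducible. This reduces the theorem to the (already available) fact that $\Xi_n$ is dominant with irreducible target, i.e. that a general tsm-stable elliptic surface arises by wall-crossing from a general minimal Weierstrass fibration — which is exactly the content of \Cref{proposition: Phi defines iso of W with KSBA} together with the identification of $\cE_n$ as (the image of) the irreducible stack $\cW_n^{\min}$.

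The main obstacle is the smoothing step: one must verify carefully that \emph{all} boundary configurations of $\cK_n$ — reducible nodal stacky curves with various $\bmu_r$ structures at nodes and interior marked fibers, and with the degree $n$ distributed among components — genuinely deform to the smooth irreducible locus, with no ``extra'' components of $\cK_n$ supported entirely on the boundary. The deformation theory of twisted stable maps guarantees unobstructedness of node-smoothing for smooth targets, so the essential point is the bookkeeping that every such stratum is adjacent to a lower stratum and, ultimately, to $\cK_n^{\circ}$; here the properness of $\cK_n$ (from \cite{AV_compactifying}) and the constancy of the discrete invariants (genus $0$, degree $n$, $\deg j = 12n$) are what keep the induction finite and force adjacency rather than a disjoint boundary component. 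If one instead takes the wall-crossing route, the obstacle migrates into verifying that $\Xi_n$ is dominant, which is where one would cite the explicit comparison in \cite{AB}.
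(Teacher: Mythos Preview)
Your main strategy---show that the locus of maps with smooth source curve is an irreducible open substack and that it is dense in $\cK_n$---is exactly the paper's approach. The paper, however, dispatches both steps much more quickly than you do. For density, it simply cites \cite{tsm2}*{Thm.~5.6}, which already proves that any genus~$0$ twisted stable map to $\overline{\cM}_{1,1}$ deforms over a DVR to one with smooth source; your hand-rolled deformation argument is in the right spirit but unnecessary. More importantly, your ``second step'' of untwisting stacky points on an irreducible smooth source is vacuous: by Definition~\ref{def_tsm_stability}(2), the coarse moduli map $\cC \to C$ is an isomorphism over the smooth locus of $C$, so once $C$ is smooth (hence $\cC \cong \bP^1$) there is no residual stacky structure to remove. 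This also means the smooth-source locus is literally $\cW_n^{\nod}$, not ``$\cW_n^{\reg}\cup\cW_n^{\nod}$ up to $\PGL_2$''---the $\PGL_2$ quotient is already built into $\cW_n$, and $\cW_n^{\nod}$ (at worst nodal fibers) is the correct identification.

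Your alternative route via $\Xi_n$ is circular as written: in the paper's logic, the construction of $\Xi_n$ in \Cref{proposition: maps between compactifications} and the claim that it is birational onto an irreducible target come \emph{after} \Cref{thm: irreducibility of K_n}, and indeed the statement that $\Xi_n$ is birational implicitly uses that its source $\cK_n^{\nu}$ is irreducible. Even setting that aside, a morphism that is ``birational onto its image'' from a possibly reducible source does not force irreducibility of the source---one would need to separately rule out extra components of $\cK_n^{\nu}$ mapping into the boundary of $\overline{\cE}_{n,\epsilon}$, which is precisely the density statement you are trying to avoid.
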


\begin{proof}
    Properness is proved in \cite[Theorem 1.4.1(1)]{AV_compactifying}, so we focus on proving irreducibility.

    By \cite{tsm2}*{Thm. 5.6}, any genus $0$ twisted stable map $g_0 \colon \cC_0 \to \overline{\cM}_{1,1}$ can be deformed to a family of maps
    $$
    \xymatrix{\cC \ar[r]^g \ar[d] & \overline{\cM}_{1,1} \\ \spec R & }
    $$
    over the spectrum of a DVR $R$ with closed point $0$ and generic point $\eta \in \spec R$ such that $\cC_\eta$ is a smooth genus $0$ curve. Thus, the locus of maps with smooth source is dense in $\cK_n$. On the other hand, by definition, if $\cC$ is smooth, then $\cC \to C$ is an isomorphism and so $\cC \cong \bP^1$. We conclude that the space of twisted stable maps to $\overline{\cM}_{1,1}$ of degree $n$ and genus $0$ with \emph{smooth} source curve is simply the space $\cW^{\nod}_n$ of Weierstrass elliptic fibrations of height $n$ with at worst nodal fibers, which is irreducible by Theorem \ref{theorem: construction of Wn}(2). 
\end{proof}

We recall the notation from Remark \ref{rem:walls}. The rest of part (\ref{item: main: birationality of Kn and En}) of \cref{thm:main} follows from \cite{tsm} and the general wall-crossing formalism of \cite{AB, inchiostro2020moduli, ascher2021wall, meng2023mmp} that we cite below. 

\begin{Prop}
\label{proposition: maps between compactifications}
    There exists a commutative diagram of wall-crossing morphisms 

    \[
        \xymatrix{\cK_n^{\nu} \ar[dr]^{\bar{\Phi}_{n,0}} \ar[d]_{\bar{\Phi}_n} & & \\ \overline{\cE}_{n} &  \overline{\cE}_{n,0} \ar[r]^{\bar{\Psi}_n} \ar[l]_{\rho} & \overline{\cP}_n}
    \]
    
    where $\cK_n^{\nu}$ is the normalization of $\cK_n$ and $\bar{\Phi}_n$ and $\bar{\Phi}_{n,0}$ (resp. $\bar{\Psi}_n$) extend $\Phi_n|_{\cW_n^{\nod}}$ (resp. $\Psi_n$). 
\end{Prop}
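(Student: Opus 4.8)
The plan is to construct the two new morphisms $\bar{\Phi}_n$ and $\bar{\Psi}_n$ by carrying out \emph{in families} the constructions that already produced $\Phi_n|_{\cW_n^{\nod}}$ and $\Psi_n$, and then to deduce commutativity of the whole diagram from a density argument. This is the wall-crossing / reduction-morphism picture of \cites{ascher2021wall, meng2023mmp}, specialized to elliptic surfaces in \cite{AB}.

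\emph{Construction of $\bar{\Phi}_n$.} I would begin with the universal twisted curve $\gamma\colon \cC\to \cK_n^{\nu}$ and its representable map $j'\colon \cC\to\overline{\cM}_{1,1}$, pull back the universal pointed genus-one curve along $j'$, and take coarse spaces as in \cite{ascher2017log} to obtain a family $(\cX_0,\cS_0)\to\cK_n^{\nu}$ of (generalized) elliptic surfaces with section whose general member is a Weierstrass fibration of height $n$ with at worst nodal fibers. Then I would run the relative log MMP / stable reduction for $(\cX_0,c(\epsilon)\cS_0)$ over $\cK_n^{\nu}$. Over this normal base the required modifications are the explicit, well-understood surgeries on elliptic fibrations recalled in \Cref{subsection_twisted_fiber} and \Cref{construction:Lfiber} — blowing up the section, contracting fiber components, taking canonical models of twisted fibers — and by \cite{AB} (building on \cites{ascher2021wall, meng2023mmp}) these can be performed simultaneously in the family, producing a family of KSBA-stable pairs $(\cX,c(\epsilon)\cS)\to\cK_n^{\nu}$ of volume $v(\epsilon)$ whose formation commutes with base change. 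By \Cref{theorem: KSBA is a good moduli theory} this is a morphism $\cK_n^{\nu}\to\MKSBA_{c(\epsilon),v(\epsilon)}$. On the dense open substack of maps with smooth source curve, which by the proof of \Cref{thm: irreducibility of K_n} is exactly $\cW_n^{\nod}$, it coincides with $\Phi_n$; hence its set-theoretic image lies in the closure of $\cE_n$, and since $\cK_n^{\nu}$ is normal, so seminormal, the morphism factors through the seminormalization, giving $\bar{\Phi}_n\colon\cK_n^{\nu}\to\overline{\cE}_{n,\epsilon}$. It is birational because it restricts to the open immersion $\Phi_n$ on $\cW_n^{\nod}$, and the map $\cK_n^{\nu}\to\cK_n$ is just the normalization.

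\emph{Construction of $\bar{\Psi}_n$.} I would run the proof of \Cref{prop: existence of the morphism} with $\cE_n^{\lc}$ replaced by $\overline{\cE}_{n,\epsilon}$. Let $(\cX,\cS)\to\overline{\cE}_{n,\epsilon}$ be the universal family; on each fiber $K+\tfrac{n-2}{n}\cS$ is big, nef and semiample, being the pullback of the ample canonical class of the pseudo-elliptic contraction. Since $\overline{\cE}_{n,\epsilon}$ is seminormal — in particular reduced — Kawamata--Viehweg vanishing for slc pairs \cite{fujinoAG} and cohomology and base change show that for suitably divisible $m$ the sheaves $\pi_*\cO_{\cX}(dm(K_{\cX/\overline{\cE}_{n,\epsilon}}+\tfrac{n-2}{n}\cS))$ are locally free and commute with base change, so their relative $\Proj$ is a flat projective family $\cY\to\overline{\cE}_{n,\epsilon}$ with KSB-stable fibers of constant volume $v=\tfrac{(n-2)^2}{n}$. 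By \Cref{thm: kollar condition numerical} it satisfies Koll\'ar's condition, hence defines $\bar{\Psi}_n\colon\overline{\cE}_{n,\epsilon}\to\MKSB_v$ extending $\Psi_n$; its image has closure $\overline{\cP}_n$.

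\emph{Commutativity and the main obstacle.} The triangle and the square commute over the dense open $\cW_n^{\nod}$ by construction, and since all the stacks are separated and $\cK_n^{\nu}$ is reduced, two morphisms agreeing on a dense open agree everywhere, which yields commutativity in general. The hard part is the first step: proving that the relative stable-reduction/MMP procedure can be run \emph{uniformly over $\cK_n^{\nu}$}, with the output commuting with base change, rather than just over a DVR or fiber by fiber. For elliptic surfaces this is precisely the content of \cite{AB} via \cites{ascher2021wall, meng2023mmp}; it works because the modifications involved are the controlled birational operations of \Cref{construction:Lfiber}, whose relevant higher direct images vanish so that cohomology and base change applies. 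A secondary point — the reason for passing to the normalization $\cK_n^{\nu}$ in the first place — is ensuring that the pullback of the universal family over $\overline{\cM}_{1,1}$ and its coarse space are well-behaved in the family (flat, with reduced fibers, compatibly with the stacky structure of $\cC$), which is cleaner over a normal base.
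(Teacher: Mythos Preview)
The paper does not give its own proof of this proposition; it is simply cited from \cite{AB} (with the wall-crossing machinery of \cites{ascher2021wall, meng2023mmp} in the background), and \Cref{subsection_from_tsm_to_KSBA} then unpacks the effect of $\bar{\Phi}_n$ on points by running the MMP over a DVR. Your sketch is the right shape and matches that framework: pull back the universal family to $\cK_n^\nu$, run the relative canonical model with coefficient $c(\epsilon)$ to land in $\MKSBA$, factor through the seminormalization $\overline{\cE}_{n,\epsilon}$, and then pass to $\epsilon=0$ via a relative $\Proj$ to get $\bar{\Psi}_n$.

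One technical point to flag in your $\bar{\Psi}_n$ step: you invoke Kawamata--Viehweg vanishing for the fibers over $\overline{\cE}_{n,\epsilon}$ by analogy with \Cref{prop: existence of the morphism}, but those fibers are genuinely slc (non-normal, several components) rather than lc with isolated lc centers, so \cite{fujinoAG}*{Thm.~1.10} does not apply as stated. The correct input here is the slc vanishing/abundance package underlying \cites{ascher2021wall, meng2023mmp}, which you already cite; it is worth saying explicitly that this is where the cohomology-and-base-change argument is actually justified, rather than appealing to the lc version. With that adjustment, your outline is essentially the argument of \cite{AB}.
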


\subsection{From twisted stable maps limits to KSBA limits}\label{subsection_from_tsm_to_KSBA}

There are two stability conditions that we will consider for elliptic surfaces.
The first one comes from \textit{twisted stable maps}, as in \Cref{def_tsm_stability}, and the second one from KSB(A), as in \Cref{subsection_KSB_stab}. 
\begin{Def}\label{def_tsm}
    An elliptic surface $(X,S)\to C$ is \textit{twisted stable maps-stable} (abbv. tsm-stable) if there is 0-pointed twisted stable map $(j' \colon \cC\to \overline{\cM}_{1,1})$ of a certain genus such that:
    \begin{enumerate}
        \item $C$ is the coarse moduli space of $\cC$;
        \item if we denote by $(\sX,\sS)\to \cC$ the pull-back of the universal curve and the universal section to $\cC$, then $(X,S)$ is the coarse moduli space of $(\sX,\sS)$; and
        \item the projection morphism $X\to C$ coincides with the induced morphism between coarse moduli spaces of $\sX\to \cC$.
    \end{enumerate}
\end{Def}

\begin{Remark}
    One can check that the locus where $\cC$ is a stack (but not a scheme) can be determined from the data of $X\to C$. Indeed, $\cC$ is a stack along the nodes $\mathfrak{n}\in C$ such that the fiber of $X\to C$ is non-reduced. Indeed, the fibers of $\cX\to \cC$ are parametrized by $\overline{\cM}_{1,1}$, so if we take the reduced structure on the geometric fiber of $X\to C$ over $\mathfrak{n}$, the resulting curve is a quotient of a Deligne-Mumford stable 1-pointed genus 1 curve $(E,x)$, by a subgroup $\Gamma$ of $\Aut(E,x)$, and $\Gamma = \{1\}$ if and only if $\cC$ is a scheme at the node $\mathfrak{n}$; see \cite{tsm}.
\end{Remark}
It follows from the wall-crossing results of \cite{ascher2021wall,meng2023mmp} that there is a morphism $\rho\colon \overline{\cE}_{n, 0}\to \overline{\cE}_{n}$, which reduces the weight of the divisor from $\frac{n-2}{n}-\epsilon$ (in $\overline{\cE}_{n,0}$) to $\epsilon$ (in $\overline{\cE}_{n}$) where $0 < \epsilon \ll 1$. By Proposition \ref{proposition: maps between compactifications}, have $\rho\circ \bar{\Phi}_{n,0} = \bar{\Phi}_n$. It turns out that it is slightly easier to understand $\bar{\Phi}_n \colon \cK_n^\nu\to \overline{\cE}_{n}$ rather than $\bar{\Phi}_{n,0}$, so we will focus on $\bar{\Phi}_n$ in this section. We will later prove that $\rho$ is an isomorphism, so that we have \emph{a posteriori} also described $\bar{\Phi}_{n,0}$.

We now describe how to understand $\bar{\Phi}_n$. For each point $p\in \cK_n^{\nu}$, consider a one parameter family $\spec(R)\to \cK_n^{\nu}$ over the spectrum of a DVR $R$, which maps the generic point $\eta$ to the locus parametrizing minimal Weierstrass fibrations in $\cK_n$, and the special point, which we denote by $0$, to $p$. One has the resulting family of tsm-stable elliptic surfaces:
\[ (\sX,\sS)\ \longrightarrow\ \sC\ \longrightarrow\ \spec(R).\] 
\begin{Remark}
    In this subsection we will denote by $\sC$ a family of nodal curves over a DVR as above, whereas by $\cC$ a twisted curve over the spectrum of a field as in \Cref{def_tsm}.\end{Remark}
Choose $0<\epsilon\ll 1$, and consider the canonical model  \[ (\sX^c,\epsilon\sS^c)\ \longrightarrow\ \spec(R)\ \ \ \text{ of } \ \ \ (\sX,\epsilon\sS) \ \longrightarrow\  \spec(R).\] From \cite{inchiostro2020moduli}*{Thm. 1.2}, for $\epsilon$ small enough, the special fiber of such a canonical model will be an elliptic surface $\sX^c_0\to \sC^c_0$ with a section $\sS^c_0$ and irreducible fibers. This stable pair $(\sX_0^c, \sC_0^c)$ yields the point $\bar{\Phi}_n(p)$. 

 To obtain the canonical model $(\sX^c,\epsilon \sS^c)$, one has to run an MMP and use the abundance theorem; this is worked out in \cites{AB, inchiostro2020moduli}, we report the salient steps.

In \textit{loc. cit.} it is proven that there is a \textit{specific} MMP with scaling that one can run, such that only a specific type of flip is needed, the so-called \textit{flip of La Nave} (see \cite{AB}*{Appendix A}; see also \cite{lanave2002explicit}*{Thm. 7.1.2}, \cite{AB}*{Sec. 6.2.2} or \cite{inchiostro2020moduli}*{3.2} for a description of the flip of La Nave). More specifically, it is proven in \cite{inchiostro2020moduli}*{Thm. 6.5} that the specific MMP mentioned above can be factored as 
$$(\sX,\sS)\ =\ (\sX^{(1)},\epsilon\sS^{(1)})\dashrightarrow(\sX^{(2)},\epsilon\sS^{(2)}) \dashrightarrow ... \dashrightarrow (\sX^{(m)},\epsilon\sS^{(m)})\ =\ (\sX^c,\epsilon\sS^c)$$
where $f_i: \sX^{(i)}\dashrightarrow \sX^{(i+1)}$ is either a flip of La Nave, or a divisorial contraction of some irreducible component of $\sX^{(i)}_p$. Via these steps, it is proven in \cite{AB}*{Section 6} and \cite{inchiostro2020moduli}*{Cor. 6.7} that the central fiber of $\sX^{(i)}$ is a nodal union of irreducible components which are either:
\begin{enumerate}
    \item pseudo-elliptic surfaces, or
    \item elliptic surfaces.
\end{enumerate}
 The flip of La Nave contracts an irreducible component of the special fiber of the section $\sS^{(i)}$; this will result in a pseudo-elliptic component attached to a so-called \textit{intermediate fiber} (cf. \cite{ascher2017log}*{Def. 4.9}).

 Now, to control how the special fiber of $(\sX^{(i)},\sS^{(i)})$ changes after each step of the MMP, we have to control the intersection pairing on some elliptic surfaces; this is the goal of the remaining part of this subsection.

  \begin{Lemma}\label{lemma_which_intermediate_fiber_after_a_flip}
     Assume that $\sX^{(i)}\dashrightarrow \sX^{(i+1)}$ is a flip of La Nave, which contracts a section $S_P\subseteq \sX^{(i)}$ and extracts the curve $A\subseteq \sX^{(i+1)}$. Let $S^+$ be the section of the elliptic surface given by the irreducible component $E^+\subseteq \sX^{(i+1)}_0$ containing $A$, and let $S^-$ be the section in the elliptic surface $E^-$, given by the proper transform of $E^+$ in $\sX^{(i)}_{0}$.
     Then one has
     \[
     (S^-)^2 \ =\ (S^+)^2 -\frac{1}{A^2}\ \ \text{ and } \ \ S_P^2\ =\ -\frac{1}{A^2}. 
     \]
 \end{Lemma}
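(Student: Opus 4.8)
The plan is to localize the assertion at the flipping locus of the flip of La Nave, where the birational geometry is explicit, and then to compute intersection numbers on a common resolution. Recall the anatomy of this flip (\cite{AB}*{Appendix A}; see also \cite{lanave2002explicit}*{Thm.~7.1.2} and \cite{inchiostro2020moduli}*{3.2}): it fits into a diagram of small birational contractions
$$
\sX^{(i)}\ \xrightarrow{\ \phi^-\ }\ Z\ \xleftarrow{\ \phi^+\ }\ \sX^{(i+1)},
$$
where $\phi^-$ contracts $S_P$ to a point $p\in Z$, $\phi^+$ contracts $A$ to the same point $p$, and \emph{loc.~cit.} provides an explicit (toric) \'etale-local model for the germ $(Z,p)$ together with both $\phi^{\pm}$. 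Fix such a model and let $W\to Z$ be a toric resolution factoring through both $\sX^{(i)}$ and $\sX^{(i+1)}$; on $W$ one can name the proper transforms of $E^-$, $E^+$, of the section surfaces $\sS^{(i)}$, $\sS^{(i+1)}$, and of $S_P$ and $A$, and read off the full intersection matrix of the exceptional locus.

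First I would extract the surface-level picture. Since $E^-$ and $E^+$ are proper transforms of one another, they are two birational modifications of a single divisor $\bar E:=\phi^{\pm}_*E^{\mp}\subseteq Z$ over the germ $\bar p\in\bar E$ cut out by the local model, with $\phi^-|_{E^-}$ contracting $S_P$ and $\phi^+|_{E^+}$ contracting $A$; from the local model, $\bar p\in\bar E$ is a cyclic quotient surface singularity whose two ``opposite'' partial resolutions have exceptional curves $S_P$ and $A$, and the toric data immediately give $S_P^2\cdot A^2=-1$, which is the second identity. For the first identity, one applies the same localization to the section surfaces: $S^{\mp}=\sS^{(i)}_0\cap E^{\mp}$ is the section of $E^{\mp}$, and the flip alters the self-intersection of the section only by the local contribution at $S_P$. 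Concretely, I would write $\phi^{-*}$ and $\phi^{+*}$ of the relevant divisor class, use that the two pullbacks agree on $Z$ (so the discrepancies along $E^{\pm}$, $S_P$ and $A$ are linked), and then apply the projection formula for $\phi^{\pm}$ together with adjunction on $E^{\pm}$; the discrepancy contribution is precisely $1/A^2$, giving $(S^-)^2-(S^+)^2=-1/A^2$.

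I expect the main obstacle to be bookkeeping rather than any single hard idea: one must align the \emph{global} configuration --- which component of $\sX^{(i+1)}_0$ is $E^+$, how $A$ and the section surface sit inside it, the shape of the intermediate fiber to which the new pseudo-elliptic component is glued, and any nontrivial stacky structure along that fiber --- with the \emph{local} toric model of the flip, and then track every contribution carefully: the discrepancies of $\phi^{\pm}$ along $E^{\pm}$, $S_P$ and $A$, the multiplicities of the special fibers, and the passage between the twisted curves/surfaces and their coarse spaces. Once the local picture is pinned down, the computation reduces to the elementary fact that the two opposite partial resolutions of a cyclic quotient surface singularity carry curves with negatively reciprocal self-intersections, together with the projection formula applied to $\phi^{\pm}$.
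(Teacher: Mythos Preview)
Your approach is valid in principle but takes a genuinely different route from the paper's proof, and it is worth seeing the contrast.

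The paper's argument is entirely global and surface-level; it never opens up the local toric model of the flip. It uses two ingredients. First, flatness of the section surface $\sS^{(j)}\to\spec(R)$ forces the total self-intersection $\sum_\ell (S_\ell)^2$ over the special fiber to be constant in $j$; since the only components affected by the flip are $S^-$, $S_P$ (on the $i$-th side) and $S^+$ (on the $(i+1)$-th side), this gives the single relation $(S^-)^2+(S_P)^2=(S^+)^2$. Second, the flip restricted to the component $E$ is simply a surface contraction $p\colon E^+\to E^-$ with exceptional curve $A$ and $p_*S^+=S^-$; writing $p^*S^-=S^++\alpha A$ and using $(p^*S^-.\,A)=0$ and $(S^+.\,A)=1$ gives $\alpha=-1/A^2$, whence $(S^-)^2=(p^*S^-.\,S^+)=(S^+)^2-1/A^2$. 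The two displayed identities follow by combining these.

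Your plan instead localizes at the flipping point, invokes the explicit toric model of the La Nave flip, passes to a common resolution, and reads off the intersection numbers from the toric data. This certainly works, and your remark that the two opposite partial resolutions of the relevant cyclic quotient surface singularity have exceptional curves with reciprocal (up to sign) self-intersections is the correct local fact underlying the second identity. The cost is exactly what you anticipate: aligning the global picture with the local chart, tracking discrepancies along several divisors, and handling possible stacky structure. The paper's proof sidesteps all of this: it needs neither the explicit local model nor a common resolution, only the flatness of $\sS$ and the standard projection formula on the single surface $E^+$. What your approach buys is a direct geometric explanation of \emph{why} $A^2$ governs the change --- it is visible in the toric fan --- whereas the paper's argument is slicker but more opaque on that point.
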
 
 \begin{figure}[H]
\centering
\includegraphics[width=13cm]{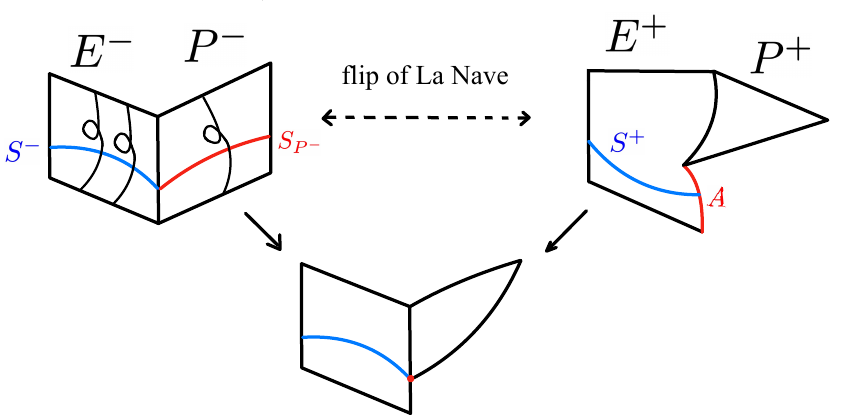}
\caption{Special fibers $\sX^{(i)}_{0}\dashrightarrow \sX^{(i+1)}_{0}$ of a flip of La Nave}
\end{figure}
  \begin{proof}
     Let $\sS^{(j)}$ be the proper transform of the section on $\sX^{(j)}$ for every $j$. Then, if $S_1,...,S_k$ are the irreducible components of $\sS^{(j)}_{0}$, we have that \[\big(\sS^{(j)}_\eta\big)^2  \ =\ \sum_{\ell=1}^k \big(\sS^{(j)}.S_\ell\big) \ =\ \sum_{\ell =1}^k \big(S_\ell^2\big)_{\sX^{(j)}_l}, \]
     where $\sS^{(j)}_\eta$ is the restriction of $\sS^{(j)}$ to the generic fiber of $\sX^{(j)}\to \spec(R)$, and the last intersection pairing is computed on each irreducible component $\sX^{(j)}_l$ of $\sX^{(j)}$. The first equality follows from the flatness of $\sS^{(j)}\to \spec(R)$, and so its self-intersection is constant along the fibers of $\sX^{(i)}\to \spec(R)$. 

     Now, from how the flip of La Nave is constructed, there are exactly two irreducible components of $\sX^{(i)}$ where $f_i$ is not an isomorphism. One is $E^-$, the proper transform of $E^+$, and the other is $P^-$, the irreducible component whose section $S_P$ will be contracted after the flip. Denote by $S^{+}$ (resp. $S^{-}$) the section on $E^{+}$ (resp. $E^{-}$). Then we have that 
     \[ (S^-)^2 + (S_P)^2 \ =\ (S^+)^2.\]
     Moreover, there is a morphism $p: E^+\to E^-$ which contracts $A$, and $p_*S^+ = S^-$. So
     \[(S^-)^2 \ = \ (S^-.\ p_*S^+)\ =\ (p^*S^-.\ S^+) \ =\ \big(S^++\alpha A.\ S^+\big). \]
     Here, we can compute \[\big(S^++\alpha A.\ A\big)=0 \ \ \ \textup{so}\ \ \ \alpha = \frac{-1}{A^2}.\]
     Therefore one has $
     \big(S^-\big)^2 = \big(S^+\big)^2 -\frac{1}{A^2}
     $ as desired.
 \end{proof}
It follows from \cite{inchiostro2020moduli}*{Thm. 9.9} that when $0<\epsilon\ll 1$, there are no pseudo-elliptic surfaces among the irreducible components of $\sX^c$. Therefore, all the pseudo-elliptic surfaces which appear after a flip of La Nave will eventually get contracted, and the corresponding intermediate fiber along which the pseudo-elliptic component is attached will become a cuspidal fiber (cf. \cite{ascher2017log}). Moreover, from \cite{inchiostro2020moduli}*{Prop. 4.14}, the type of intermediate fiber that appears after a flip of La Nave is uniquely determined by $A^2$. Combining this with \cite{ascher2017log}*{Thm. 1.1} and \cite{AB}*{Table 2}, where the authors compute the log-canonical threshold of a cusp in a minimal Weierstrass fibration, we obtain the following result.

\begin{Lemma}\label{lemma_lct_Asquare}
    Suppose that $f_i:\sX^{(i)}\to \sX^{(i+1)}$ contracts a pseudo-elliptic surface. Let $A\subseteq \sX^{(i)}$ be the intermediate component of the intermediate fiber along which is attached the pseudo-elliptic surface contracted by $f_i$. Let $A^c$ be the cuspidal fiber given by the proper transform of $A$ in $\sX^{(i+1)}$, and $X$ be the irreducible component of $\sX^{(i+1)}_{0}$ containing $A^c$. Then we have that $$-\frac{1}{A^2} \ =\ 1-\lct(X; A^c).$$
\end{Lemma}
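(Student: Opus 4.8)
The plan is to reduce the identity to a finite comparison of classification tables. The first input is \cite{inchiostro2020moduli}*{Prop.~4.14}: the type of intermediate fiber produced by a flip of La Nave is uniquely determined by the self-intersection $A^2$ of its intermediate component, so there is an explicit finite list of admissible values of $A^2$, each labelled by a type of intermediate fiber. The second input is the analysis of \cite{ascher2017log}: once the pseudo-elliptic component attached to $A$ is contracted by $f_i$, the intermediate fiber of a given type specializes to a cuspidal fiber $A^c$ of a corresponding Kodaira type inside the minimal Weierstrass component $X = \sX^{(i+1)}_{0} \supseteq A^c$, so the Kodaira type of $A^c$ is again determined by $A^2$. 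The third input is \cite{ascher2017log}*{Thm.~1.1} together with \cite{AB}*{Table~2}, which record $\lct(X; A^c)$ as a function of the Kodaira type of $A^c$: the value is $\tfrac{5}{6}, \tfrac{3}{4}, \tfrac{2}{3}, \tfrac{1}{2}, \tfrac{1}{3}, \tfrac{1}{4}, \tfrac{1}{6}$ for cusps of type $\mathrm{II}, \mathrm{III}, \mathrm{IV}, \mathrm{I}_0^{*}, \mathrm{IV}^{*}, \mathrm{III}^{*}, \mathrm{II}^{*}$ respectively.

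With these three dictionaries in hand, I would run through the finitely many types. For each type, \cite{inchiostro2020moduli}*{Prop.~4.14} supplies the value of $A^2$ and \cite{ascher2017log} identifies the Kodaira type of $A^c$, while \cite{ascher2017log}*{Thm.~1.1} and \cite{AB}*{Table~2} supply $\lct(X; A^c)$; a term-by-term check then verifies the desired equality $-1/A^2 = 1 - \lct(X; A^c)$. Concretely, the admissible values of $A^2$ run through $-6, -4, -3, -2, -\tfrac{3}{2}, -\tfrac{4}{3}, -\tfrac{6}{5}$ in the same order as the types listed above, so that $-1/A^2$ equals $\tfrac{1}{6}, \tfrac{1}{4}, \tfrac{1}{3}, \tfrac{1}{2}, \tfrac{2}{3}, \tfrac{3}{4}, \tfrac{5}{6}$, matching $1 - \lct(X; A^c)$ in every case. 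As a consistency check, this is also compatible with Lemma~\ref{lemma_which_intermediate_fiber_after_a_flip}, which expresses $A^2$ via the self-intersection of the section contracted by the corresponding flip.

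The only genuine obstacle is bookkeeping: one must confirm that the three references are indexing the \emph{same} fiber, i.e. that the intermediate fiber of self-intersection $A^2$ in the sense of \cite{inchiostro2020moduli}, the intermediate-to-cuspidal specialization of \cite{ascher2017log}, and the Kodaira-type cusp whose log canonical threshold is tabulated in \cite{AB}*{Table~2} all refer to the cuspidal fiber $A^c$ of the statement; and that $A^2$ is computed on the correct surface component, namely the elliptic-surface component of $\sX^{(i)}_{0}$ containing $A$. Once these conventions are aligned, the identity follows from the finite verification above.
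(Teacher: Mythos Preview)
Your proposal is correct and follows exactly the approach the paper takes: the paper derives the lemma by combining \cite{inchiostro2020moduli}*{Prop.~4.14} (the intermediate fiber type is determined by $A^2$) with \cite{ascher2017log}*{Thm.~1.1} and \cite{AB}*{Table~2} (the log canonical thresholds of cuspidal fibers), precisely as you outline. Your write-up is in fact more explicit than the paper's, which simply cites these three references in the sentence preceding the lemma and leaves the finite tabular verification to the reader.
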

\begin{Remark}
    We justify why, in the previous setting, $A^c$ is a cuspidal fiber. Note first that this is proved in \cite{AB}, see especially \cite[\S 6.2.3]{AB} and \cite[Theorem 6.3 (c)]{AB}; however we include a more direct argument.

    We observe that each step of our MMP for $(\sX,\epsilon \sS)$ maps to a family of nodal curves, i.e.
    we have maps $\sX^{(j)}\to \sC^{(j)}$ for every $j$. This follows from how the MMP is constructed in \cite{AB,inchiostro2020moduli}. Observe also that $A^c$ is irreducible. This follows as it is the flipped curve of a flip of La Nave; so in particular $\sS^{(i+1)}\subseteq \sX^{(i+1)}$ is ample over $\sC^{(i+1)}$, as in our setting we contracted the pseudoelliptic components.
    
    Now, the image of the pseudoelliptic component which is contracted via $\sX^{(i)}\dashrightarrow \sX^{(i+1)}$ lies over a \textit{smooth}
    point $p$ of the closed fiber of $\sC^{(i+1)}$. There is an open $p\in U\subseteq \sC^{(i+1)}$ such that the fibers of \[(\sX^{(i+1)}|_{U\smallsetminus \{p\}},\sS^{(i+1)}|_{U\smallsetminus \{p\}})\to {U\smallsetminus \{p\}}\] away from $p$ are parametrized by $\overline{\cM}_{1,1}$, i.e. come from a map $U\smallsetminus \{p\}\to \overline{\cM}_{1,1}$. Recall that $\overline{\cM}_{1,1}$
    is a weighted projective stack, so there is an inclusion $\overline{\cM}_{1,1}\subseteq [\bA^2_{A,B}/\bG_m]$ where the action has weight 4 and 6. More explicitly, the stack $[\bA^2_{A,B}/\bG_m]$ parametrizes cubics of the form $y^2z=x^3+Axz^2+Bz^3$ with a marked smooth point,
    which are either smooth, nodal, or cuspidal, and the good moduli space for $[\bA^2_{A,B}/\bG_m]$ is the
    ring of invariants of $k[A,B]$ with the action of weight 4 on $A$ and 6 on $B$ (namely, it is $\spec(k)$). Now it follows from \cite[Lem. 2.1]{di2024stable}
    that the composition
    \[U\smallsetminus \{p\}\to \overline{\cM}_{1,1}\to [\bA^2/\bG_m]
    \]extends to $U\to [\bA^2/\bG_m]$, namely, the family of cubics with a marked point over $U\smallsetminus \{p\}$ extends to a family of cubics with a marked point over $U$. More specifically, the content of \cite[Lem. 2.1]{di2024stable} in our case specializes to the fact that the family of cubics over $U\smallsetminus\{p\}$ is the data of its Weierstrass form, which is the data of a line bundle $L$ and two sections, one of $L^{\otimes 4}$ and one of $L^{\otimes 6}$. As $p$ is smooth on $U$ which has dimension 2, the line bundle $L$ with the two sections $A$ and $B$ extend to $U$, so we can extend the Weierstrass form.

    Now note that the family of pointed cubics we constructed extending $L$ and the two sections as above, which we denote by $(\sX',\sS')\to U$, agrees with $(\sX^{(i+1)}|_U,\sS^{(i+1)}|_U)\to U$ as both $\sS'$ and $\sS^{(i+1)}|_U$ are ample over $U$, both $\sX'$ and $\sX^{(i+1)}$ are $S_2$ so $\sX^{(i+1)}|_U$ and $\sX'$ are Proj of the same algebra.
\end{Remark}
\begin{Prop}\label{prop_can_bundle_formula}
    Let $(X',S')\to C'$ be a tsm-stable elliptic surface, and let $(X,S)$ be an irreducible component of $X'$. Let
    \begin{enumerate}
    \item $C$ be the irreducible component of $C'$ such that $X$ maps to $C$;
    \item $f\colon X\to C$ be the corresponding map;
    \item $p_1,\dots,p_k$ be the nodes of $C'$ along $C$;
    \item $F_i:=(f^{*}p_i)^{\red}$, and $m_i$ be the multiplicity of $F_i$ in $f^{*}p_i$.
    \end{enumerate} If $j\colon C\to \overline{M}_{1,1}$ is the corresponding $j$-map, then one has \[
    (S^2)\ = \ -\frac{\deg(j)}{12} \ \ \text{ and }\ \ \ \bigg(K_X+\sum_{i=1}^kF_i\bigg). \ S \ =\ 2g(C)-2+ k +\frac{\deg(j)}{12}.
    \]
    Moreover, if we denote by $Y$ the surface obtained by replacing $F_1,\dots,F_n$ by cuspidal fibers $F^c_1,\dots ,F^c_n$, and denote by $S_Y$ the proper transform of $S$ on $Y$, then one has
    \[
    \bigg(K_Y+\sum_{i=n+1}^kF_i\bigg).\ S_Y\ = \ \bigg(K_X+\sum_{i=1}^kF_i\bigg).\ S -\sum_{i=1}^n\lct(Y; F^c_i)
    \] and
    \[
    \big(S^2_Y\big) \ =\  -\frac{\deg(j)}{12} - \sum_{i=1}^n\big(1 - \lct(Y;F_i^c)\big).
    \]
    
\end{Prop}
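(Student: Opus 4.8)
The plan is to reduce every identity to adjunction along the section combined with Kodaira's canonical bundle formula, and then to transport the identities for $X$ to the partial Weierstrass model $Y$ using the explicit description of the flip of La Nave from Lemmas~\ref{lemma_which_intermediate_fiber_after_a_flip} and~\ref{lemma_lct_Asquare}.

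\emph{The surface $X$.} Since $f$ restricts to an isomorphism $S\xrightarrow{\ \sim\ }C$, adjunction gives $(K_X+S)\cdot S=\deg\omega_C=2g(C)-2$. To evaluate $K_X\cdot S$ I would apply Kodaira's canonical bundle formula in the form valid for elliptic surfaces with twisted fibers (Theorem~\ref{thm: canonical bundle formula}, extended as in \cite{ascher2017log}*{\S4} and \cite{inchiostro2020moduli}), namely $\omega_X\cong f^*(\omega_C\otimes\cL)\otimes\cO_X\!\big(\sum_i(m_i-1)F_i\big)$, where $\cL$ is the fundamental line bundle; the point is that $\deg\cL$ together with the twisted data encodes $\deg(j)$ through the discriminant, so that passing to the minimal resolution $\widetilde X\to X$ and to the associated minimal elliptic fibration (whose height is $\tfrac{1}{12}\deg(j)$) one obtains $(S)^2=-\tfrac{1}{12}\deg(j)$. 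Feeding this, together with the local computation of $F_i\cdot S$ (which uses $f^*p_i\cdot S=1$), back into $(K_X+\sum_iF_i)\cdot S=(2g(C)-2)-(S)^2+\sum_iF_i\cdot S$ gives the second identity, the multiple-fiber corrections $(m_i-1)F_i$ in $\omega_X$ telescoping against the $\sum_iF_i$ to produce precisely the term $+k$.

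\emph{The surface $Y$.} The model $Y$ is obtained from $X$ by a local modification over each of $p_1,\dots,p_n$ that replaces the twisted fiber $m_iF_i$ with a cuspidal fiber $F_i^c$; as explained in \S\ref{subsection_from_tsm_to_KSBA} this modification is realized by a flip of La Nave followed by the contraction of the resulting pseudo-elliptic component. By Lemma~\ref{lemma_which_intermediate_fiber_after_a_flip} each such step changes $(S)^2$ by $-1/A_i^2$, and by Lemma~\ref{lemma_lct_Asquare} one has $-1/A_i^2=1-\lct(Y;F_i^c)$; summing over $i=1,\dots,n$ and using the value of $(S)^2$ already found yields $(S_Y^2)=-\tfrac{1}{12}\deg(j)-\sum_{i=1}^n\big(1-\lct(Y;F_i^c)\big)$. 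For the remaining identity I would again run adjunction on $S_Y\cong C$ to get $(K_Y+\sum_{i=n+1}^kF_i)\cdot S_Y=(2g(C)-2)-(S_Y^2)+\sum_{i=n+1}^kF_i\cdot S_Y$, observe that $X\dashrightarrow Y$ is an isomorphism in a neighbourhood of $F_{n+1},\dots,F_k$ so those intersection numbers are unchanged, and subtract the analogous expression for $X$; the difference collapses, via $(S_Y^2)-(S)^2=-\sum_{i=1}^n(1-\lct(Y;F_i^c))$ and $\sum_{i=1}^nF_i\cdot S=n$, to $-\sum_{i=1}^n\lct(Y;F_i^c)$, which is the claim.

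\emph{Main obstacle.} The crux is the bookkeeping of the twisted and intermediate fibers: one must pin down precisely the coefficient with which each $F_i$ enters the canonical bundle formula, the exact value of $F_i\cdot S$ on the (possibly singular) surface $X$, and how the degree of the $j$-map is distributed among these fibers, and then verify that all of this combines so that the twisted corrections telescope exactly as in the stated formulas. This relies on the structure theory of twisted fibers in \cite{ascher2017log} and \cite{inchiostro2020moduli}, together with the care that every intersection number be interpreted consistently as an intersection of $\bQ$-Cartier $\bQ$-divisors.
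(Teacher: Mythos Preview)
Your plan—adjunction along $S$ combined with the canonical bundle formula—is precisely the paper's approach, and you correctly identify in your last paragraph that the bookkeeping of the twisted fibers is the crux. But that bookkeeping is not carried out correctly.

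The adjunction statement $(K_X+S)\cdot S=2g(C)-2$ is wrong as written: $X$ has cyclic quotient singularities at the points $S\cap F_i$ where the twisted fibers meet the section, so adjunction carries a different and the correct value is $(K_X+S)\cdot S=2g(C)-2+\sum_i(1-1/m_i)$. Relatedly, your claim $\sum_{i=1}^nF_i\cdot S=n$ is inconsistent with $f^*p_i\cdot S=1$, which gives $F_i\cdot S=1/m_i$. These two errors cancel in the end, but as written the argument does not close. The paper sidesteps this by quoting \cite{tsm}*{Prop.~5.3} for the packaged identity $(K_X+S+\sum_iF_i)\cdot S=2g(S)-2+k$, and by using the log canonical bundle formula in the form
\[
K_X\ \sim_{\bQ}\ f^*\!\Big(K_C+\tfrac{\deg j}{12}\cO(1)+\textstyle\sum_i\big(1-\tfrac{1}{m_i}\big)p_i\Big),
\]
where the boundary coefficient is $1-\lct(X;m_iF_i)=1-\tfrac{1}{m_i}$; subtracting these two identities gives $(S^2)=-\tfrac{\deg j}{12}$ directly, with no detour through a minimal resolution.

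For the ``moreover'' part your route through Lemmas~\ref{lemma_which_intermediate_fiber_after_a_flip} and~\ref{lemma_lct_Asquare} is genuinely different from the paper's, and also unnecessarily roundabout: those lemmas concern a threefold flip, whereas the passage $X\rightsquigarrow Y$ here is a birational modification of \emph{surfaces} over $C$. The paper simply observes that $Y$ is again an elliptic surface over $C$—now with cuspidal rather than twisted fibers over $p_1,\dots,p_n$—and applies the same canonical bundle formula to $Y$, where the boundary coefficient at $p_i$ becomes $1-\lct(Y;F_i^c)$ in place of $1-\tfrac{1}{m_i}$. Subtracting the two expressions for $K_X\cdot S$ and $K_Y\cdot S_Y$ yields both displayed identities at once. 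Your approach can be made to work, but it requires justifying that the threefold flip followed by the divisorial contraction restricts, on the component in question, exactly to the surface modification defining $Y$, and that the section's self-intersection is unaffected by the contraction step; the direct application of the canonical bundle formula avoids this.
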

\noindent{}We now explain the process of replacing $F_1,\dots,F_n$ with cuspidal fibers to obtain \(Y\).
One can take a resolution of $X$ around the intersection point of $S$ and $F_i$.
The resulting surface will admit a morphism to $C$ and if one contracts all the irreducible fiber components not meeting $S$,
the fiber $F$ gets replaced by a singular fiber,
as shown in \cite{ascher2017log}. It is shown in \textit{loc. cit.} that the resulting fibers are cusps in our case.

\begin{Remark}
    \Cref{prop_can_bundle_formula} is the reason it is simpler to work with $\bar{\Phi}_n$ and the limit in $\overline{\cE}_{n}$ rather than with the moduli space $\overline{\cE}_{n,0}$: we can use the canonical bundle formula on $(\sX^{(m)}, \sS^{(m)}) = (\sX^c, \epsilon \sS^c)$, as $\sX^{(m)}$ admits a fibration with irreducible fibers which generically are elliptic curves.
\end{Remark}

\begin{Remark}\label{rmk_S_Y_square_is_an_integer}
    Observe that in the formula computing $(S_Y^2)$, if all the fibers of $Y$ are reduced (e.g. when $n=k$), then $(S_Y^2)\in \bZ$, since in this case the section of $Y$ is contained in the smooth locus of $Y$.
\end{Remark}
\begin{proof}[Proof of \Cref{prop_can_bundle_formula}]
    Recall that the fibers of $f:X\to C$ are either one-pointed genus one stable curves, or the twisted fibers of \Cref{subsection_twisted_fiber}. In particular, they are irreducible, so from the canonical bundle formula \cite{floris2020travel}*{Ex. 2.7} one has
    \[K_X = f^*\left(K_{C}+\frac{\deg(j)}{12}\cO(1) + \sum_{i=1}^k(1-m_i')p_i\right)\]
    where $\{p_1,...,p_k\}$ are points in $C$ where the fibers of $X\to C$ could be worse than nodal singularities (which by assumption are supported on the nodes of $C'$).
    From canonical bundle formula $m_i'=\lct(X; m_iF_i)$. In \cite{ascher2017log}, the authors compute it explicitly, and it is shown that $m_i'=\frac{1}{m_i}$, or in other terms, $(X,F_i)$ is log-canonical.

    Similarly, it is proven in \cite{tsm}*{Prop. 5.3} that \[
    \bigg(K_X+S+\sum_{i=1}^k F_i\bigg). \ S\ =\ 2g(S)-2+k.
    \]
    Since \[
    F_i.S = \frac{1}{m_i}f^{-1}(p_i).S = \frac{1}{m_i},
    \]
    putting this together leads to \[
   2g(S)-2+\frac{\deg(j)}{12} + \sum_{i=1}^k(1-m_i) + (S^2) + \sum_{i=1}^km_i\ =\ 2g(S)-2+k \] and thus \[(S^2)\ =\  - \frac{\deg(j)}{12}.\]
    
    The other equality now follows similarly, and the ``moreover" part follows again from the canonical bundle formula.
\end{proof}


\begin{Algorithm}[explicit construction of $(\sX^c,\epsilon\sS^c)$]
\label{algorith: stable reduction}
We are ready to explain how to compute the canonical model $(\sX^c,\epsilon \sS^c)$, starting from $(\sX,\sS)$. From the explicit descriptions of the steps of the MMP, one can proceed as follows. Let $S$ be an irreducible component of $\sS^{(i)}$ which is a leaf\footnote{Here, a \emph{leaf} is a vertex of degree one in the dual graph, i.e. it is incident to exactly one edge.} on the dual graph of $\sS^{(i)}$, and which belongs to the irreducible component $X\subseteq \sX^{(i)}_{0}$. Then $X$ is glued to the other components of $\sX_0^{(i)}$ by a single fiber $F$ and $S^2 < 0$ by \Cref{prop_can_bundle_formula}. We can use \Cref{prop_can_bundle_formula} to compute $\big(K_X+F\big).S$. There are two cases:
\begin{enumerate}
    \item  If $(K_X+F).S\le0$, then $(K_{\sX^{(i)}} + \epsilon \sS).S = (K_X + \epsilon S + F).S < 0$ and we flip $S$ via a flip of La Nave. Since $S$ is a leaf, there is a unique irreducible component $X'\subseteq \sX^{(i)}_{0}$ to which $X$ is attached along the fiber $F$. The flip of La Nave will replace $F$ with an intermediate fiber. We know that the resulting pseudo-elliptic component $P$ (namely, the proper transform of $X$) has to be contracted by taking the canonical model $(\sX^c,\sS^c)$, either by a step of our special MMP (if there is a $(K_{\sX^{(i)}}+\epsilon \sS^{(i)})$-extremal ray, whose contraction will contract $P$) or by taking the canonical model of our minimal model (i.e. by $f^{(m-1)}$). We choose to contract it right away, so that the resulting contraction will replace $F$ with a cusp, whose log-canonical threshold can be computed using \Cref{lemma_which_intermediate_fiber_after_a_flip}, and so that each irreducible component of the resulting special fiber will be an elliptic surface with a section and with irreducible fibers (which will allow us to use \Cref{prop_can_bundle_formula} once again). One can perform the contraction of a pseudoelliptic component explicitly by adding several marked fibers of $\sX^{(i)}\to \sC^{(i)}$, away from $P$, and taking the canonical model of the resulting threefold pair.
    \item  If $(K_X+F).S> 0$ we do nothing and move to the next leaf.
\end{enumerate}
The algorithm terminates precisely when we are in case $(2)$ for every leaf, and in this case the resulting central fiber is stable (here we have used that $0 < \epsilon \ll 1$ is very small). Since the KSBA moduli space is separated, this must be the unique KSBA-stable limit.
\end{Algorithm}

The previous propositions explain how the intersection pairing change after point (1) above, so we can iterate the step above until for each $S$ we have $(K_X+\sum F_i).S> 0$. At this point, the resulting elliptic surface will be the special fiber of $(\sX^c,\epsilon \sS^c)$.
\begin{Remark}
    One might wonder how the algorithm would change if we were to consider other coefficients for the section $\sS$. It turns out that, for that case, the control on the steps of our special MMP will provide other (slightly more complicated) combinatorial invariants that one has to consider; see \cite{inchiostro2020moduli}*{Thm. 1.4 and Def. 7.3}.
\end{Remark}

\subsection{Combinatorial description}\label{subsection_comb}
The goal of this subsection is to introduce a combinatorial data which will package all the information in \Cref{algorith: stable reduction}. We will use this combinatorial description as a gadget to control the steps of \Cref{algorith: stable reduction}, which are needed to study the boundary of $\overline{\cE}_n$ and $\overline{\cP}_n$; see \Cref{thm:stratificationE_n} and \Cref{thm:boundary_P_n}. 
We will first introduce objects that correspond to the locally closed strata of $\cK_n$ -- the compactification of the Weierstrass locus $\Wei^{\nod}$ by twisted stable maps.

\begin{Def}
\label{definition: sliced tree}
    A \emph{sliced tree} $\Gamma= (V,E,E_0,\jdeg)$ is a graph with vertices $V$ and edges $E$, which is a tree, together with the following structure.
    We choose a $\jdeg$ function $$\jdeg \colon V \longrightarrow \frac{1}{12}\bZ_{\geq0},$$ a subset $E_0 \subset E$ of \emph{sliced edges}, and for each $e \in E_0$ connecting $v$ and $w$, we assign a pair $(e_v, e_w)$ of fractions, called \emph{slicings}, from the following list:
    \[
    \left( \frac{1}{2}, \frac{1}{2} \right), \ \ \ 
    \left( \frac{1}{3}, \frac{2}{3} \right), \ \ \
    \left( \frac{1}{4}, \frac{3}{4} \right), \ \ \
    \left( \frac{1}{6}, \frac{5}{6} \right).
    \]
    
\noindent As usual, a vertex $v \in V$ is called a \emph{leaf} if it is adjacent to only one other vertex.
We define
$$\jdeg(\Gamma)\ :=\ \sum_{v\in V}\jdeg(v).$$
Moreover, we define $E(v)$ (resp. $E_0(v)$) to be the set of edges (resp. sliced edges) adjacent to $v \in V$, and
we require that for each vertex $v$ we have that $$ \jdeg(v)+\sum_{e \in E_0(v)} e_v \ \in\ \bZ.$$
\end{Def}

\begin{Def}
    We say that a sliced tree $\Gamma = (V,E,E_0,\jdeg)$ is \emph{tsm-stable} if
    \begin{enumerate}
        \item $\jdeg(v) \geq 0$ for all $v \in V$; and
        \item $|E(v)| \geq 3$ if $\jdeg(v) = 0$.
    \end{enumerate}
    
\end{Def}

\begin{Def}\label{def_assignment}
    Let $(X,S)\to C$ be a tsm-stable elliptic surface. The \emph{sliced tree $\Gamma$ associated to $(X,S)\to C$} is given as follows.
    \begin{enumerate}
        \item Each irreducible component of $X$ corresponds to a vertex of $\Gamma$.
        \item The $\jdeg$ of each vertex is the degree of the corresponding $j$-map divided by $12$.
        \item There is an edge between vertices $v$ and $w$ if the corresponding irreducible components intersect.
        \item
        \label{item: edge is sliced iff glued along non-reduced fiber}
        The edge connecting $v$ and $w$ is sliced if the corresponding irreducible components are glued along a non-reduced fiber.
        \item
        \label{item: associated slicing}
        The slicing is defined as follows.
        If two irreducible components $X_v$ and $X_w$ of $X$ intersect along a non-reduced fiber $F$, let $Y_v \to X_v$ (resp. $Y_w\to X_w$)
        be the minimal resolution of $X_v$ (resp. $X_w$) along $S\cap F$, and let $Y_v\to Z_v$
        (resp. $Y_w \to Z_w$) be the surface obtained by contracting all the fiber components not meeting $S$.
        Then the fiber $F$ is replaced with a different fiber
        $F_v \subseteq Z_v$ (resp. $F_w \subseteq Z_w$).
        The slicing adjacent to $v$ (resp. $w$) is $1-\lct(Z_v; F_v)$ (resp. $1-\lct(Z_w;F_w)$).
    \end{enumerate}

\end{Def}
~\
\begin{Remark}~\

\begin{enumerate}[(a)]
    \item  The surfaces $Z_v$ and $Z_w$ of \Cref{def_assignment} (\ref{item: associated slicing}) can also be constructed by taking the minimal Weierstrass fibration birational to $X_v$ and $X_w$ respectively, if $X_v$ and $X_w$ are normal.
    The fibers $F_v$ and $F_w$ would be the cuspidal fibers which replace $F$.
    \item The condition $\jdeg(v)+\sum_{e \in E_0(v)} e_v \in \bZ$ follows from \Cref{rmk_S_Y_square_is_an_integer}.
    \item One might wonder why the possible markings are only those listed in \Cref{definition: sliced tree}, for example, why $\left(\frac{1}{2},\frac{1}{6}\right)$ is not allowed. This follows from \Cref{def_assignment} (\ref{item: edge is sliced iff glued along non-reduced fiber}).
    Indeed, it turns out that if the action is of the type $(x,y)\mapsto (\xi\cdot x, \xi^{-1}\cdot y)$, then the only slicing allowed are those of \Cref{definition: sliced tree}. We now explain this point more carefully.

    Consider a family of twisted curves $\cC\to \spec(R)$ over the spectrum of a DVR with uniformizer $\pi$, with smooth generic fiber. Assume we are given a map $\gamma\colon\cC\to \overline{\cM}_{1,1}$, and let $n_\cC\in \cC$ be a node of the central fiber of $\cC\to \spec(R)$. It follows from \cite[Proposition 2.2 (ii)]{olsson2007log} the local equation of the node is of the form
    \[
    [\spec(R[\![x,y]\!]/xy-\pi^k)/\bmu_n]
    \]
    with the action of $\bmu_n$ of the form $\xi*x=\xi x$ and $\xi*y=\xi^{-1} y$. Then, let $(\cX,\cS)\to \cC$ be the family of genus 1 curves with a section corresponding to $\gamma$, and let $n_\cS$ the fiber of $\cS\to \cC$ over $n_\cC$, the nodal point of $\cC$. As $\gamma$ is representable, the action of $\bmu_n$ is faithful on the vertical tangent direction at $n_\cS$. In particular, the local equation of $\cX$ at $n_\cS$ is of the form
    \[
    [\spec(R[\![x,y,f]\!]/xy-\pi^k)/\bmu_n]
    \]
    with the action of $\bmu_n$ of the form $\xi*x=\xi x$, $\xi*f=\xi^\delta f$ for $\delta$ and $n$ coprime, and $\xi*y=\xi^{-1} y$. Here, we denoted by $f$ the vertical tangent direction. Now, over the central fiber $\cX_0\subseteq \cX$ (i.e. when $\pi=0$) the equation of $\cX_0$ is \[
    [\spec(k[\![x,y,f]\!]/xy)/\bmu_n]
    \] with the same action as before. So for example when $n=3$ and $\delta=1$, we see that the two components of the central fiber, once we take the coarse moduli space, are:
    \begin{enumerate}
        \item the quotient of $k[\![x,f]\!]$ by the action of $\bmu_3$ which sends $\xi*x = \xi x$ and $\xi*f=\xi f$, and
        \item the quotient of $k[\![y,f]\!]$ by the action of $\bmu_3$ which sends $\xi*y = \xi^{-1}y$ and $\xi*f=\xi f$.
    \end{enumerate}
    Namely, an $A_2$-singularity and a singularity which is the cone over a rational normal curve in $\bP^3$.

    As the map $\cC\to \overline{\cM}_{1,1}$ is representable, we can only have $\bmu_n=\{1,\bmu_2, \bmu_4,\bmu_3,\bmu_6\}$. A tedious but straightforward case by case analysis leads to the previous table.
\end{enumerate}
\end{Remark}

The following proposition is now straightforward.
\begin{Prop}
    The stack $\cK_n$ admits a locally closed stratification with  strata labeled by the stable sliced trees of $\jdeg(\Gamma)=n$:
    \[
        \cK_n = \bigsqcup_{\Gamma, \, \jdeg(\Gamma)=n} \cK_\Gamma.
    \]
\end{Prop}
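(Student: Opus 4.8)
The plan is to exhibit $\bigsqcup_{\Gamma}\cK_{\Gamma}$ as the common refinement of the stratification of $\cK_n$ by the combinatorial type of the twisted source curve and the stratification by the $j$-degrees of its components. Fix a twisted stable map $\big(j'\colon\cC\to\overline{\cM}_{1,1}\big)$ representing a point of $\cK_n$, with coarse curve $C$ and associated tsm-stable elliptic surface $(X,S)\to C$. By \Cref{def_assignment} it has an associated sliced tree $\Gamma$, whose underlying graph is the dual graph of $C$ (a tree, since $C$ has arithmetic genus $0$) and also the dual graph of $X$ (because $X\to C$ has irreducible fibers). I would first check that $\Gamma$ is automatically tsm-stable with $\jdeg(\Gamma)=n$: $\jdeg(v)\ge 0$ because $12\,\jdeg(v)$ is the degree of a morphism of curves; $|E(v)|\ge 3$ whenever $\jdeg(v)=0$ because this is exactly the Kontsevich stability of the coarse stable map $j\colon C\to\overline{M}_{1,1}$, which forbids a contracted genus-$0$ component with fewer than three nodes; and $\jdeg(\Gamma)=\tfrac{1}{12}\deg(j)=n$. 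Defining $\cK_{\Gamma}\subseteq\cK_n$ as the locus of those twisted stable maps whose associated sliced tree is isomorphic to $\Gamma$, we obtain a set-theoretic equality $\cK_n=\bigsqcup_{\jdeg(\Gamma)=n}\cK_{\Gamma}$, with disjointness clear because the sliced tree is an isomorphism invariant of the point; the real content is that each $\cK_{\Gamma}$ is locally closed.

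For this I would use the forgetful morphism $\cK_n\to\mathfrak{M}^{\mathrm{tw}}$ to the algebraic stack of genus-$0$ nodal twisted curves, $\big(j'\colon\cC\to\overline{\cM}_{1,1}\big)\mapsto\cC$. The stack $\mathfrak{M}^{\mathrm{tw}}$ admits a locally closed stratification by \emph{combinatorial type}, i.e.\ by the pair consisting of the dual graph of the coarse curve together with the order $r_e\in\bZ_{>0}$ of the cyclotomic gerbe structure at each node $e$: the number of nodes is upper semicontinuous, within a fixed number of nodes the incidence of components is open and closed, and the gerbe order $r_e$ is locally constant in families since a $\bmu_r$-gerbe cannot specialise to a $\bmu_{r'}$-gerbe with $r'\ne r$ (cf.\ \cite{AV_compactifying}). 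Pulling this stratification back, $\cK_n$ is cut into locally closed substacks on which the underlying tree of $\Gamma$ and all the orbifold orders $r_e$ are constant; in particular $E_0$ is constant, because an edge is sliced exactly when the corresponding fiber of $X\to C$ is non-reduced (\Cref{def_assignment}, item~(\ref{item: edge is sliced iff glued along non-reduced fiber})), and — by the discussion of twisted fibers in \Cref{subsection_twisted_fiber} — this happens exactly when $\cC$ is stacky at that node, i.e.\ $r_e>1$.

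It remains to pin down the decorations on a fixed such stratum. Since the universal coarse curve over the stratum has locally constant dual graph, each vertex $v$ corresponds to a component which propagates in a sub-family $C_v$, and the restricted $j$-map $C_v\to\overline{M}_{1,1}\cong\bP^1$ has degree $d_v:=\deg\big((j|_{C_v})^{*}\cO_{\bP^1}(1)\big)$, which is locally constant; then $\jdeg(v)=d_v/12$. Likewise, by \Cref{def_assignment}, item~(\ref{item: associated slicing}), and the remark following it, the slicing $(e_v,e_w)$ at a sliced edge is determined by the orders $r_e$ and the type of the $\bmu_{r_e}$-action $(x,y)\mapsto(\xi x,\xi^{-1}y)$ — and in particular lies in the four listed pairs — hence is locally constant. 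Refining each combinatorial stratum along the open-and-closed loci where the $d_v$ take prescribed values produces exactly the substacks $\cK_{\Gamma}$, which are therefore locally closed in $\cK_n$; by the first paragraph they are disjoint and cover $\cK_n$. Finally, $\cK_n$ is Noetherian (being proper over $k$), so only finitely many $\cK_{\Gamma}$ are nonempty — in particular only finitely many tsm-stable sliced trees with $\jdeg=n$ actually occur.

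The genuinely substantive inputs are the local closedness of the combinatorial stratification of twisted curves and the local constancy of the gerbe orders $r_e$ and of the component degrees $d_v$; granting these standard facts, the argument is just the bookkeeping matching these discrete invariants with \Cref{def_assignment}, which is why the statement is ``straightforward''. I would also record, although it is not needed here, the converse nonemptiness $\cK_{\Gamma}\ne\varnothing$ for every tsm-stable $\Gamma$ with $\jdeg(\Gamma)=n$: this follows by gluing, along compatible twisted nodes, twisted stable maps realising the prescribed $j$-degree on each component, and then smoothing the resulting map via the deformation theory of twisted stable maps (as in \cite{tsm2}).
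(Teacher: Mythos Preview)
Your main argument is correct and is exactly the route the paper has in mind; the paper itself gives no proof beyond declaring the proposition ``straightforward''. Stratifying first by the combinatorial type of the twisted source (dual graph plus gerbe orders at the nodes), then refining by the locally constant $j$-degrees on components, is the natural decomposition, and your identification of the sliced edges with the stacky nodes via \Cref{subsection_twisted_fiber} is right.

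One minor imprecision: you say the slicing $(e_v,e_w)$ at a sliced edge is ``determined by the orders $r_e$ and the type of the $\bmu_{r_e}$-action $(x,y)\mapsto(\xi x,\xi^{-1}y)$''. The balanced action alone only pins down the unordered pair $\{e_v,e_w\}$; what decides which endpoint gets which fraction (when $r_e>2$) is the character of $\bmu_{r_e}$ on the pullback of the Hodge line bundle along each branch of the node. This character is a discrete invariant of a $\bmu_{r_e}$-equivariant line bundle and is therefore locally constant in families, so your conclusion stands, but the justification should go through the line bundle rather than the nodal action.

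Your final aside, however, is wrong: it is \emph{not} true that $\cK_{\Gamma}\neq\varnothing$ for every tsm-stable $\Gamma$ with $\jdeg(\Gamma)=n$. The paper records this explicitly in the Remark immediately following the proposition. A counterexample is a vertex $v$ with $\jdeg(v)=\tfrac{1}{6}$ carrying five sliced edges, each with slicing $\tfrac{1}{6}$ at $v$. The integrality constraint $\tfrac{1}{6}+5\cdot\tfrac{1}{6}=1\in\bZ$ is satisfied and the tree is tsm-stable, yet the corresponding component cannot exist: its minimal Weierstrass model would have height $1$, so $A\in \oH^0(\cO_{\bP^1}(4))$ would have to vanish at five distinct points, forcing $A\equiv 0$; but then, since the slicing $\tfrac{1}{6}$ forces type~$\mathrm{II}$ cusps (where $B$ has a simple zero), $B\in \oH^0(\cO_{\bP^1}(6))$ must have six distinct simple roots, producing six cusps rather than five. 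The flaw in your gluing sketch is at the very first step: the prescribed local pieces (twisted maps from a stacky $\bP^1$ with specified $\jdeg$ and specified characters at the stacky points) need not exist, so there is nothing to glue. Since you correctly flag this as ``not needed here'', your proof of the proposition itself is unaffected.
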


\begin{Remark}
    \Cref{table: sliced weights} explains what is the slicing (and so $1-$ the minimal log-canonical threshold) of each type of cusp which appears in a minimal Weierstrass fibration.
    Moreover, if one replaces a cuspidal fiber $f^{-1}(p)$ with a twisted fiber as in \cite{ascher2017log}, the corresponding elliptic surface in a neighborhood of $p$ comes from a twisted stable map $\phi \colon \cC\to \overline{\mathcal{M}}_{1,1}$.
    From the analysis in \cite{ascher2017log} and \cite{bejleri2022height}*{Thm. 1.6 \& Sec. 7}, the denominator $d$ of each slicing corresponds to the order of the stabilizers of $p\in \cC$, and the numerator comes from the character of the irreducible representation of $\bmu_d$ on $\phi^*\cO(1)$. For example, type $\operatorname{III}^*$ cusps get replaced with twisted fibers such that $\cC$ has automorphism group $\bmu_4$, with the action on the fibers of $\phi^*\cO(1)$ being $\zeta\cdot v = \zeta^3v$.
\end{Remark}
 
    \begin{table}
    \caption{Correspondence between Kodaira fiber types and slicings}
    \label{table: sliced weights}
    \renewcommand{\arraystretch}{1.3}
    \begin{tabular}{|c|c|l|c|c|}
      \cline{1-2} \cline{4-5}
    Kodaira type & Slicing, or $1- \lct$ &  & Kodaira type  & Slicing, or $1- \lct$ \\ 
    \cline{1-2} \cline{4-5}
    I       & $0$ &  & 
    I*          &$\frac{1}{2}$ \\ 
     \cline{1-2} \cline{4-5}
    II          &$\frac{1}{6}$ &  & 
    II*     &   $\frac{5}{6}$ \\ 
     \cline{1-2} \cline{4-5}
    III      &$\frac{1}{4}$ &  & 
    III*       &$\frac{3}{4}$ \\ 
     \cline{1-2} \cline{4-5}
    IV         &$\frac{1}{3}$  &  & 
    IV*        &$\frac{2}{3}$ \\ 
      \cline{1-2} \cline{4-5}
    \end{tabular}
    \end{table}

\begin{Remark}\label{rem:empty}
      Some of these strata may be empty. For example, one can consider a tree with a vertex 
     whose $\jdeg$ equal to $\frac{1}{6}$, and with 5 edges exiting from it, each sliced with coefficient $\frac{1}{6}$.
     This should correspond to an elliptic surface, with $j$-map of degree 2, and with five non-reduced fibers. One can replace this elliptic surface with its minimal Weierstrass fibration, and one would have a Weierstrass fibration with five type $\mathrm{II}$ cusps and $j$-map of degree $2$.
     But such a Weierstrass fibration would be given by $$A\in \oH^0(\cO_{\bP^1}(4)) \ \ \ \textup{and} \ \ \ B\in \oH^0(\cO_{\bP^1}(6)),$$ where the polynomial $A$ vanishes along the five points corresponding to cuspidal fibers. This forces $A$ to be identically 0. By \cite{bejleri2022height}*{Thm. 1.6}, if the edges are sliced with coefficient $\frac{1}{6}$, the corresponding cusp will be such that $B$ cannot have a double root. So $B$ vanishes at six distinct points, and therefore there are six cusps rather than five, which is the desired contradiction.
\end{Remark}

\begin{EG}
\label{example: explicit sliced tree}
    Consider a tsm-stable elliptic surface $(\pi \colon X \to C, S)$, whose associated sliced tree $\Gamma$ is as on the right.
    Then it should be interpreted as providing the following information.
    
    \noindent\begin{minipage}{0.8\textwidth}
    \begin{itemize}
        \item $C$ has $6$ irreducible components $C_v$, $v \in \{ a,b,c,d,f,h \}$, each isomorphic to $\bP^1$.
        \item $\jdeg(\Gamma) = 6$, hence $g \in \cK_6$, i.e. it is a degeneration of a Weierstrass fibration whose $j$-map has degree 72.        
        \item The degree of the $j$-map restricted to $C_v$ and divided by 12 is the label $\jdeg (v)$ of the vertex $v$.
    \end{itemize}
    \vspace{1 ex}
    \end{minipage}
    \begin{minipage}{0.19\textwidth}
    \flushright
    \begin{tikzpicture}[radius=2pt]
    \fill (0,0) circle node[above]{\small{1}} node[below]{\small{a}};
    \fill (1,0) circle node[above]{\small{4/3}} node[shift={(1.5mm,-2mm)}]{\small{b}};
    \fill (2,0) circle node[above]{\small{1/6}} node[below]{\small{c}};
    \fill (1,-1) circle node[shift={(-1.5mm,2mm)}]{\small{0}} node[below]{\small{d}};
    \fill (0,-1) circle node[above]{\small{2}} node[below]{\small{f}};
    \fill (2,-1) circle node[above]{\small{3/2}} node[below]{\small{h}};
    \draw (0,0) -- (1,0) -- (2,0);
    \draw[-|] (1,0) -- (1.5,0);
    \draw[-|] (1,-1) -- (1.5,-1);
    \draw[-|] (1,0) -- (1,-0.5);
    \draw (1,0) -- (1,-1);
    \draw (0,-1) -- (1,-1) -- (2,-1);
    \end{tikzpicture}
    \end{minipage}
    
    \noindent{}In this example, the sliced edges are $(bc)$, $(bd)$ and $(dh)$, where the slicings are $\left( \frac{1}{6}, \frac{5}{6} \right)$, $\left( \frac{1}{2}, \frac{1}{2} \right)$ and $\left( \frac{1}{2}, \frac{1}{2} \right)$ respectively.
\end{EG}

In order to describe boundary strata in the closure of the KSBA moduli space, we need to contract certain irreducible components of surfaces via \Cref{algorith: stable reduction}. This will reduce the number of vertices, but create singularities on the surfaces, which can be tracked combinatorially by attaching half-edges. Since the contraction process resembles the process of pruning trees, we name the corresponding object accordingly.

\begin{Def}\label{def_pruned_tree}
    A \emph{pruned tree} $\Pi=(V,E,E_0,\jdeg, F,T)$ is a sliced tree $(V,E,E_0,\jdeg)$ together with a set $K$, called \emph{klt-markings}, a set $L$ called \emph{lc-markings}, and the following additional structure. Each klt-marking and lc-marking is attached to one vertex, and similar to the notation in \cref{definition: sliced tree}, we denote the set of klt-markings (resp. lc-markings) attached to a vertex $v \in V$ by $T(v)$ (resp. $F(v)$).
    Each klt-marking $t \in T(v)$ attached to $v \in V$ is assigned a number
    \[ t_v \in
    \left\{ 
    \frac{1}{2}, \, \frac{1}{3}, \, 
    \frac{2}{3}, \, \frac{1}{4}, \,
    \frac{3}{4}, \, \frac{1}{6}, \,
    \frac{5}{6} \right\}.
    \]
\end{Def}
\begin{Remark}
    The numbers in \Cref{def_pruned_tree} are $(1-\lct \textup{of cuspidal fibers})$  in minimal Weierstrass fibrations, see \cite{ascher2017log}.
\end{Remark}

\begin{Def}
    The \emph{weight} of a vertex $v$ of a pruned tree is defined as follows:
   
    \[
    \wt(v)\ :=\ \# \{\text{edges adjacent to } v \} -2 +  \jdeg(v) + \sum_{t_v \in T(v)} t_v+ |F(v)|.
    \]
\end{Def}

\begin{Def}
    A pruned tree $\Pi = (V,E,E_0,\jdeg, F,T)$ is \emph{stable} if each vertex $v \in V$ has $\wt(v) > 0$.
\end{Def}

\begin{Remark}
    We now explain the reason behind the previous definition. We will associate to each KSBA-limit, as in \Cref{subsection_from_tsm_to_KSBA}, a pruned tree $\Pi$. More explicitly, if $(\sX,\sS)\to \spec(R)$ is a one-parameter family of tsm-stable elliptic surfaces, whose generic fiber is a Weierstrass fibration, we explained in \Cref{subsection_from_tsm_to_KSBA} how to take the relative canonical model of $(\sX,\epsilon\sS)\to \spec(R)$. If $\Gamma$ is the sliced tree associated to $(\sX_0, \sS_0)$, each step of the algorithm flips and contracts a component of $(\sX_0, \sS_0)$ and that corresponds to removing a leaf of $\Gamma$ and replacing it with an lc cusp or a klt cusp.
    From \Cref{algorith: stable reduction}, the algorithm terminates when each irreducible component of such a canonical model will satisfy that $(K_X+\sum F_i).S>0$, and from \Cref{prop_can_bundle_formula} this translates into a condition that $\wt(v) > 0$ for every vertex in the resulting graph. Moreover, the lc markings (resp. klt markings) corresponds to strictly lc cusps (resp. klt cusps).
\end{Remark}

The discussion in \Cref{algorith: stable reduction} leads to the following \emph{pruning algorithm} for stable sliced trees.

\begin{Prop}\label{prop_pruning_tree}
    Let $\bar{\Phi}_n \colon \cK_n^{\nu}\rightarrow \overline{\cE}_{n}$ be as at the beginning of this subsection. Let $\Gamma = (V,E,E_0,\jdeg)$ be a stable sliced tree parametrizing a stratum $\mathcal{K}^\nu_\Gamma$ in $\cK_n^{\nu}$. Let $\Pi$ be obtained from $\Gamma$ by the following process of \emph{pruning}:
    \begin{enumerate}
        \item Set $\Pi = \Gamma$ with $F = T = \varnothing$.
        \item \label{item: pruning step}
        If there is a leaf $v \in V$ of weight $\leq 0$, denote by $w$ the vertex adjacent to it, and do the following: replace $V$ by $V \setminus\{v\}$, and let $t := \wt(v) + 1$.
        \begin{enumerate}
        \item If $t=1$ then replace $F(w)$ by $F(w)\cup \{v\}$;
        \item if $t=0$, do nothing; otherwise
        \item replace $T(w)$ by $T(w)\cup \{  \wt(v) + 1\}$.
        \end{enumerate}
        If there are no leaves of weight $\le 0$, stop.
        \item Repeat (\ref{item: pruning step}).
    \end{enumerate}
    The resulting pruned tree, denoted $\bar{\Phi}_n(\Gamma) := \Pi$, is stable as a pruned tree. Moreover, the image of the stratum $\bar{\Phi}_n(\mathcal{K}^\nu_\Gamma)$ parametrizes surface pairs $(X,S)$ with sliced tree given by $\Pi$ and whose components have lc cusps (resp. klt cusps) of given type indexed by $F(w)$ (resp. $T(w))$. 
\end{Prop}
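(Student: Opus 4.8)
The strategy is to run the combinatorial pruning procedure of \Cref{prop_pruning_tree} in lockstep with the geometric stable reduction of \Cref{algorith: stable reduction}: each pruning step will correspond to exactly one flip of La Nave followed by the contraction of the pseudo-elliptic component it produces, and the numerical data on the two sides will be matched via the canonical bundle formula \Cref{prop_can_bundle_formula} together with \Cref{lemma_which_intermediate_fiber_after_a_flip}, \Cref{lemma_lct_Asquare} and \Cref{table: sliced weights}. To set up the dictionary, fix the stratum $\cS_\Gamma \subseteq \cK_n^{\nu}$ and a one-parameter degeneration $(\sX,\sS)\to\spec(R)$ through it whose generic fibre is a minimal Weierstrass fibration, and let $(\sX^{(i)},\epsilon\sS^{(i)})$ be the intermediate models of \Cref{subsection_from_tsm_to_KSBA}. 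If $v$ is a vertex of the current (partially pruned) tree, attached to the component $X$ of $\sX^{(i)}_0$ over a component $C\cong\bP^1$ of the base, where some of the Kodaira slots of $X$ have already been turned into cusps indexed by $T(v)\sqcup F(v)$ and the remaining $|E(v)|$ slots carry the gluings to neighbours along non-reduced fibres $F_1,\dots,F_{|E(v)|}$, then \Cref{prop_can_bundle_formula} with $g(C)=0$ gives
\[
\Big(K_X+\sum_{j=1}^{|E(v)|}F_j\Big).\,S\;=\;|E(v)|-2+\jdeg(v)+\sum_{t\in T(v)}t+|F(v)|\;=\;\wt(v),
\]
and, when $v$ is a leaf, $S^2=-\big(\jdeg(v)+\sum_{t\in T(v)}t+|F(v)|\big)=-(\wt(v)+1)$.

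Consequently, the dichotomy $(K_X+\sum_j F_j).S\le 0$ versus $>0$ that governs \Cref{algorith: stable reduction} is exactly the dichotomy $\wt(v)\le 0$ versus $\wt(v)>0$. Now suppose $v$ is a leaf with $\wt(v)\le 0$, attached to $w$ along a fibre $F$. The flip of La Nave on $X$ contracts its section $S|_X$, of self-intersection $-(\wt(v)+1)$; \Cref{lemma_which_intermediate_fiber_after_a_flip} then determines the self-intersection of the intermediate component $A$ produced on the $w$-side, and \Cref{lemma_lct_Asquare} shows that, after contracting the resulting pseudo-elliptic component, the cusp $A^c$ created on (the new model of) the component of $w$ satisfies $1-\lct = \wt(v)+1 =: t$. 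Reading off \Cref{table: sliced weights}: for $t=0$ the new fibre is of Kodaira type $\mathrm I$, so no cusp appears and no marking is recorded; for $t=1$ the cusp is strictly log canonical and one records an lc-marking on $w$; and for $0<t<1$ the classification of the intermediate fibres arising from flips of La Nave forces $t\in\{\tfrac12,\tfrac13,\tfrac23,\tfrac14,\tfrac34,\tfrac16,\tfrac56\}$ and one records a klt-marking on $w$ of value $t=\wt(v)+1$. This is exactly the update prescribed by \Cref{prop_pruning_tree}; and since $w$ loses the edge to $v$ and gains a cusp contributing $t$ to its weight, $\wt(w)$ changes precisely as the weight formula predicts, so the induction closes. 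The procedure terminates because each step deletes one vertex, and its output is independent of the order of the steps, since flips and contractions at disjoint leaves commute and any ordering is refined by the intrinsic relative canonical model $(\sX^c,\epsilon\sS^c)$.

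It remains to see that the resulting pruned tree $\Pi$ is stable and that it describes the image. When \Cref{algorith: stable reduction} halts, every surviving component satisfies $(K_X+\sum F_j).S>0$, so every leaf of $\Pi$ has positive weight; and for a non-leaf $v$ one has $|E(v)|\ge 2$, $\jdeg(v)\ge 0$ and nonnegative markings, so $\wt(v)=0$ would force $|E(v)|=2$, $\jdeg(v)=0$ and no markings on $v$ — which cannot happen, because a tsm-stable $\Gamma$ has $|E(v)|\ge 3$ whenever $\jdeg(v)=0$, and no pruning step removes an edge from a vertex without simultaneously recording a positive marking on it (indeed the case $t=0$ never occurs, as it would require pruning a leaf with $\jdeg=0$ and no markings, which is impossible for a leaf of $\Gamma$ and hence, by finiteness of the tree, impossible throughout). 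Hence $\Pi$ is a stable pruned tree. Finally, tracking the components through the algorithm shows that $\sX^c_0$ is the nodal union, indexed by $V(\Pi)$, of tsm-stable elliptic surfaces glued to one another along the non-reduced fibres recorded by the edges of $\Pi$, carrying klt cusps of the types prescribed by $T(w)$ via \Cref{table: sliced weights} and strictly lc cusps indexed by $F(w)$; since $(\sX^c,\epsilon\sS^c)$ is the relative canonical model of $(\sX,\epsilon\sS)$ for $0<\epsilon\ll1$, this combinatorial type depends only on $\Gamma$, so $\overline{\Phi}_n(\cS_\Gamma)$ is precisely the locus of pairs with associated pruned tree $\Pi$. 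I expect the main obstacle to be the bookkeeping in the middle step — checking that \Cref{prop_can_bundle_formula} still yields $(K_X+\sum F_j).S=\wt(v)$ once a component has been made non-minimal by earlier cusp creations, and that the numerology of \Cref{lemma_which_intermediate_fiber_after_a_flip} and \Cref{lemma_lct_Asquare} pins the marking value to $\wt(v)+1$ — together with the case analysis excluding weight $\le 0$ on non-leaves.
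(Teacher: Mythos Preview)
Your proof is correct and follows essentially the same route as the paper's: both arguments run \Cref{algorith: stable reduction} in parallel with the pruning, use \Cref{prop_can_bundle_formula} to identify $\wt(v)$ with the intersection number $(K_X+\sum F_j).S$, and then combine \Cref{lemma_which_intermediate_fiber_after_a_flip} with \Cref{lemma_lct_Asquare} to pin the new cusp's $1-\lct$ to $-S_v^2=\wt(v)+1$. Your write-up is in fact more complete than the paper's on several points the paper leaves implicit---the case distinction $t=0,1$ versus $0<t<1$, the stability of $\Pi$ at non-leaf vertices, and order independence---though your argument that $t=0$ never occurs (``by finiteness of the tree'') would be clearer if phrased as a minimal-counterexample argument: the first leaf pruned with $t=0$ must have had $\jdeg=0$ and hence originally $|E(v)|\ge 3$, so at least two earlier prunings removed edges at $v$, each of which (by minimality) had $t>0$ and therefore left a marking on $v$, contradiction.
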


\noindent\begin{minipage}{0.79\textwidth}
\begin{EG}
    Let $\Gamma$ be the sliced tree exhibited in \cref{example: explicit sliced tree}. Then the pruned tree $\bar{\Phi}_6 (\Gamma)$ is the one on the right. Every leaf of the pruned tree on the right has positive weight, hence it is stable and does not need to be pruned any further.
\end{EG}
\end{minipage}
\begin{minipage}{0.2\textwidth}
\flushright
\begin{tikzpicture}[radius=2pt]
\fill (1,0) circle node[above]{\small{4/3}} node[shift={(1.5mm,-2mm)}]{\small{b}};
\fill (1,-1) circle node[shift={(-1.5mm,2mm)}]{\small{0}} node[below]{\small{d}};
\fill (0,-1) circle node[above]{\small{2}} node[below]{\small{f}};
\fill (2,-1) circle node[above]{\small{3/2}} node[below]{\small{g}};
\draw [-{Rays[]}] (1,0) -- (0.5,0);
\draw (1,0) -- (1.5,0);
\draw[-|] (1,-1) -- (1.5,-1);
\draw[-|] (1,0) -- (1,-0.5);
\draw (1,0) -- (1,-1);
\draw (0,-1) -- (1,-1) -- (2,-1);
\end{tikzpicture}
\end{minipage}

\begin{EG}
We now give another example of pruning, where each squiggly arrow corresponds to pruning all outermost leaves once. The sliced edges have coefficients $\left(\frac{1}{2},\frac{1}{2}\right)$, $\left(\frac{1}{2},\frac{1}{2}\right)$, $\left(\frac{1}{3},\frac{2}{3}\right)$, $\left(\frac{1}{6},\frac{5}{6}\right)$ respectively, the edges labelled with $(y)$ represent a klt marking with marking $y$, whereas the edges labelled with an x represent lc markings. 

\vspace{1ex}

{ \begin{center}
\noindent\begin{tikzpicture}[radius=2pt, decoration=snake]
\fill (0,0) circle node[above]{\small{1/2}};
\fill (1,0) circle node[above]{\small{1/2}};
\fill (2,0) circle node[above]{\small{1}};
\fill (3,0) circle node[above]{\small{3/2}};
\fill (4,0) circle node[above]{\small{1/6}};
\fill (5,0) circle node[above]{\small{1/6}};
\fill (6,0) circle node[above]{\small{1/6}};
\draw (0,0)--(6,0);
\draw[-|] (0,0) -- (.5,0);
\draw[-|] (3,0) -- (3.5,0);
\draw[-|] (4,0) -- (4.5,0);
\draw[-|] (5,0) -- (5.5,0);
\draw[->, decorate] (7,0) -- (8,0);
\end{tikzpicture}
\hspace{0.8 cm}
\begin{tikzpicture}[radius=2pt, decoration=snake]
\fill (0,-0.25) node[above]{(\small{1/2})};
\fill (1,0) circle node[above]{\small{1/2}};
\fill (2,0) circle node[above]{\small{1}};
\fill (3,0) circle node[above]{\small{3/2}};
\fill (4,0) circle node[above]{\small{1/6}};
\fill (5,0) circle node[above]{\small{1/6}};
\fill (6,-0.25) node[above]{(\small{1/6})};
\draw (0.5,0)--(5.5,0);
\draw[-|] (3,0) -- (3.5,0);
\draw[-|] (4,0) -- (4.5,0);
\end{tikzpicture}

\vspace{3ex}

\noindent\begin{tikzpicture}[radius=2pt, decoration=snake]
\draw[->, decorate] (-.4,0) -- (.6,0);
\fill (2,0) circle node[above]{\small{1}};
\fill (3,0) circle node[above]{\small{3/2}};
\fill (4,0) circle node[above]{\small{1/6}};
\fill (4.9,-0.25) node[above]{(\small{1/3})};
\draw (2,0) -- (4,0) ;
\draw[-|] (3,0) -- (3.5,0);
\draw[-{Rays[]}] (2,0) -- (1.6,0);
\draw (4,0) -- (4.4,0);
\draw[->, decorate] (5.4,0) -- (6.4,0);
\end{tikzpicture}
\hspace{.8 cm}
\noindent\begin{tikzpicture}[radius=2pt, decoration=snake]
\fill (2,0) circle node[above]{\small{1}};
\fill (3,0) circle node[above]{\small{3/2}};
\fill (4,-0.25) node[above]{(\small{1/2})};
\draw  (2,0) -- (3,0);
\draw[-{Rays[]}] (2,0) -- (1.6,0);
\draw (3,0) -- (3.4,0);
;
\end{tikzpicture}
\end{center} }
\end{EG}
\begin{proof}[Proof of \Cref{prop_pruning_tree}]
As mentioned above, the klt-markings (resp. lc-markings) of a given vertex, correspond to the klt (resp. lc) cuspidal fibers in the corresponding irreducible components.
From \Cref{prop_can_bundle_formula}, the weight of a vertex $v$ on a pruned tree corresponds to the intersection number $(K_{\sX^{(i)}}.\ S_v)$, where $S_v$ is the irreducible component of $\sS^{(i)}_{0}$ contained in the irreducible component of $\sX_{0}$ corresponding to the vertex $v$.
Therefore from \Cref{algorith: stable reduction}, the only thing we need to check is the following.
Let $v$ be a leaf with non-positive weight, attached to a vertex $w$, and let $X_v^-$ be the irreducible component corresponding to $v$ with $X_{w}^-$ the irreducible component attached to it.
Then after a flip of La Nave and a contraction of the pseudo-elliptic component, if we denote by $X_v^+$ the proper transform of $X_j^-$, and by $F$ the cuspidal fiber to which the pseudo-elliptic surface given by the proper transform of $X_v^+$ is contracted, then we claim that
\[
1-\lct\big(X_{w}^{\cusp}; F\big) \ =\ \wt(i) + 1
\]
where $X_{w}^{\cusp}$ is the proper transform of $X_{w}^-$ containing $F$.

\begin{figure}[H]
\centering
\includegraphics[width=12cm]{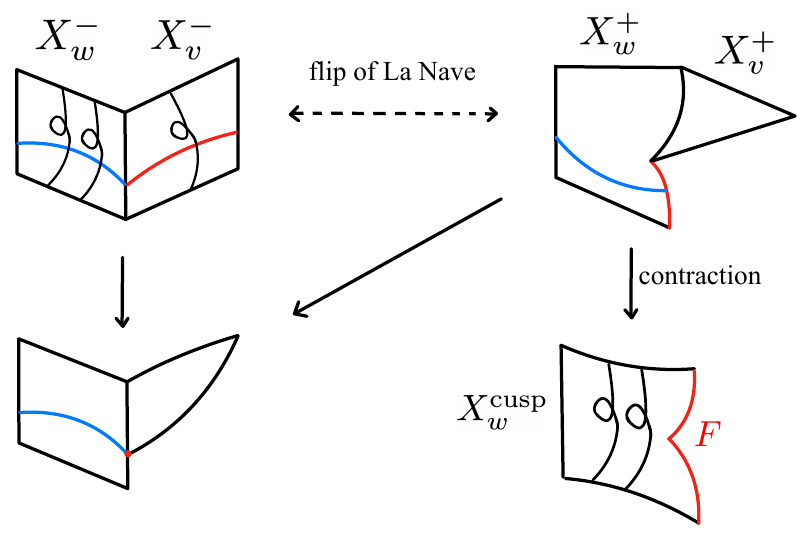}
\caption{Notations as in the proof of \Cref{prop_pruning_tree}.}
\end{figure}

This is because from \Cref{lemma_lct_Asquare}, the log-canonical threshold is given by the reciprocal of the self-intersection of a certain intermediate component, which from \Cref{lemma_which_intermediate_fiber_after_a_flip} is given (up to sign) by $(S_v^2)$, where $S_v \subseteq X_v$ is the section component.
Since $X_v$ is attached only along a single fiber, the desired statement now follows from \Cref{prop_can_bundle_formula}.
\end{proof}

\begin{Cor}\label{cor:weight}
 The value $\wt(\Gamma) := \sum_v \wt(v)$ remains constant during the pruning process.
\end{Cor}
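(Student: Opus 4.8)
The plan is to reduce the statement to a purely local computation: show that $\sum_{v} \wt(v)$ is unchanged by a single application of the pruning step of \Cref{prop_pruning_tree}, and then conclude by induction, since the whole pruning process is a finite composition of such steps (each one strictly decreases $|V|$, so the process terminates).

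Concretely, I would fix a pruned tree $\Pi = (V,E,E_0,\jdeg,F,T)$ occurring at some stage of the process, let $v \in V$ be a leaf of weight $\le 0$ about to be pruned, let $w$ be the unique vertex adjacent to $v$, and set $t := \wt(v)+1$; denote by $\Pi'$ the pruned tree resulting from this step. Every vertex $u \notin \{v,w\}$ is untouched, so $\wt_{\Pi'}(u) = \wt_{\Pi}(u)$. The vertex $w$ loses exactly one edge, namely $vw$ (since $v$ is a leaf and no other edge is altered), which decreases $\wt_{\Pi}(w)$ by $1$; and then, according to whether $t=1$, $t=0$, or neither, the vertex $w$ acquires one lc-marking, nothing, or a klt-marking of value $t$. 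In all three cases the contribution of the new marking to the weight formula
$$
\wt(w)\ =\ \#\{\text{edges adjacent to }w\} - 2 + \jdeg(w) + \sum_{t_w \in T(w)} t_w + |F(w)|
$$
is exactly $t$: it is $1=t$ when $t=1$ (via $|F(w)|$), it is $0=t$ when $t=0$, and it is $t$ (via $\sum t_w$) otherwise. Hence
$$
\wt_{\Pi'}(w)\ =\ \wt_{\Pi}(w) - 1 + t\ =\ \wt_{\Pi}(w) + \wt_{\Pi}(v),
$$
and therefore $\sum_{u \in V \setminus\{v\}} \wt_{\Pi'}(u) = \sum_{u\in V}\wt_{\Pi}(u)$, as desired. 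Iterating over all pruning steps gives the corollary.

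The only point requiring care — and it is a very mild one — is the bookkeeping in the weight formula: one must observe that removing a leaf affects the edge-count of its neighbour and of no other vertex (valence-one leaves in a tree, with the rest of the tree preserved), that slicings do not enter $\wt$, and that the klt-marking prescribed in \Cref{prop_pruning_tree} has value precisely $t = \wt(v)+1$, which is exactly what makes the three cases collapse into the single identity above. As a byproduct, the same bookkeeping identifies the constant value: on the initial sliced tree $\Gamma$ one has $F = T = \varnothing$ and $\sum_{v}|E(v)| = 2(|V|-1)$ because $\Gamma$ is a tree, so $\sum_v \wt(v) = \jdeg(\Gamma) - 2 = n-2$, consistent with the fact (from \Cref{prop_can_bundle_formula}) that the weights track the intersection numbers $\big(K_{\sX^{(i)}}\cdot S_v\big)$, whose total along the special fiber equals $\big(K_X\cdot S\big) = n-2$ for the minimal Weierstrass fibration.
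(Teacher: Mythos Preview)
Your proof is correct and is essentially the same approach as the paper's: the paper simply asserts that the invariance follows from how the algorithm in \Cref{prop_pruning_tree} works, and you have spelled out that bookkeeping explicitly. Your additional remark identifying the constant as $\jdeg(\Gamma)-2 = n-2$ is also correct and matches the computation the paper carries out later in the proof of \Cref{cor_KSB_limits_have_at_most_6_vertices}.
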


\begin{proof}
       This follows from how the algorithm of \Cref{prop_pruning_tree} works.
\end{proof}

\begin{Def}
    The \emph{height} of a stable pruned tree is defined by 
    $$
    \height(\Pi) = \sum_{i \in V(\Pi)}\left( \jdeg(i) + \sum_{t_j \in T(i)} t_j + |F(i)|\right).
    $$    
\end{Def}

\begin{Cor}\label{cor:height}
    Let $\Pi = \bar{\Phi}_n(\Gamma)$. Then 
    $$\height(\Pi) \ =\  \wt(\Pi) + 2 \ =\  \jdeg(\Gamma).$$
\end{Cor}
\begin{proof}
    By Corollary \ref{cor:weight}, we have $\wt(\Pi) = \wt(\Gamma)$. Then we can compute 
        \begin{align*}
        \wt(\Pi)&\ =\ \wt(\Gamma)\\
        &\ =\ \sum_{i \in V(\Gamma)}\big ( \#\{\text{edges containing }i\}-2+\jdeg(i)\big)\\
        & \ =\ \sum_{i \in V(\Gamma)}\big ( \#\{\text{edges containing }i\}-2\big) + \jdeg(\Gamma) \\
        &\ =\  \jdeg(\Gamma) - 2,
    \end{align*}
    where we have used that the Euler characteristic of $\Gamma$, defined by $\#\{\text{vertices}\} - \#\{\text{edges}\}$, is equal to $1$ since $\Gamma$ is a tree. This proves the second equality. For the first equality, using the definition of $\wt(v)$ and $\height(\Pi)$, we similarly have
    \begin{align*}
        \wt(\Pi)&\ =\ \height(\Pi) + \sum_{v \in \Pi} + \sum_{v \in V(\Pi)}\big ( \#\{\text{edges containing }v\}-2\big)\\
        & \ =\ \height(\Pi) - 2.
    \end{align*}
\end{proof}

\begin{Cor}\label{cor:heightn}
    The stable pruned tree associated to any KSB-stable surface parametrized by $\overline{\cE}_{n}$ has height $n$.
\end{Cor}

\begin{proof} By Corollary \ref{cor:height}, $\height(\Pi) = \jdeg(\Gamma)$ for any stable sliced tree $\Gamma$ which prunes to $\Pi$. On the other hand, $\jdeg(\Gamma)$ is simply the degree of the $j$-map divided by $12$ of any twisted stable map $\mathcal{C} \to \overline{\mathcal{M}}_{1,1}$ with sliced graph $\Gamma$. This degree is constant in families of twisted stable maps and equal to the height $n$ for any stable map in the interior of $\mathcal{K}_n$, i.e. an elliptic surface with at worst nodal fibers.
\end{proof}

Since $\cK^\nu_n\to \overline{\cE}_{n}$ is surjective, every point of $\overline{\cE}_{n}$ is in the image of some stratum. Thus, the combinatorial data of any surface pair $(X,S)$ parametrized by $\overline{\cE}_{n}$ is encoded by a stable pruned graph $\bar{\Phi}_n(\Gamma) = \Pi$ for some stable sliced graph $\Gamma$. This yields the following description of the boundary of $\overline{\cE}_{n}$. 

\begin{theorem}\label{thm:stratificationE_n}
There is a constructible stratification 
 $$
 \overline{\cE}_{n} = \bigsqcup_{\height(\Pi) = n} \mathcal{E}_{\Pi}
 $$
indexed by pruned trees of height $n$, where the constructible stratum $\mathcal{E}_\Pi$ parametrizes smoothable KSBA-stable elliptic surface pairs $(X, \epsilon S)$ for $0 < \epsilon \ll 1$ with combinatorial type $\Pi$.
\end{theorem}

\begin{proof}
Let $\mathcal{E}_\Pi$ be the locus parametrizing surface pairs with combinatorial type $\Pi$. By Corollary \ref{cor:heightn}, $\height(\Pi) = n$. Thus this stratum is simply the union of images under $\bar{\Phi}_n$ of strata in $\mathcal{K}_n$: 
$$
\mathcal{E}_\Pi = \bigcup_{\bar{\Phi}_n(\Gamma) = \Pi} \bar{\Phi}_n(\mathcal{K}_\Gamma).
$$
By Chevalley's Theorem, $\mathcal{E}_\Pi$ are constructible sets, and by surjectivity of $\bar{\Phi}_n$, they cover $\overline{\mathcal{E}}_n$. 
\end{proof}

\section{Wall-crossing morphisms for $\overline{\cE}_{n,t}$ and $\overline{\cP}_n$}\label{sec:wall-crossing-iso}
In this section, we study the wall-crossing morphisms between the moduli spaces $\overline{\cE}_{n,t}$ for different choices of $t$ and $\overline{\cP}_n$ in order to prove that $\rho : \overline{\cE}_{n,0} \to \overline{\cE}_n$ and $\bar{\Psi}_n : \overline{\cE}_{n,0} \to \overline{\cP}_n$ in Proposition \ref{proposition: maps between compactifications} are isomorphisms. Let $c_0 = \frac{n-2}{n}$, and we begin with the following result.
\begin{Lemma}\label{cor_lemma_no_div_contractions}
    Let $(\sX^c,\epsilon \sS^c)\to \spec(R)$ be a family corresponding to a morphism $\spec(R)\to \overline{\cE}_{n}$ from the spectrum of a DVR, with smooth generic fiber. Then
    \begin{enumerate}
        \item the pair $(\sX^c,t\sS^c + \sX_0^c)$ is log canonical, where we marked the divisor $\sS^c$ with coefficient $0 < t \le 1$;
        \item the MMP that yields the canonical model $(\sX^c,(c_0 - \epsilon)\sS^c) \dashrightarrow (\sX', (c_0 - \epsilon)\sS')$ for $0 < \epsilon \ll 1$ contracts no divisors; and
        \item the canonical model $(\sX', c_0 \sS') \to \sY$ contracts only the divisor $\sS'$. 
    \end{enumerate}
    In particular, there is a bijection between the irreducible components of $\sX_0^c$, those of $\sX'_0$, and those of $\sY_0$.
\end{Lemma}
\begin{proof}
    We first prove (1). Recall that the family $(\sX^c,\epsilon \sS^c)\to \spec(R)$ is the canonical model of a threefold pair $(\sX,\epsilon \sS)\to \spec(R)$ which is a family of tsm-stable elliptic surfaces. Moreover, the threefold $\sX^c$ admits a morphism $\xi\colon \sX^c\to \sC$ to a family of nodal curves with pure 1-dimensional fibers \cite[Thm. 1.2]{inchiostro2020moduli}. The proof of point (1) is by induction on the steps of \Cref{algorith: stable reduction}. We argue that at each step of \Cref{algorith: stable reduction}, the pair $(\sX^{(i)},\sS^{(i)}+\sX^{(i)}_0)$ is lc and $\sS^{(i)}$ is a $\mathbb{Q}$-Cartier divisor.

    By \cite[Prop. 4.3]{tsm}, this is true at the beginning of the algorithm, i.e., the tsm-stable pair $(\sX,\sS)$ is lc, and the section is the coarse space of a Cartier divisor on the stack-like elliptic surface. The only steps of the algorithm which replace a neighborhood of $\sS^{(i)}$ are the flips of La Nave. So assume we have a flip of La Nave 
    \[
(\sX^{(i)},\sS^{(i)})\ \dashrightarrow \ (\sX^{(i+1)},\sS^{(i+1)}).
    \] It is proved in the moreover part of \cite[Thm. 7.1.2]{lanave2002explicit} that the pair $(\sX^{(i+1)},\sS^{(i+1)})$ is log smooth around $\sS^{(i+1)}$ in a neighborhood of the flipped curve. So also $(\sX^{(i+1)},\sS^{(i+1)})$ is lc and $\sS^{(i+1)}$ remains $\mathbb{Q}$-Cartier; this finishes (1).

    We now prove (2) and (3). Notice that the generic fiber $(\sX^c_\eta,(c_0-\epsilon)\sS_\eta^c)$ is stable for every $0<\epsilon \ll 1$, and the canonical model of $(\sX^c_\eta,\frac{n-2}{n}\sS_\eta^c)$ contracts only $\sS_\eta^c$. We first check that the canonical model \[\left(\sX^c,\left(c_0-\epsilon\right)\sS^c\right) \dashrightarrow \left(\sX',\left(c_0-\epsilon\right)\sS'\right)\] does not contract divisors. This follows from the fact that the KSBA-moduli space is separated, and the fact that the MMP does not extract divisors. Indeed, the pair $\left(\sX',\left(c_0-\epsilon'\right)\sS'\right)$
    is locally stable and $\sS'$ is $\mathbb{Q}$-Cartier, so if we replace the coefficient of $\sS'$ with $\epsilon$, it remains locally stable. Since the KSBA-moduli stack is separated, the canonical model of $\left(\sX',\epsilon\sS'\right)$ is $(\sX^c,\epsilon\sS^c)$; thus,
 there is a birational contraction $\sX'\dashrightarrow \sX^c$. The composition of the two birational contractions $\sX^c\dashrightarrow \sX'$ and $\sX'\dashrightarrow \sX^c$ constructed above is the identity, so $\sX^c\dashrightarrow \sX'$ does not contract divisors. This proves $(2)$.

    We now check that the canonical model $\left(\sX',c_0\sS'\right)\to \sY$ contracts only $\sS'$. As $\epsilon$ was chosen small so that $c_0 - \epsilon$ is in the chamber below the wall $c_0$, then $(\sX', c_0 \sS')$ is a good minimal model and $K_{\sX'}+c_0\sS'$ is semi-ample. A divisor is contracted by $\sX'\to \sY$ if and only if every curve $A$ on it satisfies that $$\textstyle \big(K_{\sX'}+\frac{n-2}{n}\sS'\big).A=0.$$ Consider a divisor
    $\Gamma'\subseteq \sX'$ different from $\sS'$. As $\sX^c\dashrightarrow\sX'$ does not contract divisors, let $\Gamma^c\subseteq \sX^c$ be the proper transform of $\Gamma'$. Let $p\colon W\to \sX^c$ and $q\colon W\to \sX'$ be a common resolution, and let $D_W$
    be a (not necessarily effective) $\bQ$-divisor such that $q\colon (W,D_W)\to (\sX',\epsilon \sS')$ is crepant birational. Then
    \[
    K_W+D_W\sim_\bQ p^*(K_{\sX^c}+\epsilon \sS^c)+E
    \]
    with $E\ge 0$ and $p$-exceptional, by definition of canonical model. As $\sX'\dashrightarrow \sX^c$ does not contract divisors, $E$ is also $q$-exceptional. In particular, we can choose a curve $A^c\subseteq \Gamma^c$ such that:
    \begin{enumerate}
        \item $A^c$ is not contained in the image of $\operatorname{Supp}(E)$,
        \item $A^c$ is the image of a curve $A_W$ on $\Gamma_W$, the proper transform of $\Gamma^c$ in $W$, and $A':=p(A_W)$ is still a curve in $\Gamma'$ (i.e. $A_W$ is not contracted by $p$ or $q$),
         \item $A'$ is not contained $\sS'$.
    \end{enumerate}
     Then 
     \begin{equation}\nonumber\textstyle 
      \begin{split}
          A'.\left(K_{\sX'}+\frac{n-2}{n} \sS'\right)& \ge A'.(K_{\sX'}+\epsilon\sS')\\
          & = A_W.(K_W+D_W) \\&= A_W.p^*(K_{\sX^c}+\epsilon \sS^c) + A_W.E\\
          & \ge A_W.p^*(K_{\sX^c}+\epsilon \sS^c) \\ &= A^c.(K_{\sX^c}+\epsilon \sS^c)>0.
      \end{split} 
     \end{equation}
    The first inequality follows from the fact that $A'$ is not contained in $\sS'$, the first equality follows from projection formula, the second one by the definition of $E$, the second inequality follows as $A_W \nsubseteq E$, the last equality by the projection formula, and the last inequality is due to the ampleness of $K_{\sX^c}+\epsilon\sS^c$.
    Therefore, the divisor $\Gamma'$ is not contracted by taking the canonical model of $(\sX',\frac{n-2}{n}\sS')$.
\end{proof}
\begin{Cor}\label{cor_comb_description_of_surfaces_in_Pn}
    Let $(\sX,\epsilon \sS)\to \spec(R)$ a KSBA-stable limit of a Weierstrass fibration in $\cE_n$, and let $\sY\to \spec(R)$ be the KSB-stable limit of the surface $\sY_\eta$ obtained by contracting $\sS_\eta\subseteq \sX_\eta$. Then there is a bijection between the irreducible components of $\sX_0$ and those of $\sY_0$. Moreover, for any irreducible components $X_1,X_2$ of $\sX_0$ with proper transforms $Y_1$ and $Y_2$ in $\sY_0$, $X_1$ intersects $X_2$ along a curve if and only if $Y_1$ intersects $Y_2$ along a curve.
\end{Cor}
\begin{proof}
    The first statement follows immediately from \Cref{cor_lemma_no_div_contractions}. The second statement follows from the fact that $\sY$ is the canonical model of $(\sX,\sS)$, from \Cref{cor_lemma_no_div_contractions} and the uniqueness of KSB-stable limits. Moreover, to get this canonical model one has only to perform flips of La Nave from \cite[Appendix B]{ascher2017log} and \Cref{cor_lemma_no_div_contractions}, and possibly contracting some section component of the special fiber. Now the desired statement follows from how flips of La Nave are constructed.
\end{proof}

\begin{Cor}\label{cor_amm_moduli_spaces_are_isom}
    The wall-crossing morphisms $\rho : \overline{\cE}_{n,0} \to \overline{\cE}_n$ and $\bar{\Psi}_n : \overline{\cE}_{n,0} \to \overline{\cP}_n$ as in Proposition \ref{proposition: maps between compactifications} are isomorphisms. In particular, the moduli spaces $\overline{\cE}_{n,t}$ for $0<t<\frac{n-2}{n}$ and $\overline{\cP}_n$ are all isomorphic for $n \neq 4$.
\end{Cor}

\begin{proof}
    Recall that $\overline{\cE}_{n,t}$ (resp. $\overline{\cP}_n$) is the normalization of the closure of the image of $\Phi_{n,t}$ (resp. $\Psi_n$). There are wall-crossing morphisms $\rho_{t',t} \colon \overline{\cE}_{n,t}\to \overline{\cE}_{n,t'}$ for $t<t'$, given by reducing the coefficient on the divisor from $\frac{n-2}{n}-t$ to $\frac{n-2}{n}-t'$ by \cite[Thm. 1.10]{ascher2021wall}. Note that $\rho_{\epsilon, \frac{n-2}{n} - \epsilon} = \rho$ for $0 < \epsilon \ll 1$. We will show that $\rho_{t',t}$ is an isomorphism by constructing its inverse. Since these morphisms are all birational, it suffices to do so for $t=\epsilon$ and $t'=\frac{n-2}{n}-\epsilon$ for $0<\epsilon \ll 1$, i.e. that $\rho : \overline{\cE}_{n,0} \to \overline{\cE}_n$ is an isomorphism. 

    From \Cref{cor_lemma_no_div_contractions} part (1), if we denote by $(\sX,\epsilon\sS)\to \overline{\cE}_{n}$ the universal family, then \[\textstyle \left(\sX,(\frac{n-2}{n}-\epsilon)\sS\right) \ \longrightarrow \  \overline{\cE}_n\]
    is locally stable. Then one can take its canonical model over $\overline{\cE}_n$; see e.g. \cite[Thm. 1.1]{meng2023mmp}. This gives a morphism $\overline{\cE}_n\to \MKSBA$ to the KSBA-moduli stack with coefficients $\frac{n-2}{n}-\epsilon$, and such a morphism factors via $\overline{\cE}_n\to \overline{\cE}_{n,0}\to \MKSBA$ from the universal property of normalizations. This is the desired inverse of $\overline{\cE}_{n,0}\to \overline{\cE}_{n}$. 
    
    The morphism $\bar{\Psi}_n : \overline{\cE}_{n, 0}\to \overline{\cP}_n$  being an isomorphism is proved analogously to \cite[Thm. 1.9]{ascher2021wall}. Note we cannot apply \emph{loc. cit.} directly, but the same argument applies. Note that $\bar{\Psi}_n$ is proper as both source and target are proper, and the fibers of $\bar{\Psi}_n$ are countable by \cite[Lemmas 6.3 and 6.4]{ascher2021wall}, and thus finite. If $(X, (c_0 - \epsilon)S)$ is a pair parametrized by a point $p \in \overline{\cE}_{n,0}$, then $\bar{\Psi}_n(p)$ parametrizes the surface $Y$ obtained by contracting $S$ to a point. Then any automorphism of $(X, S)$ which induces the identity on $Y$ must be the identity on the dense open set $X \setminus S \subset X$ and thus is the identity. We conclude that $\Aut(p) \to \Aut(\bar{\Psi}_n(p))$ is injective so $\bar{\Psi}_n$ is representable. 
    
    For $n \neq 4$,  $\bar{\Psi}_n$ is birational for as it extends the open embedding $\Psi_n$. For $n = 4$ note that $\Psi_4$ induces a set-theoretic bijection between smooth height $4$ elliptic fibrations and height $4$ pseudoelliptic surfaces with exactly one $\frac{1}{4}(1,1)$ singularity. In characteristic $0$, a representable bijection between normal algebraic stacks is birational. Applying Zariski's Main Theorem completes the proof. 
    
\end{proof}

    In the remaining part of this section, we will not distinguish  the moduli spaces $\overline{\cE}_{n,t}$ and $\overline{\cE}_{n}$.

\subsection{Boundary stratification of $\overline{\cP}_n$}

Putting together \Cref{thm:stratificationE_n} with \Cref{cor_amm_moduli_spaces_are_isom}, we obtain a boundary stratification of $\overline{\cP}_n$ from $\overline{\cE}_n$. 

\begin{theorem}\label{thm:boundary_P_n}
    There is a constructible stratification 
    $$
    \overline{\cP}_n = \bigsqcup_{\height(\Pi) = n} \cP_\Pi
    $$
    indexed by stable pruned trees of height $n$. The stratum $\cP_{\Pi}$ parametrizes trees of pseudoelliptic surfaces glued together along pseudofibers with combinatorics described by $\Pi$. 
\end{theorem}

\begin{proof} 
Let $(\sX, \epsilon \sS) \to \overline{\cE}_n$ be the universal family of $\epsilon$-weighted stable elliptic surfaces for $0 < \epsilon \ll 1$. By \Cref{cor_lemma_no_div_contractions}, the rational map $\sX \dashrightarrow  \sX^c$ to the canonical model of $(\sX, \frac{n-2}{n}\sS)$ which induces the isomorphism $\overline{\cE}_n \cong \overline{\cP}_n$ consists of a sequence of flips followed by the single divisorial contraction of the section. By \Cref{algorith: stable reduction}, the only flips are the flips of La Nave which transform elliptic components corresponding to leaves of $\Pi$ into pseudoelliptic components. Then the final contraction of the section contrats any remaining elliptic components into pseudoelliptics. By \Cref{cor_comb_description_of_surfaces_in_Pn}, this doesn't change the combinatorics of how the pseudoelliptic components are glued together. Thus under this isomorphism, the image of the stratum $\cE_\Pi$, which we denote $\cP_\Pi$, parametrizes trees of pseudoelliptic surfaces glued along pseudofibers, with combinatorics described by $\Pi$. 
\end{proof} 

A consequence of \Cref{subsection_comb} is the following.

\begin{Cor}\label{cor_KSB_limits_have_at_most_6_vertices}
    Let $X$ be a KSBA-stable surface parametrized by $\overline{\cE}_3$, and $\Pi$ be the pruned tree associated to $X$. Then $\Pi$ has two leaves (i.e. $\Pi$ is a chain), and has at most six vertices. The same is true $\overline{\cP}_3$. 
\end{Cor}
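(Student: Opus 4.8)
The plan is combinatorial, driven by the fact that the total weight $\sum_{v}\wt(v)$ is a pruning invariant. Since $\Pi$ is a tree, $\sum_{v}\#\{\text{edges at }v\}=2\#\{\text{edges}\}=2(\#V-1)$, so for a sliced tree one computes $\sum_v\wt(v)=\jdeg(\Gamma)-2=n-2$; by the preceding Corollary the same equality passes to the pruned tree, so $\sum_{v\in\Pi}\wt(v)=n-2=1$ when $n=3$. Stability of $\Pi$ gives $\wt(v)>0$ for every vertex.

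The chain claim is then immediate. If $\#\{\text{edges at }v\}\ge 3$ for some $v$, then $\wt(v)=\#\{\text{edges at }v\}-2+\jdeg(v)+\sum_{t\in T(v)}t_v+|F(v)|\ge 1$, and since all other weights are positive while the total equals $1$, this forces $V=\{v\}$, contradicting $\#\{\text{edges at }v\}\ge 3$. Hence every vertex of the tree $\Pi$ has degree at most $2$, so $\Pi$ is a path; in particular it is a chain with at most two leaves.

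For the bound $\#V\le 6$ it suffices to show $\wt(v)\ge \tfrac16$ for every vertex, since then $1=\sum_v\wt(v)\ge \#V/6$. Every term defining $\wt(v)$ lies in $\tfrac1{12}\bZ_{\ge 0}$ apart from the integer $\#\{\text{edges at }v\}-2$, so $\wt(v)\in\tfrac1{12}\bZ$, and positivity already yields $\wt(v)\ge\tfrac1{12}$ and $\#V\le 12$. To sharpen this I would invoke integrality: by \Cref{prop_can_bundle_formula} and \Cref{rmk_S_Y_square_is_an_integer}, replacing each non-reduced fiber of the component $X_v$ by a cuspidal fiber produces an elliptic surface all of whose fibers are reduced, hence with integral section self-intersection; equivalently
\[
\sigma(v):=\jdeg(v)+\sum_{e\in E_0(v)}e_v+\sum_{t\in T(v)}t_v+|F(v)|\ \in\ \bZ_{\ge 1}.
\]
Writing $\wt(v)=\#\{\text{edges at }v\}-2+\sigma(v)-\sum_{e\in E_0(v)}e_v$, using $\#E_0(v)\le\#\{\text{edges at }v\}\le 2$ and the fact that every slicing is at most $\tfrac56$, a short case analysis on the number of sliced edges at $v$ gives $\wt(v)\ge\tfrac16$ in all cases except an interior vertex both of whose edges are sliced with $\sigma(v)=1$; there $|F(v)|=0$, $v$ carries no cuspidal fibers, $\jdeg(v)=\tfrac1{12}$, the two slicings sum to $\tfrac{11}{12}$ (so form the pair $\{\tfrac16,\tfrac34\}$ or $\{\tfrac14,\tfrac23\}$), and $\wt(v)=\tfrac1{12}$.

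I expect this exceptional weight-$\tfrac1{12}$ vertex to be the only real obstacle: if $r$ of them occur the count above gives only $\#V\le 6+\tfrac r2$, so one must show $r\le 1$ for chains realized by a tsm-stable elliptic surface of height $3$. Such a vertex is birational to a minimal Weierstrass fibration over $\bP^1$ with fundamental line bundle $\cO(1)$ whose singular fibers are one $\mathrm{I}_1$ and two cusps of the types prescribed by the slicing pair; tracing the gluing data along the chain — in particular that both leaves are then forced to carry a slicing $\tfrac56$ and that $\sum_v\jdeg(v)=3$ — one finds that a chain with two such vertices has empty stratum $\cK_\Gamma$ by the realizability constraints on slicings (cf. the remarks following the stratification of $\cK_n$). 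Hence $r\le 1$, and therefore $\#V\le 6$.
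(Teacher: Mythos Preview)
Your total-weight computation and the chain argument are correct. The chain argument via ``any vertex of valence $\ge 3$ already has $\wt(v)\ge 1$'' is a clean alternative to the paper's approach, which instead bounds the number of leaves directly: three leaves would each contribute more than $1$ to the sum $\sum_{v}\bigl(\jdeg(v)+|F(v)|+\sum_{t\in T(v)}t_v\bigr)=3$, a contradiction.

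For the bound $\#V\le 6$, the paper works with this same sum and asserts that each leaf contributes at least $\tfrac{7}{6}$ and each internal vertex at least $\tfrac{1}{6}$, so there are at most four internal vertices. Your approach is equivalent for internal vertices, since there $\wt(v)=\jdeg(v)+|F(v)|+\sum_{t\in T(v)}t_v$. You go further than the paper in observing that the lower bound $\ge\tfrac{1}{6}$ for internal vertices does not follow from the integrality of $\sigma(v)$ alone: an internal vertex with $\jdeg(v)=\tfrac{1}{12}$, no markings, and two sliced edges with slicings summing to $\tfrac{11}{12}$ has $\wt(v)=\tfrac{1}{12}$. The paper simply asserts the $\tfrac{1}{6}$ bound ``from how the numbers $t_i$ are defined'' without isolating or excluding this case.

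However, your resolution of this exceptional case is not a proof. The claim that a chain with two weight-$\tfrac{1}{12}$ vertices corresponds to an empty stratum is asserted rather than argued; the statement that ``both leaves are then forced to carry a slicing $\tfrac{5}{6}$'' is unexplained and looks false (the complementary slicing on an edge adjacent to such a vertex is one of $\tfrac{5}{6},\tfrac{1}{4},\tfrac{3}{4},\tfrac{1}{3}$, and that edge need not terminate at a leaf); and the appeal to ``realizability constraints on slicings (cf.\ the remarks following the stratification of $\cK_n$)'' is a pointer, not an argument. To make this complete you would need either to rule out $\jdeg(v)=\tfrac{1}{12}$ for components of pruned trees arising from $\cK_3$---for instance by analyzing what a degree-$1$ map $\cC_v\to\overline{\cM}_{1,1}$ forces on the stacky structure of $\cC_v$ over $j=0$ and $j=1728$ and checking this against the admissible slicing pairs $(\tfrac{1}{6},\tfrac{3}{4})$ and $(\tfrac{1}{4},\tfrac{2}{3})$---or to give a direct combinatorial count excluding chains of length $\ge 7$ in the presence of such vertices.
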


Observe that one can construct an element with six irreducible components. It suffices to construct a twisted stable map with the following tsm-stable graph $\Gamma$, and with slicings $\left ( \frac{5}{6},\frac{1}{6}\right ),\left ( \frac{2}{3},\frac{1}{3}\right ), \left ( \frac{1}{2},\frac{1}{2}\right ), \left ( \frac{1}{3},\frac{2}{3}\right ), \left ( \frac{1}{6},\frac{5}{6}\right )$ in order from left to right.
\begin{center}
\begin{tikzpicture}[radius=2pt, decoration=snake]
\fill (1,0) circle node[above]{\small{7/6}};
\fill (2,0) circle node[above]{\small{1/6}};
\fill (3,0) circle node[above]{\small{1/6}};
\fill (4,0) circle node[above]{\small{1/6}};
\fill (5,0) circle node[above]{\small{1/6}};
\fill (6,0) circle node[above]{\small{7/6}};
\draw (1,0)--(6,0);
\draw[-|] (1,0) -- (1.5,0);
\draw[-|] (2,0) -- (2.5,0);
\draw[-|] (3,0) -- (3.5,0);
\draw[-|] (4,0) -- (4.5,0);
\draw[-|] (5,0) -- (5.5,0);
\end{tikzpicture}
\end{center}
Such a twisted stable map exists, or in other terms, the stratum in $\cK_{3}$ corresponding to the graph above is not empty; we briefly sketch how to construct it. First, for each vertex $v$ of the diagram above, we consider a map from a root-stack of $\bP^1\to \overline{\cM}_{1,1}$ such that the corresponding map on coarse moduli spaces has degree $12\jdeg(v)$.
Then we glue the corresponding maps along the stacky points, in a way so that the resulting morphism is balanced. 

For example, for the first two vertices on the left, one can proceed as follows. For the leftmost vertex, we start by considering an elliptic K3 which we denote by $Y\to \bP^1$, with a single klt cusp, and from the slicing and \Cref{table: sliced weights}, is of type $\operatorname{II}^*$. From \cite{Miranda}*{Table (IV).3.1}, one can construct it explicitly via its Weierstrass equation, taking two polynomials \[A\in \oH^0(\cO_{\bP^1}(8)), B\in \oH^0(\cO_{\bP^1}(12))\]
which have all distinct and single roots, except for a single point $p\in \bP^1$ where $A$ has a zero of multiplicity 4, and $B$ has multiplicity 5. This is clearly possible, and the two polynomials $A$ and $B$ will induce a morphism $\bP^1\to [\bA^2/\bG_m]$, with the action with weights $4$ and $6$, inducing $f: \bP^1\smallsetminus\{p\}\to \overline{\cM}_{1,1}$. By \cite{bejleri2022height}*{Thm. 1.6 \& Thm. 3.3} (see also \cite{BV}), one can construct a root stack $\sP^1\to \bP^1$ so that $f$ extends to $\sP^1\to \overline{\cM}_{1,1}$.
We can check that $\sP^1$ has $\bmu_6$ as automorphism group on the stacky point. Indeed, the two sections $A$ and $B$ give a map $\bP^1\to [\bA^2/\bG_m]$ which locally is of the form $z\mapsto (z^4,z^5)$, and its $\bG_m$-orbit in $\bA^2$ is of the form \[\bG_m\times \bP^1\smallsetminus \{p\}\to \bA^2, (\lambda, z)\mapsto (\lambda^4 z^4, \lambda^6 z^5).
\] 
Extending this map to $\sP^1$ boils down to replacing $\lambda$ with $z^{\frac{m}{d}}$, so that the two sections $(z^{4 + \frac{4m}{d}},z^{5 + \frac{6m}{d}}) $ do not have a pole at $p$ and do not vanish simultaneously. The smallest positive $d$ that one can take will lead to a representable morphism $\sP^1\to \overline{\cM}_{1,1}$, and it is $d=6$ with $m=-5$; so $\sP^1$ will have a $\bmu_6$ as stabilizer group on the stacky point.
Since the morphism $\sP^1\to \overline{\cM}_{1,1}$ is representable, such a stacky point will go to the only point of $\overline{\cM}_{1,1}$ with $\bmu_6$ as automorphisms.
Similarly, for the second leftmost vertex, one can consider two homogeneous polynomials $A,B$ of degree 4 and 6 respectively, which have distinct and single roots on $\bP^1$, except at two points $p_1$ and $p_2$. From the slicings, \Cref{table: sliced weights} and \cite{Miranda}*{Table (IV).3.1}, we require that at $p_1$ the polynomials $A$ and $B$ must vanish of order 1 (this will correspond to a type $\operatorname{II}$ fiber), and at $p_2$ instead $A$ should vanish with multiplicity 3 and $B$ with multiplicity 4.
This is again possible, and proceeding as before this will lead to a stacky $\bP^1$, which we denote by $\sP^1_2$, with two stacky points. As before, one can check that $\sP^1_2$ has, at $p_1$, automorphism group which is $\bmu_6$, and at $p_2$ which is $\bmu_3$. We can glue $\sP^1$ and
$\sP^1_2$ along the point with automorphism $\bmu_6$, and the corresponding maps to $\overline{\cM}_{1,1}$ will glue. On the glued point, one can check that we will have a twisted vertex. We can proceed with this recipe to get the desired twisted map $\cC\to \overline{\cM}_{1,1}$.
\begin{proof}[Proof of \Cref{cor_KSB_limits_have_at_most_6_vertices}]
 As a consequence of Corollary \ref{cor:heightn}, one has
\begin{equation}\label{eq_weights}
    \sum_{i \in V(\Pi)} \bigg(\jdeg(i) + \sum_{t_j \in T(i)}t_j +|F(i)|\bigg)\ =\ 3. 
\end{equation}
    
    Let us prove that $\Pi$ can have at most two leaves. If not, let $\ell,j,k$ three leaves. 
    The stability condition guarantees that 
    \begin{align*}
        &\jdeg(\ell) + \sum_i |F(\ell)| + \sum_{i\in T(\ell)}t_{i} \ >\ 1, \\& \jdeg(j) + \sum_i |F(j)| + \sum_{i\in T(j)}t_{i}  \ >\  1\\
        & \jdeg(k) + \sum_i |F(k)| + \sum_{i\in T(k)}t_{i} \ >\ 1.
    \end{align*}
    Since the graph is connected, there is no edge between $i,j$ and $\ell$. Then if we add up the previous inequalities, we get a contradiction of \Cref{eq_weights}. We conclude that $\Pi$ has at most two edges, and thus $\Pi$ is a chain. We now prove that there cannot be more than four internal vertices in the chain.

    Let us label the edges on the chain as follows: let $l_1, l_2$ be the leaves and $n_1,\dots,n_k$ the internal vertices. We know that $$\jdeg(l_i) + \sum_i |F(l_i)| + \sum_{i\in T(\ell)}t_{l_i} \ >\ 1$$ for each leaf $i=1,2$, and from how the numbers $t_i$ are defined, we have that
    \[
    \jdeg(l_i) + \sum_i |F(l_i)| + \sum_{i\in T(l_i)}t_{l_i}  \ \ge\ \frac{7}{6}.
    \]
    Similarly for $1\le i\le k$,
    \[
    \jdeg(n_i) + \sum_i |F(n_i)| + \sum_{i\in T(n_i)}t_{n_i}  \ \ge\ \frac{1}{6}.
    \]
    Combining this with \Cref{eq_weights}, we get that $k\le 4$.
\end{proof}

\subsubsection{Non-emptiness of strata}\label{sec:non-empty} In this subsection, we discuss the question of non-emptiness of the strata $\mathcal{P}_{\Pi}$ and $\mathcal{E}_{\Pi}$. The idea is exactly as in the example following \Cref{cor_KSB_limits_have_at_most_6_vertices}. Since the strata are isomorphic we focus on $\mathcal{E}_\Pi$. As shown in Remark \ref{rem:empty}, there exist stable sliced graphs $\Gamma$ for which $\mathcal{K}_{\Gamma} = \emptyset$.
On the other hand, by definition, $\mathcal{E}_\Pi \neq \emptyset$ if and only if there exists some stable sliced graph $\Gamma$ which prunes to $\Pi$ with $\mathcal{K}_\Gamma \neq \emptyset$.
This reduces the question of non-emptiness of strata of \(\cE_{\Pi} \subset \overline{ \mathcal E}_n \) to the same question for strata of $\mathcal{K}_n$. 

Let $\Gamma$ be a stable sliced graph of $\jdeg = n$. Each vertex $v \in V(\Gamma)$ should correspond to a genus $0$ pointed twisted map $f_v : (\mathcal{C}_v, \{p_e\}_{e \in E(v)}) \to \overline{\mathcal{M}}_{1,1}$ of degree $\jdeg(v)$ from a smooth twisted curve $\mathcal{C}$. The stacky points of $\mathcal{C}$ are exactly $\{p_e\}_{e \in E_0(v)}$ corresponding to the sliced edges, and the stabilizer at $p_e$ is $\mu_d$ where $d$ is the denominator of the slicing $e_v$. The numerator of the slicing describes the Kodaira fiber type of the fibers $\{F_e\}_{e \in E_0(v)}$ of the tsm-stable surface $(X_v \to C_v, S + \sum_{e \in E_0(v)} F_e^{\red} + \sum_{e \in E(v) \setminus E_0(v)} G_e)$ obtained by pulling back the universal family along $f_v$ and taking the coarse moduli space. The fibers $G_e$ for $e \notin E_0(v)$ are simply stable fibers over non-stacky points of $\mathcal{C}$. 

Suppose that for each $v \in \Gamma$, there exists a pointed genus $0$ twisted map $f_v$ as above. Then we can glue these maps together along marked stacky points with the same $j$-invariant according to the sliced graph $\Gamma$ to obtain a map $f : \mathcal{C} \to \overline{\mathcal{M}}_{1,1}$ with $\jdeg$ being $\jdeg(\Gamma) = n$. The slicing condition exactly guarantees that the resulting twisted map is balanced and by the smoothability theorem, Theorem \ref{thm: irreducibility of K_n}, the twisted stable map $f$ smooths into the interior of $\mathcal{K}_n$. Thus the associated tsm-stable elliptic surface $(X \to C, S)$ (which is obtained from gluing together the surfaces $(X_v, S + \sum_{e \in E_0(v)} F_e^{\red} + \sum_{e \in E(v) \setminus E_0(v)} G_e)$ along marked fibers according to the sliced graph $\Gamma$) is the limit of an honest height $n$ elliptic surface over $\mathbb{P}^1$. By construction the stable sliced tree of $(X \to C, S)$ is $\Gamma$. Now we apply the minimal model program via the Algorithm \ref{algorith: stable reduction} to a one parameter smoothing of this surface $(X \to C, S)$ to obtain its KSBA-stable model $(X^c, \epsilon S^c)$ for $0 < \epsilon \ll 1$. By Proposition \ref{prop_pruning_tree}, this is a KSBA-stable surface with pruned tree $\overline{\Phi}_n(\Gamma) =\Pi$. 

Now to show that $\mathcal{K}_\Gamma$ is nonempty, it suffices to construct for each $v \in V(\Gamma)$, a twisted map $f_v$ satisfying the conditions above. For this we use the connection with minimal Weierstrass equations as developed in \cite{bejleri2022height}. By \cite[Prop. 5.8, Cor. 7.5, Prop. 7.8 \& Thm 7.12]{bejleri2022height}, a genus $0$ twisted map as in $f_v$ is equivalent to Weierstrass data $(A,B)$ on $\mathbb{P}^1$ of height $n$ with vanishing conditions at points $\{p_e\}_{e \in E_0(v)}$ described by the slicing $e_v$ via the table in Theorem 1.6 of \emph{loc. cit.}. Here the slicing $e = a/r$ for a pair $(r,a)$ in that table. Concretely, $A$ and $B$ are simply homogeneous polynomials of degree $4n$ and $6n$ which simultaneously vanish at a point $p_e \in \mathbb{P}^1$ to an order determined by the slicing $e_v$ and which do not simultaneously vanish elsewhere. The required twisted map exists if and only if we can find such polynomials, which is not always the case is shown in Remark \ref{rem:empty}. This reduces the question of which strata are non-empty to an elementary question about the existence polynomials satisfying the given vanishing conditions.

\bibliographystyle{amsalpha}
\bibliography{bibliography}

\end{document}